\setlist[itemize]{noitemsep, topsep=0pt}
\setlist[enumerate]{itemsep=5pt, topsep=5pt, leftmargin=25pt}
\newtheorem{theorem}{Theorem}
\definecolor{verylightblue}{rgb}{0.7,0.8,1}
  {\begin{mdframed}[backgroundcolor=verylightblue]\begin{theorem}}%
  {\end{theorem}\end{mdframed}}
\definecolor{verylightgray}{gray}{0.95}
  {\begin{mdframed}[backgroundcolor=verylightgray]\begin{proof}}%
  {\end{proof}\end{mdframed}}
\newtheorem{claim}{Claim}
\newtheorem{lemma}{Lemma}
\definecolor{verylightred}{rgb}{1,0.8,0.8}
  {\begin{mdframed}[backgroundcolor=verylightred]\begin{lemma}}%
  {\end{lemma}\end{mdframed}}
\newtheorem{proposition}{Proposition}
  {\begin{mdframed}[backgroundcolor=verylightblue]\begin{proposition}}%
  {\end{proposition}\end{mdframed}}
\newtheorem{corollary}{Corollary}
\theoremstyle{definition}
\newtheorem{definition}{Definition}
\theoremstyle{remark}
\newtheorem{example}{Example}
\theoremstyle{empty}
\newtheorem*{rep@theorem}{\rep@title}
\newcommand{\newreptheorem}[2]
{\newenvironment{rep#1}[1]
{\def\rep@title{#2 \ref{##1}} \begin{rep@theorem}}%
 {\end{rep@theorem}}}
\DeclareMathOperator*{\argmax}{arg\,max}
\DeclareMathOperator*{\argmin}{arg\,min}
\newcommand{\one}[1]{{\mathbf{1}}\{{#1}\}}
\newcommand{\PP}[1]{\mathbb{P}\!\left\{{#1}\right\}} 
\newcommand{\E}{\mathbb{E}}
\newcommand{\PPst}[2]{\mathbb{P}\!\left\{{#1}\ \middle| \ {#2}\right\}} 
\def\R{\mathbb{R}}
\def\O{\mathcal{O}}
\newcommand{\ignore}[1]{}
\let\emptyset\varnothing
\newcommand{\A}{\mathcal{A}}
\newcommand{\B}{\mathcal{B}}
\newcommand{\thedate}{\today}
\newcommand{\theauthor}{Tijana Zrnic\\\texttt{tijana.zrnic@berkeley.edu}\\University of California, Berkeley \and Michael I. Jordan\\\texttt{jordan@stat.berkeley.edu}\\
University of California, Berkeley
}
\newcommand{\thetitle}{Post-Selection Inference via Algorithmic Stability}
\date{\thedate}
\author{\theauthor}
\title{\thetitle}
\def\P{\mathcal{P}}
\def\M{\mathcal{M}}
\def\F{\mathcal{F}}
\def\D{\mathcal{D}}
\def\G{\mathcal{G}}
\def\thetalasso{\hat\theta_{\mathrm{LASSO}}}
\def\ci{\mathrm{CI}}
\def\dsel{d_{\mathrm{sel}}}
\newcommand{\dotfrac}[2]{
\mathchoice
{\ooalign{$\genfrac{}{}{0pt}{0}{#1}{#2}$\cr\leavevmode\cleaders\hb@xt@ .22em{\hss $\displaystyle\cdot$\hss}\hfill\kern\z@\cr}}
{\ooalign{$\genfrac{}{}{0pt}{1}{#1}{#2}$\cr\leavevmode\cleaders\hb@xt@ .22em{\hss $\textstyle\cdot$\hss}\hfill\kern\z@\cr}}
{\ooalign{$\genfrac{}{}{0pt}{2}{#1}{#2}$\cr\leavevmode\cleaders\hb@xt@ .22em{\hss $\scriptstyle\cdot$\hss}\hfill\kern\z@\cr}}
{\ooalign{$\genfrac{}{}{0pt}{3}{#1}{#2}$\cr\leavevmode\cleaders\hb@xt@ .22em{\hss $\scriptscriptstyle\cdot$\hss}\hfill\kern\z@\cr}}
}
\long\def\@makecaption#1#2{
        \vskip 0.8ex
        \setbox\@tempboxa\hbox{\small {\bf #1:} #2}
        \parindent 1.5em  
        \dimen0=\hsize
        \advance\dimen0 by -3em
        \ifdim \wd\@tempboxa >\dimen0
                \hbox to \hsize{
                        \parindent 0em
                        \hfil 
                        \parbox{\dimen0}{\def\baselinestretch{0.96}\small
                                {\bf #1.} #2
                                } 
                        \hfil}
        \else \hbox to \hsize{\hfil \box\@tempboxa \hfil}
        \fi
        }
\begin{document}

 \maketitle


\begin{abstract}
When the target of statistical inference is chosen in a data-driven manner, the guarantees provided by classical theories vanish. We propose a solution to the problem of inference after selection by building on the framework of
\emph{algorithmic stability}, in particular its branch with origins in the
field of differential privacy. Stability is achieved via \emph{randomization} of selection and it serves as a quantitative measure that is sufficient to obtain non-trivial post-selection
corrections for classical confidence intervals.  
Importantly, the underpinnings of algorithmic stability translate directly
into computational efficiency---our method computes simple
corrections for selective inference without recourse to Markov chain
Monte Carlo sampling.
\end{abstract}

\section{Introduction}

Classical statistical theory provides tools for valid inference under the assumption that the statistical question is determined before observing any data. In practice, however, the choice of question is typically guided by exploring the same data that is used for inference. This coupling between the statistical question and the data used for inference induces dependencies that invalidate guarantees derived from classical theories.

While traditional wisdom might deem this coupling unacceptable, recent literature embraces this coupled approach to statistical investigation and grants novel ways of thinking about validity. Indeed, data-driven model selection is widely taught and practiced, and even stands as a research area of its own. Sometimes model selection is even unavoidable; in the canonical setting of linear regression, the statistician often starts with a pool of candidate variables large enough that it makes the solution unidentifiable without additional constraints, and when those constraints are data-dependent the solution depends on the data in two ways.

This coupling of the problem formulation and inference stages of statistical analysis has been thoroughly studied in a line of work called \emph{selective}, or \emph{post-selection}, \emph{inference} \cite{berk2013valid,taylor2015statistical, benjamini2010simultaneous}. To this day, however, there are few general principles that enable both statistically powerful and computationally tractable inference after selection. Most existing solutions are either tailored to specific selection strategies \citep[e.g.,][]{lee2016exact, tibshirani2016exact, lee2014exact}, and as such do not generalize to all popular selection methods, or are valid for arbitrary selections at the cost of increased conservativeness \citep[e.g.,][]{berk2013valid, bachoc2020uniformly}.

In the current paper, we build on concepts from the field of differential privacy \cite{dwork2006calibrating, dwork2014algorithmic} to derive selective confidence intervals that are both tractable computationally and powerful statistically.  Our theoretical framework delivers intervals of \emph{tunable width}, a useful consequence of the fact that our confidence intervals derive from a quantitative measure of the \emph{algorithmic stability} of the selection procedure. More precisely, we provide a valid correction to classical, non-selective confidence intervals simultaneously for all procedures that have the same level of algorithmic stability. Informally, a selection being stable means that it is not too sensitive to the particular realization of the data, and the more stable the selection is, the smaller the resulting intervals are. In particular, if the selection is ``perfectly stable'' in the sense that the inferential target is fixed up front and does not depend on the data at hand, the confidence intervals resulting from our approach smoothly recover classical confidence intervals.

We sketch our main result. Let $\hat S$ denote a data-dependent outcome involving selection. For example, $\hat S$ could be subset of $\{1,\dots,d\}$ corresponding to the variables selected for inclusion in a linear regression model. For every possible selection $S$, let $\beta_{S}$ denote the resulting inferential target. In the linear regression context, $\beta_S$ could be the population-level least-squares solution within the model determined by $S$.


Imagine that there is an oracle that guesses $\hat S$, only knowing the method used to arrive at the selection together with the distribution of the data, but not its realization. Denote by $\hat S_0$ the oracle's guess. We say that a selection procedure is $\eta$-stable for some $\eta>0$ if there exists an oracle such that, with high probability over the distribution of the data, the likelihood of any selection under $\hat S$ and the likelihood of the same selection under $\hat S_0$ can differ by at most a multiplicative factor of $e^\eta$. Intuitively, $\eta$ quantifies how much the selection can vary across different realizations of the data; $\eta=0$ essentially means that the selection cannot depend on the data and hence $\hat S$ is fixed, while as $\eta$ grows the selection is allowed to be increasingly data-adaptive. Note that the magnitude of stability depends not only on the selection method, but also on the distribution of the data.

Our main result provides a post-selection-valid correction to classical, non-selective confidence intervals for stable selection procedures. We state an informal version of our key inference tool.

\begin{theorem}[Informal]
\label{thm:informal}
For every \emph{fixed} selection $S$, suppose that $\ci_{S}^{(\alpha)}$ are confidence intervals with valid coverage,
$$\PP{\beta_{S}\not\in \ci_{S}^{(\alpha)}} \leq \alpha.$$
Let $\hat S$ be an $\eta$-stable selection. Then,
$$\PP{\beta_{\hat S}\not\in \ci_{\hat S}^{(\alpha e^{-\eta})}} \leq \alpha.$$
\end{theorem}

Theorem \ref{thm:informal} is valid simultaneously across \emph{all} possible selection methods which are $\eta$-stable. In other words, under the computational notion of stability we consider, the stability parameter of a selection method alone is sufficient to correct for selective inferences.

Our stability designs are based on explicit randomization schemes which calibrate the level of randomization to a pre-specified algorithmic stability requirement. Together with Theorem~\ref{thm:informal}, this allows the statistician to \emph{choose} the confidence interval width, obtaining a perturbation of a selection algorithm (e.g., the LASSO), to obtain a target interval width and a guarantee of valid coverage. Since the derived perturbation is an explicit function of the target interval width, this provides a way to understand the loss in utility due to randomization; for example, expressing how ``far'' the perturbed LASSO solution is from the standard, non-randomized LASSO solution, in some appropriate sense. With this methodology in hand, one can explicitly analyze the inherent tradeoff between the post-selection correction and loss in utility due to randomization for any stable procedure.

We note that the use of randomization in selective inference is by no means a new idea~\citep[see, e.g.,][]{tian2018selective, tian2016magic, tian2016selective, kivaranovic2020tight, panigrahi2017mcmc, panigrahi2019approximate, bi2020inferactive}. The main difference between our work and previous work is the use of stability as an analysis tool, which, on the one hand, leads to a computationally efficient, sampling-free approach to constructing selective confidence intervals with strict coverage, and on the other hand, explicitly connects the level of randomization to the resulting interval width. 
We elaborate on the comparisons to related work in Section \ref{sec:related_work}.

\paragraph{Organization.}
In the following section we present two motivating vignettes together with our solutions based on stability. In Section~\ref{sec:related_work} we discuss related work. In Section \ref{sec:stability} we introduce the notion of algorithmic stability at the focus of our study and in Section~\ref{sec:cis_via_stability} we give theory for statistical inference under this definition. Then, in Section \ref{sec:linear_reg} we instantiate our theory in the context of model selection in linear regression. In Section \ref{sec:conditional} we draw connections to conditional post-selection inference. In Section \ref{sec:algorithms} we discuss the design of stable algorithms and give stable versions of the LASSO and marginal screening. In Section~\ref{sec:experiments} we study the performance of our procedures empirically. We end with a brief discussion in Section \ref{sec:discussion}.

\section{Motivating vignettes}
\label{sec:vignettes}

To illustrate our framework, we present two motivating examples together with solutions implied by our theory, deferring the proofs of validity of the solutions to later sections. 
In addition, we compare our correction to some relevant baselines.

\paragraph{Vignette 1: Winner's curse.}
 The first vignette considers the problem of selecting the largest observed effect. Suppose that we observe an $n$-dimensional vector $y\sim \mathcal{N}(\mu,\sigma^2 I)$; for example, each entry in this vector could be an observed treatment effect for a separate treatment.
  We are interested in doing inference on the most significant effect. More formally, denoting $i_* = \argmax_i y_i$, we want to construct a confidence interval for $\mu_{i_*}$. Note that this is a \emph{random} inferential target because $i_*$ is a function of the data.

One simple way of providing valid inference for $\mu_{i_*}$ is to apply the Bonferroni correction:
$$\PP{\mu_{i_*} \in (y_{i_*} \pm z_{1-\alpha/(2n)}\sigma) }\geq 1-\alpha,$$
where $z_q$ is the $q$ quantile of the standard normal distribution.

Benjamini et al.~\cite{benjamini2019confidence} show that a tighter correction is valid, namely
$$\PP{\mu_{i_*} \in (y_{i_*} \pm z_{1-\alpha/(n+1)}\sigma) }\geq 1-\alpha.$$
We show that, if we \emph{randomize} the selection step, rather than select $i_*$ exactly, the intervals can be made even tighter. Furthermore, the reduction in interval width is directly related to the amount of randomization.

\begin{claim}
\label{claim:max_effect}
Suppose that we select $\hat i_* = \argmax_i (y_i + \xi_i)$, where $\xi_i\stackrel{\mathrm{i.i.d.}}{\sim} \mathrm{Lap}\left(\frac{2z_{1-\alpha\delta/(2n)}}{\eta}\right)$, for user-chosen parameters $\eta>0, \delta\in(0,1)$. Then, 
$$\PP{\mu_{\hat i_*} \in (y_{\hat i_*} \pm z_{1-\alpha(1-\delta) e^{-\eta}/2}\sigma) }\geq 1-\alpha.$$
\end{claim}
The proof of validity of this construction relies on our notion of stability, introduced in later sections; we defer the analysis of Claim \ref{claim:max_effect} to the Supplement. Note that, as $\eta$ and $\delta$ decrease toward zero, the noise level increases and the intervals approach classical, non-selective intervals. In general, $\eta$ is the key parameter that trades off the information used for selection versus inference: small $\eta$ corresponds to using more information for inference, while large $\eta$ corresponds to prioritizing selection quality. Figure~\ref{fig:winners_curse} illustrates how the interval width changes with $\eta$, in comparison to baselines, for $\sigma =1,\delta = 0.5$, and varying $n$.
 
\begin{figure}[t]
\centerline{
\includegraphics[width=0.33\textwidth]{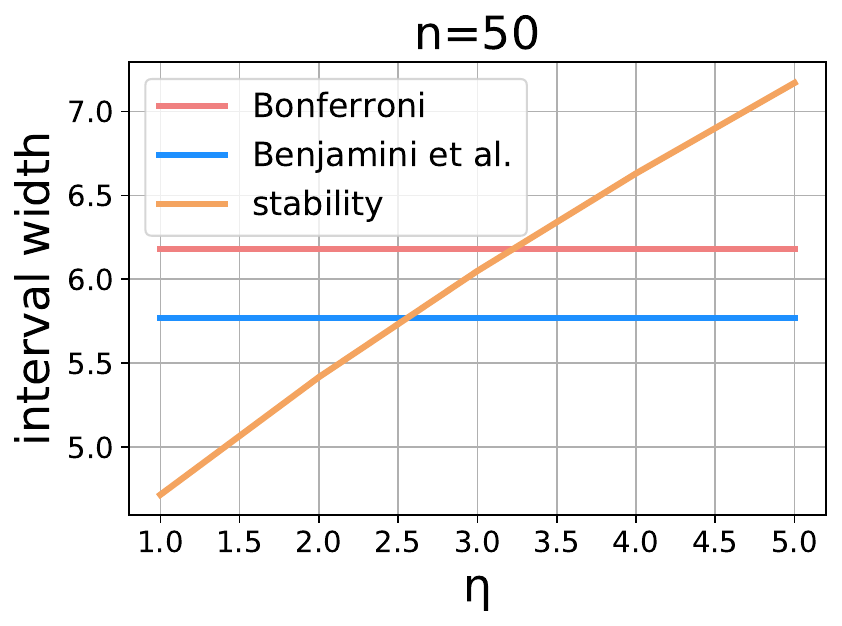}\includegraphics[width=0.33\textwidth]{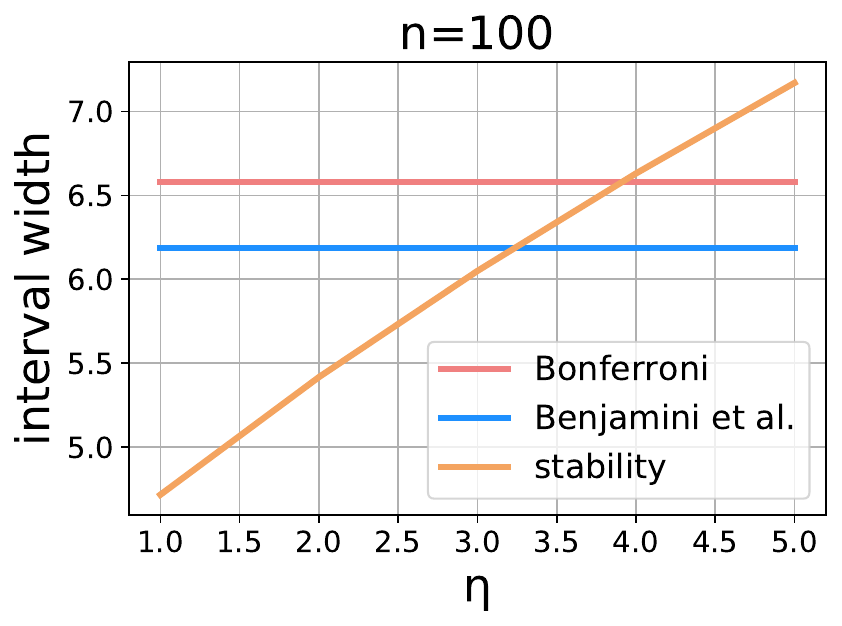}\includegraphics[width=0.33\textwidth]{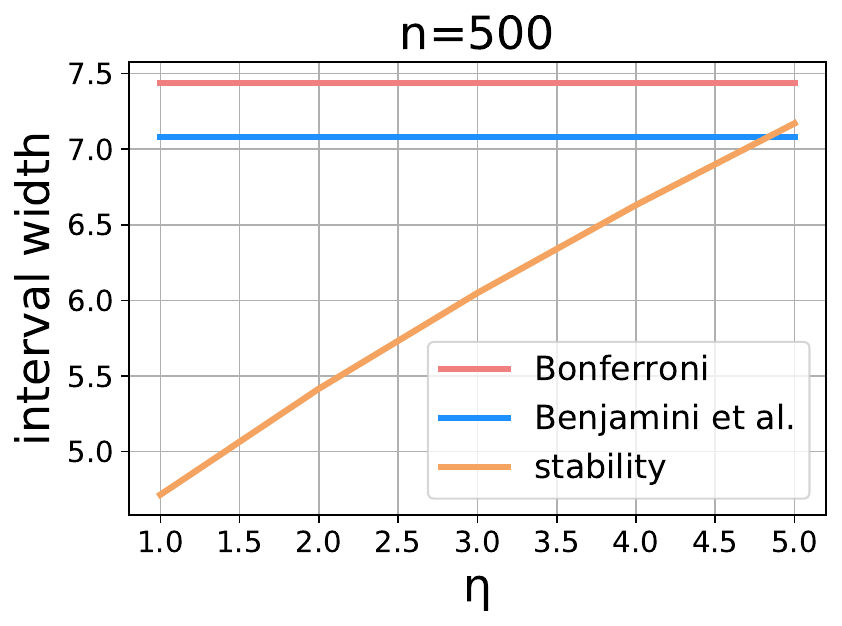}}
\caption{Confidence interval width around the ``winning'' effect, computed via the Bonferroni correction, Benjamini et al.~\cite{benjamini2019confidence} correction, and our stability-based approach. From left to right, we increase the value of $n\in\{50,100,500\}$ and keep $\sigma = 1$ fixed.} 
\label{fig:winners_curse}
\end{figure}

Note that if there is significant separation between $\mu_{i_*}$ and the other effects, $\hat i_*$ will likely be equal to $i_*$ even for small $\eta$. If, on the other hand, there are multiple effects of similar magnitude, the randomization will smooth out the selection and place non-negligible probability on all the competitive effects. In particular, to ensure $\hat i_* = i_*$ with high probability, it suffices to have $\eta$ inversely proportional to the gap between the largest and second largest observed effect, $\Delta = y_{i_*} - \max_{j\neq i_*} y_j$.

\begin{claim}
\label{claim:vignette_utility1}
If $\eta \geq \frac{4\log(n/(2\delta')) z_{1-\alpha\delta/(2n)}}{\Delta}$ for some $\delta'\in(0,1)$, then $\hat i_* = i_*$ with probability at least $1-\delta'$ over the randomness in the selection.
\end{claim}

As a result, for large enough $\Delta$, the approach of Claim \ref{claim:max_effect} allows selecting $i_*$ with high probability while providing a tighter correction than the baselines.

\paragraph{Vignette 2: Feature selection.}
In the second example we look at inference after data-driven feature selection. Suppose we have a fixed design matrix, $X\in \R^{n\times d}$, with $n$ observations and $d$ features and a corresponding outcome vector $y\sim \mathcal{N}(\mu,\sigma^2 I)\in \R^n$. Denote by $X_i$ the columns of $X$, for $i\in [d]$. We would like to select a \emph{model} corresponding to a subset of the $d$ features, and perform valid inference on the least-squares target after regressing $y$ on the \emph{selected} features only. This problem is discussed in depth by Berk et al.~\cite{berk2013valid}.

We set this problem up more generally in later sections; to keep this illustration light, assume that the features are normalized so that $\|X_i\|_2=1$ and we are selecting a single feature. Then, this problem amounts to doing inference on $X_{i_*}^\top \mu$, where $i_*$ is the selected feature. Again, we note that this is a random inferential target since $i_*$ is data-dependent.

Suppose that the goal of selection is to simply maximize the absolute correlation of the selected feature with $y$: $i_* = \argmax_i |X_i^\top y|$. Then, our results imply the following.

\begin{claim}
\label{claim:max_corr}
Suppose that we select $\hat i_* = \argmax_i |X_i^\top y + \xi_i|$, where $\xi_i\stackrel{\mathrm{i.i.d.}}{\sim} \mathrm{Lap}\left(\frac{2z_{1-\alpha\delta/(2d)}}{\eta}\right)$, for  user-chosen parameters $\eta>0, \delta\in(0,1)$. Then, 
$$\PP{X_{\hat i_*}^\top \mu \in (X_{\hat i_*}^\top y \pm z_{1-\alpha(1-\delta) e^{-\eta}/2}\sigma) }\geq 1-\alpha.$$
\end{claim}

Again, as $\eta$ and $\delta$ tend toward zero, the intervals approach non-selective intervals, and the relationship between $\eta$ and the gap between the largest and second largest correlation, $\Delta = |X_{i_*}^\top y| - \max_{j\neq i_*}|X_j^\top y|$, drives the accuracy of selection.

\begin{claim}
\label{claim:vignette_utility2}
If $\eta \geq \frac{4\log(d/(2\delta')) z_{1-\alpha\delta/(2d)}}{\Delta}$ for some $\delta'\in(0,1)$, then $\hat i_* = i_*$ with probability at least $1-\delta'$ over the randomness in the selection.	
\end{claim}

\begin{figure}[t]
\centerline{
\includegraphics[width=0.33\textwidth]{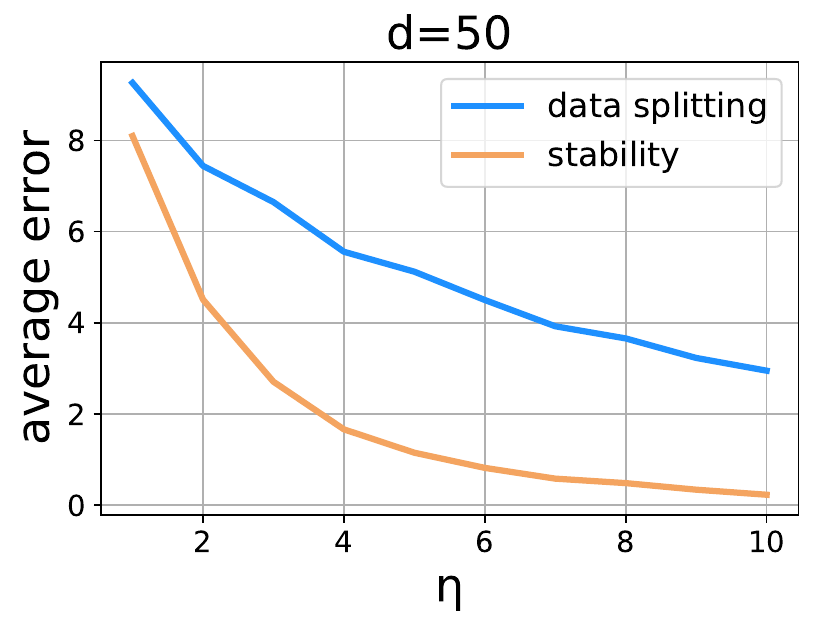}\includegraphics[width=0.33\textwidth]{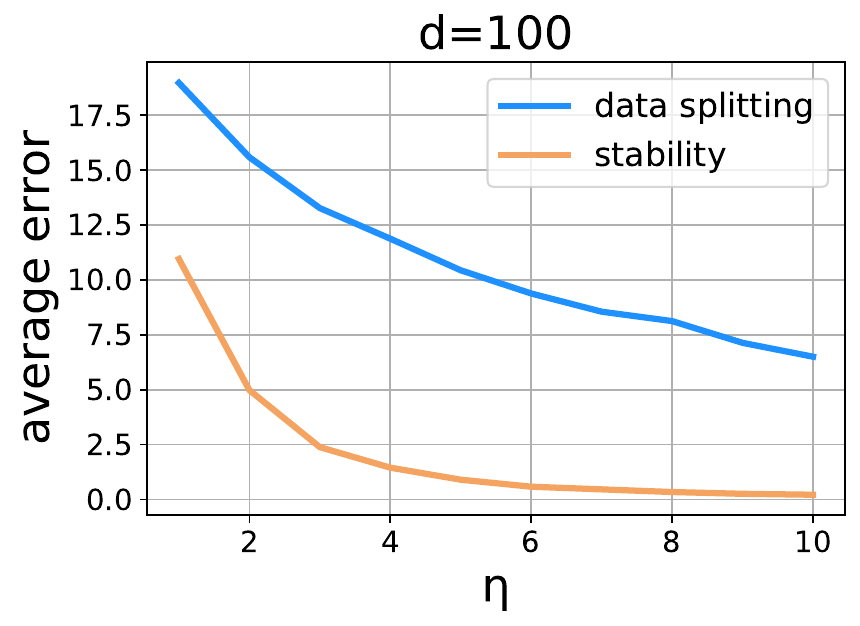}\includegraphics[width=0.33\textwidth]{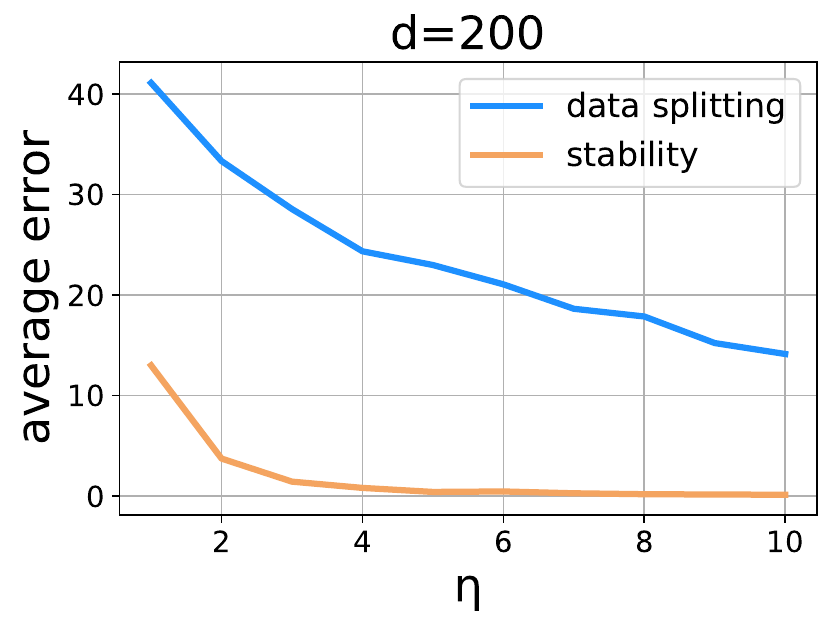}}
\caption{Average error, defined as $\max_i |X_{i}^\top y| - |X_{\hat i_*}^\top y|$, achieved by data splitting and our stability-based approach. We use one of the experimental setups in Section \ref{sec:experiments}. The rows of $X$ are drawn i.i.d.\ from an equicorrelated multivariate Gaussian distribution with pairwise correlation $\rho = 0.5$. The outcome is generated as $y = X\beta + \epsilon$, where $\epsilon$ is isotropic Gaussian noise and half of the entries in $\beta$ are zero and half are sampled independently from $\mathrm{Exp}(0.2)$. From left to right, we increase the value of $d\in\{50,100,200\}$ and keep $n=50$ fixed.} 
\label{fig:max_corr}
\end{figure}

An alternative, equally simple solution to inference after feature selection is data splitting: we use a fraction $f\in(0,1)$ of the data for selection and the remaining $1-f$ fraction for inference. In Section \ref{sec:data_splitting} we present a more detailed comparison with data splitting. For now we remark that, given that both data splitting and stability lead to intervals that look like classical intervals with an additional correction factor, every parameter $\eta$ has a corresponding parameter $f\equiv f(\eta)$ that yields data-splitting intervals of roughly the same width as stability-based intervals (given a fixed $\delta$ such as $0.5$). However, we observe that stability can be a significantly more powerful solution. Figure \ref{fig:max_corr} compares the error of data splitting with the error of the stability solution described in Claim \ref{claim:max_corr} in a simple simulation setting.

Finally, we emphasize that stability can be applied to the problem of feature selection even when data splitting is not an option, such as when there are spatial or temporal dependencies in the data.
 
 \section{Related work}
\label{sec:related_work}

In this section, we elaborate on the comparisons between our work and existing work in post-selection inference, and additionally discuss relevant work in the algorithmic stability literature.

\paragraph{Simultaneous coverage.}
In the formulation of post-selection inference by Berk et al.~\cite{berk2013valid}, the goal is to construct simultaneous confidence intervals (as per Eq.~(\ref{eqn:valid_ci})) that are valid \emph{for any model selection method} $\hat M : y \rightarrow \M$, for a pre-specified model class $\M$. The framework of Berk et al. was subsequently generalized by Bachoc et al.~\cite{bachoc2020uniformly} to handle distributions beyond the homoscedastic Gaussian, as initially assumed. These proposals are computationally infeasible in high dimensions as they essentially require looking for the ``worst possible'' model $M\in \M$, one that implies the largest so-called PoSI (Post-Selection Inference) constant, an analog of which we introduce and characterize in the context of stability. More recent work has proposed computationally efficient confidence regions via UPoSI~\cite{kuchibhotlavalid}.

Another approach to valid post-selection inferences that applies to general selection rules is \emph{data splitting} \cite{rinaldo2019bootstrapping}: split the data into two disjoint subsets, then use one subset to select the inferential target and the other subset to perform inference. Data splitting is appealing because, if the two subsets of the data are independent, classical inferences will be valid regardless of the selection procedure. However, data splitting is not universally applicable as one cannot always obtain two independent data sets, and even if applicable, it can suffer a significant loss in power, such as when only a few samples capture some relevant information. Our stability-based approach does not rely on any independence assumption between different observations, and, as illustrated in Figure~\ref{fig:max_corr}, it can be a more powerful solution than data splitting when the latter is applicable. We give a further discussion of data splitting and its relationship to stability in Section \ref{sec:data_splitting}.

All the aforementioned strategies strive for robustness: they protect against \emph{all} selection procedures. For specific selection procedures, however, the intervals computed by simultaneous methods and related approaches are unnecessarily wide, as they do not exploit any knowledge of how the analyst arrives at the selected target. Recent work aims to address this issue in the framework of simultaneous coverage over the selected variables (SoS) by constructing SoS-controlling confidence intervals for $k$ seemingly largest effects~\cite{benjamini2019confidence}. Our work likewise implies SoS intervals, by putting forward a general stability perspective and analyzing the relationship between stability and interval width for arbitrary stable procedures.

\paragraph{Conditional coverage.}
Conditional methods \cite{fithian2014optimal} exploit properties of the selection procedure. However, they control a different error criterion than simultaneous methods. In particular, the goal of conditional post-selection inference is to design $\ci_{S}$ such that
$$\PPst{\beta_{S}\in \ci_{S}}{\hat S = S}\geq 1-\alpha,$$
for all fixed selections $S$.

For a fixed selection procedure, conditional post-selection inference aims to characterize the distribution of the data given $\hat S = S$, and then using the knowledge of this conditional distribution it computes $\ci_{S}$. This approach is tailored to the selection method at hand, and existing work has derived intervals for model selection via methods such as the LASSO \cite{lee2016exact}, marginal screening, orthogonal matching pursuit~\cite{lee2014exact}, forward stepwise, and LARS \cite{tibshirani2016exact}.

It is often remarked that the conditional approach leads to \emph{overconditioning}, thus leading to wide intervals \cite{kivaranovic2018expected}. Informally, overconditioning refers to the phenomenon of overstating the cost of selection, thus leaving little information for inference. Surprisingly, it has even been observed that simultaneous approaches can in some cases yield smaller intervals, due to the intervals being unconditional rather than conditional \cite{bachoc2020uniformly}. One attempt at narrowing down the intervals involves choosing a better event on which to condition~\cite{liu2018more}. Another solution to overconditioning which is relevant to the present context is the idea of randomizing the selection procedure \cite{tian2018selective, tian2016magic, tian2016selective, kivaranovic2020tight, panigrahi2017mcmc, panigrahi2019approximate, bi2020inferactive}. Notably, the pioneering work in this direction due to Tian and Taylor~\cite{tian2018selective} proves a central limit theorem that asymptotically relates the validity of statistical inferences without selection to their selective counterparts, a result similar in flavor to our Theorem \ref{thm:conf-ints-general}. However, existing randomization proposals suffer several drawbacks. One is that they give little insight into the tradeoff between confidence interval width and the loss in utility from the additional noise. Another issue is that inference is based on a selective pivot which, unlike in exact conditional approaches, lacks closed-form expressions. As a result, to approximate the pivot, existing work resorts to computationally expensive sampling \cite{tian2016selective,tian2018selective}, which is generally infeasible in high dimensions. There are other, computationally-efficient approaches which aim to approximate the pivot \cite{panigrahi2017mcmc, panigrahi2019approximate}, although these are only approximate and the general theory applies to restricted classes of selection problems.

Although our primary goal is to provide unconditional guarantees, in Section \ref{sec:conditional} we will also discuss implications of stability for conditional inference.

We also point out the work of Andrews et al.~\cite{andrews2019inference}, who propose a hybrid approach that interpolates between unconditional and conditional post-selection inference to obtain smaller confidence intervals relative to a purely conditional approach.

\paragraph{Algorithmic stability.}
The technical tools of this paper are rooted in the theory of differential privacy~\cite{dwork2006calibrating, dwork2014algorithmic} and its extensions \cite{dwork2015generalization, bassily2016typical}. Initially, differential privacy was developed as a standard for private data analysis. A more recent line of work, typically referred to as \emph{adaptive data analysis}~\citep[see, e.g.,][]{dwork2015preserving, dwork2015generalization, bassily2016algorithmic}, has recognized that a stability concept can be extracted from differential privacy and exploited to obtain perturbation-based generalization guarantees in learning theory. Superficially, adaptive data analysis has the same goal as post-selection inference---developing statistical tools for valid inference when hypotheses about the data are also data-driven---but the typical formalization of this problem is not directly comparable to that of the canonical post-selection inference setup in regression. The conceptual connection between the two areas has, however, been recognized (dating back to at least the seminal work of Tian and Taylor~\cite{tian2018selective}), and several existing works discuss selective hypothesis tests and stability-based corrections to arbitrary selective $p$-values \cite{rogers2016max, russo2016controlling}. Our work does not aim to contribute to adaptive data analysis per se; rather, we build on and adapt existing tools in this literature for the purpose of providing post-selection corrections for common selection problems, such as within the framework put forward by Berk et al.~\cite{berk2013valid}. Finally, we note that connections between stability and generalization are not new \cite{bousquet2002stability}, and stability ideas have been utilized to construct predictive confidence intervals \cite{steinberger2018conditional, barber2019predictive}.

\section{Algorithmic stability and selection}
\label{sec:stability}

The formal theory of algorithmic stability characterizes how the output of an algorithm changes when the input is perturbed. 
Randomized algorithms have as output a \emph{random variable}; therefore, to study the stability of a randomized algorithm, an
appropriate notion of closeness of two random variables is required.  The particular notion of closeness considered in
differential privacy and related work is known as \emph{indistinguishability}, or \emph{max-divergence}.

\begin{definition}[Indistinguishability]
We say that a random variable $Q$ is $(\eta,\tau)$-indistinguishable from $W$, denoted $Q\approx_{\eta,\tau} W$, if for all measurable sets~$\O$, 
	$$\PP{Q\in \O}\leq e^{\eta}\PP{W\in \O} + \tau.$$
\end{definition}

Roughly speaking, $\tau$ bounds the probability of the event that $Q$ and $W$ are ``very different.'' For fixed $\tau\in[0,1]$, the parameter $\eta$ is meant to capture how similar the distributions of $Q$ and $W$ are---the larger $\eta$ is the larger the divergence between $Q$ and $W$ can be. One should think of $\tau$ as being at most a small factor proportional to the miscoverage probability $\alpha$.

We now formally introduce the main notion of algorithmic stability considered in this paper. The algorithm whose stability we analyze will usually be a selection algorithm.

\begin{definition}[Stability]
\label{def:stability}
Let $\A:\R^n\rightarrow \mathcal{S}$ be a randomized algorithm. We say that $\A$ is $(\eta,\tau,\nu)$-stable with respect to a distribution $\P$ supported on $\R^n$ if there exists a random variable $ A_0$, possibly dependent on $\P$, such that
$$\P\left\{\omega\in \R^n : \A(\omega)\approx_{\eta,\tau} A_0\right\}\geq 1 -\nu.$$
\end{definition}

This notion is a special case of \emph{typical stability}  introduced by Bassily and Freund~\cite{bassily2016typical}.  It is closely related to the notions of perfect generalization \cite{cummings2016adaptive} and max-information \cite{dwork2015generalization}. Unless stated otherwise, whenever we use the term stability we will assume stability in the sense of Definition \ref{def:stability}. The parameter $\nu$ can in principle take on any value in $[0,1]$ but in practice we will set it to be proportional to $\alpha$.

We will only invoke stability with respect to the data distribution, which we will denote $\P_y$. Thus, for simplicity, when we say that $\A$ is $(\eta,\tau,\nu)$-stable we are implicitly assuming that $\P_y$ is the reference distribution.

Throughout we will use $\hat S(\cdot)$ to denote a possibly randomized selection algorithm, which takes as input the data $y$ and outputs a selection that determines the inferential target. For example, $\hat S$ could be a model selection algorithm such as in the second vignette, or it could be an algorithm that selects an effect that is the focus of subsequent inference, such as in the first vignette. With a slight abuse of notation, we will use $\hat S$ to denote both the mapping from the data to the selection as well as the selection itself, $\hat S(y) \equiv \hat S$.

Intuitively, $\hat S$ will be stable if we can ``guess'' the distribution of $\hat S(y)$ (conditional on $y$) with only knowledge of the data distribution $\P_y$, not the data $y$ itself.

\begin{example}
To provide intuition for Definition \ref{def:stability}, we present one simple mechanism for achieving stability. Although basic, this mechanism will be a fundamental building block in our stability proofs. Suppose that we wish to compute $w^\top y$, for some fixed vector $w$, and suppose that we take $\P_y$ to be $\mathcal{N}(\mu,\sigma^2I)$ with known $\sigma>0$. Let $\A(y) = w^\top y + \xi$, where $\xi\sim \text{Lap}\left(\frac{z_{1-\nu/2}\sigma \|w\|_2}{\eta}\right)$, for user-specified parameters $\eta>0,\nu\in(0,1)$. Here, $\text{Lap}(b)$ denotes a draw from the zero-mean Laplace distribution with parameter $b$, independent of $y$. We argue that this mechanism is $(\eta,0,\nu)$-stable. First, we know
\begin{align*}
\lefteqn{\PP{|w^\top y - w^\top \mu| \geq z_{1-\nu/2}\sigma \|w\|_2}}\\
&= \PP{|\mathcal{N}(0,\sigma^2\|w\|_2^2)|\geq z_{1-\nu/2}\sigma \|w\|_2} = \nu.
\end{align*}

Denote $Y_\nu = \{\omega\in\R^n:|w^\top \omega - w^\top \mu| \leq z_{1-\nu/2}\sigma \|w\|_2\}$, and notice that we have shown that $\PP{y\in Y_\nu} = 1-\nu$.

Now let $A_0 = \A(\mu)$. Since the ratio of densities of $\xi\sim\text{Lap}(b)$ and its shifted counterpart $x + \xi$ is upper bounded by $e^{|x|/b}$, we can conclude that for all $\omega\in Y_\nu$ and measurable sets $\O$, 
$$\frac{\PP{\A(\omega)\in \O}}{\PP{\A(\mu)\in \O}} \leq e^\eta;$$
that is, we have $\A(\omega)\approx_{\eta,0} A_0$ for all $\omega \in Y_\nu$. Putting everything together, we see that $\A(\cdot)$ is $(\eta,0,\nu)$-stable with respect to $\P_y$.
\end{example}

\section{Confidence intervals after stable selection}
\label{sec:cis_via_stability}

Given the assumption of $(\eta, \tau, \nu)$-stability, we now show how a simple modification to classical confidence intervals suffices to correct for selective inferences. This correction is valid \emph{regardless} of any additional property of the selection criterion.

The main intuition behind this assertion is the following. If the selection algorithm is stable, then by Definition \ref{def:stability} there must exist an oracle selection $\hat S_0$ such that $\hat S(y)$ and $\hat S_0$ are indistinguishable. Note that $\hat S_0$ is independent of $y$. Since $\hat S_0$ is indistinguishable from $\hat S(y)$, we can pretend that $\hat S_0$ is the selection of interest. Furthermore, since $\hat S_0$ and $y$ are independent, we are \emph{free to use $y$ for inference}. Stability ensures that, despite data reuse, inference behaves almost like with data splitting, in which we perform selection on one batch of data and then use independent data for constructing intervals.

We state a technical lemma, similar to Lemma 3.3 by Bassily and Freund~\cite{bassily2016typical}, that we use to prove our main theorem. We include a proof of Lemma \ref{lemma:near-indep} in the Supplement.

\begin{lemma}
\label{lemma:near-indep}
	Let $\hat S:\R^n\rightarrow \mathcal{S}$ be an $(\eta,\tau,\nu)$-stable selection algorithm and let $\hat S_0$ be the corresponding oracle selection. Then, it holds that
	\begin{equation}
	\label{eqn:max-info}
(y,\hat S(y))\approx_{\eta,\tau + \nu}(y, \hat S_0).
	\end{equation}
\end{lemma}

%

Equipped with Lemma \ref{lemma:near-indep}, we can now describe how to construct post-selection-valid confidence intervals after stable selection. 

Suppose that, under selection $S$, our target of inference is $\beta_S$. Moreover, suppose that $\ci_{S}^{(\alpha)}$ are valid confidence intervals at level $1-\alpha$ for any \emph{fixed} $S$, meaning that
$$\PP{\beta_{S}\not\in \ci_{S}^{(\alpha)}} \leq \alpha.$$
Such intervals are provided by classical theory.

Theorem \ref{thm:conf-ints-general} formally states how to construct confidence intervals for an \emph{adaptive} target $\beta_{\hat S}$, when $\hat S$ is selected in a stable way. This is the key result of our paper.

\begin{theorem}
\label{thm:conf-ints-general}
Fix $\delta\in(0,1)$, and let $\hat S$ be an $(\eta,\tau,\nu)$-stable selection algorithm.
Then,
$$\PP{\beta_{\hat S}\not\in \ci_{\hat S}^{(\delta e^{-\eta})}} \leq \delta + \tau + \nu.$$
\end{theorem}

In words, if $\hat S$ is $(\eta,\tau,\nu)$-stable, we can pretend that there is no selection bias and simply construct classical intervals, albeit at a more conservative level, to achieve validity. If we set the target error level to be $\delta e^{-\eta}$, then the realized error level will be at most $\delta + \tau + \nu$. For example, if we let $\tau = \nu = \alpha/3$, then to get coverage at level $1-\alpha$ we can set the target coverage level to be $\alpha/3\cdot e^{-\eta}$.

\subsection{Comparison with data splitting}
\label{sec:data_splitting}

In many scenarios it is possible to split the data into two independent chunks, one to be used for selection and the other to be reserved for inference. Classical inferences are then valid because the inferential target is determined before seeing any of the data used in the inference step. This simple baseline for valid inference after selection is called \emph{data splitting}. In this section, we illuminate the relationship between our approach via stability and data splitting.

First we want to emphasize that the stability principle is applicable even with dependent samples: Theorem \ref{thm:conf-ints-general} can be applied even when it is not clear how to create two independent subsets of the data. Moreover, in some selection problems data splitting makes little conceptual sense, such as in our first motivating vignette about inference on the winning effect.

The appeal of data splitting lies in its broad applicability. As long as the data can be split into two independent components, the criteria for choosing the inferential target can be arbitrary. Therefore, data splitting provides a \emph{selection-agnostic} correction, universally valid across all possible selection strategies.

Conceptually, stability lies somewhere between data splitting and conditional post-selection inference. It computes a correction level as a function of how adaptive the selection is to the data, thereby adapting to some properties of the selection rule like conditional inference methods. However, at the same time it provides a correction that is universally valid across all possible selection strategies \emph{with the same level of stability}, which can be seen as a refinement of the principle of data splitting.

To illustrate the conceptual difference between the stability principle and the data splitting principle, suppose that in the latter case we allocate $f$-fraction of the data to selection, and $(1-f)$-fraction to inference. Then, the resulting intervals will roughly look like classical intervals augmented by a factor of $\sqrt{\frac{1}{1-f}}$ \emph{regardless} of how the selection is performed.

In contrast, the stability approach augments classical intervals as a function of the adaptivity of the selection algorithm. Suppose for concreteness that $y\sim \mathcal{N}(\mu, I)$ and we are considering doing inference on one of two targets, $v_0^\top \mu$ or $v_1^\top \mu$, where the selection $\hat S \in \{0,1\}$ depends on the data $y$. Consider three different selection methods:
\begin{itemize}
\item $\hat S = 1$ no matter what the data vector is.
\item $\hat S=1$ if $\bar y := \frac{1}{n} \sum_{i=1}^n y_i \geq 0$, and $\hat S=0$ otherwise.
\item $\hat S = 1$ if $X_1^\top y \geq 0$ for some unit vector $X_1$, and $\hat S=0$ otherwise.
\end{itemize}
We can write all three procedures as $\hat S = \mathbf{1}\{w^\top y \geq 0\}$; in the first case $w=0$, in the second case $w = \frac{1}{n}\mathbf{1}$, and in the third case $w = X_1$.

Let us fix the noise level $\gamma>0$ and select $\hat S = \mathbf{1}\{w^\top y + \xi \geq 0\}$, where $\xi\sim \text{Lap}(\gamma)$.
The first method is trivially $(0,0,0)$-stable for any level $\gamma$, hence we can simply use $y$ for inference without any correction. Based on the same analysis as in the example in Section \ref{sec:stability}, the second selection method is $(\sqrt{2\log(2/\nu)}/(\gamma\sqrt{n}),0,\nu)$-stable for all $\nu>~0$; i.e., it is $(\sqrt{2\log(4/\alpha)}/(\gamma\sqrt{n}),0,\alpha/2)$-stable.
Similarly, the third selection method is $(\sqrt{2\log(4/\alpha)}/\gamma,0,\alpha/2)$-stable.

We can thus observe that, even though in all three examples we perturb the selection by the same constant level of noise, the stability approach exploits the fact that some selection criteria are more stable than others and this is reflected in the resulting stability parameter. By Theorem \ref{thm:conf-ints-general}, this stability parameter, in turn, directly determines the correction factor, i.e., how conservative we need to make classical inferences for them to be valid post selection.

While data splitting and stability come with conceptual differences, they also have technical similarities. In particular, each one has a leading parameter---$f\in(0,1)$ in the case of data splitting and $\eta>0$ in the case of stability---and this parameter interpolates between two extremes. One extreme is when all information is reserved for inference (attained when $f=0$ and $\eta=0$ respectively) and the other is when all information is used for selection (attained when $f=1$ and $\eta\rightarrow\infty$ respectively). Therefore, it might make sense to ask how the two interpolations relate.

For every $\eta$, there is an $f(\eta)$ such that, if we used $f(\eta)$-fraction of the data for selection and $1-f(\eta)$ for inference, we would approximately get the same interval correction. We sketch the derivation of $f(\eta)$ in the case of normal intervals for simplicity, however this calculation can be generalized to other distributions.
We will assume that $\nu + \tau \leq \delta \alpha$ for some $\delta\in(0,1)$; then,
 the intervals resulting from $(\eta,\tau,\nu)$-stability are of width proportional to
$z_{1-(1-\delta)\frac{\alpha}{2} e^{-\eta}}.$
The intervals resulting from data splitting are of width proportional to
$z_{1-\frac{\alpha}{2}}(1-f(\eta))^{-1/2}.$
By equating the two expressions to achieve the same width and simplifying, we obtain
\begin{equation}
\label{eqn:f_eta}
f(\eta) = 1 - \left(\frac{z_{1-\frac{\alpha}{2}}}{z_{1-(1-\delta)\frac{\alpha}{2} e^{-\eta}}}\right)^2 \approx \frac{\log \frac{1}{1-\delta} + \eta}{\log\frac{2}{(1-\delta)\alpha} + \eta},
\end{equation}
where the approximation on the right-hand side follows by a subgaussian approximation.

Of course, this sketch only gives intuition for when data splitting and stability imply equally powerful inference; it does not say anything about which selection is more accurate---one where we select on $f(\eta)$-fraction of the data, or one where we select on the whole data set in an $\eta$-stable way. We will tackle this question empirically, as the notion of ``more accurate'' varies greatly depending on the context. In Figure~\ref{fig:max_corr} we used the splitting fraction in Eq.~\eqref{eqn:f_eta} and observed that stability outperforms data splitting. We provide further empirical comparisons in Section \ref{sec:experiments}.

Finally, we mention another proposal that is conceptually closely related to data splitting, namely the $(U,V)$ decomposition of Rasines and Young~\cite{rasines2021splitting}. Like stability, the $(U,V)$ decomposition allows the statistician to see all data points---more precisely, noisy versions thereof---both in the selection step and in the inference step. This is an important advantage over data splitting when there are only a few samples that capture information about certain directions. In contrast with stability, performing the $(U,V)$ decomposition does not rely on any properties of the selection method. However, finite-sample guarantees of this approach crucially rely on the data being Gaussian with known covariance, while the stability principle is applicable beyond Gaussianity and is robust to only having an estimate of the covariance.

\section{Model selection in linear regression}
\label{sec:linear_reg}

In this section, we discuss an application of our stability tools to the problem of model selection in linear regression. We focus on the framework presented in the seminal work of Berk et al.~\cite{berk2013valid}. We begin by reviewing the model and introduce the necessary notation.

Let $X\in\R^{n\times d}$ denote a fixed design matrix, and let $X_i\in\R^n$ denote the $i$-th column of $X$, for $i\in[d]$. We refer to vectors $X_i$ as variables or features. For a subset $M\subseteq[d]$, we denote by $X_M\in \R^{n\times |M|}$ the submatrix of $X$ given by selecting the columns indexed by $M$. We make no assumptions about how $n$ and $d$ relate; in particular, we could have $d\gg n$.

By $y\in\R^n$ we denote the random vector of outcomes corresponding to $X$. Importantly, we do not assume knowledge of a true data-generating process; for example, we do not assume that $\mu:=\E[y]$ can be expressed as a linear combination of $\{X_i\}_{i=1}^d$. The vector $\mu\in \R^n$ is unconstrained and need not reside in the column space of $X$. Rather, different subsets of $\{X_i\}_{i=1}^d$ provide different approximations to $\mu$, some better than others.

The statistician wishes to let the data decide how the initial pool of features should be reduced to a smaller set of seemingly relevant features, and then run linear regression on this smaller set. That is, the statistician chooses a set $\hat M\subseteq [d]$ by running a model selection method on $X,y$, and then aims to approximate $y\approx X_{\hat M}\hat \beta_{\hat M}$, for some $\hat \beta_{\hat M}$. As before, we will employ a conventional abuse of notation by letting $\hat M \equiv \hat M(y)$. 

Assuming $X_{\hat M}$ has full column rank almost surely, the unique least-squares estimate in model $\hat M$ is given by
$$\hat \beta_{\hat M} := \argmin_{\beta\in\R^{|\hat M|}} \|y - X_{\hat M}\beta\|_2^2 = (X_{\hat M}^\top X_{\hat M})^{-1}X_{\hat M}^\top y := X_{\hat M}^+ y,$$
where we define $X_{\hat M}^+:=(X_{\hat M}^\top X_{\hat M})^{-1}X_{\hat M}^\top$ to be the pseudoinverse of $X_{\hat M}$. For a \emph{fixed} model $M$, the target estimand of $\hat\beta_M$ is
$$\beta_M := \argmin_{\beta\in\R^{|M|}} \E\left[\|y - X_M\beta\|_2^2\right] = X_M^+\mu,$$
and hence for a random model $\hat M$, this implies a \emph{random} target $\beta_{\hat M} = X_{\hat M}^+\mu$.

We denote by $\beta_{j\cdot M}$ the entry of $\beta_M$ corresponding to feature $X_j$, for all $j\in M$. Note that $\beta_{j\cdot M}$ is not defined for $j\not\in M$. We adopt similar notation for the entries of $\hat \beta_M$.

Our goal is to construct simultaneous confidence intervals for the target of inference $\beta_{\hat M}$. More precisely, we wish to design $\ci_{j\cdot \hat M}^{(\alpha)}$ such that
\begin{equation}
\label{eqn:valid_ci}
\PP{\beta_{j\cdot \hat M} \in \ci_{j\cdot \hat M}^{(\alpha)},~ \forall j\in \hat M} \geq 1 - \alpha,
\end{equation}
for a fixed $\alpha\in(0,1)$ and a \emph{fixed} selection procedure $\hat M$.
Note that the work of Berk et al.\ and various extensions \cite{berk2013valid, bachoc2020uniformly, kuchibhotlavalid} provide simultaneity \emph{both} over the selected variables \emph{and} over all selection methods, while we keep the selection method fixed. Our guarantees are \emph{simultaneous over the selected}~\citep[cf.][]{benjamini2019confidence}.

The confidence intervals resulting from our approach take the usual form,
$$\ci_{j\cdot \hat M}(K) := \left(\hat \beta_{j\cdot \hat M} \pm K \hat\sigma_{j\cdot \hat M}\right),$$
where $\hat\sigma_{j\cdot \hat M}^2$ is an estimator of variance for the OLS estimate $\hat \beta_{j\cdot \hat M}$; e.g., the ``sandwich'' variance estimator~\cite{buja2019models}. Our goal is to find a suitable value of $K$ such that $\ci_{j\cdot\hat M}(K)$ are valid $(1-\alpha)$-confidence intervals, as per Eq.~\eqref{eqn:valid_ci}. 
By analogy with Berk et al.~\cite{berk2013valid}, we refer to the minimal such valid $K$ as the \emph{PoSI constant}. It is important to remember that, unlike in Berk et al., our PoSI constant depends on the selection procedure, rather than a family of all possible models.

The PoSI constant is well characterized when the model is fixed rather than determined in a data-driven fashion. For a fixed model $M$ and given $\alpha\in(0,1)$, we define $K_{M,\alpha}$ to be the minimum value of $K$ such that
$$\PP{\max_{j\in M} \left|\frac{\hat\beta_{j\cdot M} - \beta_{j\cdot M}}{\hat \sigma_{j\cdot M}}\right| \geq K}\leq \alpha.$$
In other words, $K_{M,\alpha}$ defines the PoSI constant when the model $M$ is specified up front and does not depend on the data; in this case, $\ci_{j\cdot M}(K_{M,\alpha})$ are valid simultaneous intervals at level $1-\alpha$. 
For example, when $y\sim \mathcal{N}(\mu,\sigma^2I)$, one simple way of providing a valid upper bound on $K_{M,\alpha}$ is via standard z-scores or t-scores, after doing a Bonferroni correction over $j\in M$. Sharper estimates of $K_{M,\alpha}$ can be obtained by exploiting the correlations between the regression coefficients to estimate the maximum z-score or t-score. Even in a distribution-free setting, it is common to determine $K_{M,\alpha}$ via normal approximation~\cite{rinaldo2019bootstrapping, kuchibhotla2020berry}.


We are now ready to state a corollary of Theorem \ref{thm:conf-ints-general} that focuses on the problem of model selection in linear regression.

\begin{corollary}
\label{corollary:conf-ints-linreg}
Fix $\delta\in(0,1)$. Let $\hat M$ be an $(\eta,\tau,\nu)$-stable model selection algorithm. For all $j\in \hat M$, let:
$$\ci_{j\cdot \hat M}(K_{\hat M,\delta e^{-\eta}}) = \left(\hat \beta_{j\cdot \hat M} \pm K_{\hat M,\delta e^{-\eta}} \hat\sigma_{j\cdot \hat M}\right).$$
Then,
$$\PP{\exists j \in \hat M : \beta_{j\cdot \hat M}\not\in \ci_{j\cdot \hat M}\left(K_{\hat M,\delta e^{-\eta}}\right)} \leq \delta + \tau + \nu.$$
\end{corollary}

To provide further intuition, we instantiate Corollary \ref{corollary:conf-ints-linreg} in the canonical setting of Gaussian observations. Let $y\sim \mathcal{N}(\mu,\sigma^2 I)$. If $\sigma>0$ is known, we let $\hat \sigma_{j\cdot M} = \sigma\sqrt{((X_M^\top X_M)^{-1})_{jj}}$; otherwise, we assume we have access to an estimate of $\sigma$, denoted $\hat\sigma$, and let $\hat \sigma_{j\cdot M} = \hat \sigma\sqrt{((X_M^\top X_M)^{-1})_{jj}}$. Following the treatment of Berk et al.~\cite{berk2013valid}, we assume that $\hat \sigma^2 \sim \sigma^2 \frac{\chi_r^2}{r}$ for $r$ degrees of freedom and assume that $\hat \sigma^2 \perp \hat \beta_{j\cdot M}$ for all possible OLS estimates $\hat \beta_{j\cdot M}$. If the full model is assumed to be correct, that is $y\sim \mathcal{N}(X\beta,\sigma^2I)$, and $n>d$, then this assumption is satisfied for $r=n-d$ by setting $\hat\sigma^2 = \|y - X\hat\beta\|_2^2/(n-d)$, where $\hat \beta$ is the OLS estimate in the full model. Even if the full model is not correct, there exist other ways of producing such a valid estimate of $\sigma$; we refer the reader to Berk et al.~\cite{berk2013valid} for further discussion.

We denote by $z_{1-\alpha}$ the $1-\alpha$ quantile of the standard normal distribution, and by $t_{r,1-\alpha}$ the $1-\alpha$ quantile of the $t$-distribution with $r$ degrees of freedom.

\begin{corollary}
\label{corollary:gaussians}
	Fix $\delta\in(0,1)$, and suppose $y\sim \mathcal{N}(\mu,\sigma^2I)$. Further, let $\hat M$ be an $(\eta,\tau,\nu)$-stable model selection algorithm. If $\sigma$ is known, let:
$$\ci_{j\cdot \hat M} = \left(\hat \beta_{j\cdot \hat M} \pm z_{1-\delta/(2|\hat M| e^\eta)}\sigma\sqrt{((X_{\hat M}^\top X_{\hat M})^{-1})_{jj}}\right).$$
If, on the other hand, $\sigma$ is not known but there exists an estimate, $\hat\sigma^2\sim\sigma^2 \frac{\chi_r^2}{r}$, independent of the OLS estimates, let:
$$\ci_{j\cdot \hat M} = \left(\hat \beta_{j\cdot \hat M} \pm t_{r,1-\delta/(2|\hat M| e^\eta)}\hat \sigma\sqrt{((X_{\hat M}^\top X_{\hat M})^{-1})_{jj}}\right).$$
In either case, we have
$$\PP{\exists j \in \hat M : \beta_{j\cdot \hat M}\not\in \ci_{j\cdot \hat M}} \leq \delta + \tau + \nu.$$
\end{corollary}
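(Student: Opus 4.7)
The plan is to derive the corollary as a direct specialization of Theorem~\ref{thm:conf-ints-general}, the only extra ingredient being a standard upper bound on the fixed-model PoSI constant $K_{M,\alpha}$ in the Gaussian setting via a Bonferroni correction.

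First I would apply Theorem~\ref{thm:conf-ints-general} with the prescribed $\delta$, which reduces the problem to exhibiting an explicit valid choice of $K_{\hat M,\delta(1-\nu)e^{-\eta}}$. By definition, for any \emph{fixed} model $M$, $K_{M,\alpha}$ is the smallest $K$ with
$$\PP{\max_{j\in M}\left|\frac{\hat\beta_{j\cdot M}-\beta_{j\cdot M}}{\hat\sigma_{j\cdot M}}\right|\geq K}\leq \alpha,$$
so it suffices to upper bound this quantity. In the known-$\sigma$ case, since $y\sim\mathcal{N}(\mu,\sigma^2 I)$ and $X$ is fixed, $\hat\beta_{j\cdot M}-\beta_{j\cdot M}=\ee{j}^\top X_M^+(y-\mu)$ is Gaussian with mean $0$ and variance $\sigma^2((X_M^\top X_M)^{-1})_{jj}=\hat\sigma_{j\cdot M}^2$; hence each studentized residual is standard normal. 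A union bound over $j\in M$ gives
$$K_{M,\alpha}\leq z_{1-\alpha/(2|M|)}.$$
In the unknown-$\sigma$ case, the assumed independence of $\hat\sigma^2$ from the OLS estimates and the $\sigma^2\chi_r^2/r$ distribution make each studentized statistic exactly $t_r$-distributed, and the same Bonferroni argument yields
$$K_{M,\alpha}\leq t_{r,1-\alpha/(2|M|)}.$$

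Next I would substitute $\alpha=\delta(1-\nu)e^{-\eta}$ and replace $M$ with the random $\hat M$. Because the bounds above hold pointwise for every fixed $M$, they hold with $\hat M$ substituted in (this is consistent with the conditioning step that already appears in the proof of Theorem~\ref{thm:conf-ints-general}, where $|\hat M|$ is a function of the independent resample's selection). Plugging the resulting $K_{\hat M,\delta(1-\nu)e^{-\eta}}\leq z_{1-\delta(1-\nu)/(2|\hat M| e^\eta)}$ (resp.\ the $t$-quantile analogue) into the intervals $\ci_{j\cdot\hat M}(K)$ from Theorem~\ref{thm:conf-ints-general} gives precisely the intervals stated in the corollary, and monotonicity of $\alpha\mapsto K_{\hat M,\alpha}$ (larger miscoverage yields no wider intervals) means that our potentially looser Bonferroni-based upper bound only enlarges the interval width, preserving the miscoverage guarantee $\delta+\tau+\nu$.

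There is no real technical obstacle beyond the bookkeeping; the only point to double-check is the validity of using $|\hat M|$ inside the Bonferroni quantile when $\hat M$ is random. This is fine because the distributional facts that power the Bonferroni bound (Gaussianity, $t$-distribution, independence of $\hat\sigma^2$ and the OLS estimates) are invoked for the \emph{resampled} copy $y'$ used in the proof of Theorem~\ref{thm:conf-ints-general}, where $\hat M$ is independent of $y'$ and can thus be treated as fixed after conditioning. Consequently the corollary follows immediately once Theorem~\ref{thm:conf-ints-general} and the two Gaussian PoSI bounds are in hand.
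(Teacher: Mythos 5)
Your proposal is correct and follows essentially the same route as the paper, which proves the corollary by "a direct application of Theorem \ref{thm:conf-ints-general}, together with a Bonferroni correction over $j\in \hat M$ when computing $K_{\hat M,\delta(1-\nu)e^{-\eta}}$." Your additional care about substituting the random $\hat M$ into the Bonferroni quantile is resolved exactly as you say: the distributional facts are applied to the independent resample $y'$ after conditioning on $\hat M$, as in the proof of the theorem.
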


The proof follows by a direct application of Corollary \ref{corollary:conf-ints-linreg}, together with a Bonferroni correction over $j\in \hat M$ when computing $K_{\hat M,\delta e^{-\eta}}$. 
Approximating Gaussian quantiles by subgaussian concentration, we observe that the PoSI constant in Corollary \ref{corollary:gaussians} scales roughly as $\sqrt{2\left(\log(2|\hat M|/\delta) + \eta\right)}$ (when $\sigma$ is known, or as $r\rightarrow\infty$ when $\sigma$ is estimated from data).

\paragraph{Recovering the Scheff\'e rate.}
Our main technical step in deriving selective confidence intervals is Lemma \ref{lemma:near-indep}, which argues that the joint distribution of $(y,\hat S)$ cannot be too different from the joint distribution of $(y,\hat S_0)$, where $\hat S_0$ is the oracle from the definition of stability, in the indistinguishability metric. In the context of model selection in linear regression, we verify that the confidence intervals resulting from this approach are not vacuously wide in the two most extreme settings: the first, in which the model selection is independent of the data, and the second, in which the model selection is arbitrarily complex and dependent on the data.

Suppose that $\hat M$ is independent of $y$. Then, the distribution of $\hat M(y)$, conditional on $y$, is equal to the distribution of $\hat M(\omega)$ for any point $\omega$, hence $\hat M(\omega)$ is an oracle which trivially implies $(0,0,0)$-stability. In this case, the intervals in Corollary \ref{corollary:conf-ints-linreg} reduce to $\ci_{j\cdot \hat M}(K_{\hat M,\delta})$ and are valid at level $1-\delta$, as expected.

Now suppose that $\hat M$ is allowed to have arbitrary dependence on $y$; in particular, it can attain the ``significant triviality bound'' of Berk et al.~\cite{berk2013valid}. While arguing stability in the sense of Definition~\ref{def:stability} would require additional assumptions, the only property of stability used to prove Theorem \ref{thm:conf-ints-general}---the indistinguishability bound in Eq.~\eqref{eqn:max-info}---can be obtained. This allows for the proof of Theorem \ref{thm:conf-ints-general} to go through, thus recovering the tight rate of existing analyses.

\begin{proposition}
\label{prop:posi-rate}
Let $\hat M$ be an arbitrary, possibly randomized model selection procedure, such that $|\hat M|\leq s$ almost surely. Then, for any $\P_y$, there exists an oracle selection $\hat M_0$ such that for any $\tau\in(0,1)$,
$$(y,\hat M(y))\approx_{\eta,\tau}(y,\hat M_0), \text{ for some } \eta = O(s\log(d/s)) + \log(1/\tau).$$
Consequently, there exists a value $\eta = O(s\log(d/s)) + \log(1/\tau)$ such that the intervals $\ci_{j\cdot \hat M}(K_{\hat M,\delta e^{-\eta}}) = \left(\hat \beta_{j\cdot \hat M} \pm K_{\hat M,\delta e^{-\eta}} \hat\sigma_{j\cdot \hat M}\right)$ satisfy
$$\PP{\exists j \in \hat M : \beta_{j\cdot \hat M}\not\in \ci_{j\cdot \hat M}\left(K_{\hat M,\delta e^{-\eta}}\right)} \leq \delta + \tau.$$
\end{proposition}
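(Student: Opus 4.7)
The plan is to prove the two halves in order: first bound the approximate max-information $I_\infty^{\tau}(y;\hat M(y))$ by a counting argument that exploits the small range of $\hat M$, and then feed this bound into a proof that mirrors Theorem \ref{thm:conf-ints-general} but uses max-information directly rather than the full stability assumption of Definition \ref{def:stability}. The reason this works is that the only place stability was used in the proof of Theorem \ref{thm:conf-ints-general} was via Lemma \ref{lemma:near-indep}, which itself is just a restatement of an approximate max-information bound as in \eqnref{max-info}; so any other route to such a bound suffices.

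For the first half, I would fix a measurable event $\O\subseteq\R^n\times 2^{[d]}$, enumerate its fibers $\O_M=\{\omega\in\R^n:(\omega,M)\in\O\}$, and write
\[
\PP{(y,\hat M(y))\in\O}=\sum_{M}\PP{y\in\O_M,\ \hat M(y)=M},\qquad \PP{(y',\hat M(y))\in\O}=\sum_{M}\PP{y'\in\O_M}\PP{\hat M(y)=M},
\]
where the second equality uses independence of $y'$ and $\hat M(y)$. Let $\M$ denote the range of $\hat M$; by assumption $|\M|\le N:=\binom{d}{\le s}\le(ed/s)^s$. I would then split the outer sum according to whether $\PP{\hat M(y)=M}<\tau/N$ or $\PP{\hat M(y)=M}\ge\tau/N$. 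The low-probability models contribute at most $N\cdot(\tau/N)=\tau$ in total. For each high-probability $M$, bound $\PP{y\in\O_M,\hat M(y)=M}\le\PP{y\in\O_M}=\PP{y'\in\O_M}$, and then multiply and divide by $\PP{\hat M(y)=M}$ to get a factor $1/\PP{\hat M(y)=M}\le N/\tau$. Summing back up gives $\PP{(y,\hat M(y))\in\O}\le (N/\tau)\,\PP{(y',\hat M(y))\in\O}+\tau$, which is exactly the max-information bound $I_\infty^{\tau}(y;\hat M(y))\le\log(N/\tau)=O(s\log(d/s))+\log(1/\tau)$.

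For the second half, I would set $\eta=O(s\log(d/s))+\log(1/\tau)$ as above and replay the proof of Theorem \ref{thm:conf-ints-general}. Define the (measurable) event
\[
\O=\bigl\{(\omega,M)\in\R^n\times 2^{[d]}:\ \exists\,j\in M\ \text{with}\ |\hat\beta_{j\cdot M}(\omega)-\beta_{j\cdot M}|/\hat\sigma_{j\cdot M}(\omega)\ge K_{M,\delta e^{-\eta}}\bigr\},
\]
apply the max-information bound just derived to $\O$, and then condition on $\hat M$ inside $\PP{(y',\hat M(y))\in\O}$ to invoke the definition of $K_{\hat M,\delta e^{-\eta}}$ (legitimate because $y'\independent \hat M$). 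The resulting chain yields miscoverage at most $e^{\eta}\cdot\delta e^{-\eta}+\tau=\delta+\tau$, as required.

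The main obstacle is bookkeeping in the case split of the first half: the quantity $\PP{y\in\O_M,\hat M(y)=M}$ is neither a product nor easy to upper bound by $\PP{y'\in\O_M}\PP{\hat M(y)=M}$ directly, so one must lose a factor of $1/\PP{\hat M(y)=M}$ to decouple $y$ from $\hat M(y)$. The whole point of thresholding at $\tau/N$ is to ensure this factor is at most $N/\tau$, while handling the tail mass of rare outcomes by an additive $\tau$. Once this decoupling is done cleanly, plugging $N\le(ed/s)^s$ and applying Theorem \ref{thm:conf-ints-general}'s argument are routine.
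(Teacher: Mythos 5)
Your proof is correct and follows essentially the same route as the paper's: both bound the $\tau$-approximate max-information by a counting argument over the at most $N=\binom{d}{\le s}$ possible models, discarding a set of rare models whose total probability mass is at most $\tau$ and paying a likelihood-ratio factor of $N/\tau$ on the remaining ones, and then replay the proof of Theorem \ref{thm:conf-ints-general} with the max-information bound in place of Lemma \ref{lemma:near-indep}. The only cosmetic difference is that the paper defines the negligible set of models via a worst-case likelihood ratio $\PP{\hat M(\omega^*)=M}/\PP{\hat M(y)=M}$ over the support of $\P_y$, whereas you threshold $\PP{\hat M(y)=M}$ at $\tau/N$ directly; both yield $I_\infty^{\tau}(y;\hat M(y))\le\log(N/\tau)$ and the same conclusion.
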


By approximating Gaussian quantiles via subgaussian concentration, we obtain confidence intervals which are universally valid for \emph{all} $s$-sparse selections under Gaussian outcomes and scale as $O(\sqrt{\eta}) = O(\sqrt{s\log(d/s))})$. This rate is in general tight \cite{kuchibhotla2019all}, and as $s$ approaches $d$, it matches the rate given by the Scheff\'e protection \cite{scheffe1999analysis, berk2013valid}.

\section{Conditional coverage}
\label{sec:conditional}

So far all results we have presented have been about marginal coverage. Sometimes it is desirable to provide \emph{conditional} coverage, whereby we condition on the event that a given selection was made. We discuss how stability can provide guarantees that closely resemble those of conditional post-selection inference.

We start by stating an implication of Lemma \ref{lemma:near-indep} for a specific oracle selection $\hat S_0$. In what follows, let $E$ denote the subset of $\text{supp}(\P_y)$ over which a selection $\hat S$ is indistinguishable from $\hat S_0$, i.e., if $\hat S$ is $(\eta,\tau,\nu)$-stable, we let
$$E = \{\omega: \hat S(\omega)\approx_{\eta,\tau} \hat S_0\}.$$
Note that we know $\PP{y\in E}\geq 1 -\nu$ by definition.

\begin{lemma}
\label{lemma:near-indep2}
	Suppose that $\hat S$ is $(\eta,0,\nu)$-stable with respect to oracle $\hat S_0 = \hat S(y_E')$, where $y'_E$ is a sample from $\P_y$ truncated to $E$. Then, it holds that
	\begin{equation}
	\label{eqn:conditional}	
	\PPst{y\in \O_S}{\hat S(y) = S, y\in E} \leq e^\eta \PPst{y\in \O_S}{y\in E},
	\end{equation}
	for all selections $S$ and measurable sets $\O_S$.
\end{lemma}

As suggested by Lemma \ref{lemma:near-indep2}, the main difference between conditional post-selection inference and the conditional guarantees implied by stability is that in the latter case we additionally truncate the distribution of $y$ to a high-probability set $E$. Note that on the right-hand side of Eq.~\eqref{eqn:conditional} there is no dependence on the selection event, which makes inference, despite selection, essentially as easy as classical inference.

We illustrate the conditional properties of stability with an example.

\paragraph{Example: publication bias.}
We consider an illustration of the publication bias problem, also known as the file-drawer problem \cite{rosenthal1979file, fithian2014optimal, tian2018selective}. Suppose we observe an effect $y\sim \P_y$ with $\E[y] = \mu$, $\text{supp}(\P_y)\subseteq \R$. We are interested in constructing an interval for $\mu$ only if the observed effect is deemed ``interesting'' enough, for example if $y> T$ for some threshold $T$. Denote by $\mathrm{report}(y)$ the event that we decide to report the confidence interval, given that we observe effect $y$.
  
One approach to this problem is to evaluate the distribution of the data \emph{conditional} on the selection event. For example, we could find $K$ such that $\PPst{|y - \mu| > K}{\mathrm{report}(y)}\leq \alpha$, and report $\text{CI}(K) = (y\pm K)$ on the event $\mathrm{report}(y)$. Importantly, this approach generally requires an explicit characterization of the event $\mathrm{report}(y)$.

 
 Our theory suggests a \emph{criterion-agnostic} solution based on randomizing the selection, whose validity we prove in the Supplement.
  
 \begin{claim}
 \label{claim:conditional}
Let $y\sim \mathcal{N}(\mu,\sigma^2)$. Suppose that we apply the selection criterion to $y+\xi$, where $\xi \sim \mathrm{Lap}(b)$ for some user-chosen parameter $b>0$; that is, we report the confidence interval on the event $\mathrm{report}(y+\xi)$. Then, for any user-chosen parameter $\nu\in(0,1)$, we have 
$$\PPst{\mu\not\in(y\pm z_{1-\frac{\alpha}{2}(1-\nu)e^{-\eta}}\sigma)}{\mathrm{report}(y+\xi), y\in E} \leq \alpha,$$
where
$$\eta = \frac{z_{1-\nu/2}\sigma}{b} - \frac{\sigma^2}{2b^2} + \log\left(\frac{1-\nu}{2(\Phi(z_{1-\nu/2} + \frac{\sigma}{b}) - \Phi(\frac{\sigma}{b}))}\right) $$
and $E$ is an event such that $\PP{y\in E}\geq 1-\nu$.
\end{claim}

\begin{figure}[b]
\centerline{
\includegraphics[width=0.33\textwidth]{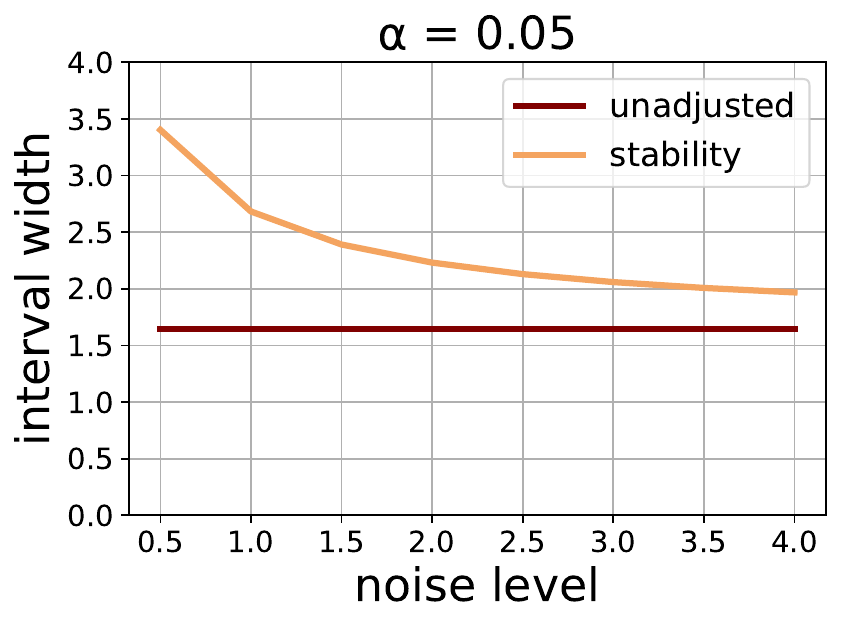}\includegraphics[width=0.33\textwidth]{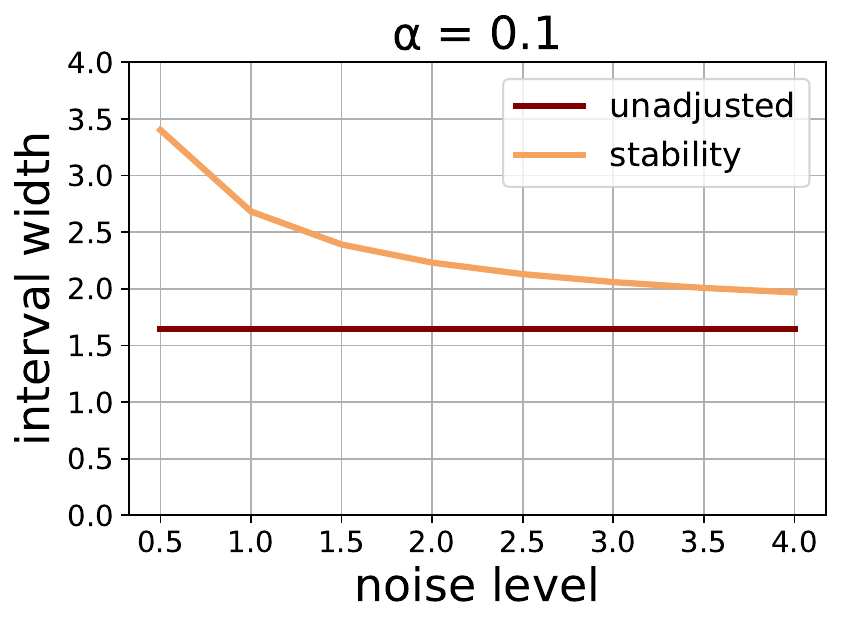}\includegraphics[width=0.33\textwidth]{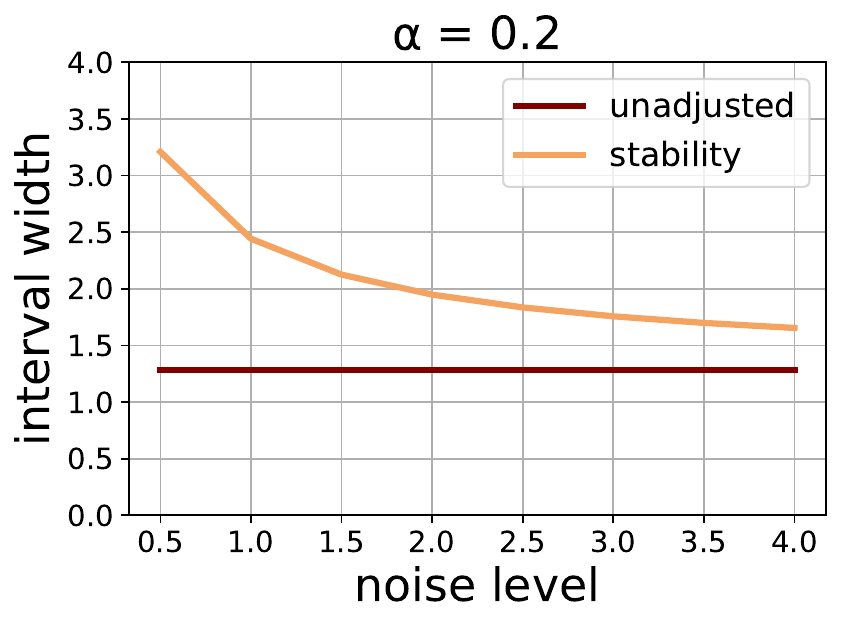}}
\caption{Normalized interval width implied by stability solution and unadjusted width for different noise levels $b$ and error levels $\alpha$. We fix $\nu = 0.05$.} 
\label{fig:pub_bias}
\end{figure}

Although in the main body of the paper we state the result when $\P_y =\mathcal{N}(\mu,\sigma^2)$ for simplicity, the Laplace noise addition strategy is valid for \emph{arbitrary} $\P_y$; only the expression for $\eta$ changes as a function of $\P_y$. In the Supplement we state a general version of Claim~\ref{claim:conditional}.

The proof of Claim \ref{claim:conditional} relies on showing that the selection to report is stable with respect to the oracle $\hat S(y'_E)$, and hence we can invoke Lemma \ref{lemma:near-indep2}.

We can see that, by choosing $\nu\rightarrow 0, b\rightarrow\infty$, we recover the non-selective confidence intervals, albeit at the cost of making the decision to report virtually independent of $y$. As we decrease $b$ (and keep $\nu$ bounded away from zero), the decision to report becomes more reflective of the event $\mathrm{report}(y)$ and the inference level smoothly becomes more stringent. In Figure \ref{fig:pub_bias} we plot the normalized interval width, $z_{1-\frac{\alpha}{2}(1-\nu)e^{-\eta}}$, together with the unadjusted normalized width $z_{1-\frac{\alpha}{2}}$, for several different noise levels $b$ and error levels $\alpha$ and $\nu = 0.05$.

As a concrete example, suppose that $\mathrm{report}(y) = \{y> T\}$, for some threshold $T$. Then, as in Claim~\ref{claim:vignette_utility1} and Claim~\ref{claim:vignette_utility2}, we can conclude that $\mathbf{1}\{\mathrm{report}(y)\} = \mathbf{1}\{\mathrm{report}(y + \xi)\}$ with probability $1-\delta$ over the choice of $\xi$ as long as $b \leq \frac{|y-T|}{\log(1/(2\delta))}$.

\section{The design of stable selection algorithms}
\label{sec:algorithms}

We discuss general tools for designing stable selection methods and present an application of these tools to variable selection in linear regression. We begin with an overview of the basic properties of stability, which are key to efficient design of stable selections.

\subsection{Properties of stability}

Stability satisfies two key algorithmic properties: \emph{closure under post-processing} and \emph{composition}. We provide precise definitions of the two shortly. The reason why these properties enable efficient stability designs is that many selection rules can be written as post-processing and composition of simple computations, such as linear functions of the data or finding maxima of a sequence. As long as we know how to stabilize the necessary simple computations, closure under post-processing and composition provide rules for computing the overall stability parameter of the whole algorithm efficiently.

\paragraph{Post-processing.}
First, stability is \emph{closed under post-processing}: if $\A:\R^n\rightarrow \mathcal{S}$ is $(\eta,\tau,\nu)$-stable, then for any (possibly randomized) map $\B:\mathcal{S}\rightarrow \G$, the composition $\B~\circ~\A$ is also $(\eta,\tau,\nu)$-stable. While the proof of this fact is a straightforward consequence of the definition of stability, the implications are significant. Suppose for the moment that the statistician is given a stable version of the LASSO algorithm, and denote its solution by $\thetalasso$. Since $\hat \theta_{\text{LASSO}}$ is stable, then so is 
$$\hat M = \{j\in [d]~:~ \hat \theta_{\text{LASSO},j} \neq 0\}.$$
In fact, the statistician need not necessarily choose the model corresponding \emph{exactly} to the support of $\thetalasso$; for example, they could choose $\hat M = \{j\in[d]: |\hat \theta_{\text{LASSO},j}| \geq \epsilon\}$, for some constant threshold $\epsilon$, or they could pick $\dsel\leq d$ entries with the maximum absolute value. More generally, any model chosen solely as a function of $\thetalasso$ inherits the same stability parameters as $\thetalasso$. And, according to Corollary \ref{corollary:conf-ints-linreg}, the same PoSI constant suffices to correct the confidence intervals resulting from any such model.

\paragraph{Composition.}
The second important property is \emph{composition}. In Algorithm \ref{alg:comp}, we define adaptive composition, after which we discuss simpler, non-adaptive composition.

\begin{algorithm}[H]
\begin{flushleft}
\SetAlgoLined
\textbf{input: } data $y\in\R^n$, sequence of algorithms $\A_t:\mathcal{S}_1\times \dots\times \mathcal{S}_{t-1}\times \R^n\rightarrow \mathcal{S}_t, ~t\in[k]$\\
\textbf{output: } $(a_1,\dots,a_k)\in \mathcal{S}_1\times \dots\times \mathcal{S}_k$\\
\For{$t=1,2,\dots,k$}{
Compute $a_t = \A_t(a_1,\dots,a_{t-1},y)\in \mathcal{S}_t$}
 Return $(a_1,\dots,a_k)$
 \end{flushleft}
\caption{Adaptive composition}
\label{alg:comp}
\end{algorithm}

Adaptive composition consists of $k$ sequential rounds in which the analyst observes the outcomes of all previous computations and selects the next computation \emph{adaptively}---as a function of the previous evaluations. The adaptive composition property bounds the stability parameters of Algorithm \ref{alg:comp} in terms of the stability parameters of $\A_t$. In its simplest form, it says that Algorithm \ref{alg:comp} is $(k\eta,0,0)$-stable if for all $t\in[k]$, $\A_t(a_1,\dots,a_{t-1},\cdot)$ is $(\eta,0,0)$-stable for all fixed $a_1,\dots,a_{t-1}$. 
For example, for some selection algorithms such as forward stepwise, it is clear to see how they can be represented using adaptive composition. In forward stepwise, $\A_t$ outputs an index $i_t\in[d]$, which corresponds to the variable $i$ that minimizes the squared error resulting from adding $i$ to the current pool of selected features; $i_t = \A_t(i_1,\dots,i_{t-1},y)$. It suffices to prove that any given step of forward stepwise selection is stable, in order to infer that the overall algorithm is stable as well.

Our proofs will only require adaptive composition for algorithms with $\nu=0$; such results follow from classical theory on differential privacy. More advanced (and naturally more conservative) adaptive composition theorems which allow $\nu> 0$ can be found in the context of typical stability~\cite{bassily2016typical}. In the Supplement we formally state the adaptive composition results we will need in our proofs.

A simpler kind of composition is non-adaptive composition. Here, the algorithms $\A_t$ have no dependence on the past computations. Non-adaptive composition can capture a protocol that involves running multiple selection methods and choosing a final selection target as an arbitrary function of all the outputs. As we state formally in the Supplement, the resulting stability parameters simply add up. This is a rather appealing property of stability, as it suggests that the statistician only needs to keep track of the stability parameters of each selection algorithm they run, in order to derive valid selective confidence intervals. An analogous combination of the results of different selection methods was considered by Markovic and Taylor~\cite{markovic2016bootstrap}; their approach, however, relies on a sophisticated Monte Carlo sampling scheme.

\subsection{Model selection algorithms: examples}

We now consider several algorithms for variable selection in linear regression through the lens of stability. While many of the principles presented in this section can be adapted to different distributional assumptions, for the sake of clarity and interpretability we assume that $y\sim \mathcal{N}(\mu,\sigma^2 I)$, where $\sigma^2$ is \emph{unknown} but we have access to an estimate $\hat \sigma^2\sim \sigma^2 \frac{\chi_r^2}{r}$, independent of $y$. This is the setup studied by Berk et al.~\cite{berk2013valid}. More generally, we only need to know the decay of the tail of the distribution of $y$ in order to enforce stability.  In the Supplement, we extend the algorithms in this section to the case of outcome vectors with a known bound on their Orlicz norm, for any Orlicz function. This includes cases such as general subgaussian and subexponential outcome vectors.


\paragraph{Model selection via the LASSO.}
We begin by considering the canonical example of the LASSO estimator \cite{tibshirani1996regression}.
The LASSO estimate is the solution to the usual least-squares problem with an additional $\ell_1$-constraint on the regression coefficients:
\begin{equation}
\label{eqn:constr-lasso}
\thetalasso \in \argmin_{\theta\in\R^d} \frac{1}{2}\|y - X\theta\|_2^2 \text{ s.t. } \|\theta\|_1 \leq C_1,
\end{equation}
where $C_1>0$ is a tuning parameter. This problem is sometimes referred to as the LASSO in constrained/bound form, to contrast it with the LASSO in penalized form:
\begin{equation}
\label{eqn:pen-lasso}
\thetalasso^\lambda \in \argmin_{\theta\in \R^d} \frac{1}{2}\|y - X\theta\|_2^2 + \lambda \|\theta\|_1,
\end{equation}
where $\lambda>0$ is now the tuning parameter. These two problems are equivalent: for any $C_1>0$, there exists a corresponding $\lambda>0$ such that $\thetalasso$ is an optimal solution for the problem in Eq.~(\ref{eqn:pen-lasso}), and vice versa. In our analysis we focus on the formulation (\ref{eqn:constr-lasso}).

The LASSO objective induces sparse solutions, and a common way of declaring that a feature is relevant is to check for a corresponding non-zero entry in the LASSO solution vector. That is, the model ``selected'' by the LASSO is:
$$\hat M = \{j\in[d]~:~ \hat \theta_{\text{LASSO},j} \neq 0\}.$$

Model selection via the LASSO has been of great interest in prior work on selective inference, starting with Lee et al.~\cite{lee2016exact}. While this work provides exact confidence intervals, it has been observed that these intervals (which do not make use of randomization) have infinite expected length \cite{kivaranovic2018expected}. Subsequent work has improved upon these often large confidence intervals by choosing a better event to condition on \cite{liu2018more}, or by applying randomization \cite{tian2016selective, tian2016magic, tian2018selective, kivaranovic2020tight, panigrahi2017mcmc, panigrahi2019approximate}. 

We now formulate a stable version of the LASSO algorithm. It is inspired by the differentially private LASSO algorithm of Talwar et al.~\cite{talwar2015nearly}, although the noise variables are calibrated somewhat differently due to different modeling assumptions.

We use $e_i$ to denote the $i$-th standard basis vector in $\R^d$, and $\{\pm e_i\}_{i=1}^d$ to denote the set of $2d$ standard basis vectors, multiplied by $1$ and $-1$. We also let $\|X\|_{2,\infty}$ denote the $L_{2,\infty}$ norm of $X$, $\|X\|_{2,\infty} := \max_{i\in[d]} \|X_i\|_2$.

\begin{algorithm}[H]
\SetAlgoLined
\begin{flushleft}
\textbf{input: }design matrix $X\in\R^{n\times d}$, outcome vector $y\in\R^{n}$, variance estimate $\hat \sigma^2 \sim \sigma^2 \frac{\chi_r^2}{r}$, $\ell_1$-constraint $C_1$, number of steps $k$, parameters $\delta\in(0,1), \eta>0$\\
\textbf{output: } LASSO solution $\thetalasso \in\R^d$\newline
Initialize $\theta_1=0$\newline
\For{$t=1,2,\dots,k$}{
\ $\forall\phi \in C_1 \cdot \{\pm e_i\}_{i=1}^d$, sample $\xi_{t,\phi} \stackrel{\text{i.i.d.}}{\sim} \text{Lap}\left(\frac{4 t_{r,1-\delta/(2d)}C_1 \|X\|_{2,\infty}}{\eta n}\right)$\newline
  $\forall\phi \in C_1 \cdot \{\pm e_i\}_{i=1}^d$, let $\alpha_\phi = -\frac{2}{n \hat \sigma} \phi^\top X^\top(y - X\theta_t) + \xi_{t,\phi}$\newline
Set $\phi_t = \argmin_{\phi\in C_1 \cdot \{\pm e_i\}_{i=1}^d} \alpha_\phi$\newline
Set $\theta_{t+1} = (1-\Delta_t)\theta_t + \Delta_t \phi_t$, where $\Delta_t = \frac{2}{t+1}$
}
 Return $\thetalasso = \theta_{k+1}$
 \end{flushleft}
\caption{Stable LASSO algorithm}
\label{alg:lasso}
\end{algorithm}

In essence, Algorithm \ref{alg:lasso} is a randomized version of the classical Frank-Wolfe algorithm from constrained optimization~\cite{frank1956algorithm}.

We now argue that $\thetalasso$ is stable. The proof is based on a composition argument: namely, we can view $\thetalasso$ as the result of a composition of $k$ subroutines, each given by one optimization step which produces $\theta_{t}$. The stability of each subroutine is proved by extending an argument related to the ``report noisy max'' mechanism from differential privacy~\cite{dwork2014algorithmic}. The full proof of Proposition \ref{prop:lasso-stability} can be found in the Supplement.

\begin{proposition}[LASSO stability]
\label{prop:lasso-stability}
Algorithm \ref{alg:lasso} is both
\begin{itemize}
\item[(a)] $\left(\frac{1}{2}k\eta^2 + \sqrt{2k\log(1/\delta)}\eta,\delta,\delta\right)$-stable, and
\item[(b)] $(k\eta,0,\delta)$-stable.
\end{itemize}
\end{proposition}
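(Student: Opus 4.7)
The plan is to view Algorithm~\ref{alg:lasso} as an adaptive composition of $k$ randomized Frank--Wolfe steps, argue that each step is $(\eta,0)$-indistinguishable on a suitable high-probability event via the ``report-noisy-max'' analysis from differential privacy, and then invoke basic and advanced composition to obtain parts (b) and (a) respectively. Because the selected support $\hat M$ is a deterministic post-processing of $\thetalasso$, it inherits whatever stability we prove for $\thetalasso$.

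First I would define the good event on two independent draws $y,y' \sim \P_y$:
\[
G \defn \left\{(y,y')\in\R^n\times\R^n :\ \max_{\phi \in C_1\cdot\{\pm e_i\}_{i=1}^d}\left|\tfrac{2}{n}\phi^\top X^\top(y-y')\right| \leq \Delta\right\},
\]
with $\Delta \defn \tfrac{4 C_1 \|X\|_{2,\infty}\sigma\sqrt{\log(4d/\delta)}}{n}$. Since $y$ is $\sigma$-subgaussian coordinate-wise (so $v^\top(y-y')$ is $\sqrt{2}\sigma\|v\|_2$-subgaussian for every $v$) and $\|\tfrac{2}{n}X\phi\|_2 \leq \tfrac{2C_1\|X\|_{2,\infty}}{n}$ for every $\phi \in C_1\{\pm e_i\}$, a standard subgaussian tail bound together with a union bound over the $2d$ directions gives $\P_y^{\otimes 2}(G) \geq 1-\delta$. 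Crucially, $G$ depends only on $(y,y')$, not on the fresh Laplace noise used in later steps.

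Next I would analyze a single step $t$ as a report-noisy-min mechanism. Condition on the history $(\phi_1,\dots,\phi_{t-1})$; this fixes $\theta_t$ as a deterministic function of past outputs. The step computes $\phi_t = \argmin_{\phi} \alpha_\phi^{(t)}(y)$ where $\alpha_\phi^{(t)}(y) = -\tfrac{2}{n}\phi^\top X^\top(y-X\theta_t) + \xi_{t,\phi}$ and $\xi_{t,\phi} \iidsim \mathrm{Lap}(b)$ with $b = 2\Delta/\eta$. Swapping $y$ to $y'$ (keeping history and noise fixed) shifts each score $\alpha_\phi^{(t)}$ by at most $\Delta$ in absolute value on $G$, so the classical report-noisy-max density-ratio calculation (using $\mathrm{Lap}(\Delta/\eta)$ noise yields an $\eta$-ratio between shifted Laplace densities, and the factor of $2$ in $b$ absorbs the two-sided sensitivity) implies that, on $G$, the conditional distributions of $\phi_t$ given $y$ and $y'$ are $(\eta,0)$-indistinguishable. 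The adaptive composition theorem cited in the Appendix then gives: on $G$, the full transcript $(\phi_1,\dots,\phi_k)$, and thus $\thetalasso$, is $(k\eta,0)$-indistinguishable between $y$ and $y'$. This proves part (b). For part (a), I would invoke the advanced composition theorem in place of basic composition, which converts $k$-fold $(\eta,0)$-indistinguishability into $\bigl(\tfrac{1}{2}k\eta^2 + \sqrt{2k\log(1/\delta)}\eta,\, \delta\bigr)$-indistinguishability; combining with $\P_y^{\otimes 2}(G) \geq 1 - \delta$ gives $\bigl(\tfrac{1}{2}k\eta^2 + \sqrt{2k\log(1/\delta)}\eta,\, \delta,\, \delta\bigr)$-stability.

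The main obstacle is the adaptation of the report-noisy-max analysis from the standard differential privacy setting (where sensitivity is a worst-case deterministic quantity under neighboring-dataset swaps) to our two-sample setting (where sensitivity holds only with high probability under resampling). The key point that must be argued carefully is that the good event $G$ is $\sigma(y,y')$-measurable and therefore independent of the fresh noise $\{\xi_{t,\phi}\}_{t,\phi}$; consequently, conditioning on $G$ does not perturb the Laplace density ratios, and the per-step indistinguishability bound survives. A secondary subtlety is that $\theta_t$ is itself a function of $y$ through the past selections $\phi_1,\dots,\phi_{t-1}$; adaptive composition dispatches this by conditioning on the history at each step, which is precisely why the per-step analysis is stated for \emph{fixed} histories.
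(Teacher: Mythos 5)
Your proposal is correct and follows essentially the same route as the paper's proof: the same high-probability event bounding the gradient perturbation $\max_{\phi}|\tfrac{2}{n}\phi^\top X^\top(y-y')|$ via subgaussian concentration (with matching constant $s^*=\Delta$), the same per-step report-noisy-max argument with $\mathrm{Lap}(2\Delta/\eta)$ noise absorbing the two-sided shift, and the same invocation of basic and advanced adaptive composition for parts (b) and (a). The subtleties you flag---measurability of the good event with respect to $(y,y')$ alone, and conditioning on the history so that $\theta_t$ is fixed at each step---are exactly the points the paper's proof handles.
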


We state two rates because there exist parameter regimes where either rate leads to tighter confidence intervals than the other (the first rate being tighter when $\eta$ is small).

By the post-processing property, Proposition \ref{prop:lasso-stability} implies stability of any model $\hat M$ obtained as a function of $\thetalasso$, such as the model corresponding to its non-zero entries.

Notice that the noise level in Algorithm \ref{alg:lasso} is an explicit function of $\eta$. This allows the statistician to understand the loss in utility---that is, how much worse $\thetalasso$ is relative to an exact LASSO solution---due to randomization. In fact, building on work by Jaggi~\cite{jaggi2013revisiting} and Talwar et al.~\cite{talwar2015nearly}, we can upper bound the excess risk resulting from randomization.

\begin{proposition}[LASSO utility]
\label{prop:lasso-utility}
Suppose we run Algorithm \ref{alg:lasso} for $k = \left\lceil\frac{n\|X\|_\infty^2 C_1 \eta}{\hat \sigma \|X\|_{2,\infty}}\right\rceil$ steps. Then, 
$$\frac{1}{n}\E[\|y - X\thetalasso\|_2^2 ~|~ y] - \min_{\theta:\|\theta\|_1\leq C_1} \frac{1}{n} \|y - X\theta\|_2^2 = \tilde O\left(\frac{C_1 \|X\|_{2,\infty} \log(d) t_{r,1-\delta/(2d)} \sigma}{n\eta}\right).$$	
\end{proposition}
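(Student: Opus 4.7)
The plan is to view Algorithm \ref{alg:lasso} as a noisy Frank--Wolfe algorithm applied to $f(\theta) := \frac{1}{n}\|y - X\theta\|_2^2$ on the polytope $\{\theta \in \R^d : \|\theta\|_1 \le C_1\}$, whose vertex set is $\V := C_1 \cdot \{\pm \ee{i}\}_{i=1}^d$. Indeed, $\nabla f(\theta_t) = -\frac{2}{n}X^\top(y - X\theta_t)$, so $\alpha_\phi = \langle \nabla f(\theta_t),\phi\rangle + \xi_{t,\phi}$, and the update $\theta_{t+1} = (1-\Delta_t)\theta_t + \Delta_t\phi_t$ with $\Delta_t = 2/(t+1)$ is the canonical Frank--Wolfe step, except that the linear minimization is performed on noisy rather than exact scores.

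The next step is a per-step descent inequality in the spirit of \citet{jaggi2013revisiting}. Let $\phi_t^\star \in \argmin_{\phi \in \V}\langle \nabla f(\theta_t),\phi\rangle$ denote the exact linear minimizer and $N_t := \max_{\phi \in \V}|\xi_{t,\phi}|$. The optimality of $\phi_t$ in the noisy problem yields $\langle \nabla f(\theta_t),\phi_t - \phi_t^\star\rangle \le 2N_t$, while convexity of $f$ together with $\theta^\star \in \conv(\V)$ gives the duality-gap bound $\langle\nabla f(\theta_t),\phi_t^\star - \theta_t\rangle \le -(f(\theta_t) - f(\theta^\star))$. Combining these with the quadratic upper bound $f(\theta_{t+1}) \le f(\theta_t) + \Delta_t\langle \nabla f(\theta_t),\phi_t - \theta_t\rangle + \tfrac{1}{2}\Delta_t^2 C_f$, where $C_f$ is the Frank--Wolfe curvature constant of $f$ over the $\ell_1$ ball, produces
\begin{align*}
f(\theta_{t+1}) - f(\theta^\star) \le (1-\Delta_t)(f(\theta_t) - f(\theta^\star)) + \tfrac{1}{2}\Delta_t^2 C_f + 2\Delta_t N_t.
\end{align*}
A vertex-pair computation using $\nabla^2 f = \frac{2}{n}X^\top X$ gives $C_f \le \frac{2}{n}\max_{v_1,v_2 \in \V}\|X(v_1 - v_2)\|_2^2 \le 8C_1^2\|X\|_{2,\infty}^2/n$.

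Taking conditional expectations given $y$ (the $\xi_{t,\phi}$ are drawn independently of $y$) and setting $e_t := \E[f(\theta_t) - f(\theta^\star) \mid y]$ and $\epsilon := \max_{t\le k}\E[N_t]$, a standard induction with $\Delta_t = 2/(t+1)$ yields $e_{k+1} \le 2C_f/(k+2) + 2\epsilon$. Since each $N_t$ is the maximum of $2d$ i.i.d.\ Laplace$(b)$ random variables with $b = \frac{8\sqrt{\log(4d/\delta)}C_1\|X\|_{2,\infty}\sigma}{n\eta}$, a tail-integration argument gives $\epsilon = O(b\log d) = \tilde O\!\left(\frac{C_1\|X\|_{2,\infty}\sigma(\log d)^{3/2}}{n\eta}\right)$. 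For the chosen $k = \lceil n\|X\|_\infty^2 C_1\eta/(\sigma\|X\|_{2,\infty})\rceil$, the elementary bound $\|X\|_{2,\infty}^2 \le n\|X\|_\infty^2$ forces the optimization term $2C_f/k$ to be $O\!\left(\frac{C_1\|X\|_{2,\infty}\sigma}{n\eta}\right)$, which is dominated by the noise contribution and delivers the advertised rate.

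The key technical step is the descent lemma, where the oracle randomness must enter through the additive $2\Delta_t N_t$ correction rather than couple multiplicatively with $\theta_t$; beyond that, the remaining work is routine bookkeeping---ensuring the curvature bound picks up $\|X\|_{2,\infty}^2/n$ rather than a cruder operator-norm quantity, and verifying that the prescribed $k$ equates (up to logarithmic factors) the Frank--Wolfe optimization error with the Laplace-noise error.
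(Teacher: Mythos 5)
Your proof is correct and reaches the stated rate, but it executes the key step differently from the paper. Both arguments view Algorithm \ref{alg:lasso} as Frank--Wolfe with an inexact linear minimization oracle and both hinge on the curvature constant and the prescribed choice of $k$; the divergence is in how the Laplace noise is absorbed. The paper invokes Jaggi's approximate-oracle lemma (Lemma \ref{lemma:frank-wolfe}), which requires the per-step oracle error to be at most $\tfrac{s\Delta_t C_L}{2}$ for a single accuracy parameter $s$; it therefore conditions on the high-probability event that \emph{all} $k\cdot 2d$ noise variables are small (with the binding constraint at $t=k$ since $\Delta_t$ decreases), and then converts the resulting high-probability bound into the claimed expectation bound via the $\ell_1$-Lipschitz constant $L_1$ of the loss on the constraint set, a choice $\zeta=\gamma/(2C_1L_1)$, and an optimization over $\gamma$ --- which also drags $\|y\|_\infty$ into the logarithm. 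You instead prove a pathwise one-step descent inequality with the noise entering additively as $2\Delta_t N_t$, take conditional expectations immediately, and run the standard $\Delta_t=2/(t+1)$ induction to get $e_{k+1}\le 2C_f/(k+2)+2\max_t\E[N_t]$; the noise contribution is then just the expected maximum of $2d$ Laplace variables, $O(b\log d)$. Your route buys a cleaner in-expectation analysis that avoids the Lipschitz-constant bookkeeping and the dependence on $\|y\|_\infty$ entirely (the paper needs an extra Jensen/subgaussianity step at the end to remove it); the paper's route buys the convenience of citing Lemma \ref{lemma:frank-wolfe} as a black box. Your curvature bound $C_f\le 8C_1^2\|X\|_{2,\infty}^2/n$ is also slightly sharper than the paper's $4\|X\|_\infty^2C_1^2$, though both suffice once combined with $\|X\|_{2,\infty}^2\le n\|X\|_\infty^2$ and the stated $k$. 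All of your intermediate inequalities (noisy-oracle optimality, duality gap, quadratic upper bound, and the induction) check out.
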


The proof is deferred to the Supplement.


\paragraph{Model selection via marginal screening.}
One of the most commonly used model selection methods involves simply picking a constant number of the features with the largest absolute inner product with the outcome $y$ \cite{guyon2003introduction, fan2008sure}. That is, one selects features $i$ corresponding to the top $k$ values of $|X_i^\top y|$, for a pre-specified parameter $k$. This strategy is known as \emph{marginal screening}, and it was first analyzed in the context of selective inference by Lee and Taylor~\cite{lee2014exact}.

In Algorithm \ref{alg:marginal}, we state a stable version of marginal screening. Notice that the randomization scheme is similar to that of the stable LASSO method. As before, we let $\|X\|_{2,\infty}$ denote the $L_{2,\infty}$ norm of $X$.

\begin{algorithm}[H]
\SetAlgoLined
\begin{flushleft}
\textbf{input: }design matrix $X\in\R^{n\times d}$, outcome vector $y\in\R^{n}$, variance estimate $\hat\sigma^2 \sim \sigma^2 \frac{\chi_r^2}{r}$, model size $k$, parameters $\delta\in(0,1), \eta>0$\\
\textbf{output: } $\hat M = \{i_1,\dots,i_k\}$\newline
Compute $(c_1,\dots,c_d) = \frac{1}{n \hat \sigma}X^\top y\in \R^d$\newline
$\text{res}_1 = [d]$\newline
\For{$t=1,2,\dots,k$}{
\ $\forall i \in \text{res}_i$, sample $\xi_{t,i} \stackrel{\text{i.i.d.}}{\sim} \text{Lap}\left(\frac{2 t_{r,1-\delta/(2d)}\|X\|_{2,\infty}}{n\eta}\right)$\newline
$i_t = \argmax_{i\in\text{res}_t} |c_i + \xi_{t,i}|$\newline
$\text{res}_{t+1} = \text{res}_t \setminus i_t$
}
 Return $\hat M = \{i_1,\dots,i_k\}$
 \end{flushleft}
\caption{Stable marginal screening algorithm}
\label{alg:marginal}
\end{algorithm}

The high-level idea behind the proof of stability of Algorithm \ref{alg:marginal} is similar to that of Algorithm~\ref{alg:lasso}, and we present it in the Supplement.

\begin{proposition}[Marginal screening stability]
\label{prop:marginal-stability}
Algorithm \ref{alg:marginal} is both
\begin{itemize}
 \item[(a)] $\left(\frac{1}{2}k\eta^2 + \sqrt{2k\log(1/\delta)}\eta,\delta,\delta\right)$-stable, and
 \item[(b)] $(k\eta,0,\delta)$-stable.
 \end{itemize}
\end{proposition}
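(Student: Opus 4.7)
I would view Algorithm~\ref{alg:marginal} as an adaptive composition of $k$ single-step ``report-noisy-argmax'' subroutines, and establish $(\eta,0)$-indistinguishability of each step on a high-probability good event over the pair of resampled outcomes. Let $y,y'\iidsim\P_y$ and set $c := X^\top y/n$, $c':=X^\top y'/n$. Because $y-\mu$ is $\sigma$-subgaussian and $y'$ is independent, $c_i-c_i'$ is $\sqrt{2}\,\sigma\|X_i\|_2/n$-subgaussian, so a union bound over $i\in[d]$ at threshold $T := 2\sigma\|X\|_{2,\infty}\sqrt{\log(2d/\delta)}/n$ gives the event $Y_\delta := \{\|c-c'\|_\infty \leq T\}$ with $\PP{Y_\delta}\geq 1-\delta$. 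This will absorb the $\nu=\delta$ slot in both stability claims.

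The main technical step is per-step $(\eta,0)$-indistinguishability on $Y_\delta$. Condition on any $(y,y')\in Y_\delta$ and any history $(i_1,\dots,i_{t-1})$ (which fixes $\mathrm{res}_t$), and fix a target output $i^*\in\mathrm{res}_t$. Let $A_{i^*}(c):=\{\xi : i^*=\argmax_{i\in\mathrm{res}_t}|c_i+\xi_i|\}$, write $s_0 := \sign(c_{i^*}+\xi_{i^*})$, and define the sign-adapted shift
$$\phi(\xi) \,:=\, \xi + 2T\,s_0\,e_{i^*},$$
which moves only the $i^*$ coordinate of $\xi$ and by exactly $2T$ in absolute value. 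I would verify three facts: (a) $|c_{i^*}+\phi(\xi)_{i^*}|=|c_{i^*}+\xi_{i^*}|+2T$ by the choice of sign, so $|c_{i^*}'+\phi(\xi)_{i^*}|\geq|c_{i^*}+\xi_{i^*}|+T$ after paying $T$ to swap $c_{i^*}$ for $c_{i^*}'$, while $|c_j'+\phi(\xi)_j|\leq|c_j+\xi_j|+T$ for $j\neq i^*$; combining these yields $\phi(A_{i^*}(c))\subseteq A_{i^*}(c')$; (b) $\phi$ is injective on $A_{i^*}(c)$ because the two sign regions map into the disjoint sets $\{c_{i^*}+\phi(\xi)_{i^*}\geq 2T\}$ and $\{c_{i^*}+\phi(\xi)_{i^*}<-2T\}$; (c) the product-of-Laplaces density ratio under a single-coordinate shift of magnitude $2T$ is at most $e^{2T/b}$, and the algorithm's choice $b = 4\sqrt{\log(2d/\delta)}\|X\|_{2,\infty}\sigma/(n\eta)$ makes this exactly $e^{\eta}$. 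Changing variables through $\phi$ (Jacobian $1$ on each sign region) then gives $\PP{\xi\in A_{i^*}(c)}\leq e^{\eta}\,\PP{\xi\in A_{i^*}(c')}$, i.e.\ $(\eta,0)$-indistinguishability for step $t$.

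Given per-step stability on $Y_\delta$, I would invoke the adaptive composition theorems from the Appendix. Basic composition of $k$ $(\eta,0)$-indistinguishable steps yields $(k\eta,0)$-indistinguishability on $Y_\delta$, which combined with $\PP{Y_\delta}\geq 1-\delta$ proves part (b); advanced composition yields $(\sqrt{2k\log(1/\delta)}\eta+\tfrac{1}{2}k\eta^2,\delta)$-indistinguishability on $Y_\delta$, which with the same good-event accounting proves part (a). The main obstacle is the absolute-value twist on report-noisy-max: the standard template handles $\argmax_i(c_i+\xi_i)$, whereas here the event ``$i^*$ wins'' is a union of two rays in $\xi_{i^*}$, so the usual single-coordinate shift of magnitude $T$ does not suffice. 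The sign-dependent doubled shift $\phi$ handles both rays simultaneously, at the cost of a factor-of-two in the required noise scale---precisely the factor $4$ (rather than $2$) appearing in the numerator of $b$.
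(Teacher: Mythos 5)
Your proposal is correct and follows essentially the same route as the paper's proof: the same high-probability event $Y_\delta$ on $\|c-c'\|_\infty$, a per-step report-noisy-max argument over the two sign regions of $|c_{i}+\xi_{i}|$ costing a shift of $2T$ (hence the factor $4$ in the Laplace scale), and the basic/advanced adaptive composition lemmas to assemble parts (b) and (a). The only difference is cosmetic: the paper proves the single-step bound by conditioning on the other noise coordinates and shifting the threshold variables $\xi^*_\pm$, whereas you use an injective sign-adapted change of variables on the win set; both are standard presentations of the same inequality.
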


As for the LASSO, we aim to quantify the loss in utility due to randomization. Given that the goal of marginal screening is to detect the largest $k$ values $|c_i| = |X_i^\top y|$, a reasonable notion of utility loss is the difference between the values $c_i$ corresponding to the variables in $\hat M$, and the actual largest values of~$c_i$.

\begin{proposition}[Marginal screening utility]
\label{prop:marginal-utility}
Let $m_i$ denote the index of the $i$-th largest value $c_j$ in absolute value, so that $(|c_{m_1}|,\dots,|c_{m_d}|)$ is the decreasing order statistic of $\{|c_i|\}_{i=1}^d$. Then, for any $\delta'\in (0,1)$, Algorithm \ref{alg:marginal} satisfies:
$$\PPst{\max_{j\in[k]}  |c_{m_j}| - |c_{i_j}| \leq \frac{4t_{r,1-\delta/(2d)}\log(dk/\delta')\|X\|_{2,\infty}}{n\eta} }{y} \geq 1 - \delta'.$$
\end{proposition}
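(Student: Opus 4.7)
The plan is to condition on $y$, so that the correlations $c_i = \frac{1}{n} X_i^\top y$ are deterministic and all remaining randomness lies in the Laplace noise variables $\xi_{t,i}$. The argument then consists of two ingredients: a high-probability bound on the worst-case magnitude of the noise, and a pigeonhole argument that forces at least one top-$j$ index to be available to the algorithm at round $j$.

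First, I would bound the size of every Laplace variable drawn over the $k$ rounds. Using the standard tail bound $\PP{|\xi|>t} = e^{-t/b}$ for $\xi\sim\text{Lap}(b)$ with $b = \frac{4\sqrt{\log(2d/\delta)}\|X\|_{2,\infty}\sigma}{n\eta}$, choosing $t = b\log(dk/\delta')$ makes each single event $\{|\xi_{t,i}|>t\}$ have probability at most $\delta'/(dk)$. Since Algorithm~\ref{alg:marginal} samples at most $dk$ Laplace variables in total (the residual set only shrinks), a union bound gives that on an event $\mathcal{E}$ of conditional probability at least $1-\delta'$, every $|\xi_{t,i}|$ is at most $E := b\log(dk/\delta')$.

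Next, on $\mathcal{E}$ I would compare the algorithm's selection to the oracle top-$k$ indices round by round. Fix $j\in[k]$. At the start of round $j$ the algorithm has removed exactly $j-1$ indices, so among the top-$j$ indices $\{m_1,\dots,m_j\}$ at least one, call it $m^\ast$, still lies in $\mathrm{res}_j$; by definition $|c_{m^\ast}|\geq |c_{m_j}|$. The argmax rule guarantees $|c_{i_j}+\xi_{j,i_j}|\geq |c_{m^\ast}+\xi_{j,m^\ast}|$, and two applications of the reverse triangle inequality on $\mathcal{E}$ yield
\[
|c_{i_j}| \;\geq\; |c_{i_j}+\xi_{j,i_j}| - E \;\geq\; |c_{m^\ast}+\xi_{j,m^\ast}| - E \;\geq\; |c_{m^\ast}| - 2E \;\geq\; |c_{m_j}| - 2E.
\]
Taking the maximum over $j\in[k]$ and plugging in the value of $2E$ gives exactly the claimed bound $\frac{8\sqrt{\log(2d/\delta)}\log(dk/\delta')\sigma\|X\|_{2,\infty}}{n\eta}$.

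There is no serious obstacle here: the proof is essentially a noisy-max-utility argument. The only points requiring care are counting the number of Laplace variables correctly (at most $dk$, not $d$) when setting up the union bound, and recognizing that the pigeonhole step only needs \emph{some} top-$j$ index to survive in $\mathrm{res}_j$, not the specific index $m_j$—this is what makes the one-sided bound $|c_{m^\ast}|\geq |c_{m_j}|$ useful and avoids any need to track which particular top indices the algorithm has already taken.
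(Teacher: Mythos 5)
Your proof is correct and follows essentially the same route as the paper's: a union bound over the Laplace draws showing the maximum noise magnitude is at most $b\log(dk/\delta')$ with probability $1-\delta'$, the pigeonhole observation that some top-$j$ index survives in $\mathrm{res}_j$, and the noisy-argmax comparison giving a deficit of at most twice the max noise. The only cosmetic difference is that you condition on a single global good event for all rounds, whereas the paper unions over $j\in[k]$ first and bounds each round's failure probability by $d\exp(-s/(2b))$; both yield the identical constant.
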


\section{Experimental results}
\label{sec:experiments}

In this section, we evaluate our selective intervals for the LASSO and marginal screening and compare our solution with data splitting.

For a fixed sample size $n$ we vary the number of features $d$. We consider two different data-generating processes for the design matrix: one in which the rows of $X$ are drawn independently from an equicorrelated multivariate Gaussian distribution with pairwise correlation $\rho = 0.5$, and the second one in which all entries of $X$ are drawn as independent Bernoulli random variables with parameter $0.1$. In the former case, $X$ is normalized to have columns of unit norm. The outcome is generated as $y = X\beta + \epsilon$, where $\epsilon_i \stackrel{\mathrm{i.i.d.}}{\sim} \mathcal{N}(0,1), i\in[n]$, and the entries of $\beta$ are sampled according to
$$\beta_i = \begin{cases}
 	\mathrm{Exp}(\rho), ~i\in \{1,\dots,sd\},\\
 	0, ~i\in\{sd + 1,\dots,d\},
 \end{cases}
$$
for a signal parameter $\rho>0$ and a sparsity parameter $s\in(0,1)$, which we vary. In the Supplement we provide additional experiments when the errors are drawn from a heavier-tailed, Laplace distribution. We fix the target miscoverage level to be $\alpha = 0.1$. In all experiments we vary $\eta\in\{1,2,3,4,5,6,7,8,9,10\}$. For the comparison with data splitting, we use the splitting fraction derived in Section~\ref{sec:data_splitting}. Further experimental details are given in the Supplement.
\begin{figure}[t]
\centerline{\includegraphics[width=0.25\textwidth]{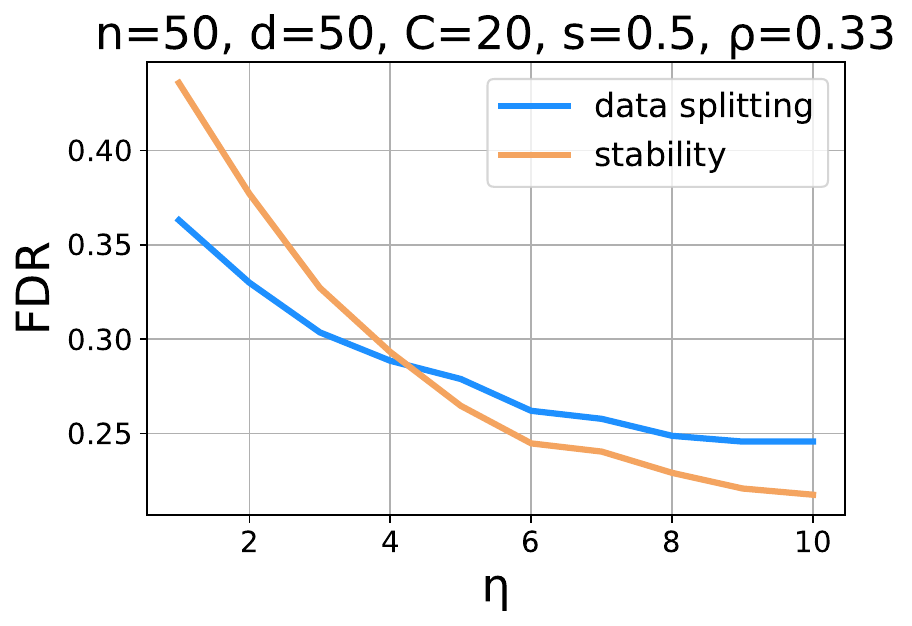}
\includegraphics[width=0.25\textwidth]{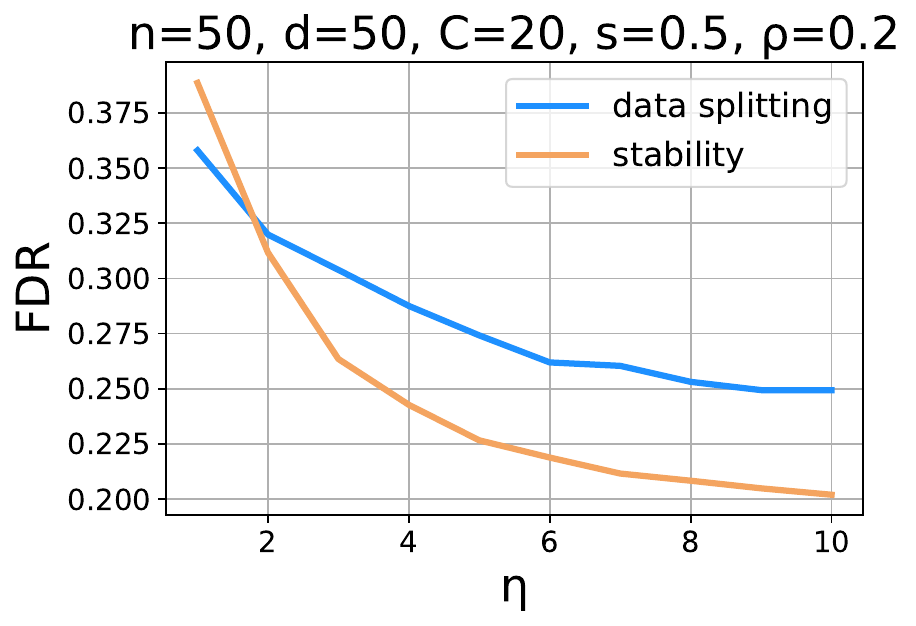}
\includegraphics[width=0.25\textwidth]{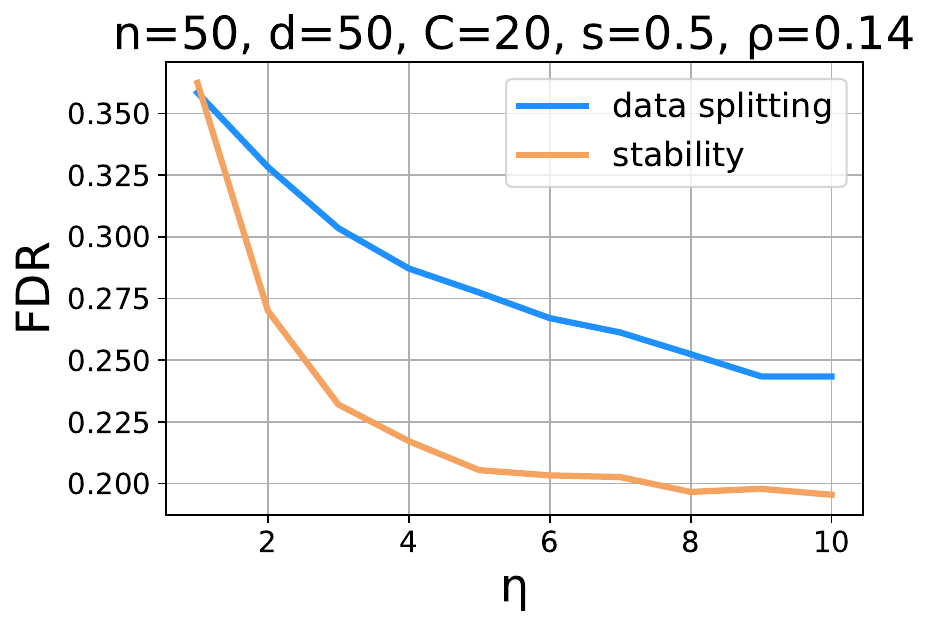}
\includegraphics[width=0.25\textwidth]{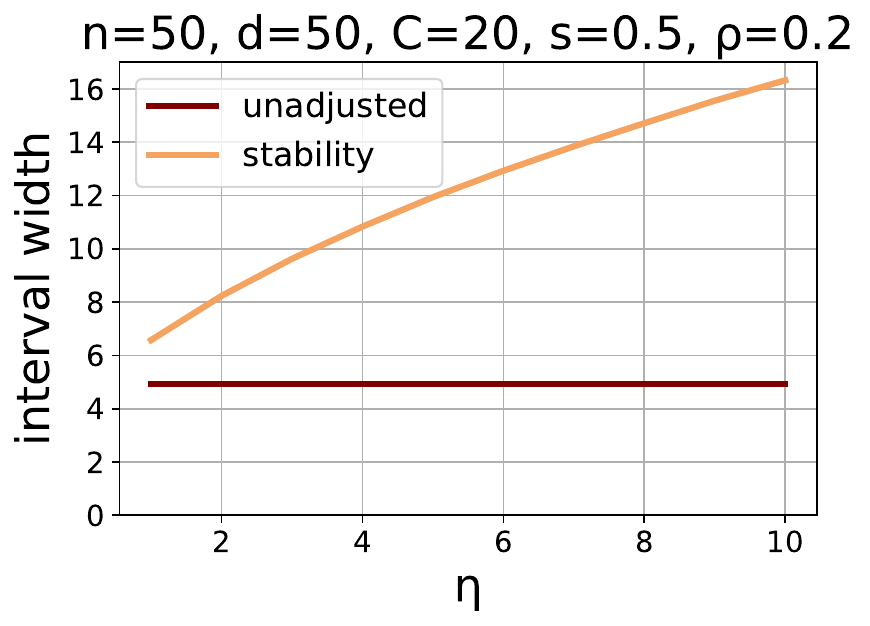}
}
\centerline{\includegraphics[width=0.25\textwidth]{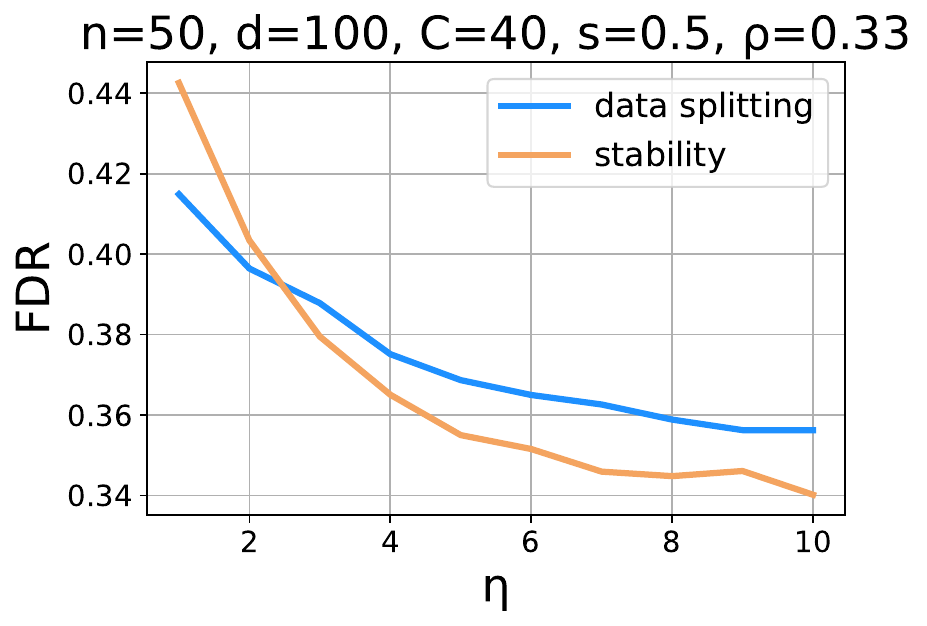}
\includegraphics[width=0.25\textwidth]{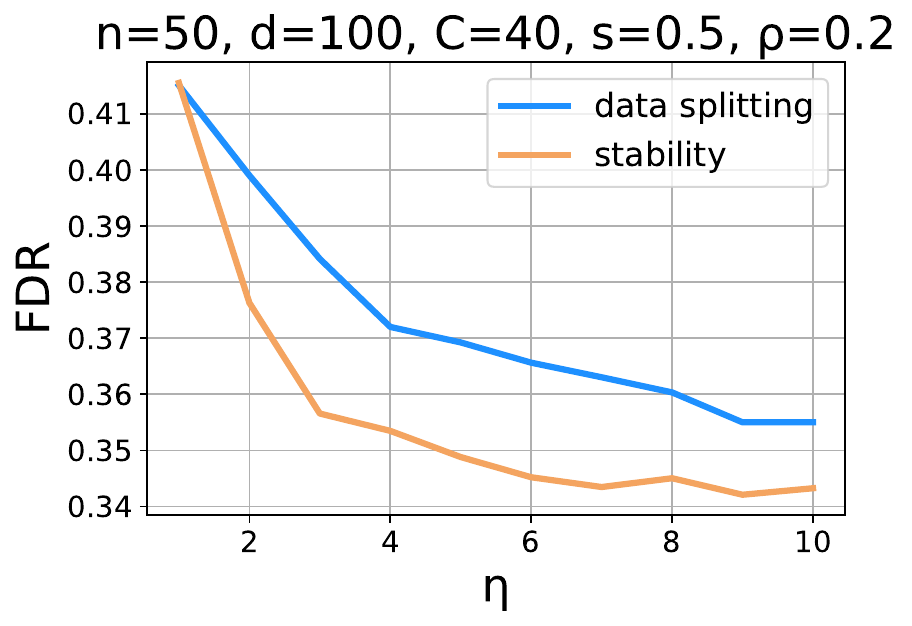}
\includegraphics[width=0.25\textwidth]{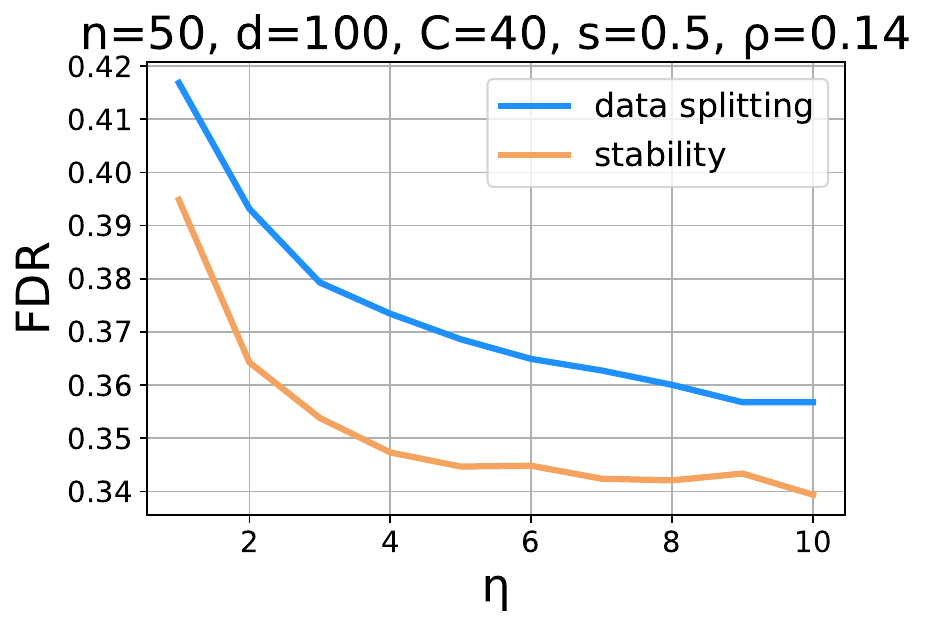}
\includegraphics[width=0.25\textwidth]{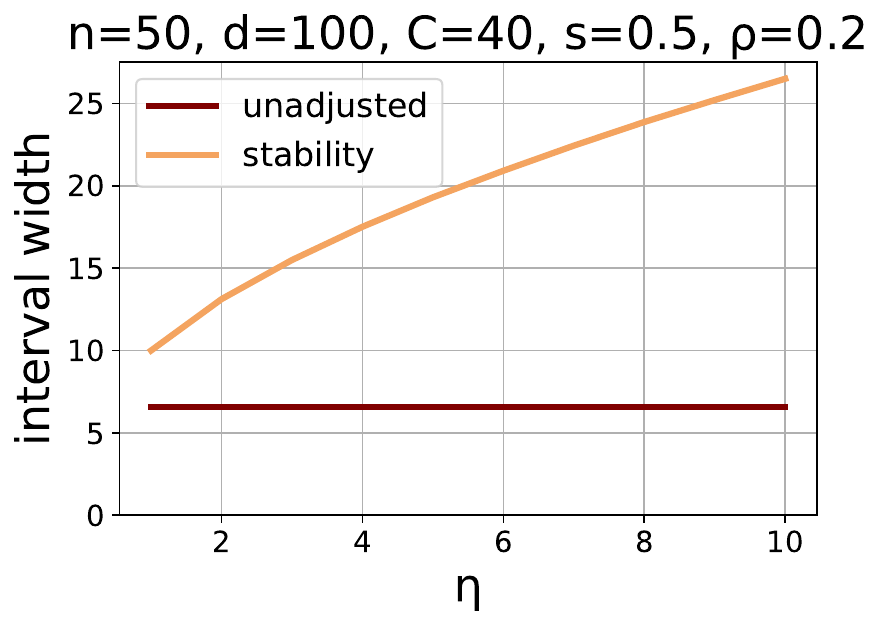}
}
\centerline{\includegraphics[width=0.25\textwidth]{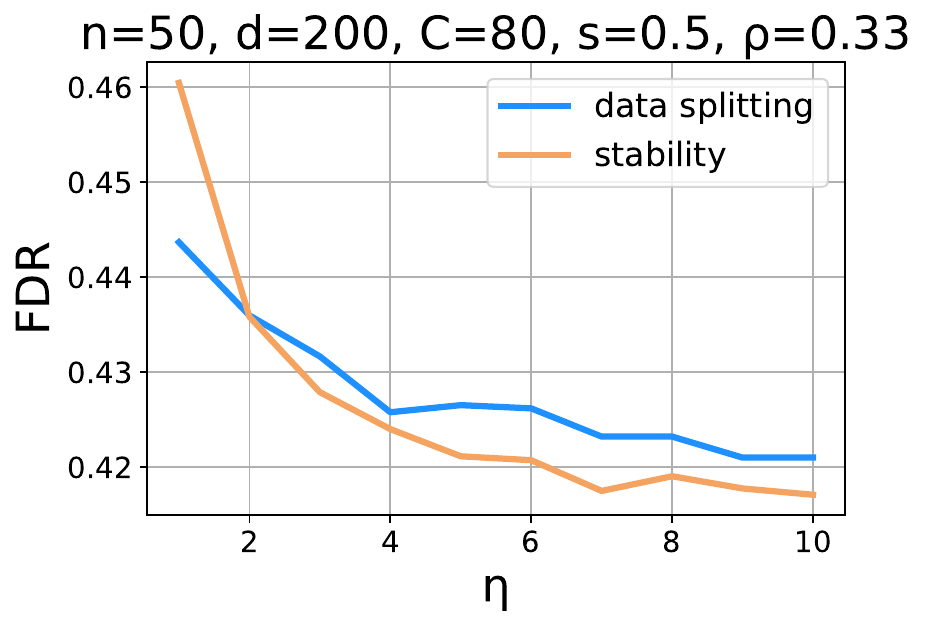}
\includegraphics[width=0.25\textwidth]{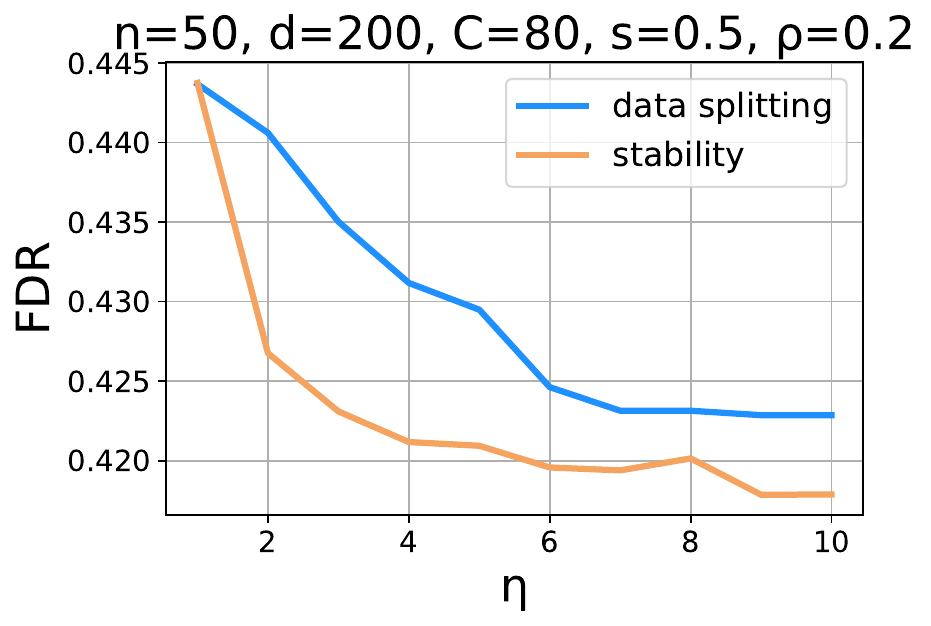}
\includegraphics[width=0.25\textwidth]{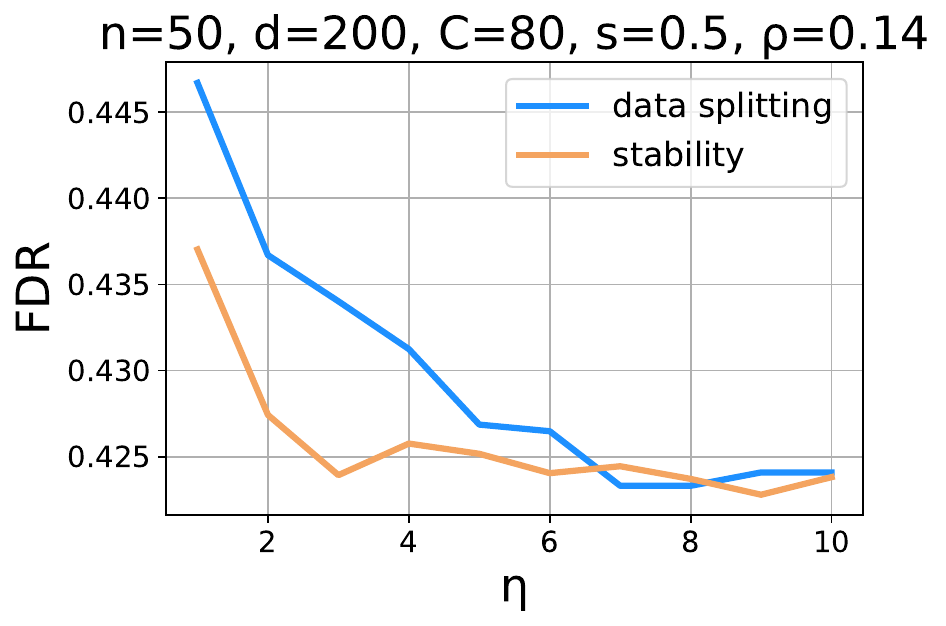}
\includegraphics[width=0.25\textwidth]{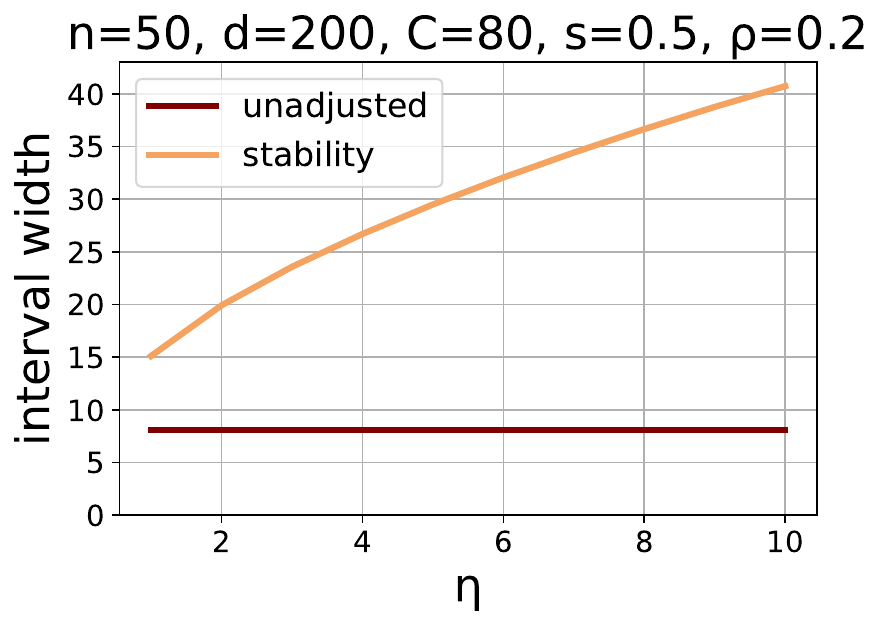}
}
\caption{Comparison of FDR after stable LASSO and LASSO with data splitting, with varying dimension and signal strength, in the Gaussian design case. In addition, we plot the average interval width (at $\rho=0.2$ only, however the width varies minimally with $\rho$) and the average unadjusted width.} 
\label{fig:lasso_comparison}
\end{figure}

\begin{figure}[b]
\centerline{\includegraphics[width=0.25\textwidth]{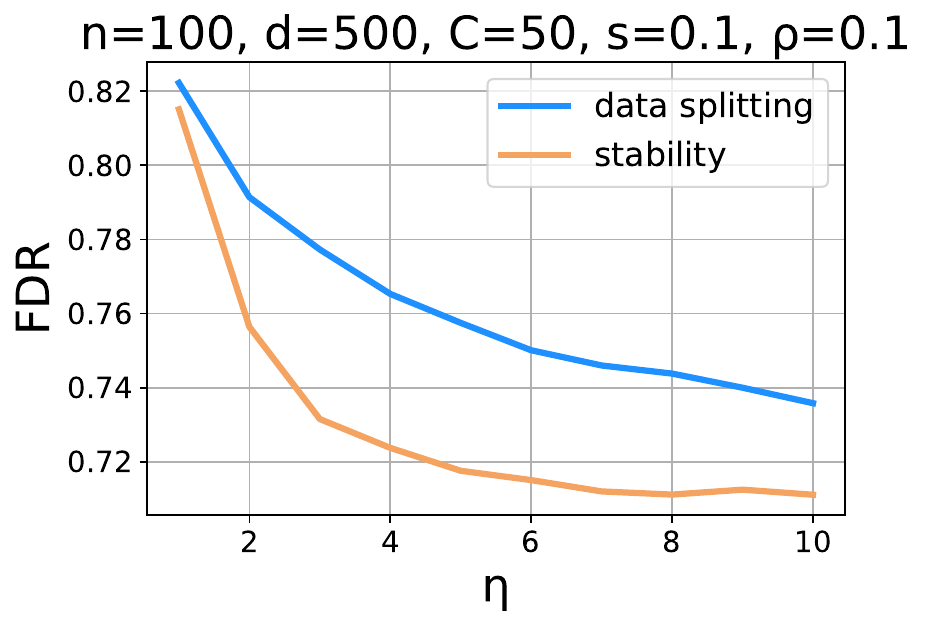}
\includegraphics[width=0.25\textwidth]{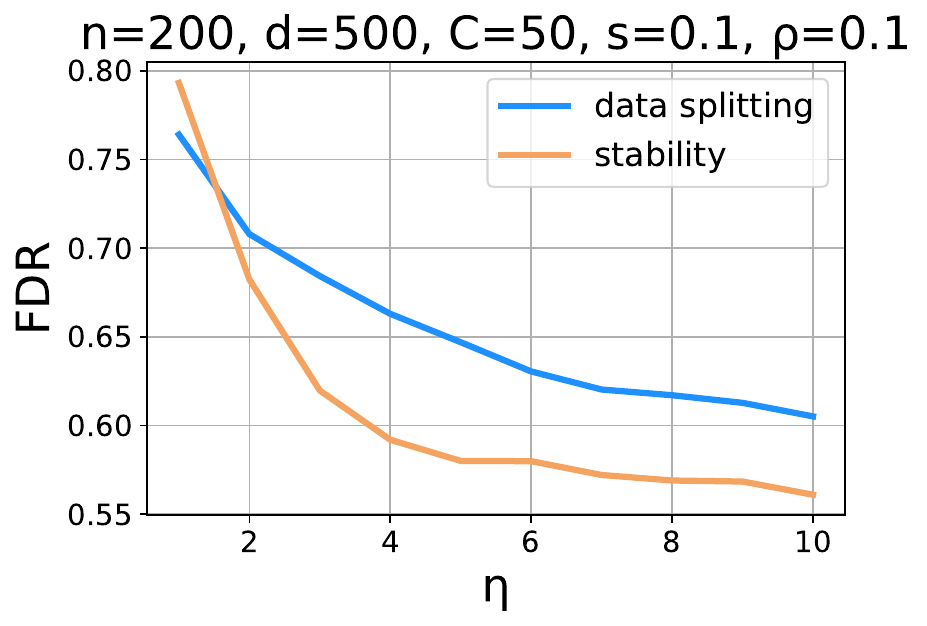}
\includegraphics[width=0.25\textwidth]{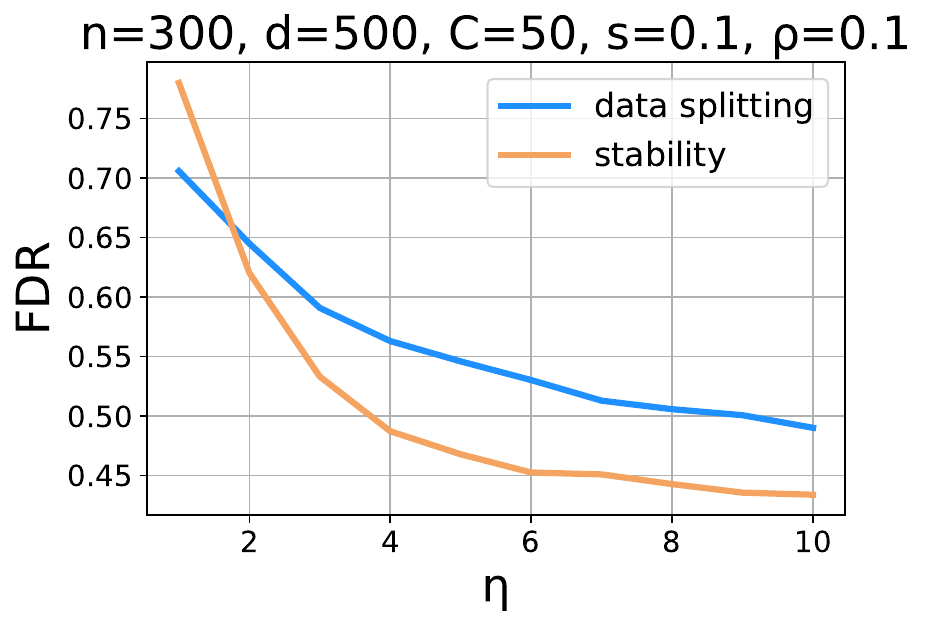}
\includegraphics[width=0.25\textwidth]{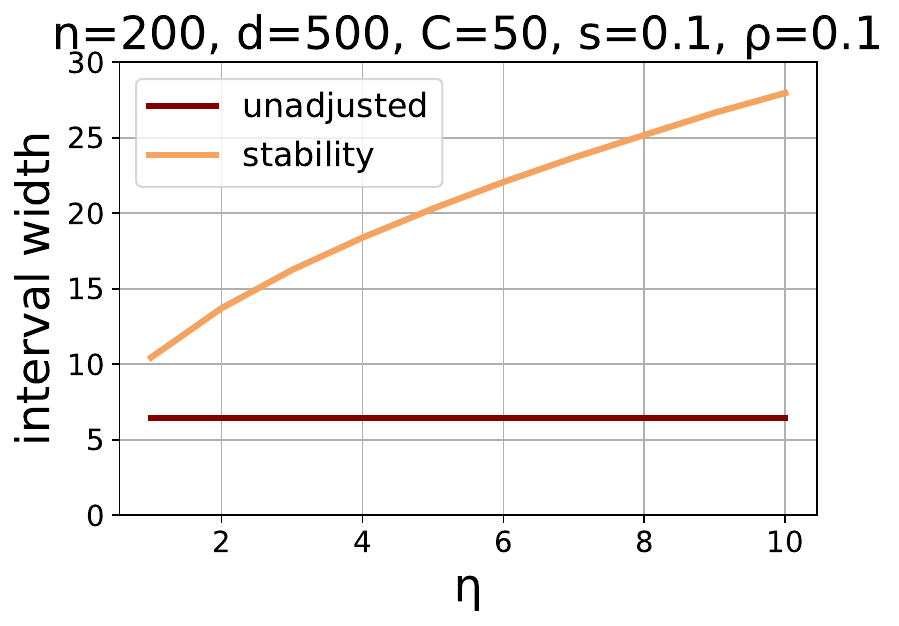}
}
\caption{Comparison of FDR after stable LASSO and LASSO with data splitting, with varying sample size, in the Gaussian design case. In addition, we plot the average interval width at $n=200$ and the average unadjusted width.} 
\label{fig:lasso_highdim}
\end{figure}

\subsection{Gaussian design} We first state the results for the Gaussian design case.

\paragraph{LASSO.}
In Figure \ref{fig:lasso_comparison} we compare the false discovery rate (FDR) of the stable LASSO algorithm and the LASSO algorithm with data splitting. In all plots $n=50$ is fixed and we vary $d\in\{50,100,200\}$. As we increase $d$, we also increase the size of the constraint set $C_1 \in \{20,40,80\}$ to allow more selections. We consider signal levels $\rho\in\{0.33, 0.2, 0.14\}$, which corresponds to an expected value of the non-null $\beta_i$ lying in $\{3,5,7\}$, and we fix $s=0.5$.

We observe that stability generally outperforms data splitting as $\eta$ grows, equivalently when the splitting fraction $f(\eta)$ grows, as well as when the signal strength grows. In Figure~\ref{fig:lasso_comparison} we additionally plot the average width of stable intervals against the average width of naive, unadjusted intervals. Note that the intervals obtained via data splitting have essentially the same width (and are hence not plotted), based on how $f(\eta)$ is chosen. We only plot interval width for $\rho = 0.2$ since the width varies minimally for different values of $\rho$. For completeness we include all plots of interval width in the Supplement.
 
 \begin{figure}[t]
\centerline{\includegraphics[width=0.25\textwidth]{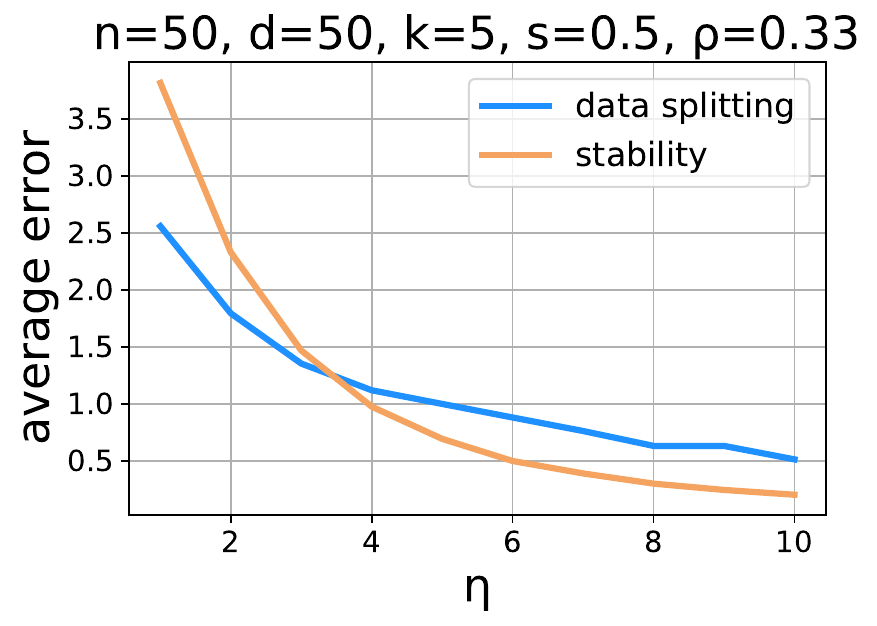}
\includegraphics[width=0.25\textwidth]{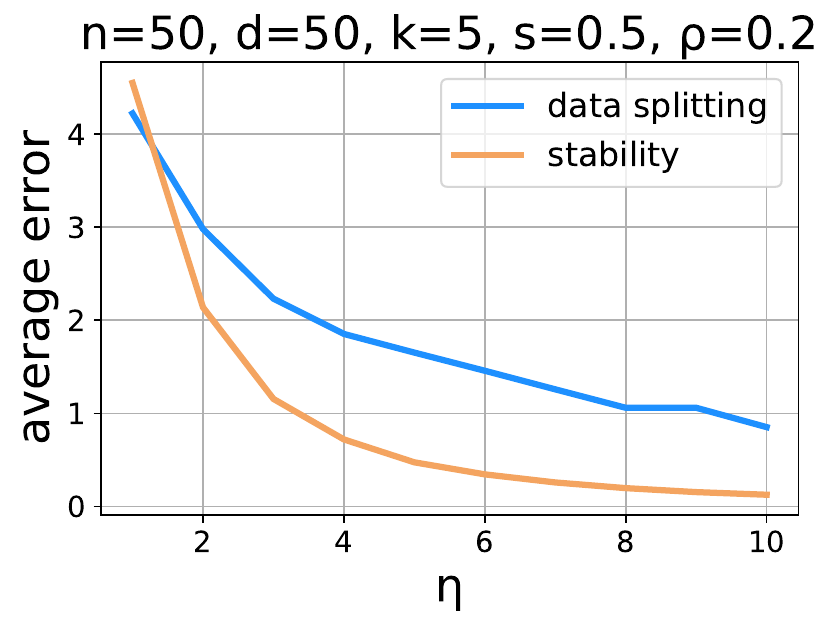}
\includegraphics[width=0.25\textwidth]{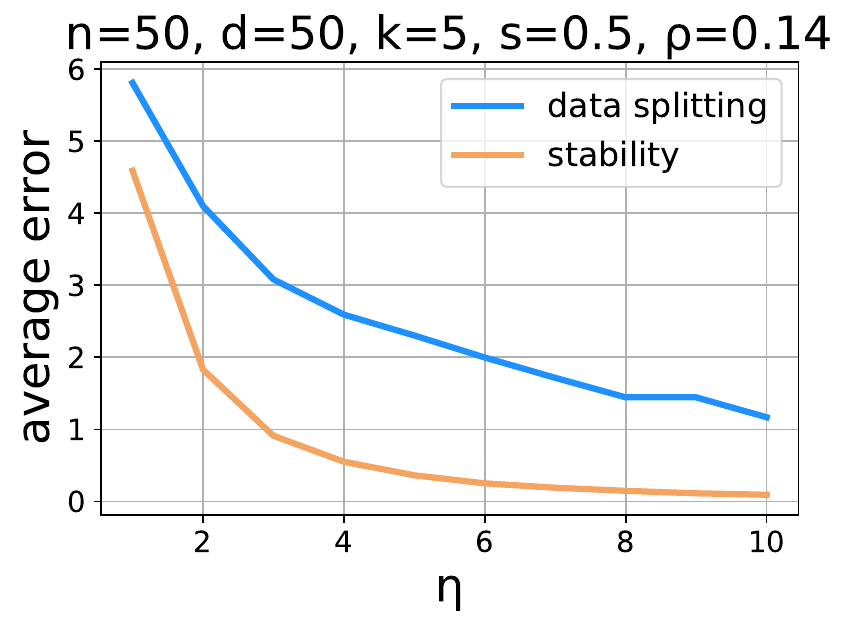}
\includegraphics[width=0.25\textwidth]{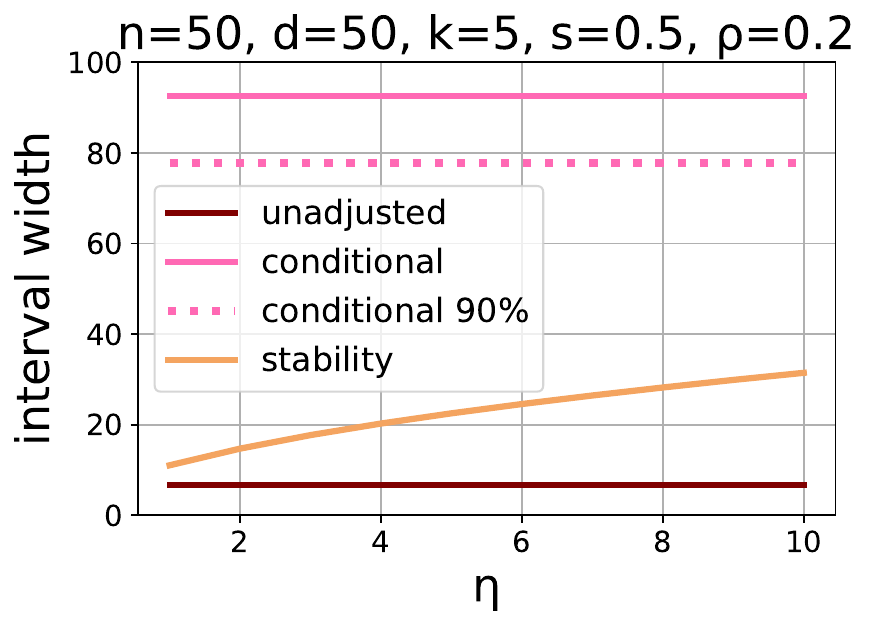}
}
\centerline{\includegraphics[width=0.25\textwidth]{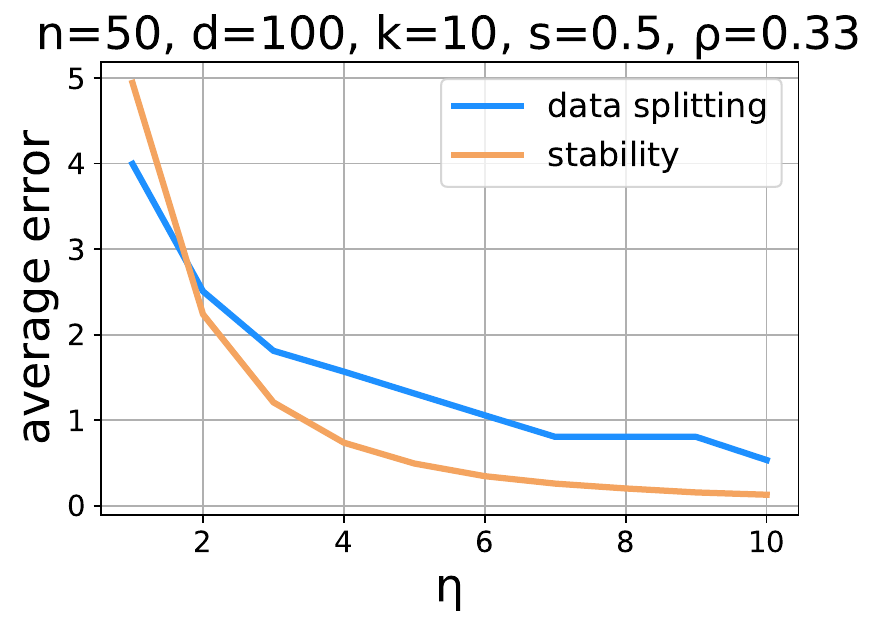}
\includegraphics[width=0.25\textwidth]{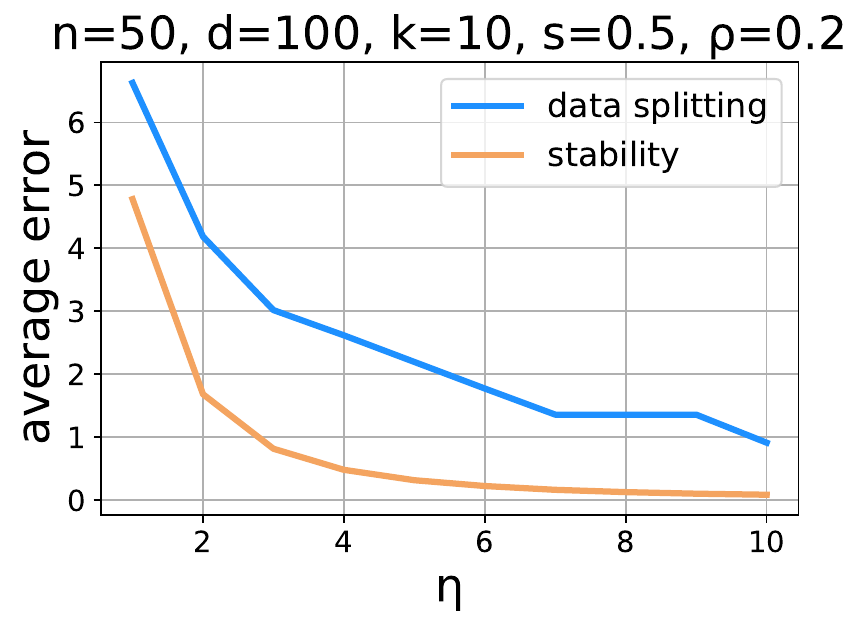}
\includegraphics[width=0.25\textwidth]{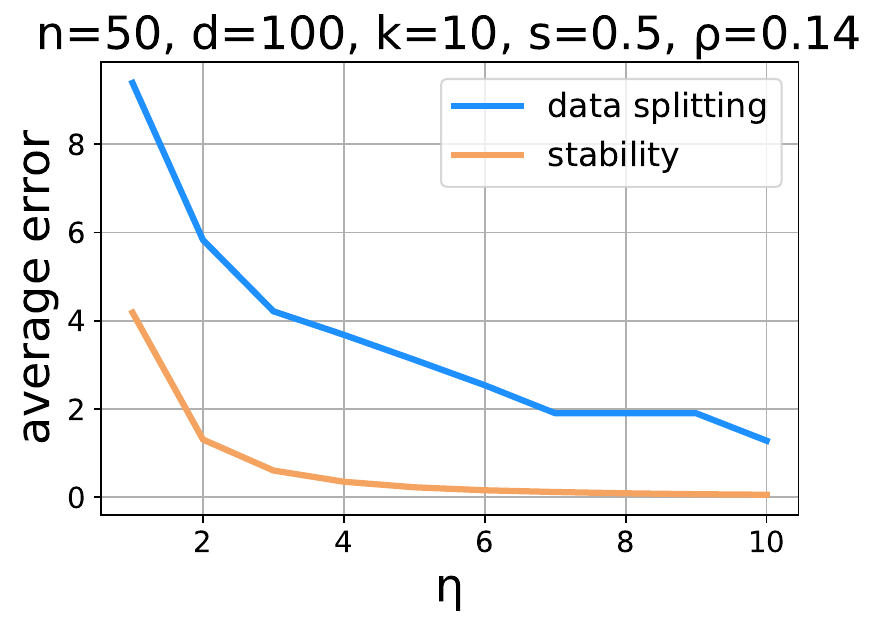}
\includegraphics[width=0.25\textwidth]{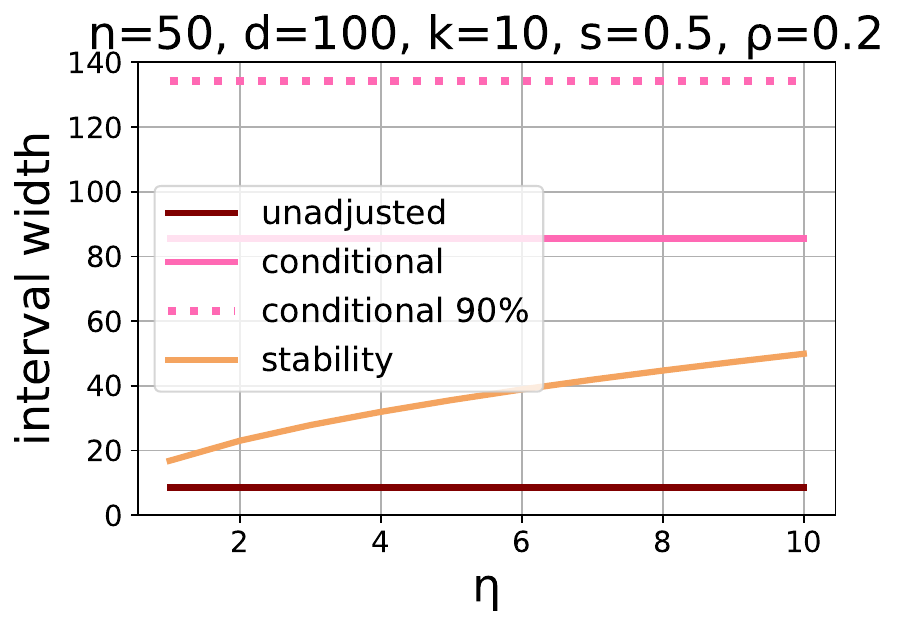}
}
\centerline{\includegraphics[width=0.25\textwidth]{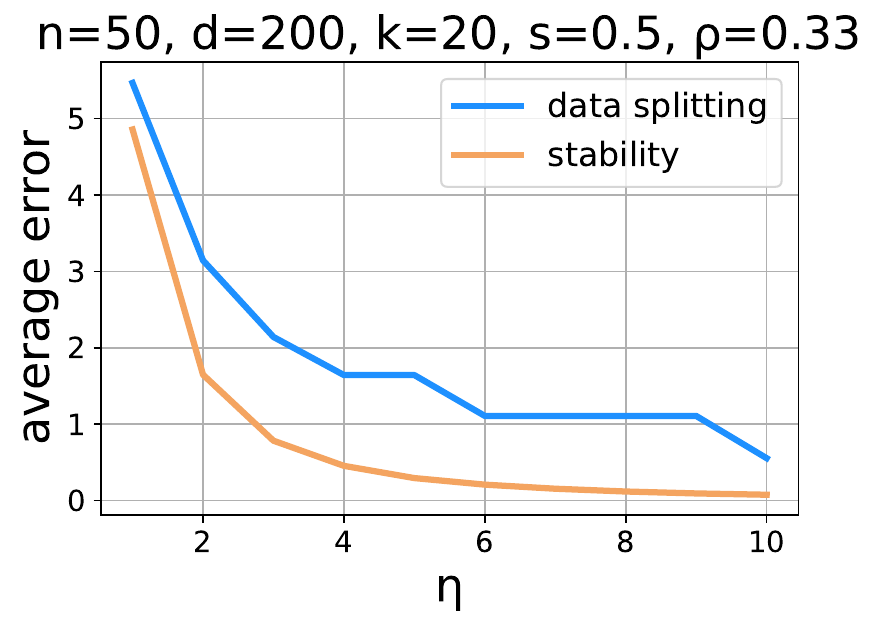}
\includegraphics[width=0.25\textwidth]{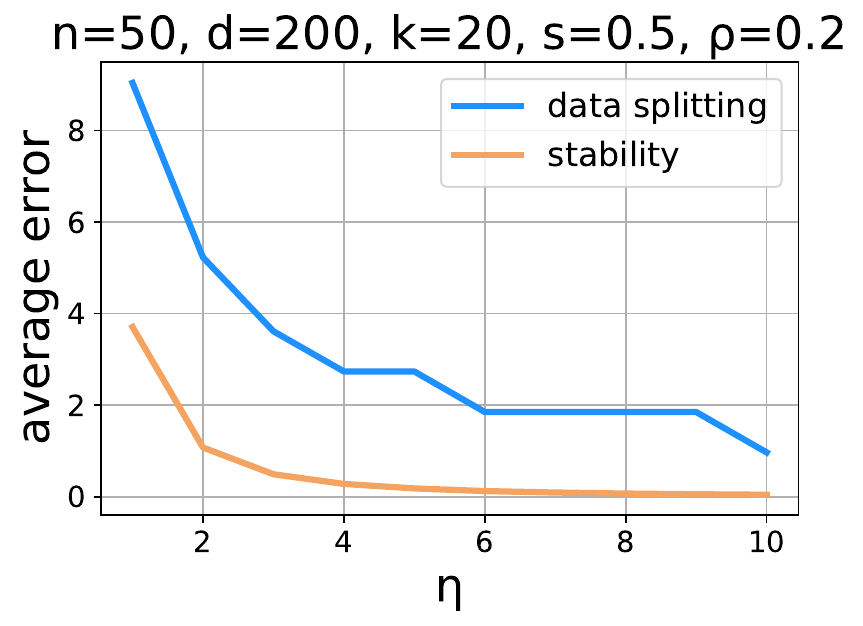}
\includegraphics[width=0.25\textwidth]{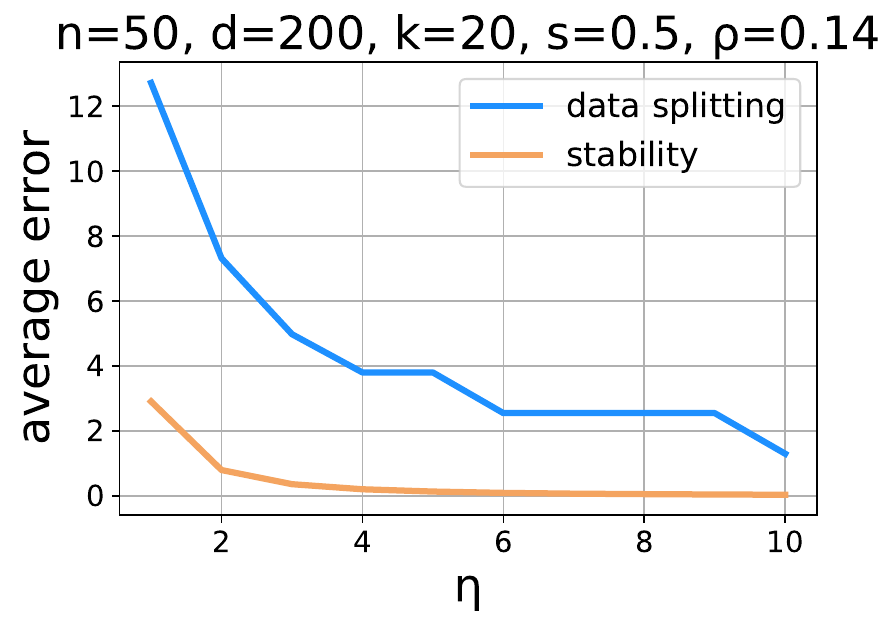}
\includegraphics[width=0.25\textwidth]{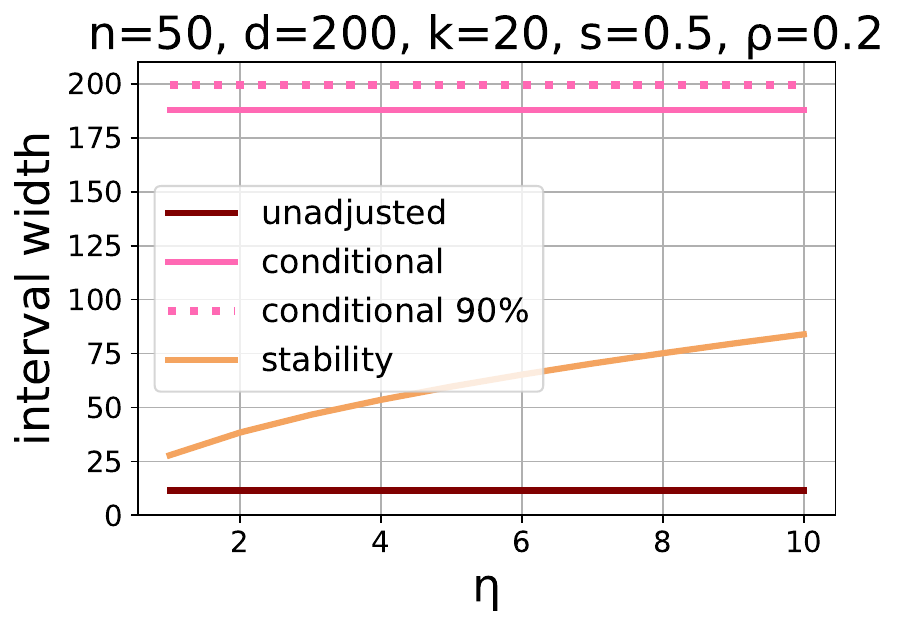}
}
\caption{Comparison of average error after stable marginal screening and marginal screening with data splitting, with varying dimension and signal strength, in the Gaussian design case. In addition, we plot the average interval width (at $\rho=0.2$ only, however the width varies minimally with $\rho$), together with the average unadjusted width and the width obtained via the conditional correction of Lee and Taylor~\cite{lee2014exact}. We also plot the $90\%$ quantile of the conditional width because it varies greatly across realizations.} 
\label{fig:screening_comparison}
\end{figure}

\begin{figure}[b]
\centerline{\includegraphics[width=0.25\textwidth]{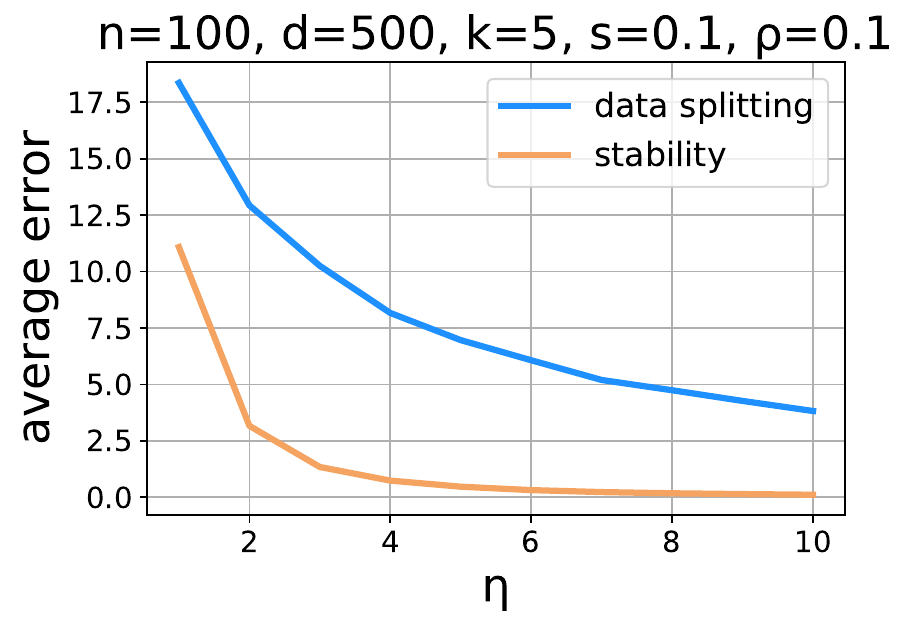}
\includegraphics[width=0.25\textwidth]{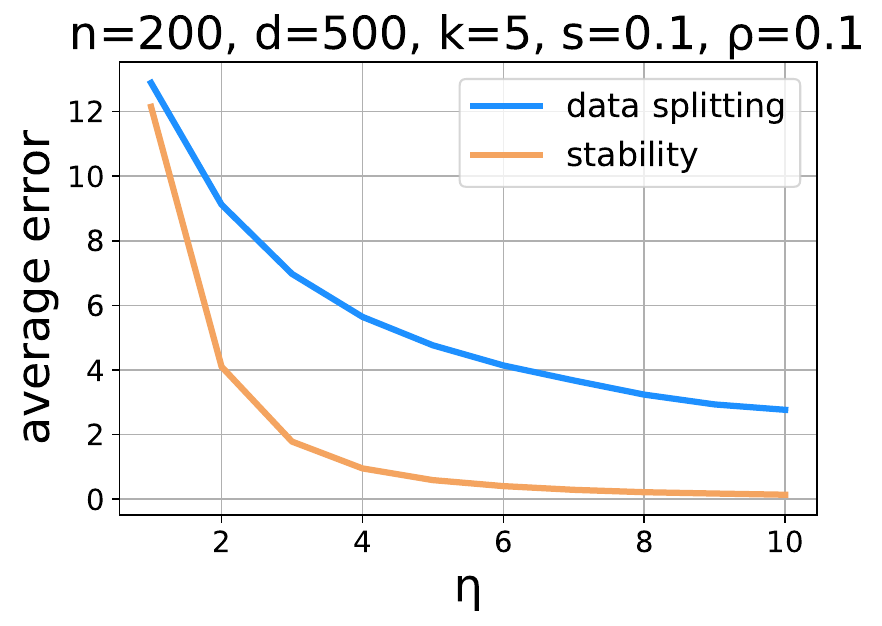}
\includegraphics[width=0.25\textwidth]{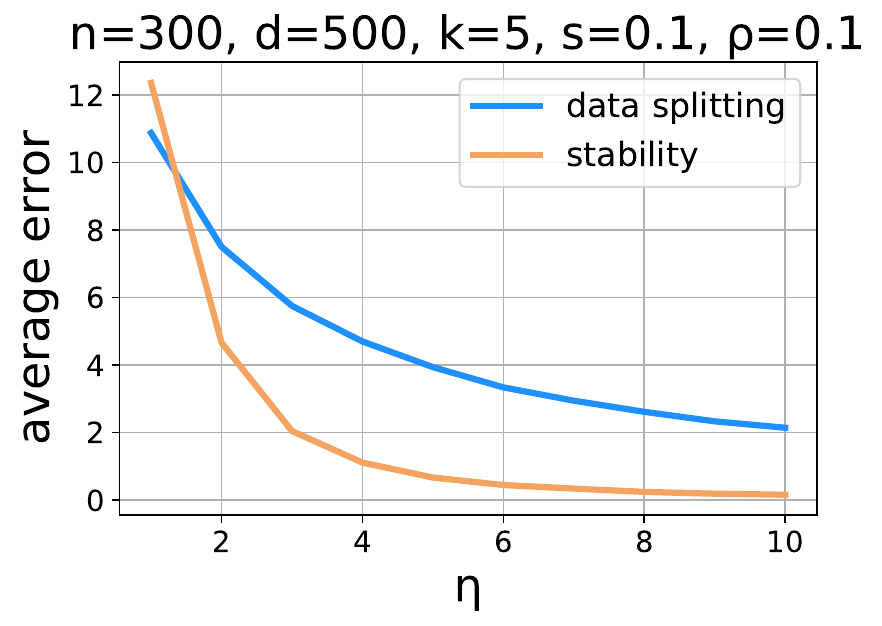}
\includegraphics[width=0.25\textwidth]{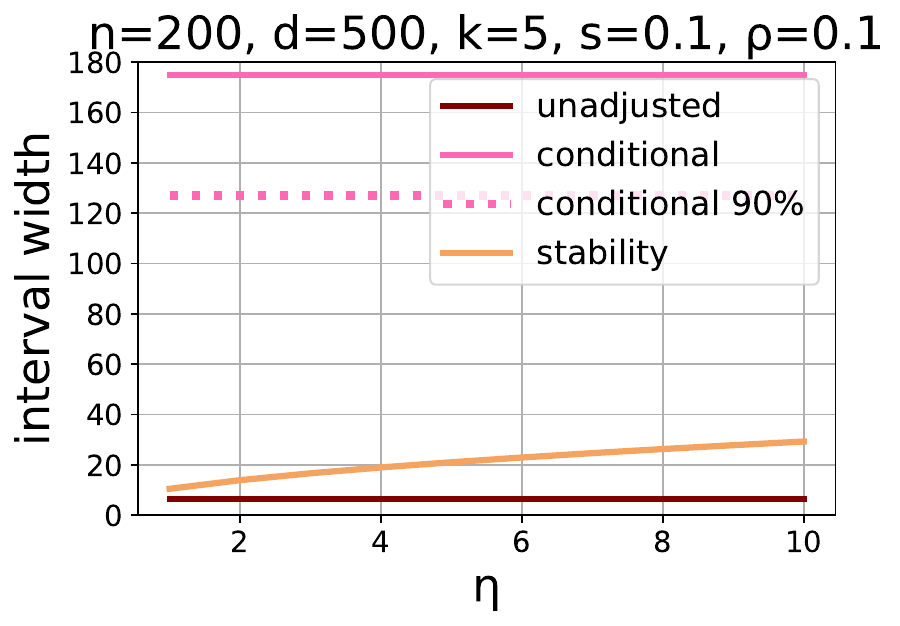}
}
\caption{Comparison of average error after stable marginal screening and marginal screening with data splitting, with varying sample size, in the Gaussian design case. In addition, we plot the average interval width at $n=200$, together with the average unadjusted width and the width implied by the conditional approach of Lee and Taylor~\cite{lee2014exact}.} 
\label{fig:ms_highdim}
\end{figure}

In Figure \ref{fig:lasso_highdim} we compare the stable LASSO algorithm and the LASSO with data splitting in a sparse high-dimensional setting with $d=500, s = 0.1$, and we vary the sample size $n\in\{100,200,300\}$. We fix $\rho = 0.1$. We observe that stability consistently outperforms data splitting for large enough $\eta$ and this gap grows with $n$. In addition, we plot the average interval width implied by stability against the average unadjusted interval width at $n=200$ (again we do not plot the interval width given by data splitting for the same reason as in Figure \ref{fig:lasso_comparison}). We include the plots of all interval widths in the Supplement.

\begin{figure}[t]
\centerline{\includegraphics[width=0.25\textwidth]{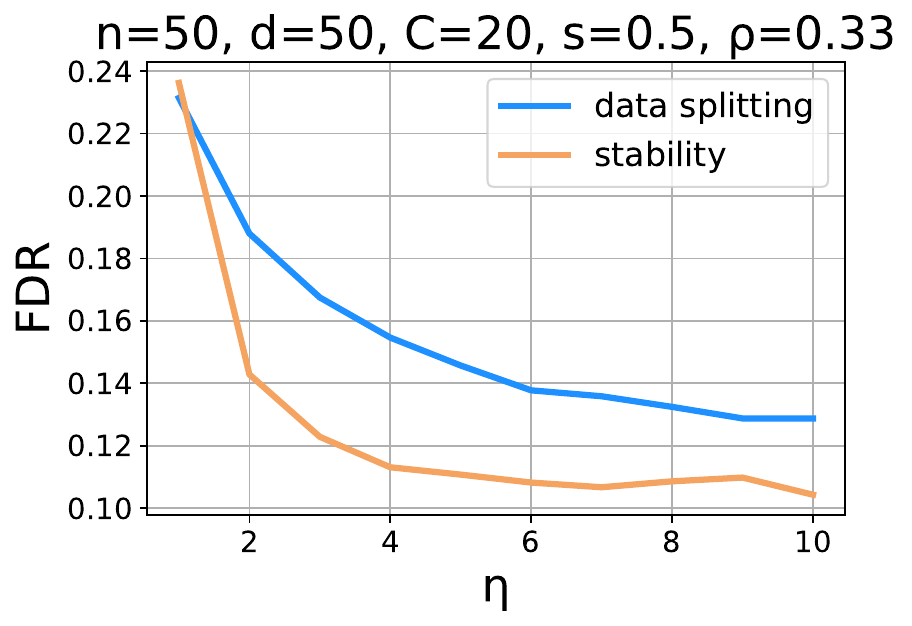}
\includegraphics[width=0.25\textwidth]{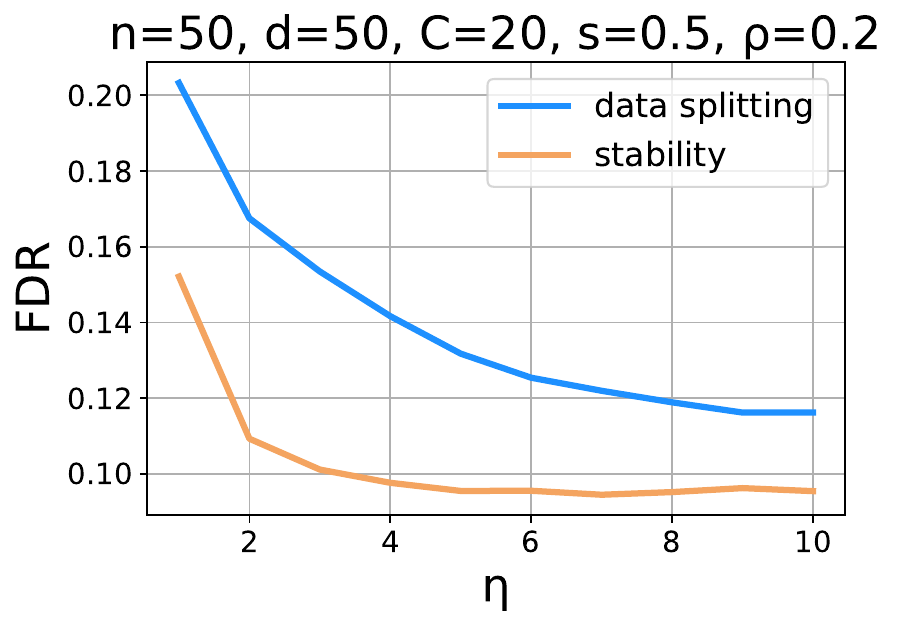}
\includegraphics[width=0.25\textwidth]{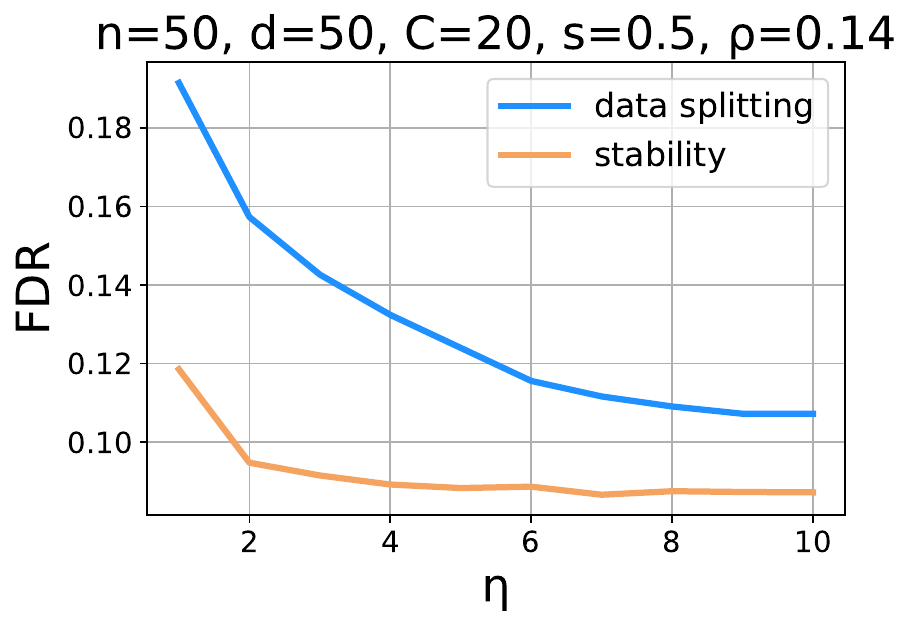}
\includegraphics[width=0.25\textwidth]{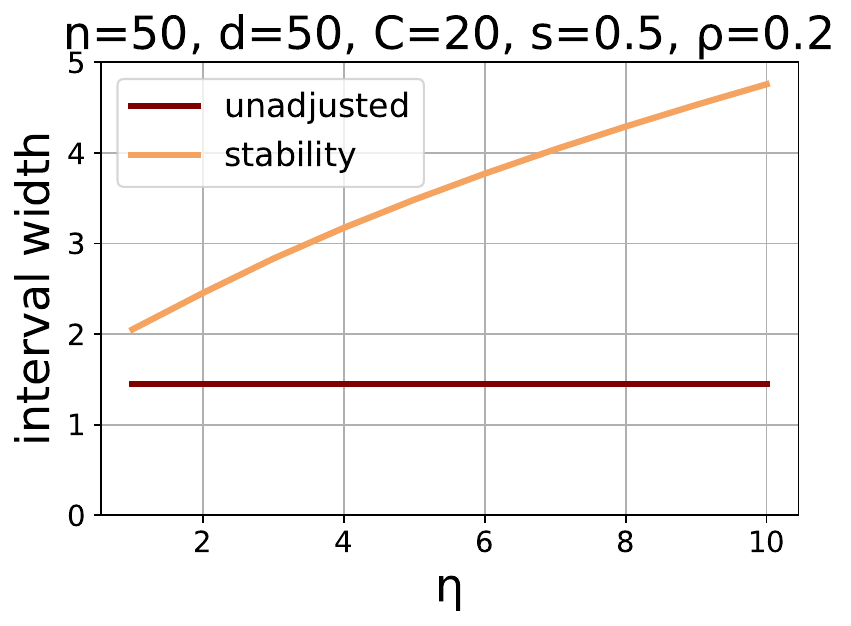}
}
\centerline{\includegraphics[width=0.25\textwidth]{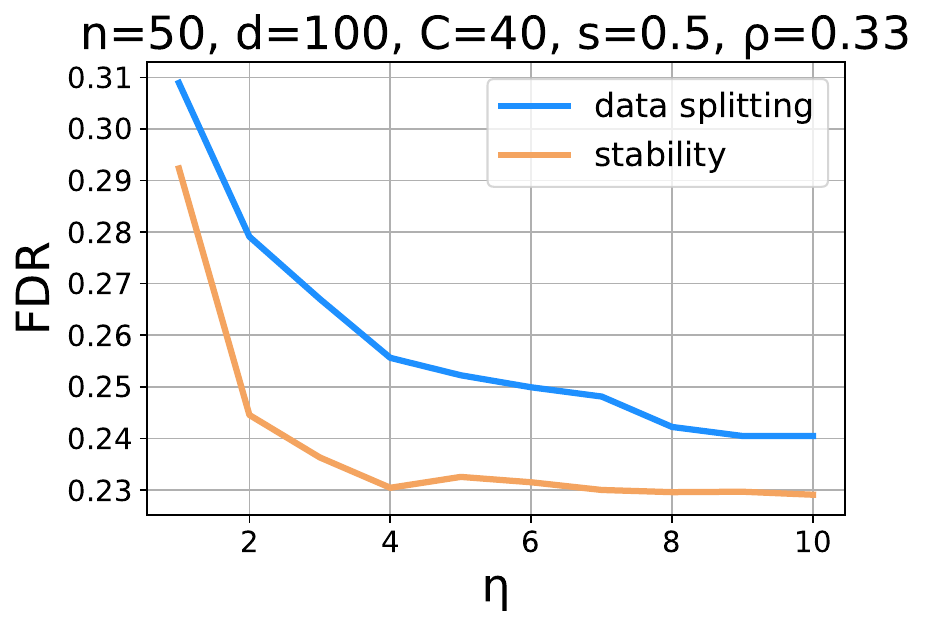}
\includegraphics[width=0.25\textwidth]{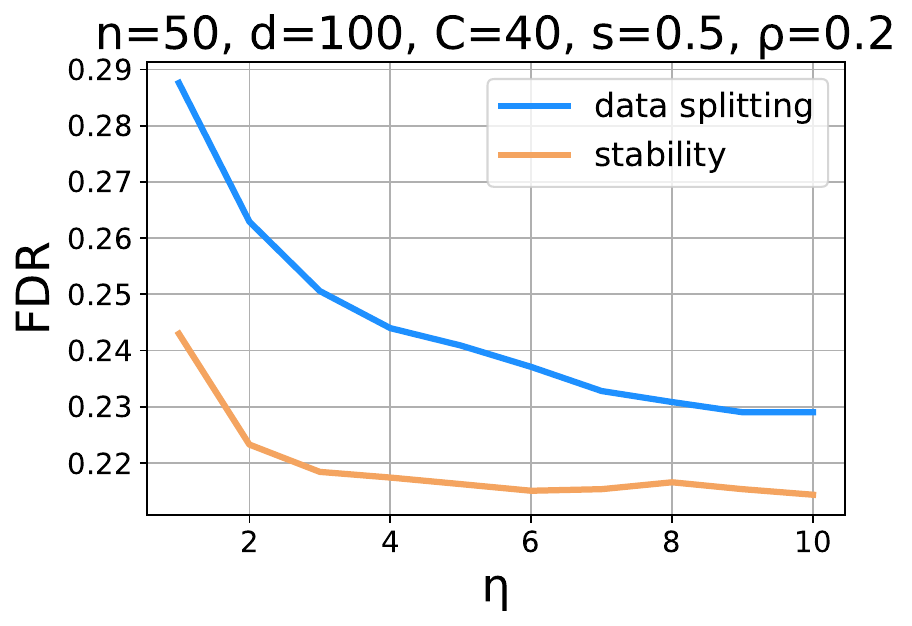}
\includegraphics[width=0.25\textwidth]{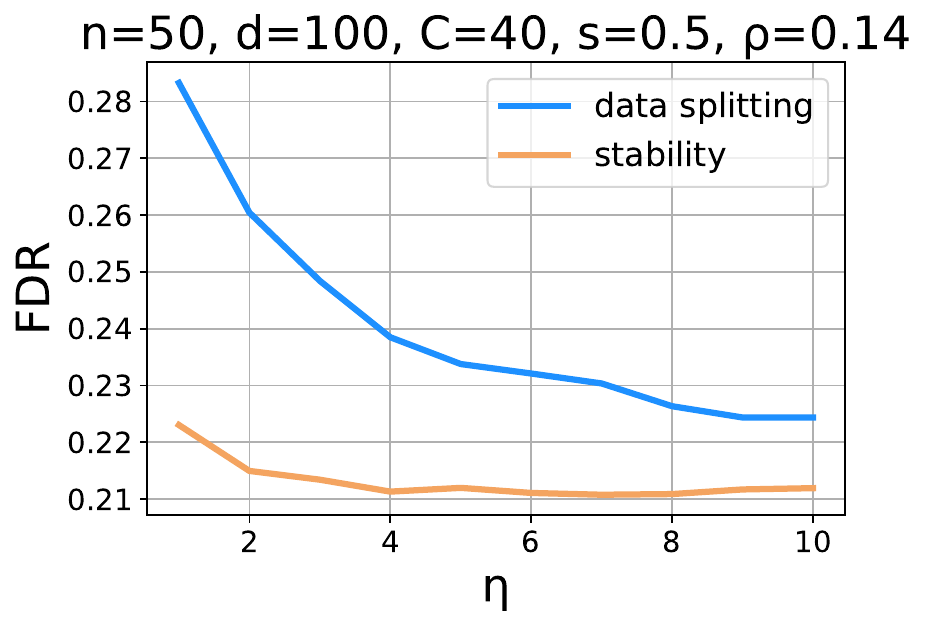}
\includegraphics[width=0.25\textwidth]{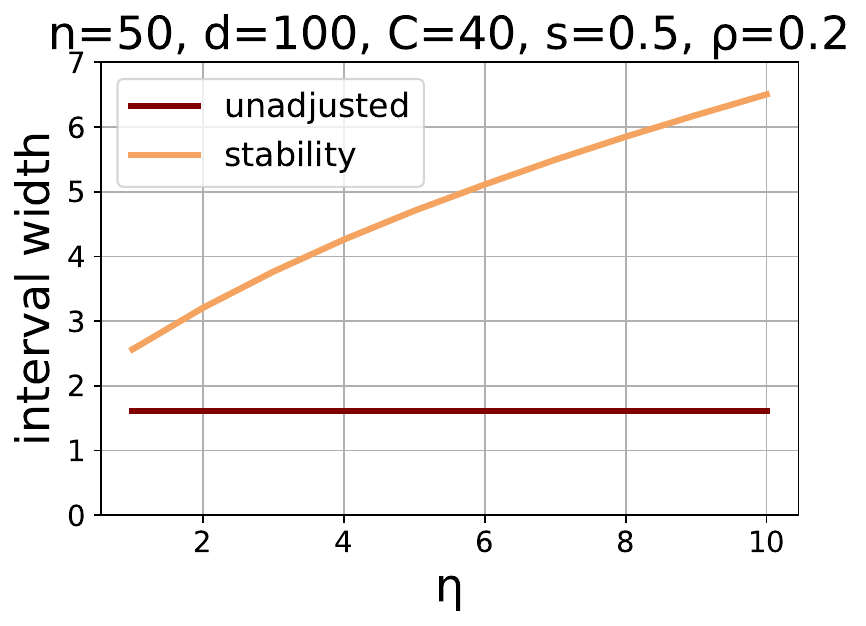}
}
\centerline{\includegraphics[width=0.25\textwidth]{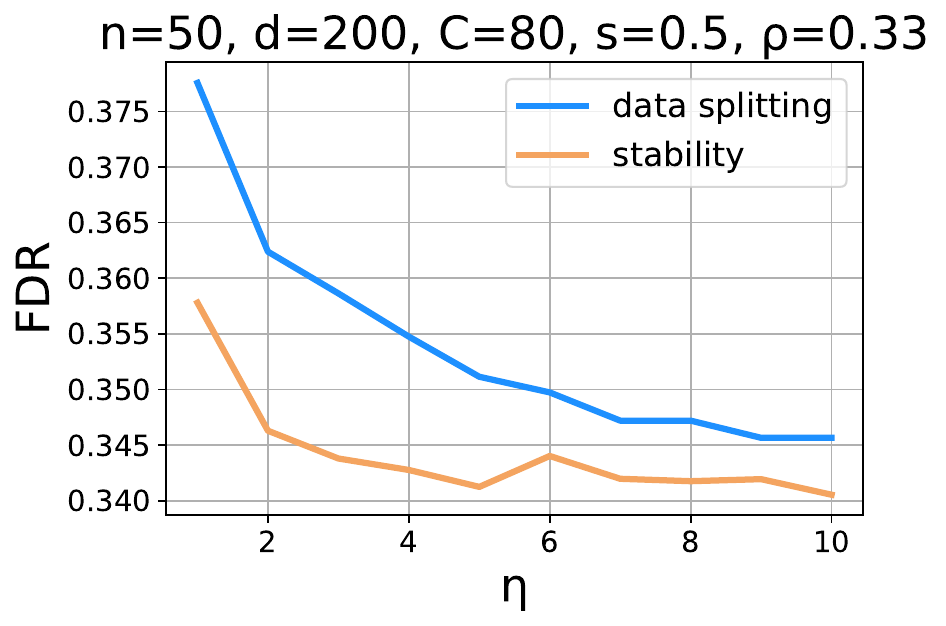}
\includegraphics[width=0.25\textwidth]{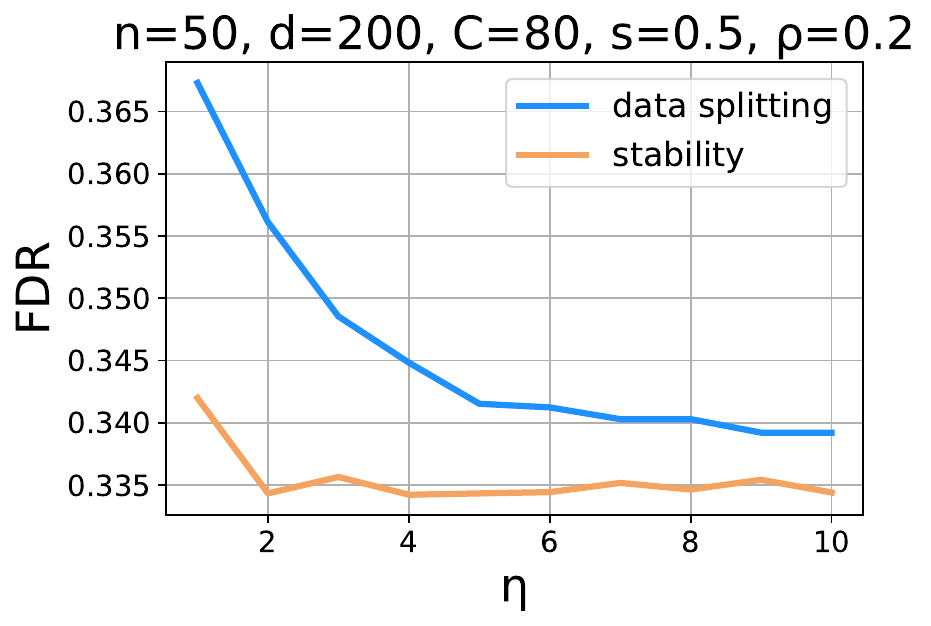}
\includegraphics[width=0.25\textwidth]{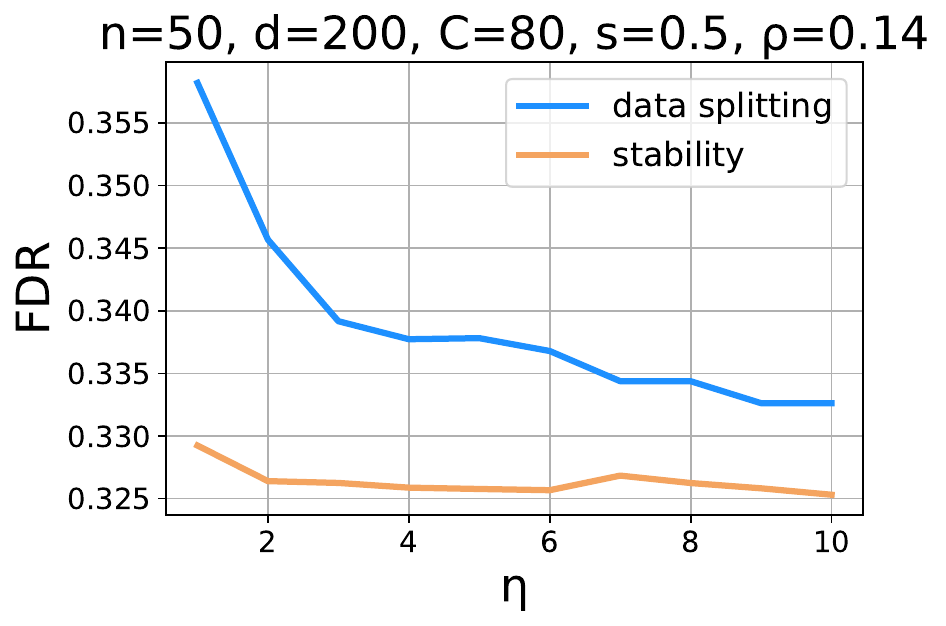}
\includegraphics[width=0.25\textwidth]{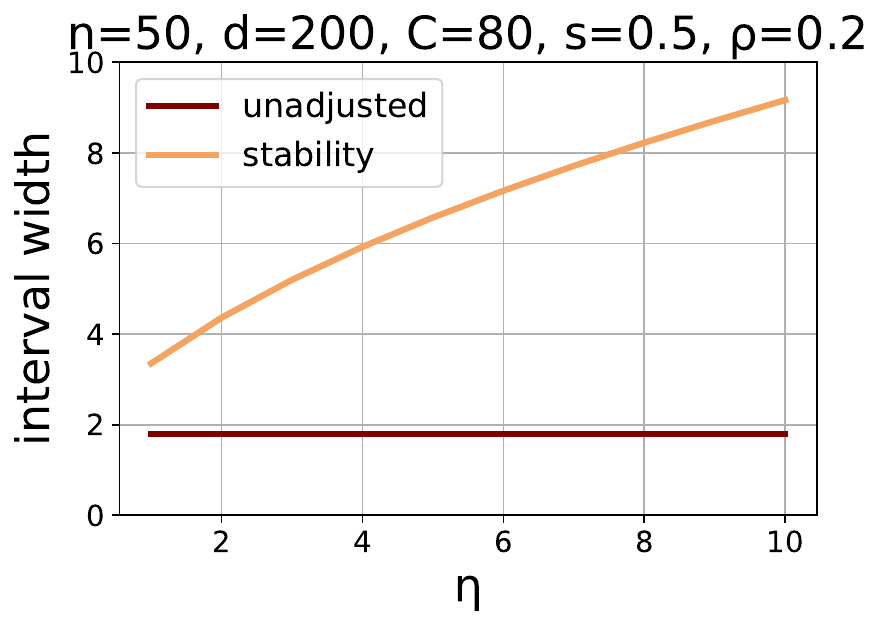}
}
\caption{Comparison of FDR after stable LASSO and LASSO with data splitting, with varying dimension and signal strength, in the Bernoulli design case. In addition, we plot the average interval width (at $\rho=0.2$ only, however the width varies minimally with $\rho$) and the average unadjusted width.} 
\label{fig:lasso_comparison_bern}
\end{figure}

\begin{figure}[b]
\centerline{\includegraphics[width=0.25\textwidth]{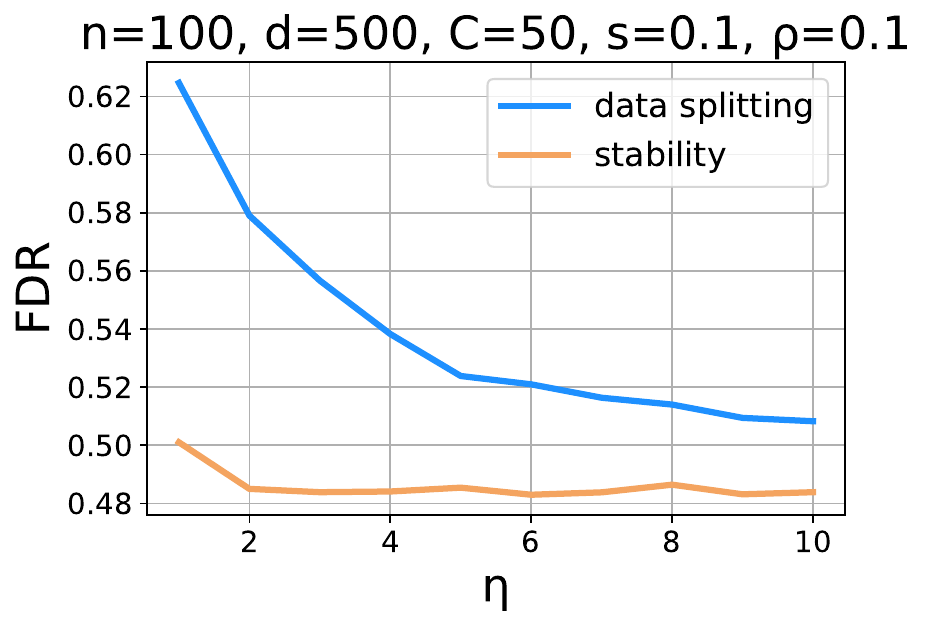}
\includegraphics[width=0.25\textwidth]{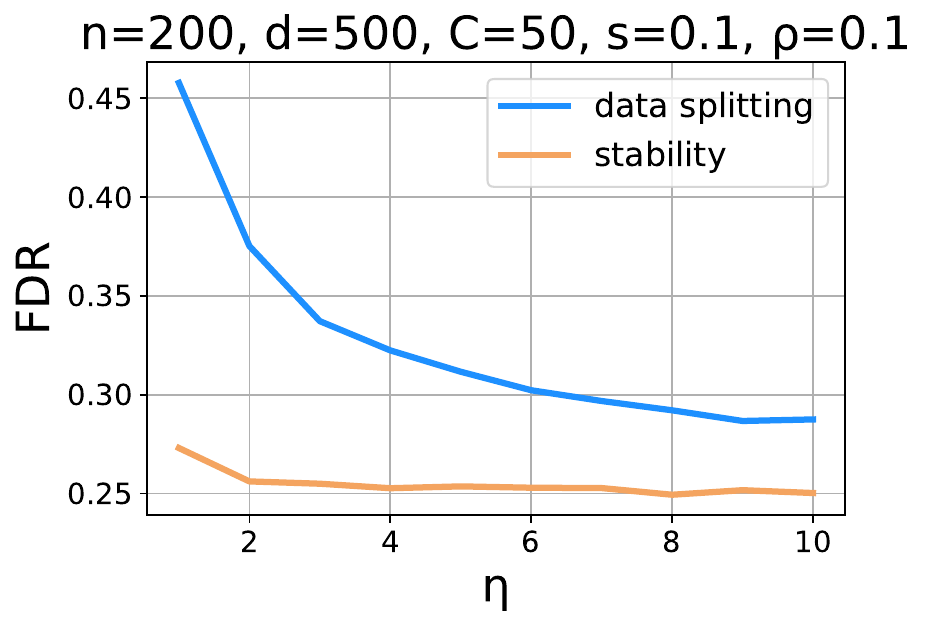}
\includegraphics[width=0.25\textwidth]{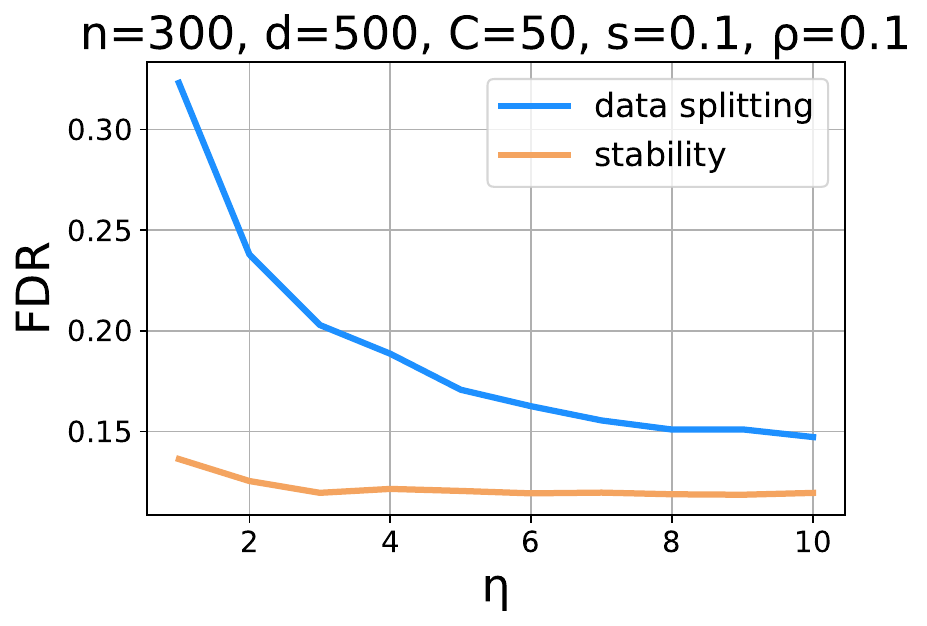}
\includegraphics[width=0.25\textwidth]{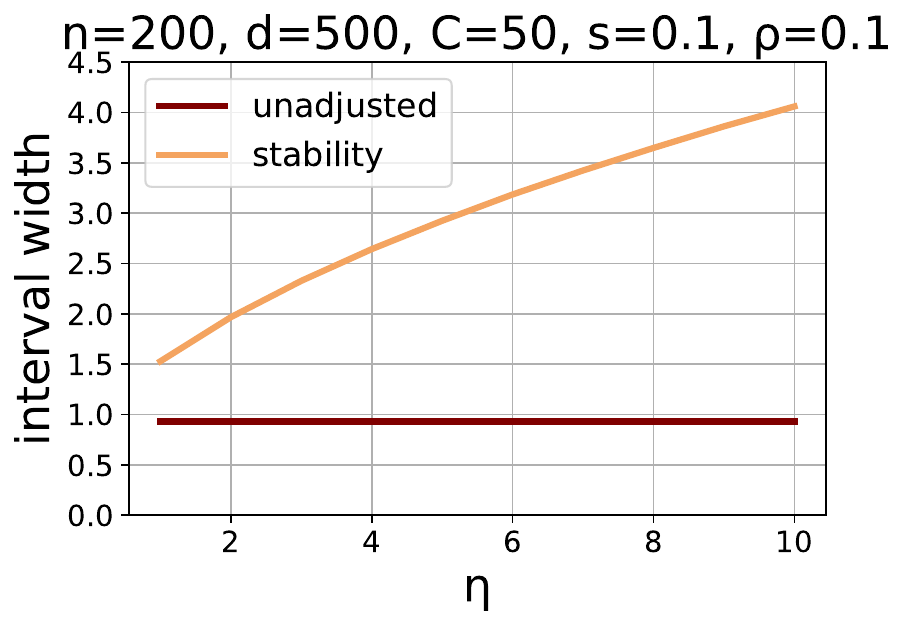}
}
\caption{Comparison of FDR after stable LASSO and LASSO with data splitting, with varying sample size, in the Bernoulli design case. In addition, we plot the average interval width at $n=200$ and the average unadjusted width.} 
\label{fig:lasso_highdim_bern}
\end{figure}

\paragraph{Marginal screening.}
In Figure \ref{fig:screening_comparison} we compare the average error of stable marginal screening and marginal screening with data splitting. Since marginal screening explicitly aims to maximize the values $|X_i^\top y|$ for selected variables $X_i$, we quantify the error as $\frac{1}{k}\sum_{t=1}^k (|X_{i_t^*}^\top y| - |X_{i_t}^\top y|)$, where $i_t$ is the estimated index of the $t$-th largest absolute inner product (based on a subsample in the case of data splitting, or based on a randomized sample in the case of stability), and $i_t^*$ is the true index of the $t$-th largest absolute inner product in the data set. We vary the parameters as in the LASSO comparison in Figure \ref{fig:lasso_comparison}, only instead of varying $C_1$ we vary $k\in\{5,10,20\}$. We also plot the average interval width with stability, together with the unadjusted interval width and the average width obtained via the conditional method of Lee and Taylor~\cite{lee2014exact} with no randomization. For the conditional method, since the intervals are sometimes orders of magnitude larger than the average width, we also plot the $90\%$ quantile of interval width. We see that stability typically outperforms data splitting in terms of the average error, and this benefit is more pronounced for larger $\eta$ and signal strength. In terms of interval width, we observe that stability leads to significantly smaller intervals than the conditional approach. We only plot interval width when $\rho=0.2$, and defer the remaining plots to the Supplement.

In Figure \ref{fig:ms_highdim} we consider a setting analogous to that of Figure \ref{fig:lasso_highdim}, and we analogously vary the sample size $n$. We again see that stability generally dominates data splitting. Moreover, the gap between the intervals obtained via stability and those of Lee and Taylor~\cite{lee2014exact} is even more pronounced than in Figure~\ref{fig:screening_comparison}. We provide the plots of all interval widths in the Supplement.

\subsection{Bernoulli design} Now we consider the Bernoulli design case. The motivation for considering a sparse Bernoulli design lies in the fact that certain directions in the column space of $X$ are captured by only a few samples, hence missing out on them---as is possible with data splitting---can significantly affect the quality of selection.

\begin{figure}[t]
\centerline{\includegraphics[width=0.25\textwidth]{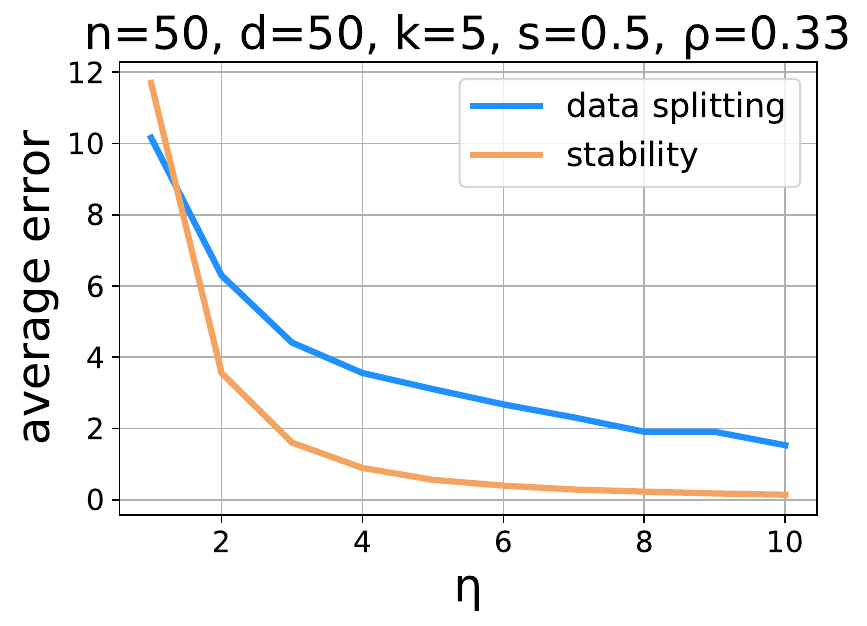}
\includegraphics[width=0.25\textwidth]{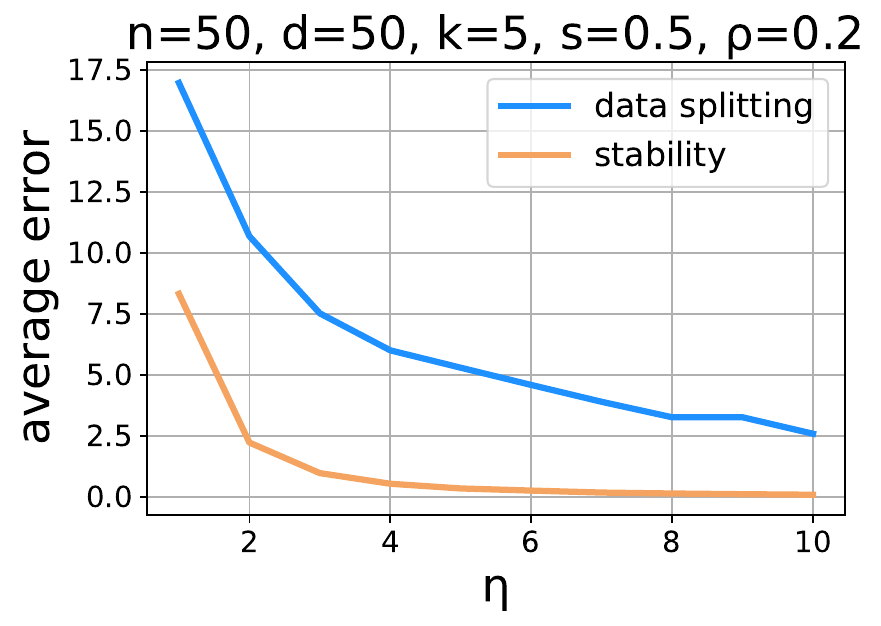}
\includegraphics[width=0.25\textwidth]{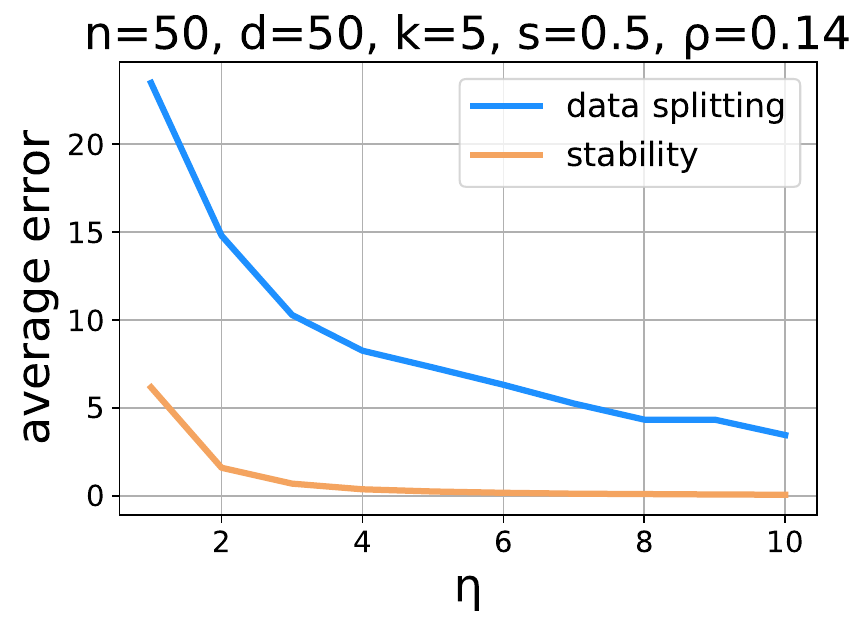}
\includegraphics[width=0.25\textwidth]{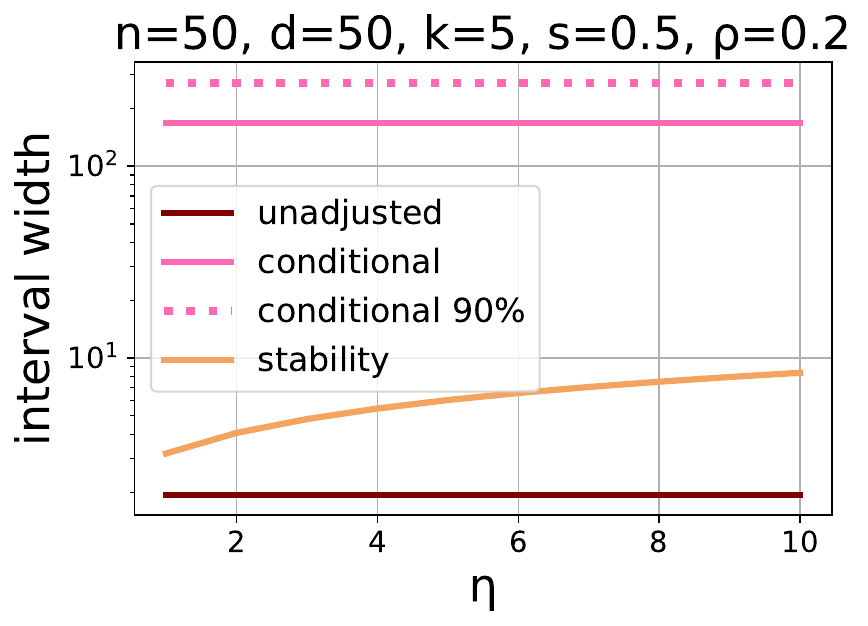}
}
\centerline{
\includegraphics[width=0.25\textwidth]{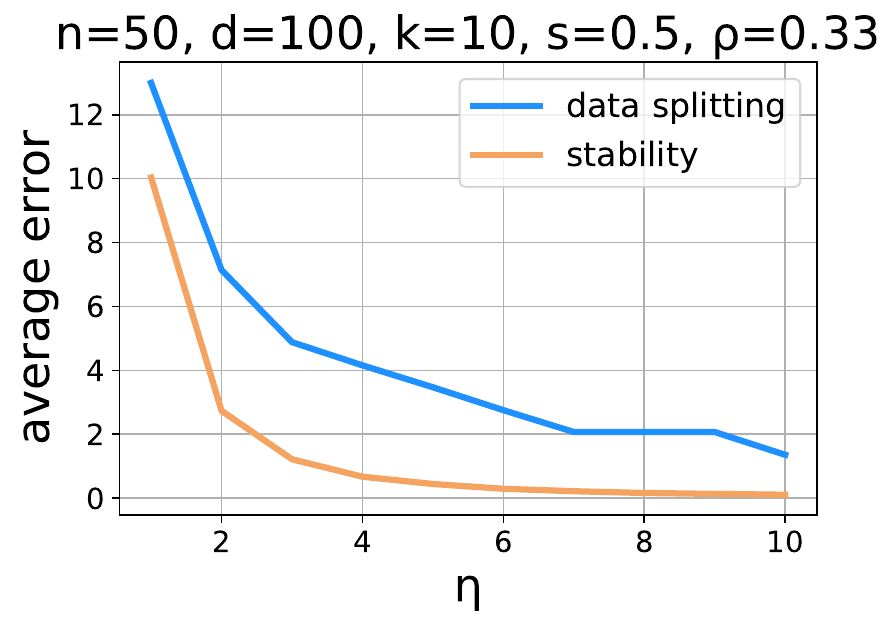}
\includegraphics[width=0.25\textwidth]{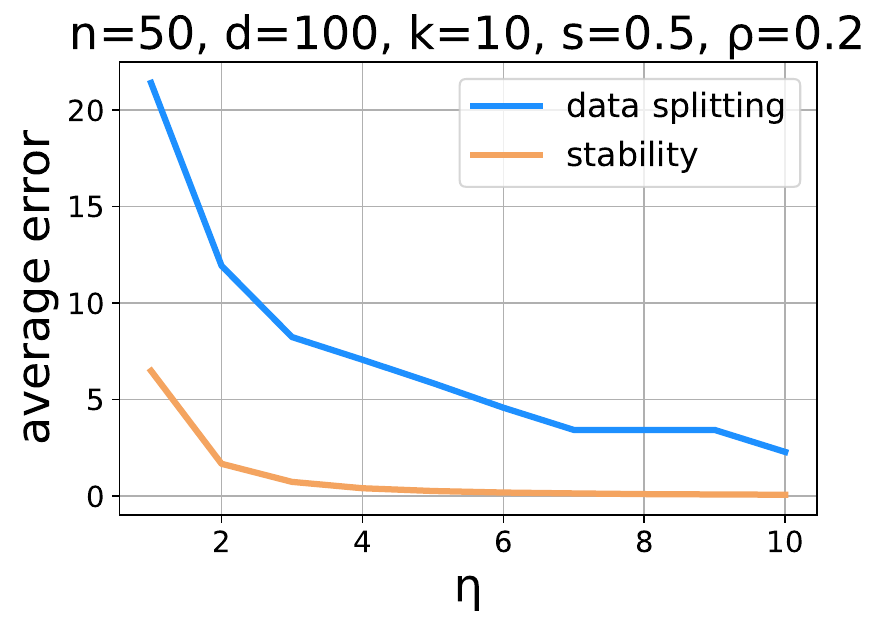}
\includegraphics[width=0.25\textwidth]{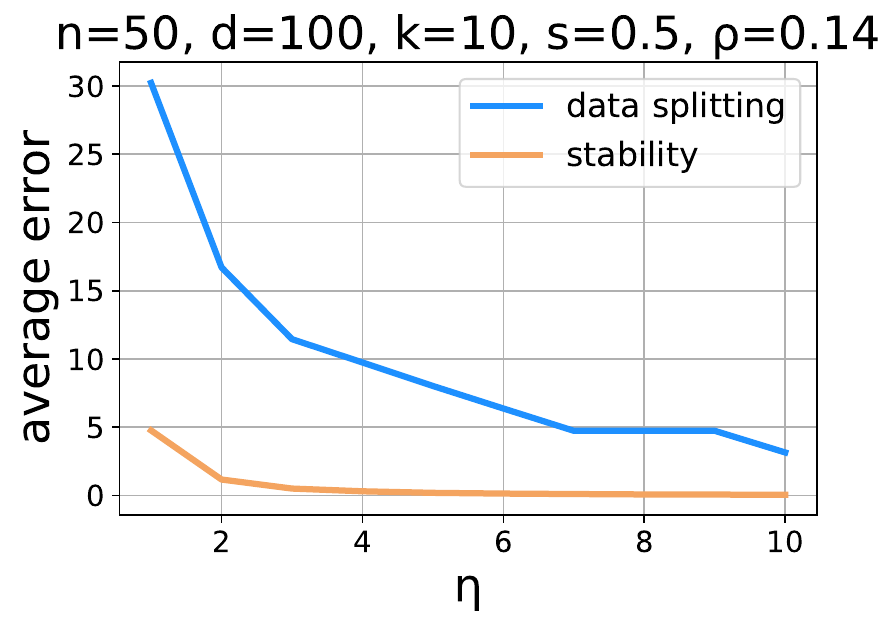}
\includegraphics[width=0.25\textwidth]{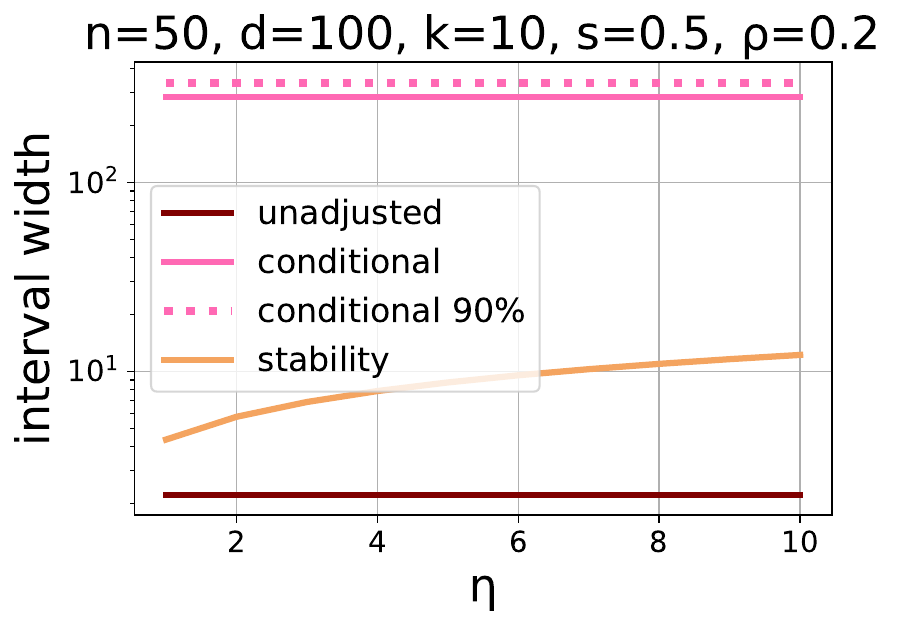}
}
\centerline{\includegraphics[width=0.25\textwidth]{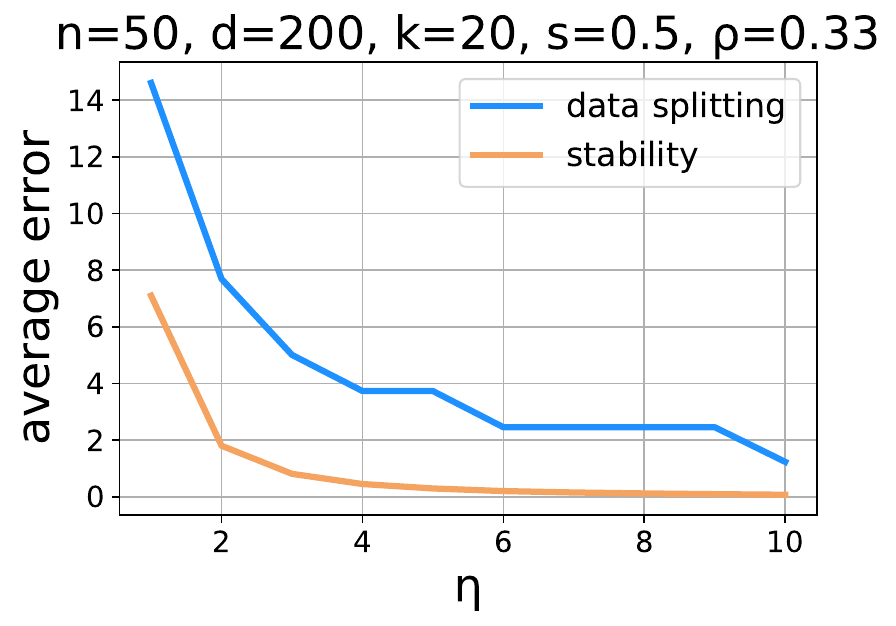}
\includegraphics[width=0.25\textwidth]{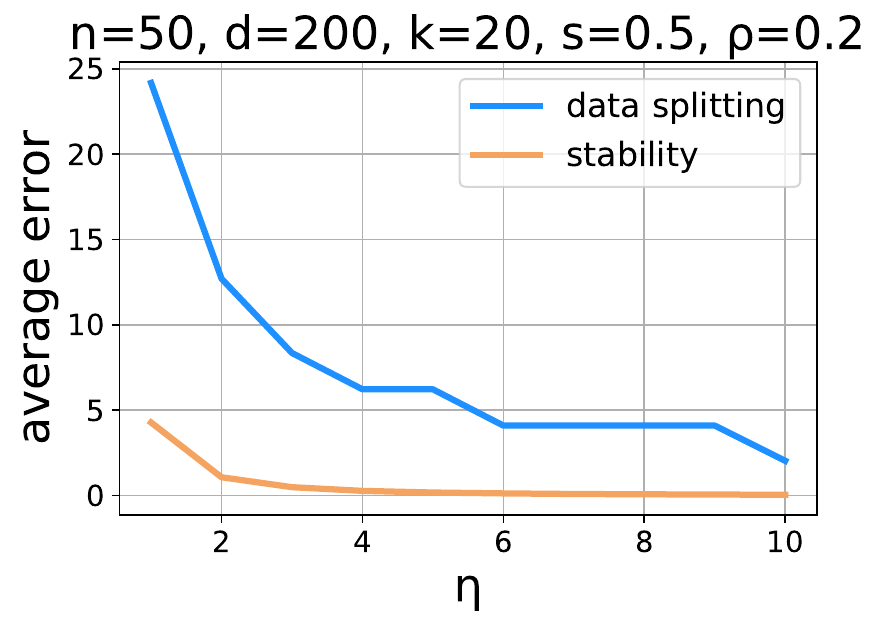}
\includegraphics[width=0.25\textwidth]{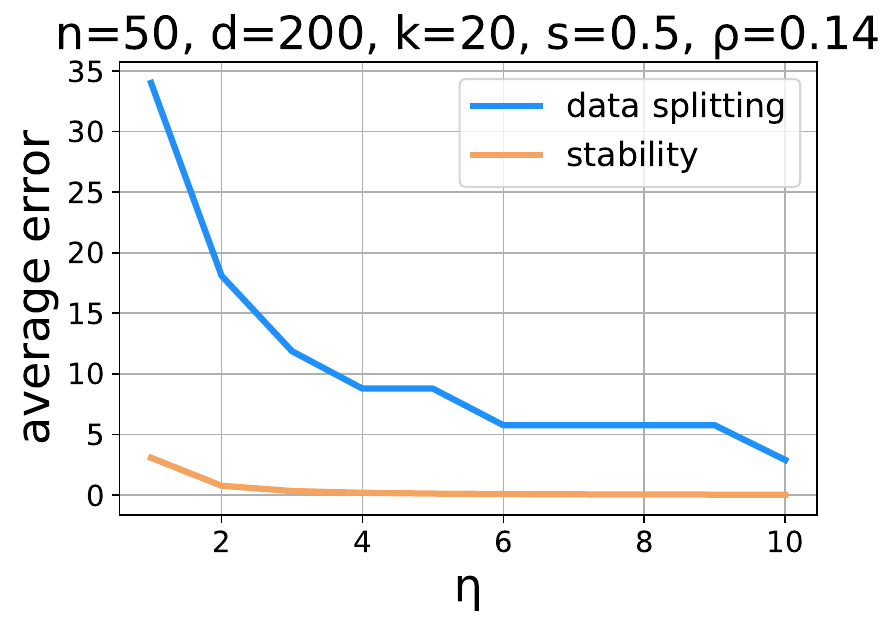}
\includegraphics[width=0.25\textwidth]{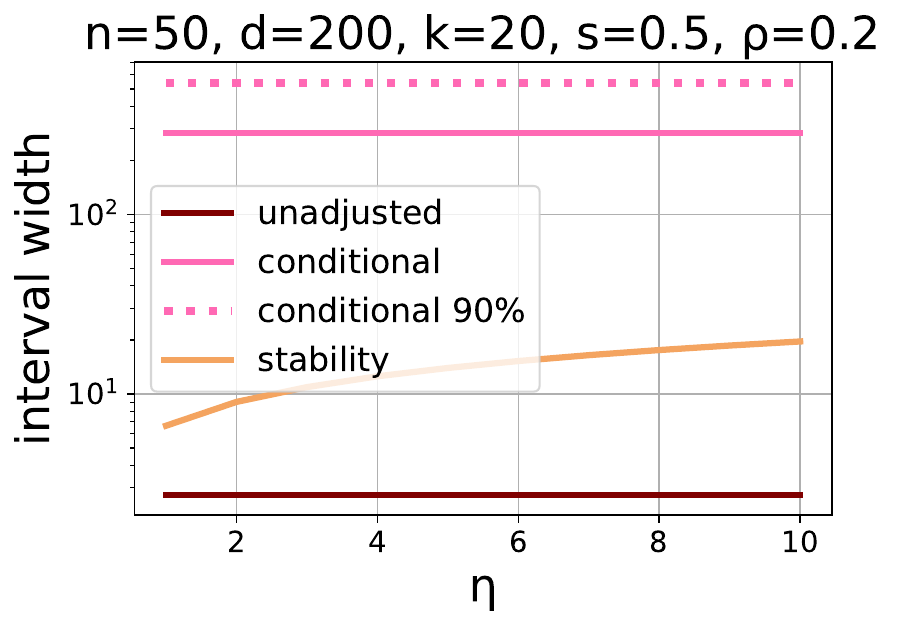}
}
\caption{Comparison of average error after stable marginal screening and marginal screening with data splitting, with varying dimension and signal strength, in the Bernoulli design case. In addition, we plot the average interval width (at $\rho=0.2$ only, however the width varies minimally with $\rho$), together with the average unadjusted width and the width obtained via the conditional correction of Lee and Taylor~\cite{lee2014exact}. We also plot the $90\%$ quantile of the conditional width because it varies greatly across realizations. Since the conditional widths are of a higher order of magnitude, the scale on the $y$-axis in the widths plots is logarithmic.} 
\label{fig:screening_comparison_bern}
\end{figure}

\begin{figure}[t]
\centerline{\includegraphics[width=0.25\textwidth]{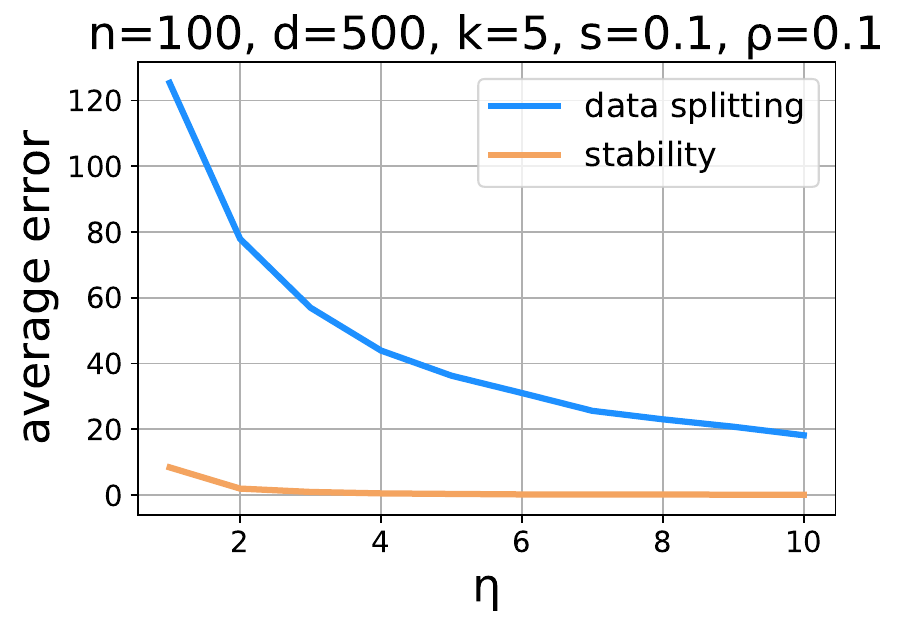}
\includegraphics[width=0.25\textwidth]{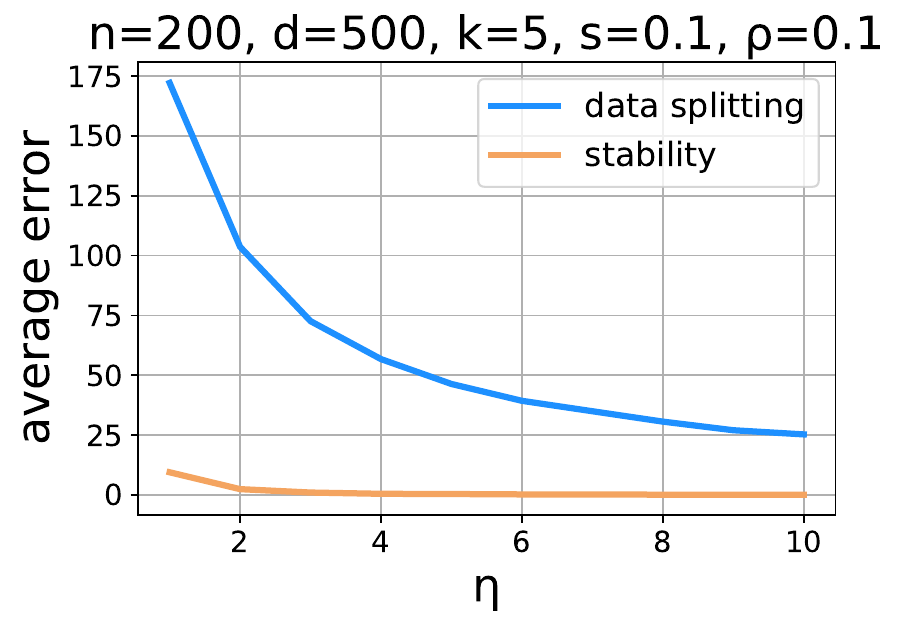}
\includegraphics[width=0.25\textwidth]{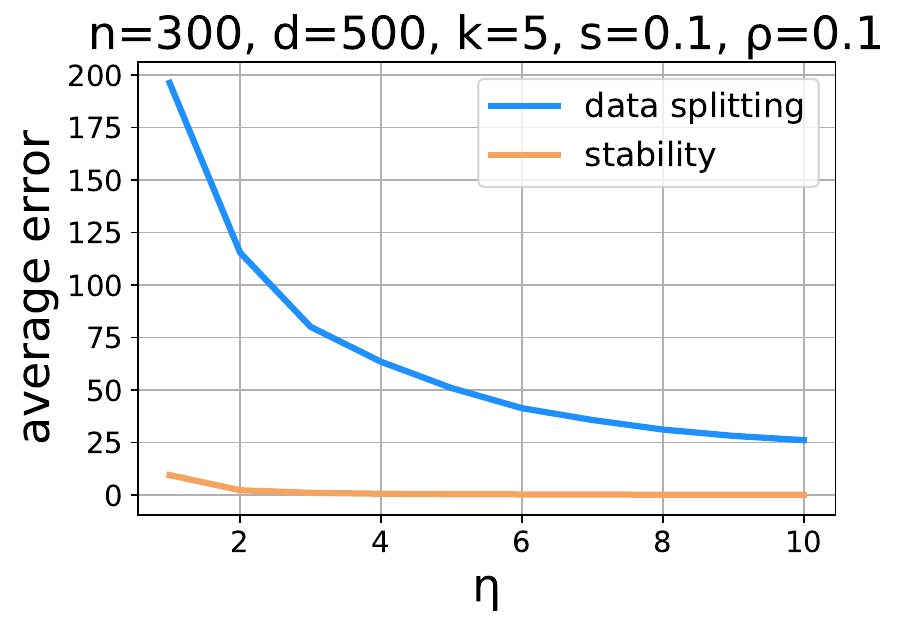}
\includegraphics[width=0.25\textwidth]{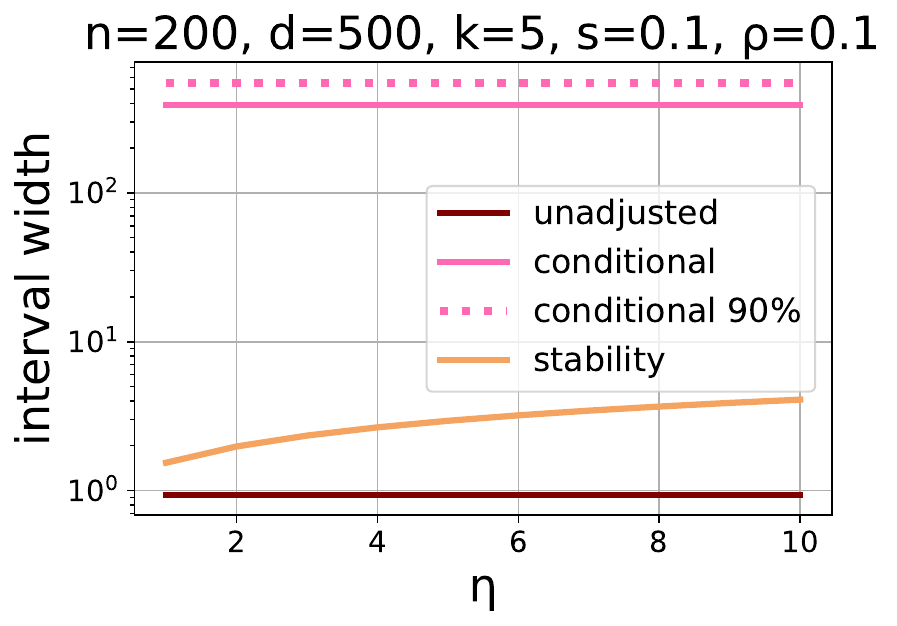}
}
\caption{Comparison of average error after stable marginal screening and marginal screening with data splitting, with varying sample size, in the Bernoulli design case. In addition, we plot the average interval width at $n=200$, together with the average unadjusted width and the width implied by the conditional approach of Lee and Taylor~\cite{lee2014exact}. Since the conditional widths are of a higher order of magnitude, the scale on the $y$-axis in the widths plot is logarithmic.} 
\label{fig:ms_highdim_bern}
\end{figure}

\paragraph{LASSO.}
In Figure \ref{fig:lasso_comparison_bern} and Figure \ref{fig:lasso_highdim_bern} we provide comparisons analogous to those of Figure~\ref{fig:lasso_comparison} and Figure~\ref{fig:lasso_highdim}, using the same parameter configurations. We observe a larger gap between data splitting and stability than in the Gaussian design case, and observe the same trends: as $\eta$ and the signal strength grow, the performance gap increases. As before, we defer the remaining plots of interval widths to the Supplement.

\paragraph{Marginal screening.}
In Figure \ref{fig:screening_comparison_bern} and Figure \ref{fig:ms_highdim_bern} we provide comparisons analogous to those of Figure~\ref{fig:screening_comparison} and Figure~\ref{fig:ms_highdim}, using the same parameter configurations. We observe a larger gap between data splitting and stability both than in the Gaussian design case, as well as in the LASSO experiments using the Bernoulli design. In addition, we observe an even more pronounced gap between stable confidence interval widths and widths of intervals obtained via a conditional correction \cite{lee2014exact}. For this reason, the $y$-axis in the widths plots is logarithmic. We defer the remaining plots of interval widths to the Supplement.

\section{Discussion}
\label{sec:discussion}

Building on concepts from algorithmic stability, as originally developed for applications in differential privacy, we have provided general theory for designing post-selection confidence intervals when the selection procedure is stable. The stability principle is broadly applicable, ranging from inference on the winning effect to model selection in linear regression. In particular, stability is applicable even when data splitting is not, such as when there are dependencies between observations.

Performing inference after a stable selection is simple: it merely requires discounting the type I error based on the level of stability. Moreover, stability comes with several practically appealing properties, namely robustness to post-processing and composition. Thus, for example, the statistician can run various selection methods, and essentially only needs to keep track of the stability parameters of each in order to obtain valid confidence intervals for the final target, which could combine the results of all the selections in an arbitrary way.


There are numerous other potential applications of algorithmic stability to the problem of post-selection inference that would be worthwhile to explore. For example, it would be valuable to understand bootstrapping \cite{rinaldo2019bootstrapping} from the perspective of stability, due to its conceptual relations to the ``privacy amplification by subsampling'' principle in differential privacy, which argues that privacy is amplified when run on a random subsample of the entire data set \cite{kasiviswanathan2011can, beimel2010bounds}. More broadly, selective mechanisms have been long analyzed in the context of differential privacy \cite{thakurta2013differentially, lei2018differentially, steinke2017tight, durfee2019practical, dwork2015private}, and we believe that some of these developments could be imported to selective inference via stability.

\section*{Acknowledgements}
We are grateful to Vitaly Feldman, Will Fithian, Moritz Hardt, Arun Kumar Kuchibhotla, and Adam Sealfon for many helpful discussions and feedback which has lead to improvements of this work. In particular, we thank Will Fithian for pointing out the advantages of the oracle definition of stability. This work was supported by the Army Research Office (ARO) under contract W911NF-17-1-0304 as
part of the collaboration between US DOD, UK MOD and UK Engineering and Physical Research Council (EPSRC) under the Multidisciplinary University Research Initiative (MURI).

\bibliography{POSI}
\bibliographystyle{plainnat}

\appendix

\section{Proofs}

\subsection{Composition of stability}

Here we summarize prior work on adaptive composition theorems for differential privacy, which will be crucial in our proofs. To facilitate readability, we restate the definition of adaptive composition.

\begin{algorithm}[H]
\SetAlgoLined
\begin{flushleft}
\textbf{input: }data $y\in\R^n$, sequence of algorithms $\A_i:\F_1\times \dots\times \F_{i-1}\times \R^n\rightarrow \F_i, ~i\in[k]$\\
\textbf{output: } $(a_1,\dots,a_k)\in \F_1\times \dots\times \F_k$\newline
\For{$i=1,2,\dots,k$}{
Compute $a_i = \A_i(a_1,\dots,a_{i-1},y)\in \F_i$}
 Return $\A^{(k)}(y) = (a_1,\dots,a_k)$
\end{flushleft}
\caption{Adaptive composition}
\label{alg:comp2}
\end{algorithm}

The first statement below is a reformulation of the ``simple'' composition property of differential privacy~\cite{dwork2006calibrating}. The second statement is a slightly stronger reformulation \cite{bun2016concentrated} of the so-called ``advanced'' composition theorem for differential privacy \cite{dwork2010boosting}.

\begin{lemma}[\cite{dwork2010boosting, bun2016concentrated}]
\label{lemma:fixedinputcomp}
Fix two vectors $y,y'\in\R^n$ and suppose that $\A_t(a_1,\dots,a_{t-1},y)\approx_{\eta,\tau} \A_t(a_1,\dots,a_{t-1},y')$, for every fixed sequence $a_1,\dots,a_{t-1}$, and all $t\in[k]$. Then,
\begin{itemize}
\item[(a)] $\A^{(k)}(y)\approx_{k\eta,k\tau} \A^{(k)}(y')$,
\item[(b)] $\A^{(k)}(y)\approx_{\frac{1}{2}k\eta^2 + \sqrt{2k\log(1/\delta)}\eta,k\tau + \delta} \A^{(k)}(y')$, for all $\delta\in(0,1)$.
\end{itemize}
\end{lemma}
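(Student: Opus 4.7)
The plan is to prove both parts by tracking how likelihood ratios accumulate along the sequence of outputs $(a_1, \dots, a_k)$, using a ``good event'' characterization of approximate indistinguishability, and tightening the analysis in part (b) via a moment generating function bound.

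For part (a), I would proceed by induction on $k$. The base case $k=1$ is the hypothesis. For the inductive step, I would use the following restatement of the hypothesis: for each $t$ and each history $a_{1:t-1}$, there exists an event $E_t(a_{1:t-1})$ in the range of $\A_t$ with $\A_t(a_{1:t-1}, y)$-probability at least $1-\tau$ such that on $E_t(a_{1:t-1})$ the density ratio between the laws of $\A_t(a_{1:t-1}, y)$ and $\A_t(a_{1:t-1}, y')$ (taken with respect to a common dominating measure) is at most $e^\eta$. Define $G$ to be the event that $a_t \in E_t(a_{1:t-1})$ for every $t \in [k]$; by a union bound across the $k$ steps, $\PP{G^c} \leq k\tau$ under the $y$-process, and on $G$ the chain-rule factorization of the joint densities of $\A^{(k)}(y)$ versus $\A^{(k)}(y')$ telescopes to a ratio bounded by $e^{k\eta}$. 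The reverse direction is symmetric, yielding $\A^{(k)}(y) \approx_{k\eta, k\tau} \A^{(k)}(y')$.

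For part (b), I would introduce the privacy loss random variable at step $t$,
$$Z_t = \log \frac{p_t(a_t \mid a_{1:t-1};\, y)}{p_t(a_t \mid a_{1:t-1};\, y')},$$
where $p_t(\cdot \mid a_{1:t-1};\, \omega)$ is the conditional density of $\A_t(a_{1:t-1}, \omega)$ against the dominating measure, and the $a_t$'s are drawn from the $y$-process. On the good event $G$ from part (a), each $Z_t$ is bounded in $[-\eta, \eta]$; a direct calculation using the two-sided indistinguishability hypothesis shows $\E[Z_t \mid a_{1:t-1}] \leq \eta(e^\eta - 1)/2$, which is at most $\tfrac{1}{2}\eta^2$ in the regime of interest. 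The sequence $\{Z_t - \E[Z_t \mid a_{1:t-1}]\}$ is a bounded martingale difference sequence, so a standard sub-Gaussian moment generating function bound plus a Chernoff tail yield
$$\sum_{t=1}^k Z_t \;\leq\; \tfrac{1}{2} k \eta^2 + \sqrt{2k\log(1/\delta)}\,\eta$$
with probability at least $1 - \delta$ conditional on $G$. On the intersection of these two good events, whose complement has probability at most $k\tau + \delta$, the joint density ratio is bounded by $\exp\bigl(\tfrac{1}{2} k\eta^2 + \sqrt{2k\log(1/\delta)}\,\eta\bigr)$, which gives the claimed indistinguishability.

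The main obstacle is the expectation bound $\E[Z_t \mid a_{1:t-1}] \leq \tfrac{1}{2}\eta^2$ used in part (b). Without this refinement --- using only the trivial bound $|Z_t| \leq \eta$ --- a direct Azuma--Hoeffding application would produce a concentration term of order $k\eta + \eta\sqrt{2k\log(1/\delta)}$, recovering only simple composition and losing the quadratic-in-$\eta$ leading term. The fact that the per-step expected log-likelihood ratio between two $\eta$-indistinguishable distributions is second-order rather than first-order in $\eta$ is the technical heart of advanced composition. A secondary subtlety is aligning the bad event $G^c$ from part (a) with the Chernoff failure event so that the two failure probabilities add rather than compound, which is exactly what produces the $k\tau + \delta$ slack in the final bound.
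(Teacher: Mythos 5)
The paper offers no proof of this lemma: it is imported verbatim from the differential-privacy literature (simple composition, and the $\tfrac12 k\eta^2$ form of advanced composition due to Bun and Steinke), so there is no in-paper argument to compare against. Your sketch reproduces the standard proof architecture of those sources --- a good-event decomposition with telescoping density ratios for (a), and a martingale/Chernoff bound on the cumulative privacy loss for (b) --- and that architecture is the right one. Two steps need repair, one of them substantive.

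The substantive one: in part (b) you assert $\E[Z_t \mid a_{1:t-1}] \le \eta(e^\eta-1)/2$, ``which is at most $\tfrac{1}{2}\eta^2$.'' That last inequality is backwards: since $e^\eta - 1 \ge \eta$ for $\eta\ge 0$, we have $\eta(e^\eta-1)/2 \ge \tfrac{1}{2}\eta^2$, so this chain cannot produce the $\tfrac{1}{2}k\eta^2$ leading term claimed in the statement; it only recovers it asymptotically as $\eta\to 0$. The bound $\E[Z_t\mid a_{1:t-1}]\le\tfrac{1}{2}\eta^2$ \emph{is} true, but it is exactly the refinement the paper is citing from Bun and Steinke (their Proposition 3.3: $(\eta,0)$-indistinguishability implies the privacy loss has conditional mean at most $\tfrac{1}{2}\eta^2$); the older Dwork--Rothblum--Vadhan KL bound of order $\eta(e^\eta-1)$ is strictly weaker. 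The sharp elementary inequality is $\E[Z_t\mid a_{1:t-1}]\le\eta\cdot\frac{e^\eta-1}{e^\eta+1}=\eta\tanh(\eta/2)\le\tfrac{1}{2}\eta^2$; with denominator $e^\eta+1$ in place of $2$ your argument goes through, but as written the crux step of part (b) fails. (Your Azuma step is fine: increments confined to an interval of length $2\eta$ give exactly the $\eta\sqrt{2k\log(1/\delta)}$ deviation.)

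The second, more routine issue: the ``event $E_t(a_{1:t-1})$ of probability at least $1-\tau$ on which the density ratio is at most $e^\eta$'' is not a restatement of $(\eta,\tau)$-indistinguishability. From the definition one only learns that the excess mass $\int_{S_t}(p_t-e^\eta q_t)\le\tau$ over the bad set $S_t=\{p_t>e^\eta q_t\}$, not that $\PP{S_t}\le\tau$. Passing to a pointwise good-event form requires the standard decomposition lemma (every $(\eta,\tau)$-indistinguishable pair is within total variation roughly $\tau$ of an $(\eta,0)$-indistinguishable pair), which is how the cited sources actually thread the $k\tau$ term through both parts. With these two repairs your proof coincides with the literature argument the paper relies on.
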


We also state a non-adaptive composition property of stability, which is relevant when running multiple model selection algorithms. It says that the privacy parameters of all outputs combined simply add up.

\begin{lemma}
\label{lemma:nonadaptive_comp}
Suppose $\A_i:\R^n\rightarrow \F_i$ is $(\eta_i,\tau_i,\nu_i	)$-stable, for all $i\in[k]$. Then, $\A^{(k)}:~\R^n\rightarrow \F_1\times \dots \times \F_k$ defined as $\A^{(k)}(\cdot) = (\A_1(\cdot),\dots,\A_k(\cdot))$ is $(\sum_{i=1}^k\eta_i,\sum_{i=1}^k\tau_i,\sum_{i=1}^k \nu_i)$-stable.
\end{lemma}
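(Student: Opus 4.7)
The plan is to reduce the non-adaptive composition to two ingredients: a union bound over the ``good pair'' events for the individual algorithms, and a standard composition-of-indistinguishability argument applied pointwise on the good event.

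First, for each $i\in[k]$ let
\[
E_i \defn \{(\omega,\omega')\in\R^n\times\R^n : \A_i(\omega)\approx_{\eta_i,\tau_i}\A_i(\omega')\}.
\]
By $(\eta_i,\tau_i,\nu_i)$-stability of $\A_i$, we have $\P^{\otimes 2}(E_i)\geq 1-\nu_i$, and hence a union bound gives $\P^{\otimes 2}\bigl(\cap_{i=1}^k E_i\bigr) \geq 1-\sum_{i=1}^k \nu_i$. So it suffices to show that, for every fixed pair $(y,y')\in\bigcap_i E_i$, one has the indistinguishability
\[
\A^{(k)}(y)\;\approx_{\sum_i\eta_i,\,\sum_i\tau_i}\;\A^{(k)}(y').
\]

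The crucial point for this second step is that the $\A_i$ use independent internal randomness, so with the input held fixed at $y$, the random variables $\A_1(y),\dots,\A_k(y)$ are independent, and likewise $\A_1(y'),\dots,\A_k(y')$ are independent. Thus the claim reduces to the following product-measure fact: if $(Q_1,W_1),\dots,(Q_k,W_k)$ are such that $Q_i\approx_{\eta_i,\tau_i}W_i$, with $Q_1,\dots,Q_k$ mutually independent and $W_1,\dots,W_k$ mutually independent, then $(Q_1,\dots,Q_k)\approx_{\sum_i\eta_i,\sum_i\tau_i}(W_1,\dots,W_k)$. I would prove this by induction on $k$. For any measurable $\O\subseteq\F_1\times\cdots\times\F_k$, slice by the last coordinate: $\O_{a_k}\defn\{(a_1,\dots,a_{k-1}):(a_1,\dots,a_k)\in\O\}$. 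Using independence, write
\[
\PP{(Q_1,\dots,Q_k)\in\O} = \int \PP{(Q_1,\dots,Q_{k-1})\in\O_{a_k}}\,\diff{\mathcal L_{Q_k}(a_k)},
\]
apply the inductive hypothesis inside the integrand to pass from the $Q_i$'s to the $W_i$'s at cost $(\sum_{i<k}\eta_i,\sum_{i<k}\tau_i)$, and then apply $Q_k\approx_{\eta_k,\tau_k}W_k$ to the remaining one-dimensional integral over $a_k$. The two error terms add, and the multiplicative factors $e^{\sum_{i<k}\eta_i}\cdot e^{\eta_k}$ combine, yielding the claimed $(\sum_i\eta_i,\sum_i\tau_i)$-indistinguishability. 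The symmetric direction is identical.

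The only step that requires care is the inductive indistinguishability calculation, in particular making sure that the additive $\tau_k$ term does not get inflated by the $e^{\sum_{i<k}\eta_i}$ factor from the inductive step; handling this cleanly is standard but needs the integral to be split into the ``good'' inductive bound and the additive slack, with the slack absorbed into $\tau_k$ before integration. Everything else is a mechanical union bound plus the observation that non-adaptive composition is exactly parallel composition under an independent-randomness coupling.
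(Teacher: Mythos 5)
The paper never actually proves this lemma---it is stated without proof, and the analogous adaptive statement (Lemma~\ref{lemma:fixedinputcomp}) is imported from the differential privacy literature---so there is no in-paper argument to compare against. Your reduction is the right one and is surely what the authors intend: the union bound gives $\P^{\otimes 2}\bigl(\bigcap_{i=1}^k E_i\bigr)\ge 1-\sum_i\nu_i$, and on that event it suffices to prove the pointwise indistinguishability $\A^{(k)}(y)\approx_{\sum_i\eta_i,\sum_i\tau_i}\A^{(k)}(y')$, using (as you correctly note must be assumed) that the $\A_i$ toss independent internal coins, so that for fixed inputs the problem reduces to composition of indistinguishability for product measures.

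The one genuine soft spot is the step you yourself flag. The slicing induction, as described, does not yield $\sum_i\tau_i$: swapping the last coordinate from $Q_k$ to $W_k$ after applying the inductive hypothesis produces an error term $e^{\sum_{i<k}\eta_i}\tau_k$ (and swapping in the other order instead inflates $\sum_{i<k}\tau_i$ by $e^{\eta_k}$); no reordering of the hybrids avoids multiplying some of the $\tau$'s by exponential factors, and ``absorbing the slack before integration'' does not help because the inner additive slack sits inside an integral that is subsequently scaled by $e^{\eta}$. The standard repair is a different decomposition: since $Q_i\approx_{\eta_i,\tau_i}W_i$, the excess mass $\int\bigl(\diff{\mathcal{L}_{Q_i}}-e^{\eta_i}\diff{\mathcal{L}_{W_i}}\bigr)_+$ is at most $\tau_i$, so one can construct a surrogate $\tilde Q_i$ whose law is dominated pointwise by $e^{\eta_i}\mathcal{L}_{W_i}$ and which is within total variation $\tau_i$ of $Q_i$ (the clipped mass can always be reallocated underneath $e^{\eta_i}\mathcal{L}_{W_i}$ because that measure has total mass $e^{\eta_i}\ge1$). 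The product of the surrogates is then dominated by $e^{\sum_i\eta_i}$ times the product of the $\mathcal{L}_{W_i}$, and total variation is subadditive over independent coordinates, which yields exactly $(\sum_i\eta_i,\sum_i\tau_i)$-indistinguishability; the reverse direction is symmetric. Alternatively, one can simply invoke basic composition for approximate differential privacy, which is how the paper handles Lemma~\ref{lemma:fixedinputcomp}. With that substitution your argument is complete.
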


The proof of Lemma \ref{lemma:nonadaptive_comp} follows by the simple composition theorem for differential privacy \cite{dwork2010boosting}, together with a union bound over all failure events with probability $\nu_i$.

\subsection{Proof of Claim \ref{claim:max_effect}}

Let $E = \{\omega\in\R^n:\|\omega - \mu\|_\infty \leq z_{1-\alpha\delta/(2n)}\}$. Then, by a Bonferroni correction, we know that $\PP{y\in E} \geq 1-\alpha\delta$.

For any $\omega\in\R^n$, let $\hat i_*(\omega) = \argmax_i (\omega_i + \xi_i)$, where $\xi_i\stackrel{\mathrm{i.i.d.}}{\sim}\mathrm{Lap}\left(\frac{2z_{1-\alpha\delta/(2n)}}{\eta}\right)$. Define
$$\xi^* = \argmin_{\xi} y_i + \xi_i > y_j + \xi_j,~\forall j \neq i.$$
Then, $\hat i_*(y) = i$ if and only if $\xi_i \geq \xi^*$. By the definition of $E$ we have that for all $y\in E$
\begin{align*}
z_{1-\alpha\delta/(2n)} + \mu_i + \xi^* \geq y_i + \xi^* > y_j + \xi_j \geq \mu_j + \xi_j - z_{1-\alpha\delta/(2n)},
\end{align*}
for all $j\neq i$. Rearranging the terms, we get
$$2z_{1-\alpha\delta/(2n)} + \mu_i + \xi^* \geq \mu_j + \xi_j.$$
Thus, if $\xi_{i} \geq \xi^* +2z_{1-\alpha\delta/(2n)}$, then $\hat i_*(\mu) = i$ if the noise levels are $(\xi_{1},\dots,\xi_{i},\dots,\xi_{n})$. Putting everything together, for fixed $y\in E$, we have
\begin{align*}
\PPst{\hat i_*(\mu) = i}{\{\xi_{t,j}\}_{j\neq i}} &\geq \PPst{\xi_{i} \geq \xi^* + 2z_{1-\alpha\delta/(2n)}}{\{\xi_{t,j}\}_{j\neq i}}\\
	&\geq e^{-\eta} \PPst{\xi_{i} \geq \xi^*}{\{\xi_{t,j}\}_{j\neq i}}
	= e^{-\eta}\PPst{\hat i_*(y) = i}{\{\xi_{t,j}\}_{j\neq i}}.
\end{align*}
Multiplying by $e^{\eta}$ and applying the law of iterated expectations completes the proof that $\hat i_*(y)\approx \hat i_*(\mu)$ for all $y\in E$, which implies that $\hat i_*(\mu)$ is a valid oracle and $\hat i_*(\cdot)$ is $(\eta,0,\alpha\delta)$-stable.

Finally, we invoke Theorem \ref{thm:conf-ints-general} to complete the proof.

\subsection{Proof of Claim \ref{claim:max_corr}}

A subsequent proof of Proposition \ref{prop:marginal-stability} implies as a special case that $\hat i_*$ is $(\eta,0,\delta\alpha)$-stable. Putting this fact together with Theorem \ref{thm:conf-ints-general} completes the proof.

\subsection{Proof of Claim \ref{claim:vignette_utility1} and Claim \ref{claim:vignette_utility2}}

We prove Claim \ref{claim:vignette_utility1}; the proof of Claim \ref{claim:vignette_utility2} is essentially identical. The event $\{\hat i_* \neq i_*\}$ must imply that at least one of two events: $\{\xi_{i_*} \leq -\frac{\Delta}{2}\}$ or $\{\exists j\neq i_*: \xi_{j} \geq \frac{\Delta}{2}\}$. By a union bound and concentration of Laplace random variables, we can thus conclude
$$\PP{\hat i_*\neq i_*} \leq \frac{d}{2} \exp\left(-\frac{\Delta \eta}{4z_{1-\alpha\delta/(2d)}}\right).$$
Setting this expression to $\delta'$ and substituting for $\eta$ completes the proof.

\subsection{Proof of Lemma \ref{lemma:near-indep}}

Denote by $\hat S_0$ the oracle from Definition \ref{def:stability} and let $E = \{\omega\in \R^n : \hat S(\omega) \approx_{\eta,\tau} \hat S_0\}$. Fix an event $\O\subseteq \R^n\times \mathcal{S}$, and let $\O_\omega = \{S\in \mathcal{S}:(\omega,S)\in\O\}$. Notice that $\one{(y,\hat S(y))\in\O} = \one{\hat S(y)\in\O_y}$, and hence $\E[\one{(y,\hat S(y))\in\O}|y] = \E[\one{\hat S(y)\in\O_y}|y]$.

With this, we can write:
	\begin{align*}
	\PP{(y,\hat S(y))\in \O, y\in E} &= \E[\E[\one{\hat S(y)\in \O_y}| y]\one{y\in E}]\\
	&= \E[\PPst{\hat S(y)\in \O_y}{y}\one{y\in E}]\\
	&\leq \E[(e^{\eta}\PPst{\hat S_0\in \O_y}{y} + \tau)\one{y\in E}]\\
	&= \E[(e^{\eta}\one{\hat S_0\in \O_y} + \tau)\one{y\in E}]\\
	&\leq e^{\eta}\PP{(y,\hat S_0)\in \O, y\in E} + \tau.
	\end{align*}
Since $\PP{y\in E}\geq 1 - \nu$, we can conclude:
\begin{align*}
\PP{(y,\hat S(y))\in \O} &= \PP{(y,\hat S(y))\in \O, y\in E} + \PP{(y,\hat S(y))\in \O, y\not\in E}\\
&\leq \PP{(y,\hat S(y))\in \O, y\in E} + \nu\\
&\leq e^{\eta}\PP{(y,\hat S_0)\in \O, y\in E} + \tau + \nu\\
&\leq e^{\eta}\PP{(y,\hat S_0)\in \O} + \tau + \nu.
\end{align*}

\subsection{Proof of Theorem \ref{thm:conf-ints-general}}

By Lemma \ref{lemma:near-indep}, we know that
\begin{align*}
\PP{\beta_{\hat S} \not\in \ci_{\hat S}^{(\delta e^{-\eta})}} &\leq e^\eta \PP{\beta_{\hat S_0} \not\in \ci_{\hat S_0}^{(\delta e^{-\eta})}} + \tau + \nu\\
&= e^\eta \E\left[\PPst{\beta_{\hat S_0} \not\in \ci_{\hat S_0}^{(\delta e^{-\eta})}}{\hat S_0} \right] + \tau + \nu,
\end{align*}
where $\ci_{\hat S_0}^{(\delta e^{-\eta})}$ are confidence intervals computed on $y$ and $\hat S_0$ is an oracle selection independent of $y$. By the construction of $\ci_{\hat S_0}^{(\delta e^{-\eta})}$, we know $\PPst{\beta_{\hat S_0} \not\in \ci_{\hat S_0}^{(\delta e^{-\eta})}}{\hat S_0}\leq \delta e^{-\eta}$, and therefore
$$\PP{\beta_{\hat S} \not\in \ci_{\hat S}^{(\delta e^{-\eta})}} \leq e^\eta e^{-\eta}\delta + \tau + \nu = \delta + \tau + \nu.$$

\subsection{Proof of Proposition \ref{prop:posi-rate}}

Denote by $\M_s$ the set of all models of size at most $s$ and fix any $\tau\in(0,1)$. Let $y'\sim\P_y$ be an i.i.d. copy of $y$. Define the set of bad models to be
$$\M^* = \left\{M\in \M_s ~:~ \exists \omega^*\in\text{supp}(\P_y) \text{ such that }\frac{\PP{\hat M(\omega^*) = M}}{\PP{\hat M(y') = M}}\geq \frac{\sum_{k=1}^s{d\choose k}}{\tau}\right\}.$$
 
By definition, we see 
$$\PP{\hat M(y') \in \M^*}  \leq \sum_{M\in \M^*}\PP{\hat M(y') = M}\leq  \tau,$$
which follows by taking a union bound over all $\sum_{k=1}^s {d\choose k}$ possible models. Consequently, for any event $\O\subseteq  \R^n \times \M_s$ such that $\{M:\exists \omega \text{ s.t. } (\omega,M)\in\O\}\subseteq \M^*$, we have
$$\PP{(y, \hat M(y))\in\O}\leq \PP{\hat M(y) \in \M^*} = \PP{\hat M(y') \in \M^*} \leq \tau.$$
Now denote $\O_\omega = \{M\in \M_s:(\omega,M)\in\O\}$, and notice that $\{(y, \hat M(y))\in\O\} = \{\hat M(y)\in\O_y\}$. Then, for all $\O\subseteq  \R^n \times \M_s$ such that $\{M:\exists \omega \text{ s.t. } (\omega,M)\in\O\}\cap \M^* = \emptyset$, we know
\begin{align*}
\PP{(y,\hat M(y))\in\O} &= \PP{\hat M(y)\in\O_y} = \E\left[\PPst{\hat M(y)\in\O_y}{y}\right]\\
  &\leq \frac{\sum_{k=1}^s{d\choose k}}{\tau}\E\left[\PPst{\hat M(y')\in\O_y}{y}\right] = \frac{\sum_{k=1}^s{d\choose k}}{\tau}\PP{(y,\hat M(y'))\in\O}.
\end{align*}
Finally, take an arbitrary $\O\subseteq  \R^n \times \M_s$, and partition it as follows:
 $$\O_{\text{bad}}= \{(\omega,M)\in\O: M\in\M^*\}, ~~ \O_{\text{good}}= \{(\omega,M)\in\O: M\not \in\M^*\}.$$
Putting everything together, we have shown
\begin{align*}
\PP{(y,\hat M(y))\in\O} &= \PP{(y,\hat M(y))\in\O_{\text{bad}}} + \PP{(y,\hat M(y))\in\O_{\text{good}}}\\
 &\leq \tau + \frac{\sum_{k=1}^s{d\choose k}}{\tau}\PP{(y,\hat M(y'))\in\O}.
\end{align*}

In other words, we can conclude that $(y,\hat M(y))\approx_{\eta,\tau}(y,\hat M(y'))$, with $\eta= \log\left(\frac{\sum_{k=1}^s{d\choose k}}{\tau}\right) = O(s\log(d/s)) + \log(1/\tau)$, as desired.

Applying the same steps as in Theorem \ref{thm:conf-ints-general} allows us to conclude that $\ci_{j\cdot \hat M}(K_{\hat M,\delta e^{-\eta}}) = \left(\hat \beta_{j\cdot \hat M} \pm K_{\hat M,\delta e^{-\eta}} \hat\sigma_{j\cdot \hat M}\right)$, where $\eta=O(s\log(d/s)) + \log(1/\tau)$, are valid confidence intervals at level $\delta + \tau$.

A related argument is given in Theorem 6 of Dwork et al.~\cite{dwork2015generalization}.

\subsection{Proof of Lemma \ref{lemma:near-indep2}}

An intermediate result in the proof of Lemma \ref{lemma:near-indep} says:
\begin{align*}
	\PP{(y,\hat S(y))\in \O, y\in E} \leq e^{\eta}\PP{(y,\hat S(y_E'))\in \O, y\in E} + \tau,
	\end{align*}
where we set $\hat S_0 = \hat S(y_E')$ by assumption.
If we set $\tau=0$ and normalize both sides by $\PP{y\in E}$, we get
\begin{align}
\label{eqn:near_indep_condE}
\PPst{(y,\hat S(y))\in \O}{y\in E} \leq e^{\eta}\PPst{(y,\hat S(y_E'))\in \O}{y\in E}.
\end{align}
Now we choose $\O = \O_S\times \{S\}$. The left-hand side above can be rewritten as
$$\PPst{y\in \O_S, \hat S(y) = S}{y\in E} = \PPst{y\in \O_S}{\hat S(y) = S, y\in E}\PPst{\hat S(y) = S}{y\in E},$$
while the right-hand side is equal to
\begin{align*}
\PPst{y\in\O_S,\hat S(y_E')=S}{y\in E} &= \PPst{y\in \O_S}{\hat S(y_E')=S, y\in E} \PPst{\hat S(y_E')=S}{y\in E}\\
&= \PPst{y\in \O_S}{ y\in E} \PP{\hat S(y_E')=S},
\end{align*}
where we use the fact that $y$ and $y_E'$ are independent.

Since $\PP{\hat S(y_E')=S} = \PPst{\hat S(y)=S}{y\in E}$ , Eq.~\eqref{eqn:near_indep_condE} is equivalent to
$$\PPst{y\in \O_S}{\hat S(y) = S, y\in E} \leq e^\eta \PPst{y\in \O_S}{ y\in E},$$
which completes the proof.

\subsection{Generalization and proof of Claim \ref{claim:conditional}}

First we state a generalization of Claim \ref{claim:conditional} for general $\P_y$. In the following let $\{\P(\mu)\}_{\mu\in\R}$ be a location family obtained by translating a zero-mean distribution $\P_0$ by $\mu$. Furthermore, denote by $q_\delta$ the $\delta$ quantile of $|Q|, Q\sim\P_0$ and by $\P_0^\nu$ the distribution $\P_0$ restricted to $(-q_{1-\nu}, q_{1-\nu})$:
$$\P_0^\nu(\O) = \frac{\P_0(\O\cap (-q_{1-\nu}, q_{1-\nu}) )}{1-\nu},$$
for all measurable sets $\O$. Finally, we let $|\P_0^\nu|$ be the distribution of $|Q|, Q\sim \P_0^\nu$.

\begin{claim}
Let $y\sim \P_y = \P(\mu)$ for some $\mu\in\R$. Suppose that we apply the selection criterion to $y+\xi$, where $\xi \sim \mathrm{Lap}(b)$ for some user-chosen parameter $b>0$; that is, we report the confidence interval on the event $\mathrm{report}(y+\xi)$. Then, for any user-chosen parameter $\nu\in(0,1)$, we have 
$$\PPst{\mu\not\in(y\pm q_{1-\alpha(1-\nu)e^{-\eta}})}{\mathrm{report}(y+\xi), y\in E} \leq \alpha,$$
where
$$\eta = \frac{q_{1-\nu}}{b} - \log \mathrm{MGF}_{|\P_0^\nu|}\left(-\frac{1}{b}\right),$$
$\mathrm{MGF}_{|\P_0^{\nu}|}$ is the moment-generating function of $|\P_0^\nu|$ and $E$ is an event such that $\PP{y\in E}\geq 1-\nu$.
\end{claim}

\begin{proof}
To prove this, first let $E=\{\omega\in\R^n:|\omega-\mu| \leq q_{1-\nu}\}$ and note that $\PP{y\in E}\geq 1-\nu$. Denote by $p_Q(a)$ the density of a random variable $Q$ at $a$. Letting $Z\sim\P_0^\nu$, we have for all $\omega\in E$,
\begin{align*}
\frac{p_{\omega + \xi}(a)}{p_{\mu + Z + \xi}(a)}	 &= \frac{\frac{1}{2b}\exp\left(-\frac{|a-\omega|}{b}\right)}{\frac{1}{2b}\int_{-\infty}^\infty\exp\left(-\frac{|u-\mu|}{b}\right)p_Z(a-u)du}\\
&\leq \frac{\exp\left(\frac{|\omega-\mu|}{b}\right)}{\int_{-\infty}^\infty \exp\left(-\frac{|a-u|}{b}\right) p_Z(a-u)du}\\
&= \frac{\exp\left(\frac{|\omega-\mu|}{b}\right)}{\int_{-\infty}^\infty \exp\left(-\frac{|u|}{b}\right) p_Z(-u)du}\\
&= \frac{\exp\left(\frac{|\omega-\mu|}{b}\right)}{\int_{0}^\infty \exp\left(-\frac{|u|}{b}\right) (p_Z(u) + p_Z(-u))du}\\
&\leq \frac{\exp\left(\frac{q_{1-\nu}}{b}\right)}{\mathrm{MGF}_{|\P_0^\nu|}\left(-\frac{1}{b}\right)}.
\end{align*}
The value of $\eta$ follows by taking the logarithm of this expression, and this proves that $\mathrm{report}(\cdot + \xi)$ is $(\eta,0,\nu)$-stable with respect to an oracle that reports on the event $\mathrm{report}(\mu + Z + \xi)$. The final claim follows by invoking Lemma \ref{lemma:near-indep2}.
\end{proof}

In Claim \ref{claim:conditional}, we instantiate the value of $\eta$ when $\P_0 = \mathcal{N}(0,\sigma^2)$. In that case, the moment-generating function of the truncated normal $|\P_0^\nu|$ is equal to
$$\mathrm{MGF}_{|\P_0^\nu|}\left(-\frac{1}{b}\right) = \frac{2}{1-\nu} e^{\frac{\sigma^2}{2b^2}}\left(\Phi\left(z_{1-\nu/2} + \frac{\sigma}{b}\right) - \Phi\left( \frac{\sigma}{b}\right)\right).$$

\subsection{Proof of Proposition \ref{prop:lasso-stability} (LASSO stability)}

For the sake of readability, we denote the squared loss, rescaled by $\hat \sigma$, by $L(\theta;X,y) := \frac{1}{n \hat \sigma}\|y-X\theta\|_2^2$; hence, $\nabla L(\theta;X,y) = \frac{2}{n \hat \sigma}X^\top(y - X\theta)$. Also, we denote by $S_{C_1} := C_1 \cdot \{\pm e_i\}_{i=1}^d$ the set of $2d$ extreme points of the $\ell_1$-ball in $\R^d$, scaled by the LASSO constraint $C_1$. Similarly, we let $S_{C_1}^+ := C_1 \cdot \{ e_i\}_{i=1}^d$ denote half of the points in $S_{C_1}$ that correspond to the extreme points with non-negative coordinates.

Let $y\sim \mathcal{N}(\mu,\sigma^2 I)$. Fix $t\in[k]$ and $\theta$ such that $\|\theta\|_1\leq C_1$. For all $\phi\in S_{C_1}$, we have
\begin{align*}
 \phi^\top (\nabla L(\theta;X,y) - \nabla L(\theta;X,\mu)) &= \phi^\top\left(\frac{2}{n\hat \sigma} X^\top (y - X\theta) - \frac{2}{n\hat \sigma} X^\top(\mu - X\theta)\right)\\
  &= \frac{2}{n\hat \sigma} \phi^\top X^\top(y-\mu).
 \end{align*}

Notice that $\|X\phi\|_2 \leq C_1 \|X\|_{2,\infty} = C_1 \max_{i\in[d]}\|X_i\|_2$ for all $\phi\in S_{C_1}$. By a union bound, we can write:
\begin{align*}
\PP{\frac{2}{n\hat \sigma} \max_{\phi\in S_{C_1}}|\phi^\top X^\top(y-\mu)|\geq s} &= \PP{\frac{2}{n\hat \sigma} \max_{\phi\in S_{C_1}^+}|\phi^\top X^\top(y-\mu)|\geq s}\\
 &\leq \sum_{\phi \in S_{C_1}^+}\PP{\frac{2}{n\hat \sigma} |\phi^\top X^\top(y-\mu)|\geq s}.
\end{align*}
Since $\frac{2}{n\hat \sigma} \phi^\top X^\top(y-\mu)$ follows a rescaled $t$-distribution with $r$ degrees of freedom and there are $d$ terms in the sum on the right-hand side, for $s = s^*:= \frac{2 t_{r,1-\delta/(2d)} C_1 \|X\|_{2,\infty}}{n}$, the probability above is at most $\delta$. Denote $Y_\delta = \{\omega:\max_{\phi\in S_{C_1}} |\frac{2}{n\hat\sigma} \phi^\top X^\top (\omega - \mu)| \leq s^*\}$; we have thus shown $\PP{y\in Y_\delta} \geq 1-\delta$.

We now show that, whenever $y\in Y_\delta$, stable LASSO with input $y$ is indistinguishable from stable LASSO with input $\mu$. From here on, we fix $y\in Y_\delta$ and only consider the randomness of the algorithm.

The output of Algorithm \ref{alg:lasso} can be written as a function of $(\theta_1,\dots,\theta_{k+1})$, and hence proving that $(\theta_1,\dots,\theta_{k+1})$ is indistinguishable when computed on $y$ and $\mu$ is sufficient to argue that $\thetalasso$ is indistinguishable on the two inputs, by the post-processing property.

For all $t\leq k$, we can write $\theta_{t+1} = g_t(\theta_{t}, y)$ for some randomized function $g_t$; in Algorithm \ref{alg:g_t} we express $g_t$ as an algorithm. If we show $g_t(\theta, y)\approx_{\eta,0} g_t(\theta, \mu)$ for every fixed $\theta$ such that $\|\theta\|_1\leq C_1$, then we can apply Lemma \ref{lemma:fixedinputcomp} to conclude indistinguishability of the whole sequence $(\theta_1,\dots,\theta_{k+1})$.

\begin{algorithm}[H]
\SetAlgoLined
\begin{flushleft}
\textbf{input: }$\theta_{t},y$\\
\textbf{output: } $\theta_{t+1}$\newline
$\forall\phi \in C_1 \cdot \{\pm e_i\}_{i=1}^d$, sample $\xi_{t,\phi} \stackrel{\text{i.i.d.}}{\sim} \text{Lap}\left(\frac{4  t_{r,1-\delta/(2d)} C_1 \|X\|_{2,\infty}}{n\eta}\right)$\newline
  $\forall\phi \in C_1 \cdot \{\pm e_i\}_{i=1}^d$, let $\alpha_\phi = -\frac{2}{n \hat \sigma}\phi^\top X^\top(y - X\theta_t) + \xi_{t,\phi}$\newline
Set $\phi_t = \argmin_{\phi\in S_{C_1}} \alpha_\phi$\newline
Set $\theta_{t+1} = (1-\Delta_t)\theta_t + \Delta_t \phi_t$, where $\Delta_t = \frac{2}{t+1}$\\
 Return $\theta_{t+1}$
\end{flushleft}
\caption{The $g_t$ subroutine of the stable LASSO algorithm}
\label{alg:g_t}
\end{algorithm}

Let $\phi_t$ and $\phi_t^\mu$ denote the minimizers of $\alpha_\phi$ when the input is $y$ and $\mu$, respectively, and fix an arbitrary point $\phi^*\in S_{C_1}$. Let $\{\xi_{t,\phi}\}_{\phi\in S_{C_1}}$ be independent samples from $\text{Lap}\left(\frac{2s^*}{\eta}\right)$. Denote
$$\xi^* = \argmax_{\xi} \nabla L(\theta;X,y)^\top \phi^* + \xi \leq  \nabla L(\theta;X,y)^\top \phi + \xi_{t,\phi}, \forall \phi\in S_{C_1}\setminus\{\phi^*\}.$$
Conditional on $\xi_{t,\phi}$, $\phi\in S_{C_1}\setminus\{\phi^*\}$, we get $\phi_t = \phi^*$ if and only if $\xi_{t,\phi^*}\leq \xi^*$.

By the definition of $Y_\delta$, we have:
\begin{align*}
(\phi^*)^\top\nabla L(\theta;X,\mu) - s^* + \xi^* &\leq (\phi^*)^\top\nabla  L(\theta;X,y) + \xi^*\\
 &\leq \phi^\top\nabla  L(\theta;X,y) + \xi_{t,\phi} \leq \phi^\top\nabla  L(\theta;X,\mu) + s^* + \xi_{t,\phi},
\end{align*}
for all $\phi\in S_{C_1}\setminus\{\phi^*\}$. As a result, conditional on $\xi_{t,\phi}$, $\phi\in S_{C_1}\setminus\{\phi^*\}$, the event $\xi_{t,\phi^*}\leq \xi^* - 2s^*$ implies $\phi_t^\mu = \phi^*$. Thus, we get:
\begin{align*}
\PPst{\phi_t^\mu = \phi^*}{\xi_{t,\phi}, \phi\in S_{C_1}\setminus\{\phi^*\}} &\geq \PPst{\xi_{t,\phi^*} \leq \xi^* - 2s^*}{\xi_{t,\phi}, \phi\in S_{C_1}\setminus\{\phi^*\}}\\
 &\geq e^{-\eta}\PPst{\xi_{t,\phi^*} \leq \xi^*}{\xi_{t,\phi}, \phi\in S_{C_1}\setminus\{\phi^*\}}\\
 &= e^{-\eta}\PPst{\phi_t = \phi^*}{\xi_{t,\phi}, \phi\in S_{C_1}\setminus\{\phi^*\}}.
\end{align*}
Applying an expectation to both sides yields
$$\PP{\phi_t = \phi^*}\leq e^{\eta}\PP{\phi_t^\mu = \phi^*},$$
and this is true for all $\phi^*\in S_{C_1}$.
Therefore, for all $y\in Y_{\delta}$, $\phi_t\approx_{\eta,0}\phi_t^\mu$. By post-processing, this also implies $g_t(\theta,y)\approx_{\eta,0} g_t(\theta,\mu)$, for all $\theta$.

By Lemma \ref{lemma:fixedinputcomp}, we finally conclude that, for all $y\in Y_\delta$, the output of the stable LASSO algorithm when applied to $y$ is $(\frac{1}{2}k\eta^2 + \sqrt{2k\log(1/\delta)}\eta,\delta)$-indistinguishable from the output implied by the oracle input $\mu$, for all $\delta\in(0,1)$, or alternatively it is $(k\eta,0)$-indistinguishable. Since this holds with $1-\delta$ probability over the choice of $y$, we see that Algorithm \ref{alg:lasso} is stable with the desired parameters.

\subsection{Proof of Proposition \ref{prop:lasso-utility} (LASSO utility)}

As in the proof of Proposition \ref{prop:lasso-stability}, we denote the rescaled squared loss by $L(\theta;X,y) := \frac{1}{n \hat \sigma}\|y-X\theta\|_2^2$, and by $S_{C_1} := C_1 \cdot \{\pm e_i\}_{i=1}^d$ we denote the set of $2d$ extreme points of the $\ell_1$-ball in $\R^d$, scaled by the constraint $C_1$.

We begin by stating a convergence result for the Frank-Wolfe algorithm due to Jaggi~\cite{jaggi2013revisiting}, which forms the core of our analysis.

\begin{lemma}[\cite{jaggi2013revisiting}]
\label{lemma:frank-wolfe}
Fix $s>0$ and $\theta_1\in \D\subseteq \R^d$. Let $(\phi_1,\dots,\phi_{k})$ be a sequence of vectors from $\D$ and let $\theta_{t+1} = (1 - \Delta_t)\theta_t + \Delta_t \phi_t$, for arbitrary $\Delta_t\in [0,1]$. Define the curvature constant of $L$ as
$$C_L := \sup_{\theta_1,\theta_2\in\D, \gamma\in[0,1], \theta_3 = (1-\gamma)\theta_1 + \gamma\theta_2} \frac{2}{\gamma^2} (L(\theta_3) - L(\theta_1) - (\theta_3 - \theta_1)^\top \nabla L(\theta_1)).$$
Suppose that for all $t\in[k]$, it holds that: $\phi_t^\top \nabla L(\theta_t) \leq \min_{\phi\in C_1\cdot \{\pm e_i\}_{i=1}^d} \phi^\top \nabla L(\theta_t) + \frac{s\Delta_t C_L}{2}.$
Then,
$$L(\theta_{k+1})- \min_{\theta:\|\theta\|_1\leq C_1}L(\theta) \leq \frac{2C_L}{k + 2} (1 + s).$$
\end{lemma}


Denote by $b:=\frac{4t_{r,1-\delta/(2d)}C_1\|X\|_{2,\infty}}{n\eta}$ the parameter of the Laplace noise in Algorithm \ref{alg:lasso}. Fix $s>0$. Denoting by $C_L$ the curvature constant of $L$, as defined in Lemma \ref{lemma:frank-wolfe}, and by applying subexponential concentration of the Laplace distribution, we know:
\begin{align*}
&\lefteqn{\PP{\exists t \in [k]: \phi_t^\top \nabla L(\theta_t; X,y) > \min_{\phi\in S_{C_1}}\phi^\top \nabla L(\theta_t; X,y) + \frac{s\Delta_t C_L}{2} }}\\
 &\leq \PP{\exists t \in [k]:  \max_{\phi\in S_{C_1}} |\xi_{t,\phi}| > \frac{s\Delta_t C_L}{4}}\\
&\leq \PP{\max_{t\in[k], \phi\in S_{C_1}} |\xi_{t,\phi}| > \frac{s\Delta_k C_L}{4} }\\
&\leq k|S_{C_1}|\exp\left(-\frac{s\Delta_k C_L}{4b}\right),
\end{align*}
where the last step follows by a union bound. Setting $s = \frac{4b}{\Delta_k C_L}\log(k|S_{C_1}|/\zeta)$ controls this probability to be at most $\zeta$.

We use a standard fact from convex geometry: for any set $S_{\D}$ such that its convex hull is equal to $\D$, it holds that $\min_{\phi\in \D} \phi^\top \nabla L(\theta_t; X,y) = \min_{\phi\in S_{\D}} \phi^\top \nabla L(\theta_t; X,y)$. In our setting, $\D = \{\theta:\|\theta\|_1\leq C_1\}$, and it can be obtained as the convex hull of $S_{C_1}$.

With this, we can apply Lemma \ref{lemma:frank-wolfe}, as well as the fact that $|S_{C_1}|=2d$, to get that with probability $1-\zeta$ over the Laplace noise variables:
$$L(\theta_{k+1}; X,y) - \min_{\theta:\|\theta\|_1 \leq C_1} L(\theta; X,y) \leq \frac{2C_L}{k+2} + \frac{8 C_L  b\log(2kd/\zeta)}{(k+2)\Delta_k C_L}.$$
By the curvature characterization for quadratics due to Clarkson~\cite{clarkson2010coresets}, we can bound the curvature constant as 
$$C_L \leq \frac{1}{n \hat \sigma}\max_{\theta,\theta': \|\theta\|_1\leq C_1, \|\theta'\|_1\leq C_1} \|X(\theta-\theta')\|_2^2 \leq \frac{1}{n\hat \sigma} \max_{\varphi: \|\varphi\|_1\leq 2C_1} \|X\varphi\|_2^2 \leq \frac{4}{\hat \sigma}\|X\|_\infty^2C_1^2.$$
Therefore, we can conclude
$$L(\theta_{k+1}; X,y) - \min_{\theta:\|\theta\|_1 \leq C_1} L(\theta; X,y) \leq \frac{8\|X\|_\infty^2 C_1^2}{\hat \sigma(k+2)} + 4 b\log(2kd/\zeta).$$
Further, notice that for all $\theta,\theta'$ such that $ \max\{\|\theta\|_1,\|\theta'\|_1\}\leq C_1$, by H\"older's inequality we have:
\begin{align*}
|L(\theta; X,y) - L(\theta'; X,y)| &= \left|\frac{1}{n\hat \sigma}\|y-X\theta\|_2^2 - \frac{1}{n\hat \sigma}\|y-X\theta'\|_2^2\right|\\
 &\leq \frac{2}{\hat \sigma} \|X\|_\infty (\|X\|_\infty C_1 + \|y\|_\infty)\|\theta'-\theta\|_1 := L_1 \|\theta'-\theta\|_1 \leq 2L_1 C_1,
\end{align*}
where by $L_1$ we denote the $\ell_1$-Lipschitz constant of the squared loss restricted to the LASSO domain. Now we pick $\zeta = \frac{\gamma}{2C_1 L_1}$ for some constant $\gamma>0$, which gives:
\begin{align*}
\E[L(\theta_{k+1}; X,y)|y, \hat \sigma]- \min_{\theta:\|\theta\|_1 \leq C_1} L(\theta; X,y) \leq \gamma + \frac{8\|X\|_\infty^2 C_1^2}{\hat \sigma(k+2)} + 4 b\log(4kd C_1 L_1/\gamma)&\\
= \gamma + \frac{8\|X\|_\infty^2 C_1^2}{\hat \sigma(k+2)} + \frac{16 t_{r,1-\delta/(2d)}C_1 \|X\|_{2,\infty}\log(4kd C_1 L_1/\gamma)}{n\eta}&,
\end{align*}
where in the last step we use the noise level from Algorithm \ref{alg:lasso}.
Now we set $k = \left\lceil\frac{n\|X\|_\infty^2 C_1 \eta}{\hat \sigma \|X\|_{2,\infty}}\right\rceil$, and get the following utility upper bound:
\begin{align*}
&\lefteqn{\E[L(\theta_{k+1};X,y)|y, \hat \sigma]- \min_{\theta:\|\theta\|_1 \leq C_1} L(\theta;X,y)}\\
 &\leq \gamma + \frac{8 C_1\|X\|_{2,\infty}}{n\eta} + \frac{16 t_{r,1-\delta/(2d)}C_1 \|X\|_{2,\infty}\log(4kd C_1 L_1/\gamma)}{n\eta}.
\end{align*}
Note that the above inequality is true for all $\gamma>0$. After optimizing over $\gamma$, the right-hand side reduces to
$$\frac{8C_1\|X\|_{2,\infty}}{n\eta} + \frac{16t_{r,1-\delta/(2d)}C_1 \|X\|_{2,\infty}\left(1 +\log(kd L_1 n \eta/(4 t_{r,1-\delta/(2d)}\|X\|_{2,\infty}))\right)}{n\eta}.$$
Using $k\leq \frac{2n\|X\|_\infty^2 C_1\eta}{\hat \sigma \|X\|_{2,\infty}}$ and the value of $L_1$, we finally get
\begin{align*}
	&\E[L(\theta_{k+1};X,y)|y, \hat \sigma]- \min_{\theta:\|\theta\|_1 \leq C_1} L(\theta;X,y) \leq \frac{8C_1\|X\|_{2,\infty}}{n\eta} + \\
	&\frac{16 t_{r,1-\delta/(2d)}C_1 \|X\|_{2,\infty}}{n\eta} \left(1 + \log\left(\frac{d C_1 n^2 \eta^2 \|X\|_\infty^3 (\|X\|_\infty C_1 + \|y\|_\infty)}{2 t_{r,1-\delta/(2d)}\|X\|_{2,\infty}^2\hat \sigma^2}\right)\right).
\end{align*}
Focusing on the relevant parameters, this bound can be simplified as
$$\frac{1}{n}\E[\|y - X\theta_{k+1}\|_2^2~|~y]- \min_{\theta:\|\theta\|_1 \leq C_1} \frac{1}{n} \|y-X\theta\|_2^2 = \tilde O\left(\frac{C_1 \|X\|_{2,\infty} \log(d) t_{r,1-\delta/(2d)} \sigma}{n\eta}\right).$$

Note that similar guarantees follow without conditioning on $y$, by taking iterated expectations, applying Jensen's inequality, and using subgaussianity to bound  $\E[\|y\|_\infty]$.

\subsection{Proof of Proposition \ref{prop:marginal-stability} (marginal screening stability)}

Let $y\sim\mathcal{N}(\mu,\sigma^2 I)$ and define $c_i^\omega:=\frac{1}{n \hat \sigma}X_i^\top \omega$ for all $\omega\in\R^n$. Let $Y_\delta = \{\omega:\|c^\omega-c^{\mu}\|_\infty \leq \frac{t_{r,1-\delta/(2d)} \|X\|_{2,\infty}}{n}\}$. First we prove that $\PP{y\in Y_\delta}\geq 1 - \delta$:
\begin{align*}
\PP{\|c^y - c^{\mu}\|_\infty \geq \frac{t_{r,1-\delta/(2d)} \|X\|_{2,\infty}}{n}} &= \PP{\exists i:\frac{1}{n \hat \sigma}|X_i^\top y - X_i^\top \mu| \geq \frac{t_{r,1-\delta/(2d)} \|X\|_{2,\infty}}{n}}\\
&= \PP{\exists i:\left|\frac{X_i^\top (y - \mu)}{\hat \sigma}\right| \geq  t_{r,1-\delta/(2d)} \|X\|_{2,\infty}}\\
&\leq d \cdot \frac{\delta}{d} = \delta.
\end{align*}

Now we appeal to a similar composition argument as in Proposition \ref{prop:lasso-stability}. From here on, fix $y\in Y_\delta$. We will show that the output of stable marginal screening, when applied to $y$, is indistinguishable from the output of stable marginal screening given the oracle input $\mu$.
 
The selected model $\hat M$ can be written as the output of a composition of $k$ functions $g_t(i_1,\dots,i_{t-1}, y)$, $t\in[k]$. In particular, the feature ``peeled off'' at time $t$, $i_t$, is equal to $g_t(i_1,\dots,i_{t-1},y)$. We show that $g_t(i_1,\dots,i_{t-1},y)\approx_{\eta,0} g_t(i_1,\dots,i_{t-1},\mu)$ holds true for all fixed $i_1,\dots,i_{t-1}$. By Lemma \ref{lemma:fixedinputcomp}, that will imply that the overall selected model under input $y$ and under input $\mu$ is indistinguishable as well.

Fix a round $t\in[k]$, as well as an index $i\in\text{res}_t$. Suppose that we add independent draws $\xi_{t,j}\sim \text{Lap}\left(\frac{2 t_{r,1-\delta/(2d)} \|X\|_{2,\infty}}{n\eta}\right)$ to each value $c_j$, where $j\in \text{res}_t$. Define
$$\xi^*_+ = \argmin_{\xi\geq -c^y_i} c_i^y + \xi > |c^y_j + \xi_{t,j}|, ~~\xi^*_- = \argmax_{\xi< -c^y_i} -c^y_i - \xi > |c^y_j + \xi_{t,j}|, ~\forall j \neq i.$$
Then, $g_t(i_1,\dots,i_{t-1},y) = i$ if and only if $\xi_{t,i} \geq \xi^*_+$ or $\xi_{t,i} \leq \xi^*_-$. Moreover, since $y\in Y_\delta$, we have
\begin{align*}
\frac{t_{r,1-\delta/(2d)} \|X\|_{2,\infty}}{n} + c_i^{\mu} + \xi^*_+ &\geq c^y_i + \xi^*_+ > |c^y_j + \xi_{t,j}|\geq |c_j^{\mu} + \xi_{t,j}| - \frac{t_{r,1-\delta/(2d)} \|X\|_{2,\infty}}{n},
\end{align*}
\begin{align*}
\frac{t_{r,1-\delta/(2d)} \|X\|_{2,\infty}}{n} - c_i^{\mu} - \xi^*_- &\geq -c^y_i - \xi^*_- > |c^y_j + \xi_{t,j}|\geq |c_j^{\mu} + \xi_{t,j}| - \frac{t_{r,1-\delta/(2d)} \|X\|_{2,\infty}}{n}.
\end{align*}

Rearranging the terms, we get
$$\frac{2t_{r,1-\delta/(2d)} \|X\|_{2,\infty}}{n} + c_i^{\mu} + \xi^*_+ \geq |c_j^{\mu} + \xi_{t,j}|,~\frac{2t_{r,1-\delta/(2d)} \|X\|_{2,\infty}}{n} - c_i^{\mu} - \xi^*_- \geq |c_j^{\mu} + \xi_{t,j}|.$$
Thus, if $\xi_{t,i} \geq \xi^*_+ + \frac{2t_{r,1-\delta/(2d)} \|X\|_{2,\infty}}{n}$ or $\xi_{t,i} \leq \xi^*_- - \frac{2t_{r,1-\delta/(2d)} \|X\|_{2,\infty}}{n}$, then $i = g_t(i_1,\dots,i_{t-1}, \mu)$ if the noise levels are $(\xi_{t,1},\dots,\xi_{t,i},\dots,\xi_{t,d})$. Finally, for fixed $y\in Y_\delta$, we have
\begin{align*}
&\lefteqn{\PPst{g_t(i_1,\dots,i_{t-1}, \mu) = i}{\{\xi_{t,j}\}_{j\neq i}}}\\
&\geq \PPst{\xi_{t,i} \geq \xi^*_+ + \frac{2t_{r,1-\delta/(2d)} \|X\|_{2,\infty}}{n}}{\{\xi_{t,j}\}_{j\neq i}}\\
	 &+ \PPst{\xi_{t,i} \leq \xi^*_- - \frac{2t_{r,1-\delta/(2d)} \|X\|_{2,\infty}}{n}}{\{\xi_{t,j}\}_{j\neq i}}\\
	&\geq e^{-\eta} \PPst{\xi_{t,i} \geq \xi^*_+}{\{\xi_{t,j}\}_{j\neq i}} +e^{-\eta}\PPst{\xi_{t,i} \leq \xi^*_-}{\{\xi_{t,j}\}_{j\neq i}}\\
	 &= e^{-\eta}\PPst{g_t(i_1,\dots,i_{t-1}, y) = i}{\{\xi_{t,j}\}_{j\neq i}}.
\end{align*}
Multiplying by $e^{\eta}$ and applying the law of iterated expectations completes the proof that $g_t(i_1,\dots,i_{t-1},y)\approx_{\eta,0} g_t(i_1,\dots,i_{t-1},\mu)$ for all $y\in Y_\delta$.

Finally, by Lemma \ref{lemma:fixedinputcomp} we conclude that for all fixed $y\in Y_\delta$, the output of stable marginal screening under input $y$ and under the oracle input $\mu$ is $(\frac{1}{2}k\eta^2 + \sqrt{2k\log(1/\delta)}\eta,\delta)$-indistinguishable for all $\delta\in(0,1)$, or alternatively $(k\eta,0)$-indistinguishable. Since this holds with $1-\delta$ probability over the choice of $y$, we see that stable marginal screening satisfies stability with the desired parameters.

\begin{figure}[t]
\centerline{\includegraphics[width=0.3\textwidth]{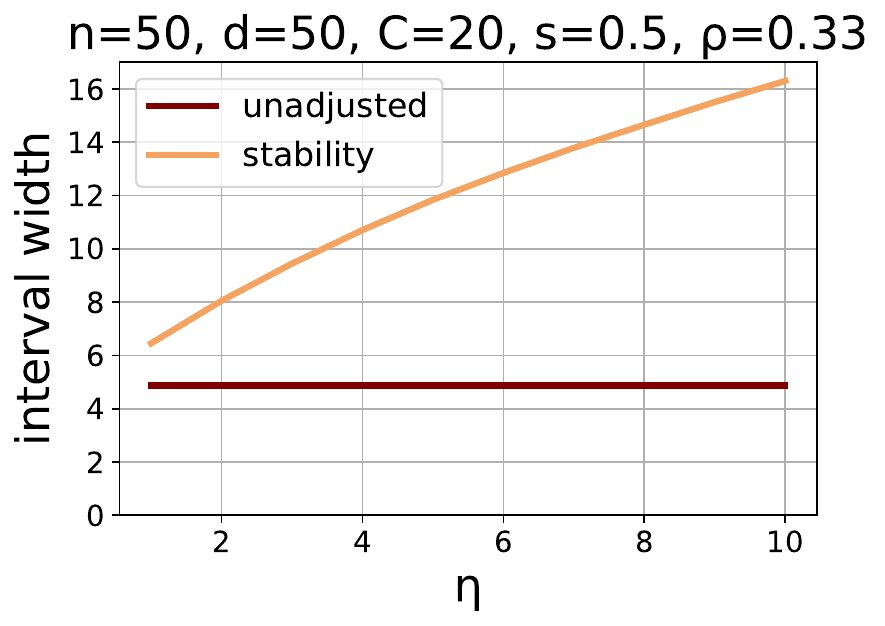}
\includegraphics[width=0.3\textwidth]{plots/LASSOw_d50_C20_s05_sig5.pdf}
\includegraphics[width=0.3\textwidth]{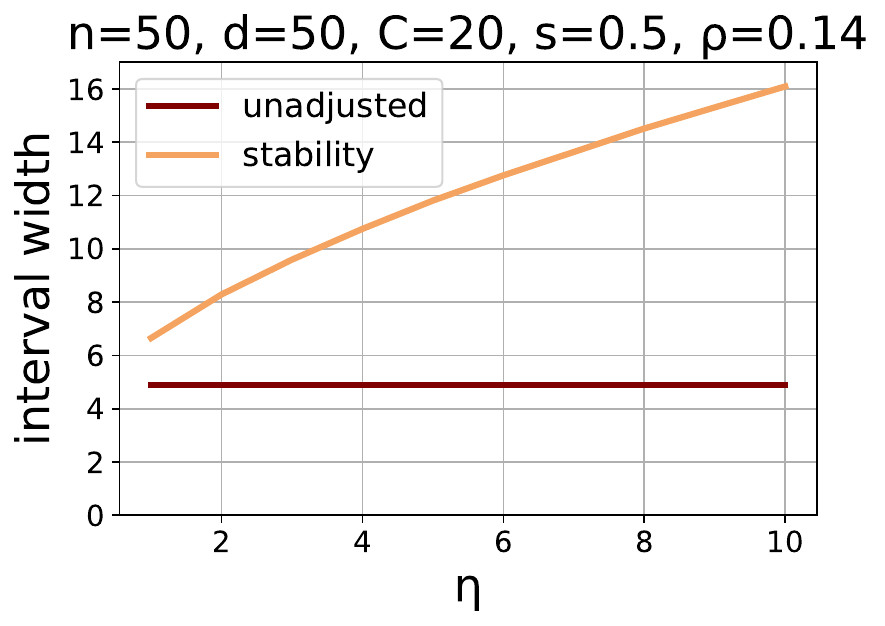}
}
\centerline{\includegraphics[width=0.3\textwidth]{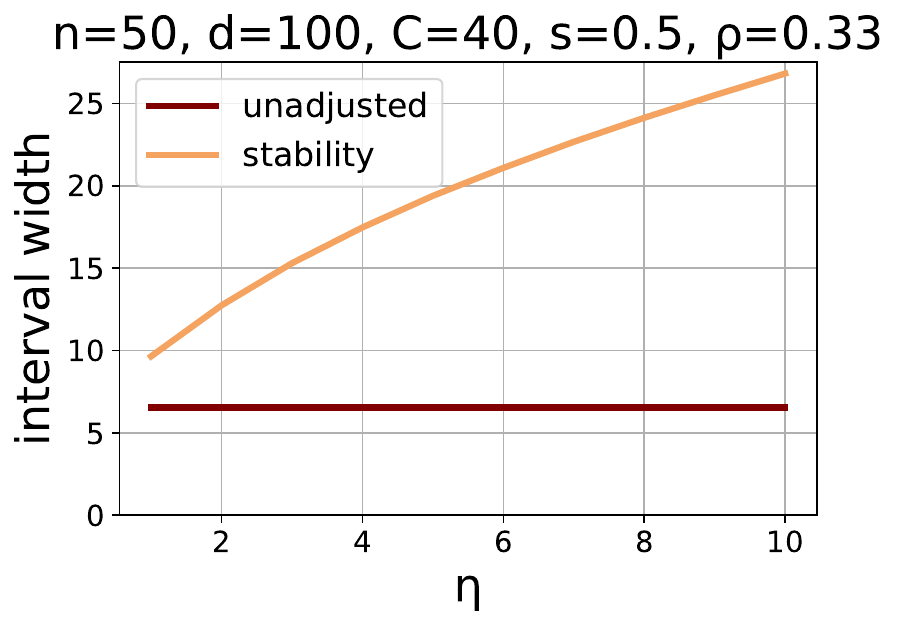}
\includegraphics[width=0.3\textwidth]{plots/LASSOw_d100_C40_s05_sig5.pdf}
\includegraphics[width=0.3\textwidth]{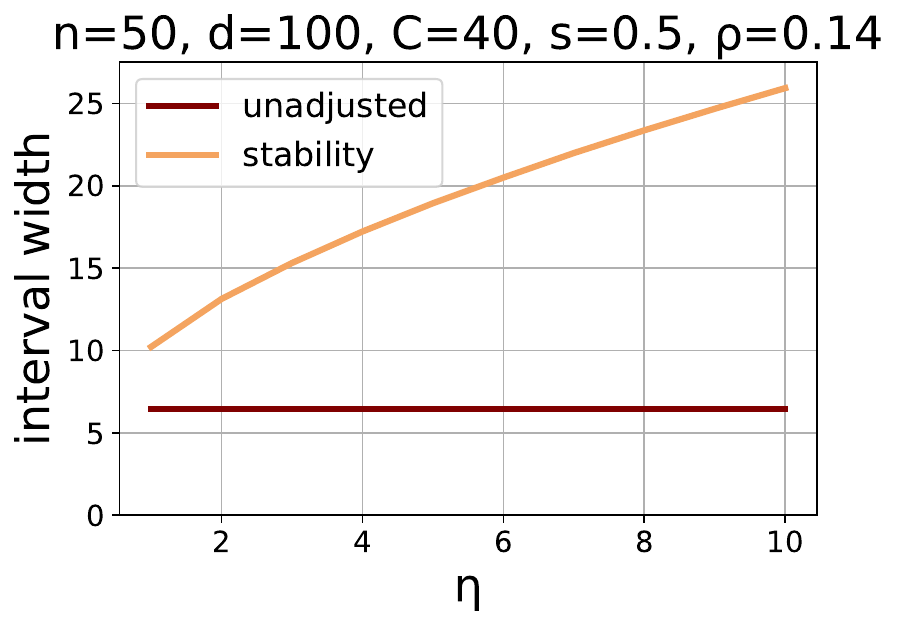}
}
\centerline{\includegraphics[width=0.3\textwidth]{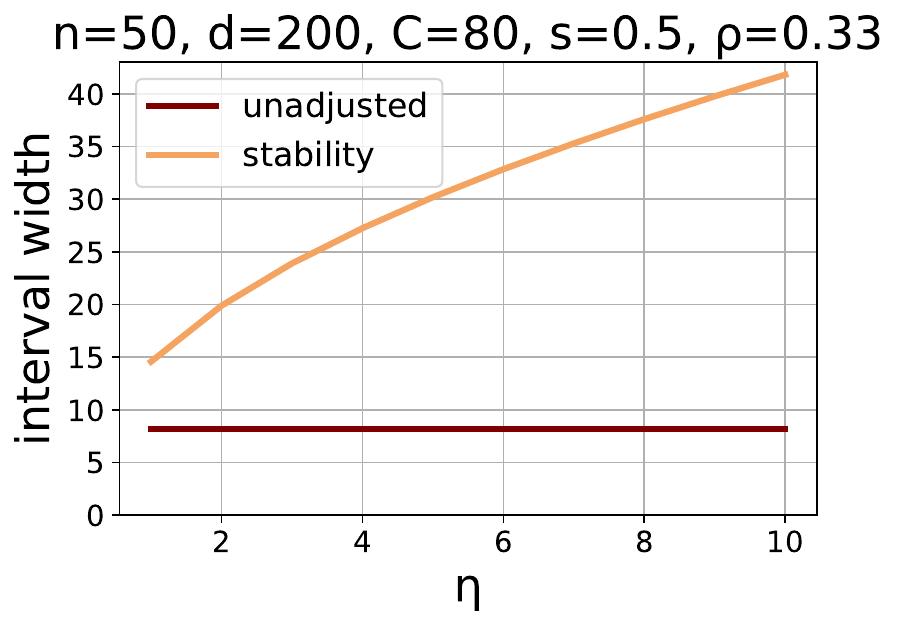}
\includegraphics[width=0.3\textwidth]{plots/LASSOw_d200_C80_s05_sig5.pdf}
\includegraphics[width=0.3\textwidth]{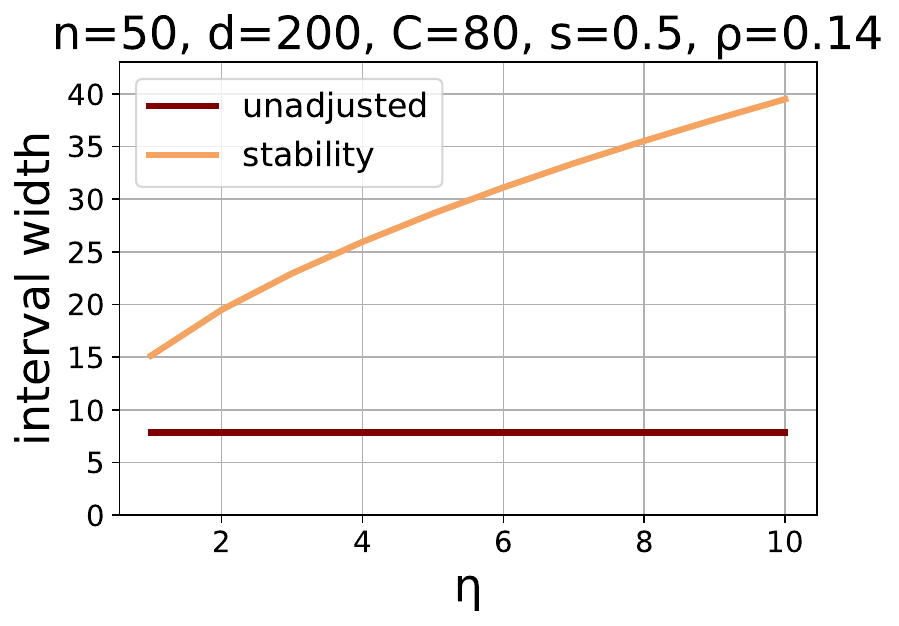}
}
\caption{Plots of average confidence interval widths of stable LASSO, with varying dimension and signal strength, in the Gaussian design case.} 
\label{fig:lassow_comparison}
\end{figure}

\begin{figure}[b]
\centerline{\includegraphics[width=0.3\textwidth]{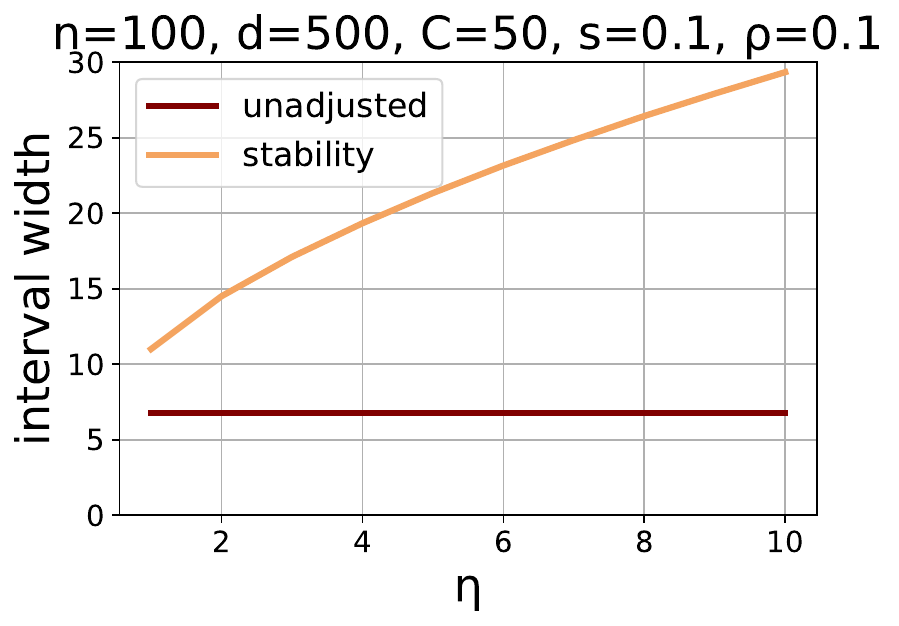}
\includegraphics[width=0.3\textwidth]{plots/LASSOw_n200_d500_C50_s01_sig10.pdf}
\includegraphics[width=0.3\textwidth]{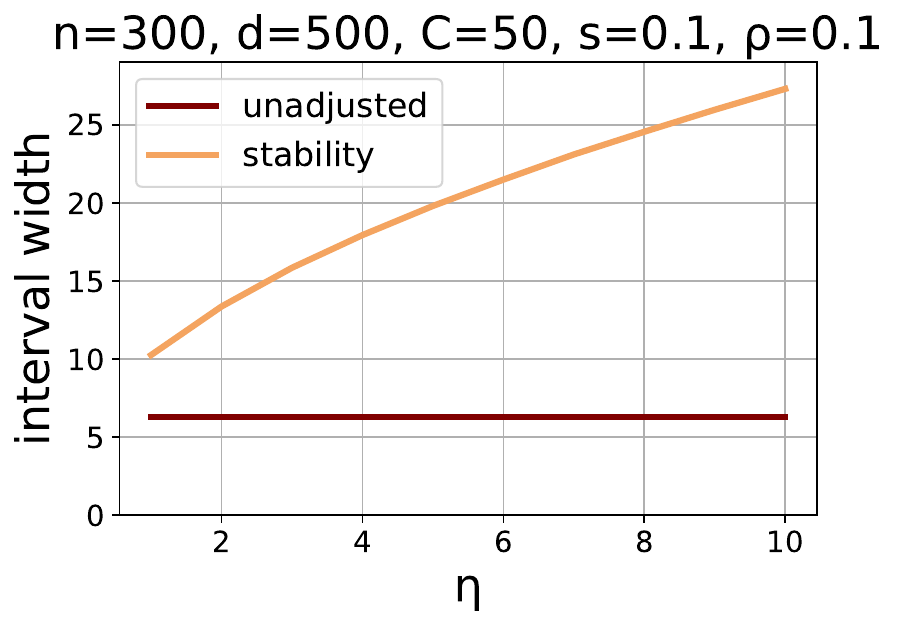}
}
\caption{Plots of average confidence interval widths of stable LASSO, with varying sample size, in the Gaussian design case.} 
\label{fig:lassow_highdim}
\end{figure}

\subsection{Proof of Proposition \ref{prop:marginal-utility} (marginal screening utility)}

Fix $s>0$. Taking a union bound, we get:
$$\PPst{\max_{j\in[k]}  |c_{m_j}| - |c_{i_j}| \geq s}{y} \leq \sum_{j=1}^k \PPst{|c_{m_j}| - |c_{i_j}| \geq s}{y}.$$
At the time when $i_j$ is chosen, exactly $j-1$ items have been selected; therefore, at least one of $m_1,\dots,m_j$ has still not been selected. The event that $|c_{m_j}| - |c_{i_j}| \geq s$ implies that $i_j$ ``beat'' one of $m_1,\dots,m_j$, which further implies that
$\max_{i\in[d]} |\xi_{j,i}|\geq \frac{s}{2}$.  By a union bound, this happens with probability at most $d\exp(-sn\eta/(4t_{r,1-\delta/(2d)}\|X\|_{2,\infty}))$. Putting everything together, we get
$$\sum_{j=1}^k \PPst{|c_{m_j}| - |c_{i_j}| \geq s}{y} \leq kd\exp\left(-\frac{sn\eta}{4t_{r,1-\delta/(2d)}\|X\|_{2,\infty}}\right).$$
Plugging in $s = \frac{4t_{r,1-\delta/(2d)}\log(dk/\delta')\|X\|_{2,\infty}}{n\eta}$ completes the proof.

\begin{figure}[t]
\centerline{\includegraphics[width=0.3\textwidth]{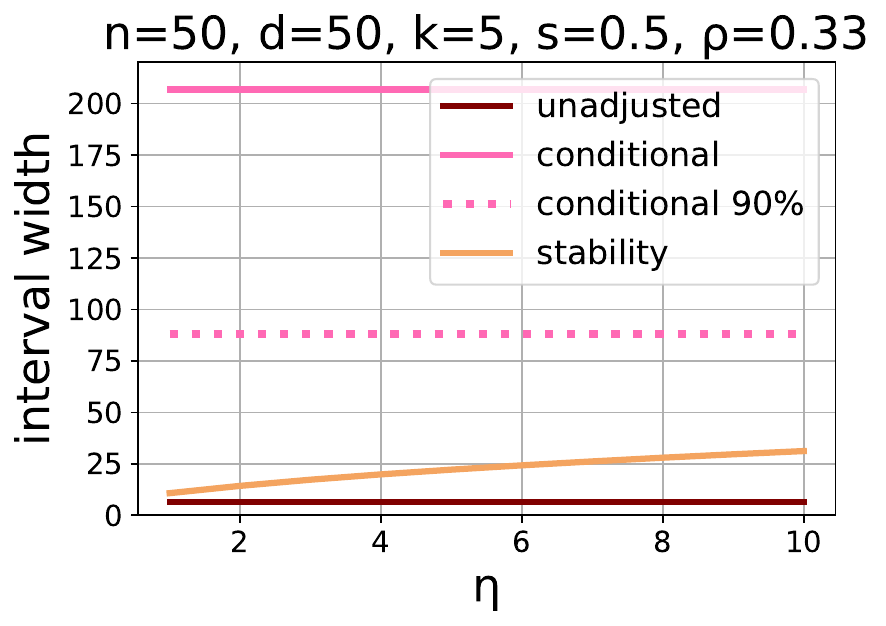}
\includegraphics[width=0.3\textwidth]{plots/MSw_d50_k5_s05_sig5.pdf}
\includegraphics[width=0.3\textwidth]{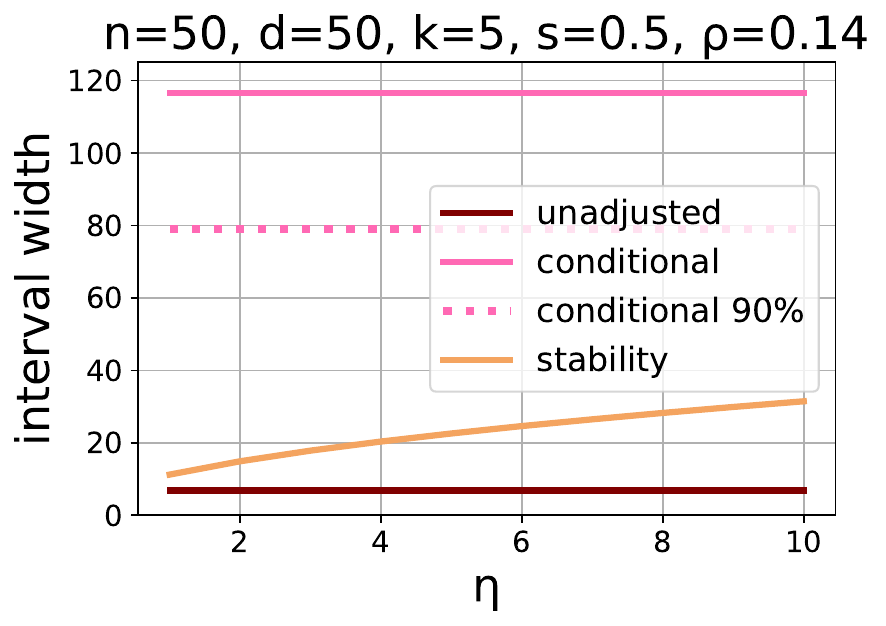}
}
\centerline{\includegraphics[width=0.3\textwidth]{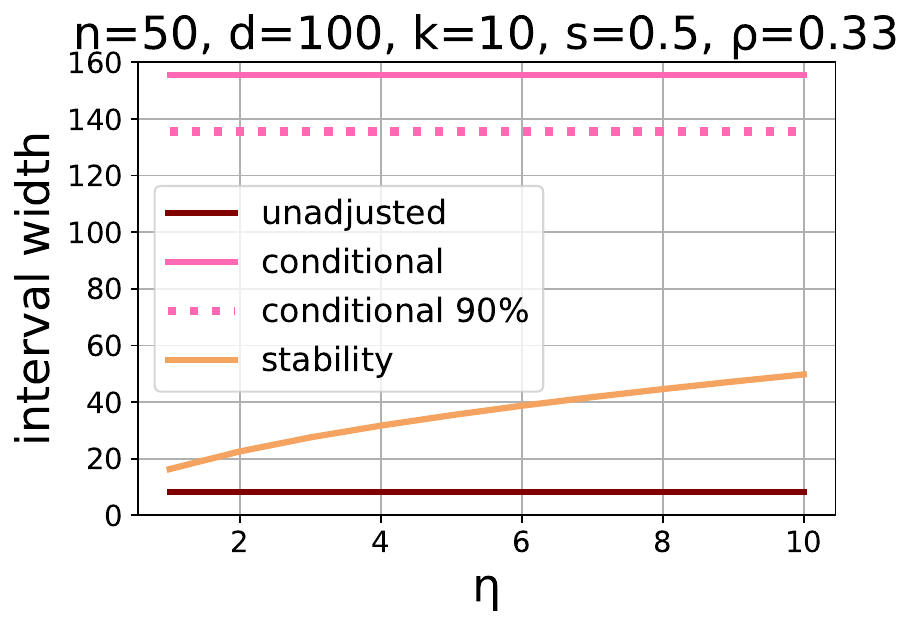}
\includegraphics[width=0.3\textwidth]{plots/MSw_d100_k10_s05_sig5.pdf}
\includegraphics[width=0.3\textwidth]{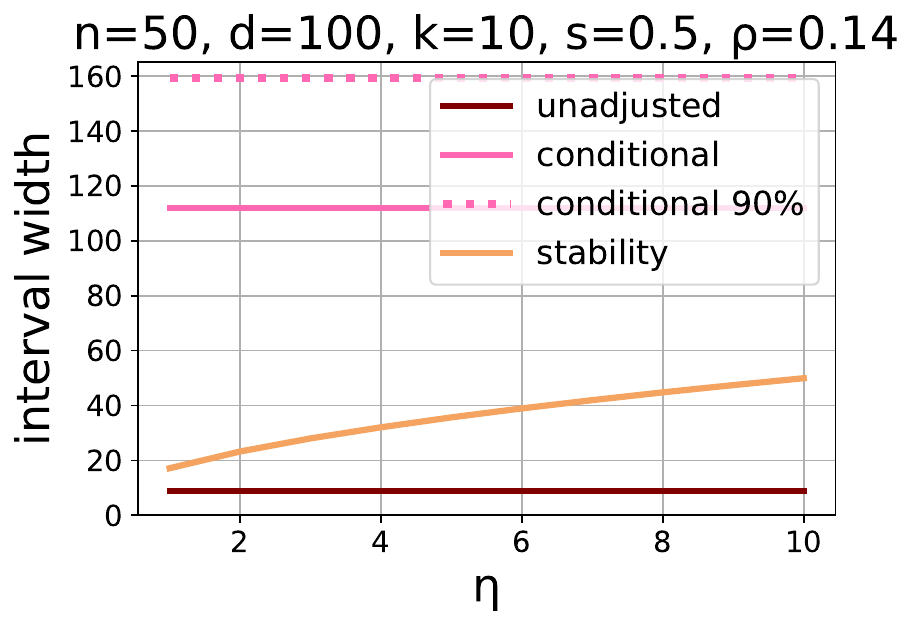}
}
\centerline{\includegraphics[width=0.3\textwidth]{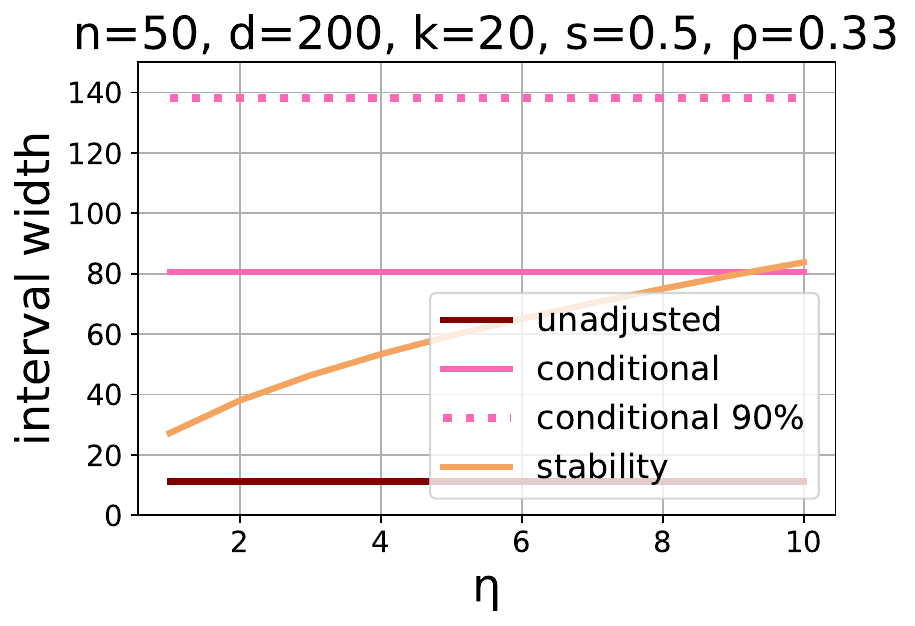}
\includegraphics[width=0.3\textwidth]{plots/MSw_d200_k20_s05_sig5.pdf}
\includegraphics[width=0.3\textwidth]{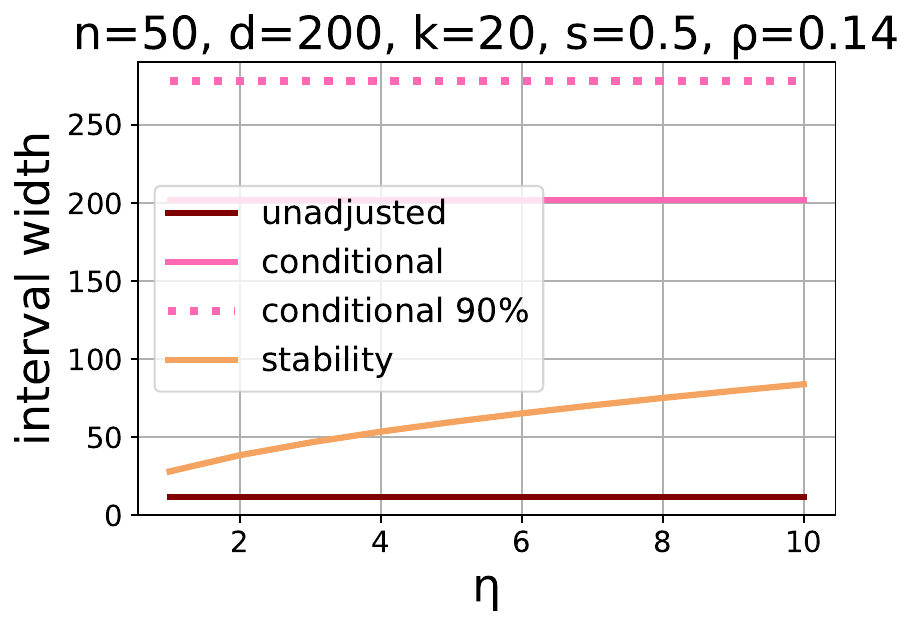}
}
\caption{Plots of average confidence interval widths of stable marginal screening and exact marginal screening with a conditional correction, with varying dimension and signal strength, in the Gaussian design case. We also plot the $90\%$ quantile of the conditional width because it varies greatly across realizations.} 
\label{fig:screeningw_comparison}
\end{figure}

\begin{figure}[b]
\centerline{\includegraphics[width=0.3\textwidth]{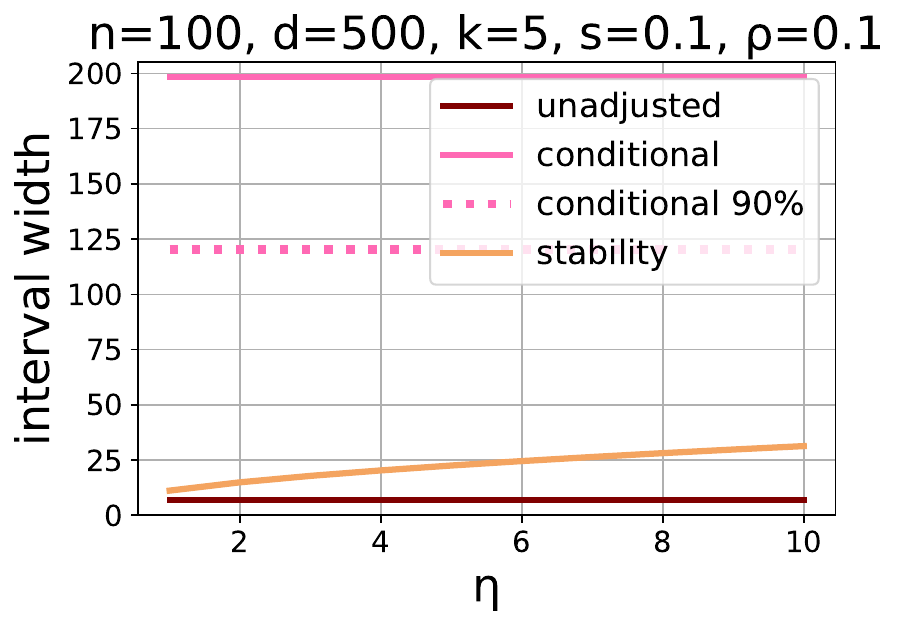}
\includegraphics[width=0.3\textwidth]{plots/MSw_n200_d500_k5_s01_sig10.pdf}
\includegraphics[width=0.3\textwidth]{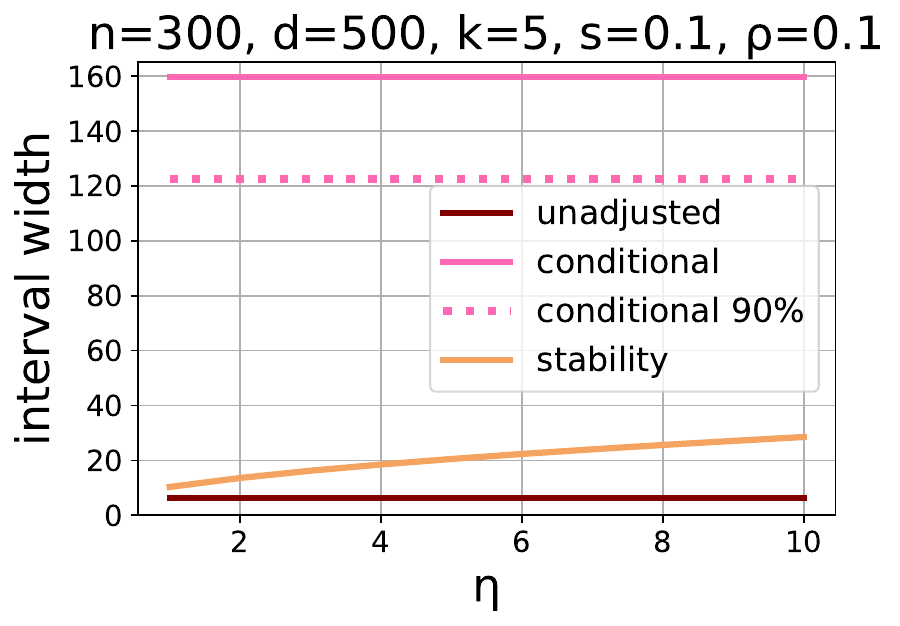}
}
\caption{Plots of average confidence interval widths of stable marginal screening and exact marginal screening with a conditional correction, with varying sample size, in the Gaussian design case. In addition, we plot the $90\%$ quantile of the conditional width because it varies greatly across realizations.} 
\label{fig:msw_highdim}
\end{figure}

\section{Deferred numerical results and simulation details}
\label{sec:deferred_plots}

\paragraph{Deferred plots.}
We include plots of confidence interval widths, which were deferred from Section \ref{sec:experiments}.

For the Gaussian design case, in Figure~\ref{fig:lassow_comparison} we plot the widths corresponding to the experiments in Figure~\ref{fig:lasso_comparison}; in Figure~\ref{fig:lassow_highdim} we plot the widths corresponding to the experiments in Figure~\ref{fig:lasso_highdim}; in Figure~\ref{fig:screeningw_comparison} we plot the widths corresponding to the experiments in Figure~\ref{fig:screening_comparison}; in Figure~\ref{fig:msw_highdim} we plot the widths corresponding to the experiments in Figure~\ref{fig:ms_highdim}.

 For the Bernoulli design case, in Figure~\ref{fig:lassow_comparison_bern} we plot the widths corresponding to the experiments in Figure~\ref{fig:lasso_comparison_bern}; in Figure~\ref{fig:lassow_highdim_bern} we plot the widths corresponding to the experiments in Figure~\ref{fig:lasso_highdim_bern}; in Figure~\ref{fig:screeningw_comparison_bern} we plot the widths corresponding to the experiments in Figure~\ref{fig:screening_comparison_bern}; in Figure~\ref{fig:msw_highdim_bern} we plot the widths corresponding to the experiments in Figure~\ref{fig:ms_highdim_bern}. In Figure~\ref{fig:screeningw_comparison_bern} and Figure~\ref{fig:msw_highdim_bern}, since the conditional widths are of a higher order of magnitude, the scale on the $y$-axis in the widths plots is logarithmic.

\begin{figure}[t]
\centerline{\includegraphics[width=0.3\textwidth]{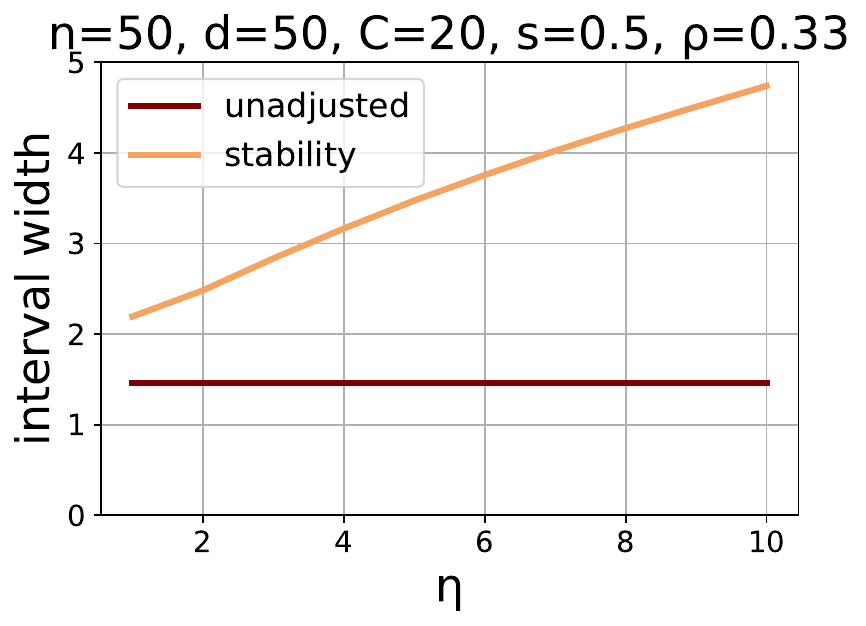}
\includegraphics[width=0.3\textwidth]{plots/LASSOwbern_d50_C20_s05_sig5.pdf}
\includegraphics[width=0.3\textwidth]{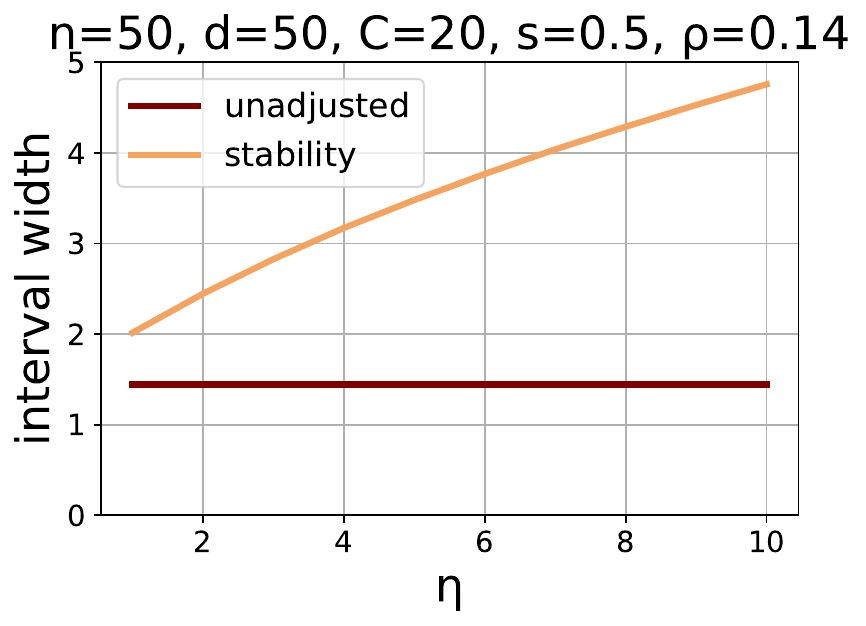}
}
\centerline{\includegraphics[width=0.3\textwidth]{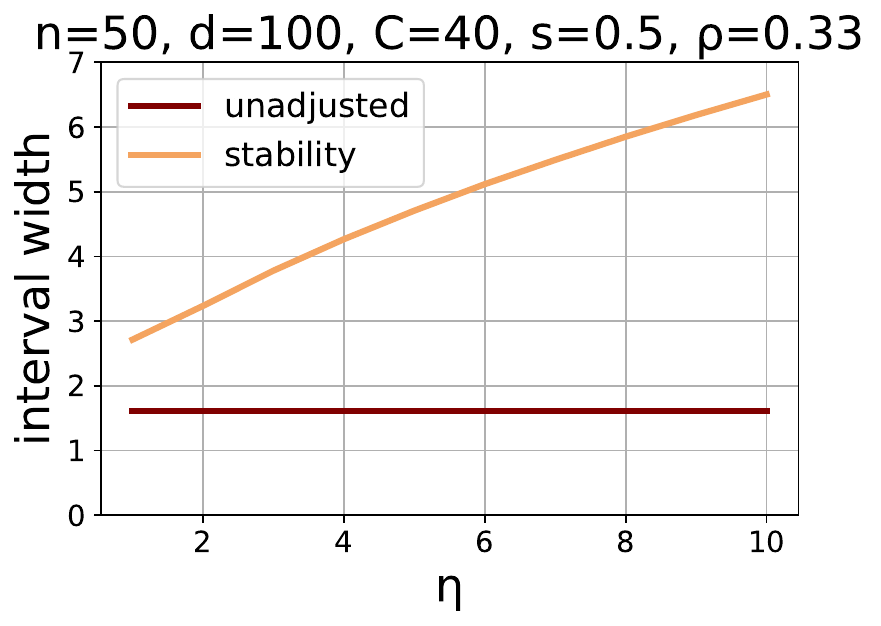}
\includegraphics[width=0.3\textwidth]{plots/LASSOwbern_d100_C40_s05_sig5.pdf}
\includegraphics[width=0.3\textwidth]{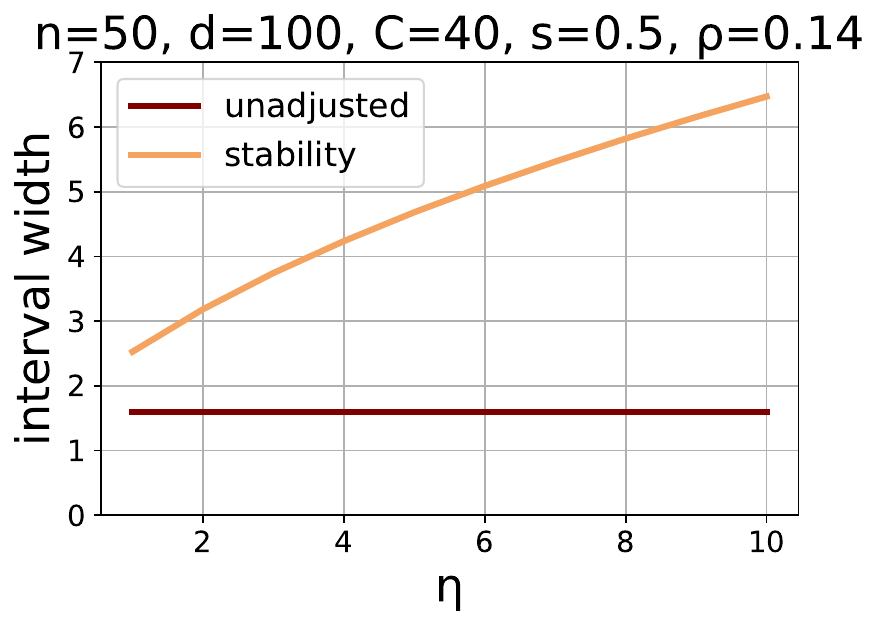}
}
\centerline{\includegraphics[width=0.3\textwidth]{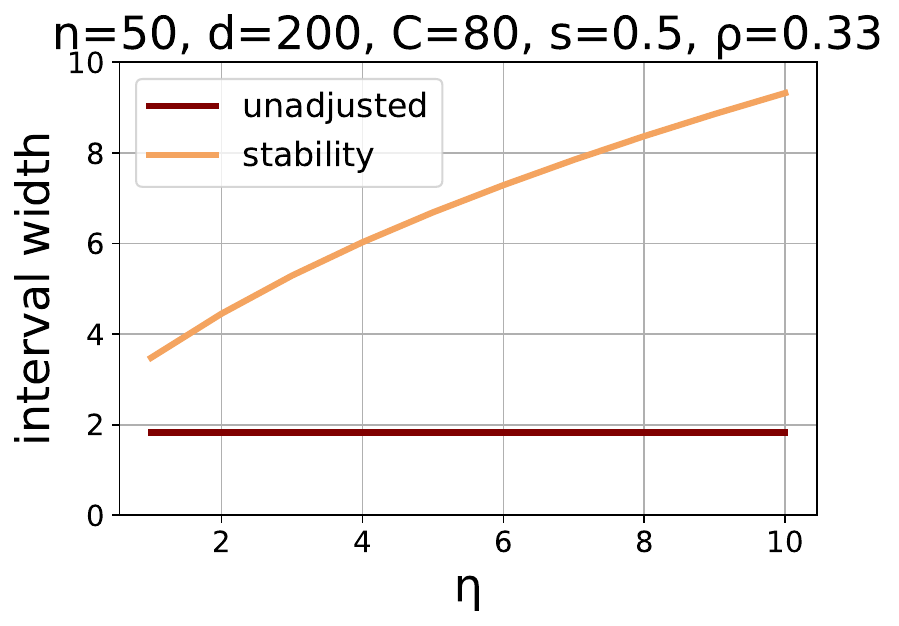}
\includegraphics[width=0.3\textwidth]{plots/LASSOwbern_d200_C80_s05_sig5.pdf}
\includegraphics[width=0.3\textwidth]{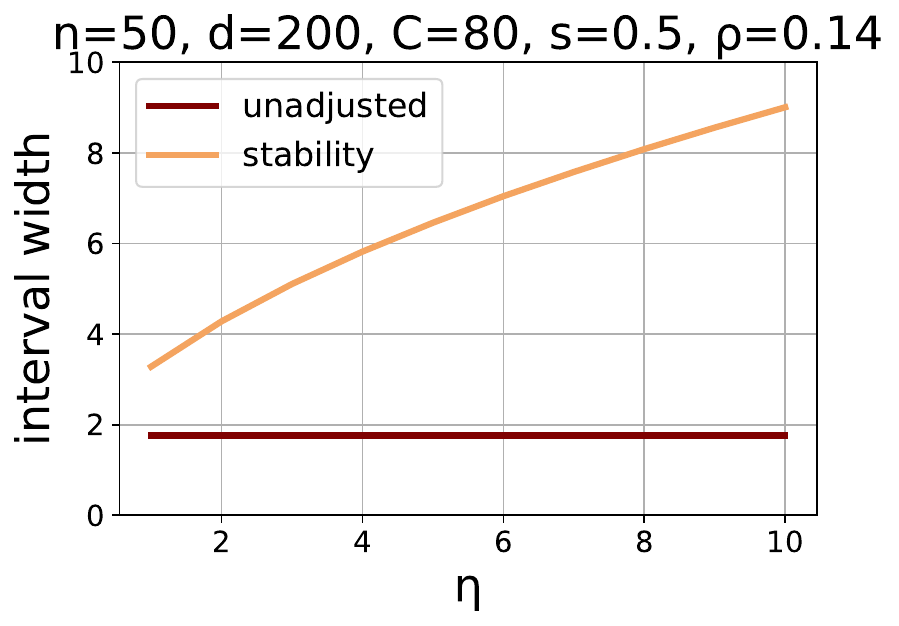}
}
\caption{Plots of average confidence interval widths of stable LASSO, with varying dimension and signal strength, in the Bernoulli design case.} 
\label{fig:lassow_comparison_bern}
\end{figure}

\begin{figure}[b]
\centerline{\includegraphics[width=0.3\textwidth]{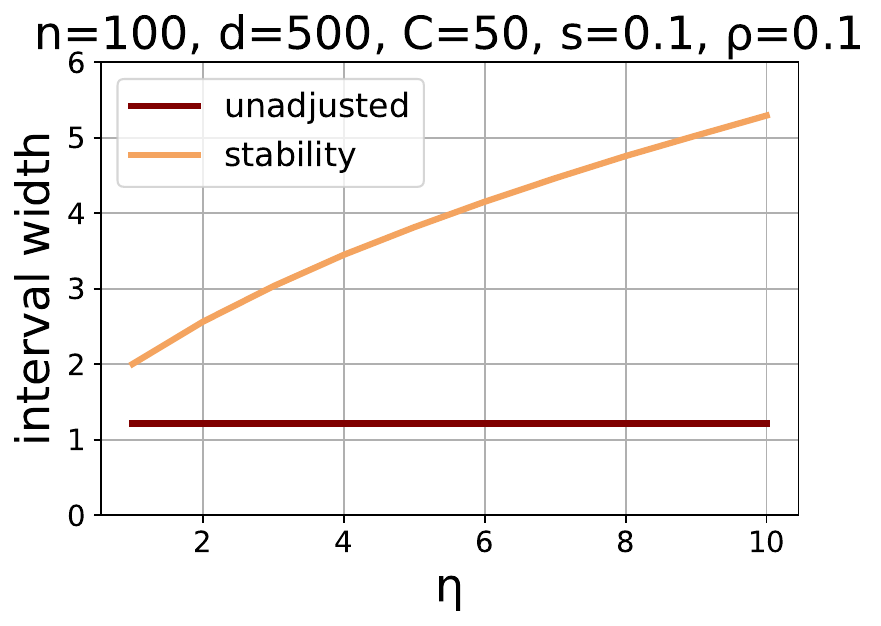}
\includegraphics[width=0.3\textwidth]{plots/LASSOwbern_n200_d500_C50_s01_sig10.pdf}
\includegraphics[width=0.3\textwidth]{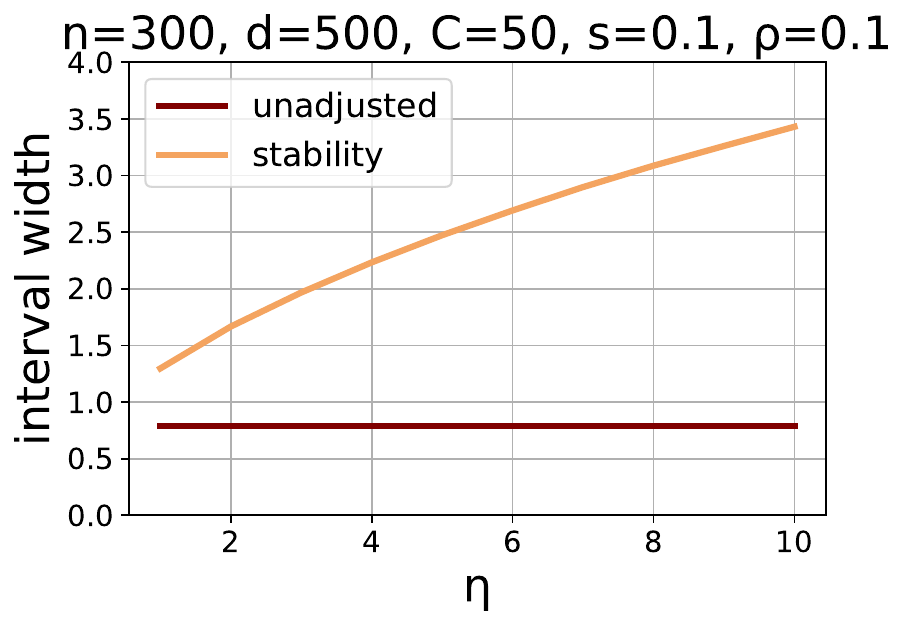}
}
\caption{Plots of average confidence interval widths of stable LASSO, with varying sample size, in the Bernoulli design case.} 
\label{fig:lassow_highdim_bern}
\end{figure}

\begin{figure}[t]
\centerline{\includegraphics[width=0.3\textwidth]{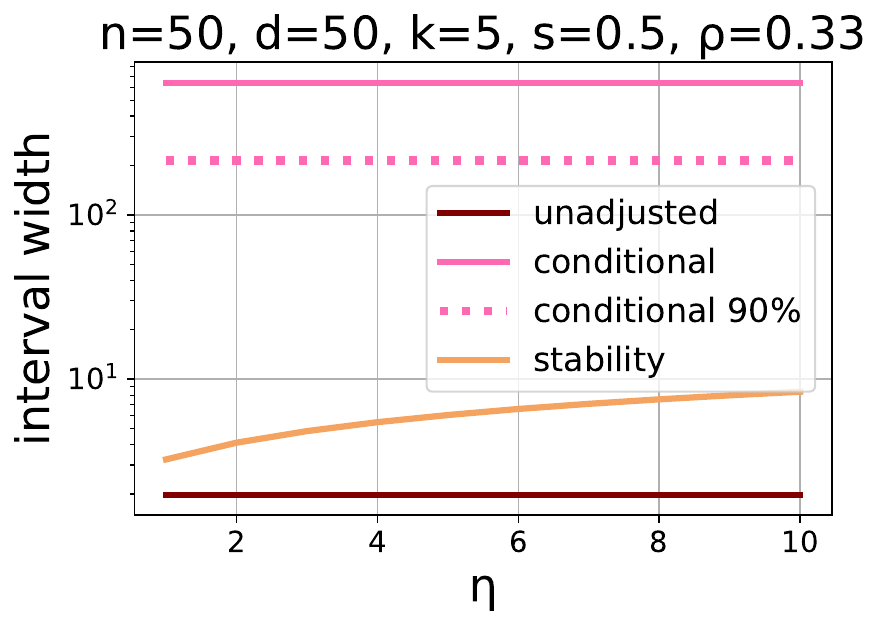}
\includegraphics[width=0.3\textwidth]{plots/MSwbern_d50_k5_s05_sig5.pdf}
\includegraphics[width=0.3\textwidth]{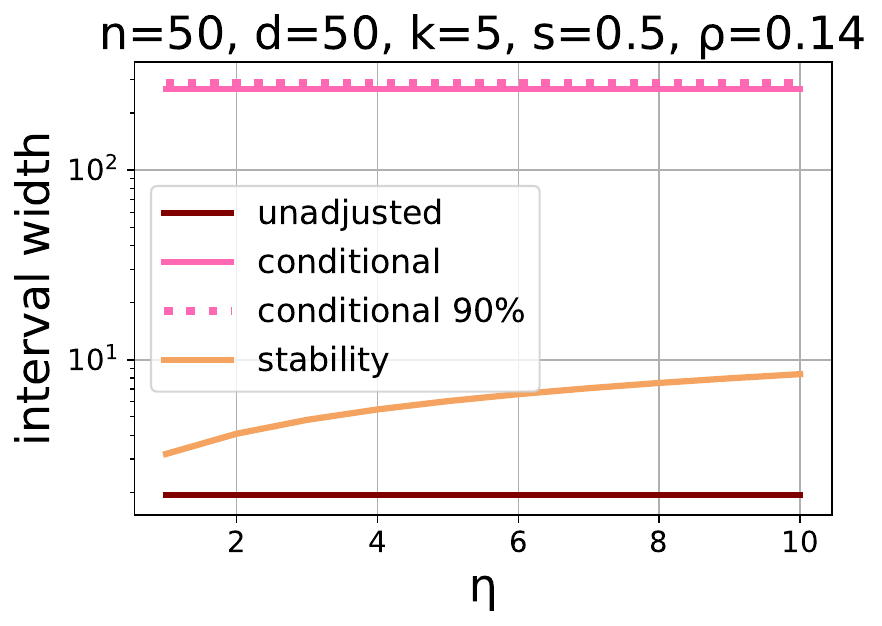}
}
\centerline{\includegraphics[width=0.3\textwidth]{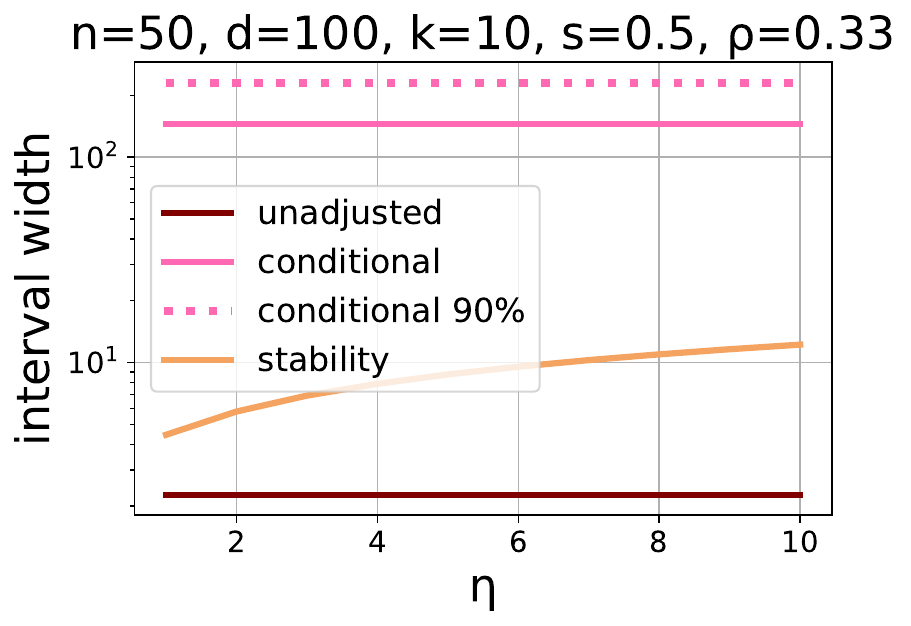}
\includegraphics[width=0.3\textwidth]{plots/MSwbern_d100_k10_s05_sig5.pdf}
\includegraphics[width=0.3\textwidth]{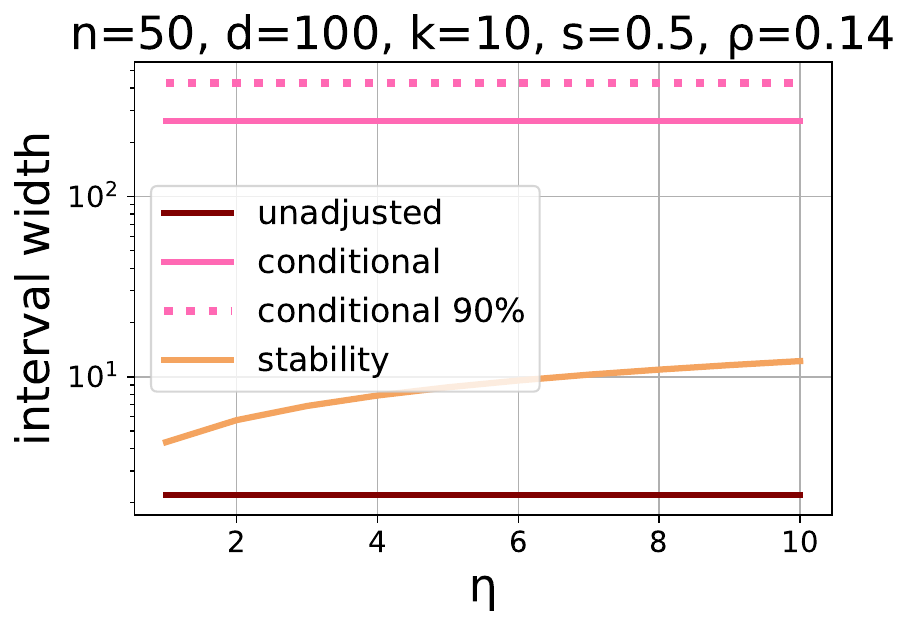}
}
\centerline{\includegraphics[width=0.3\textwidth]{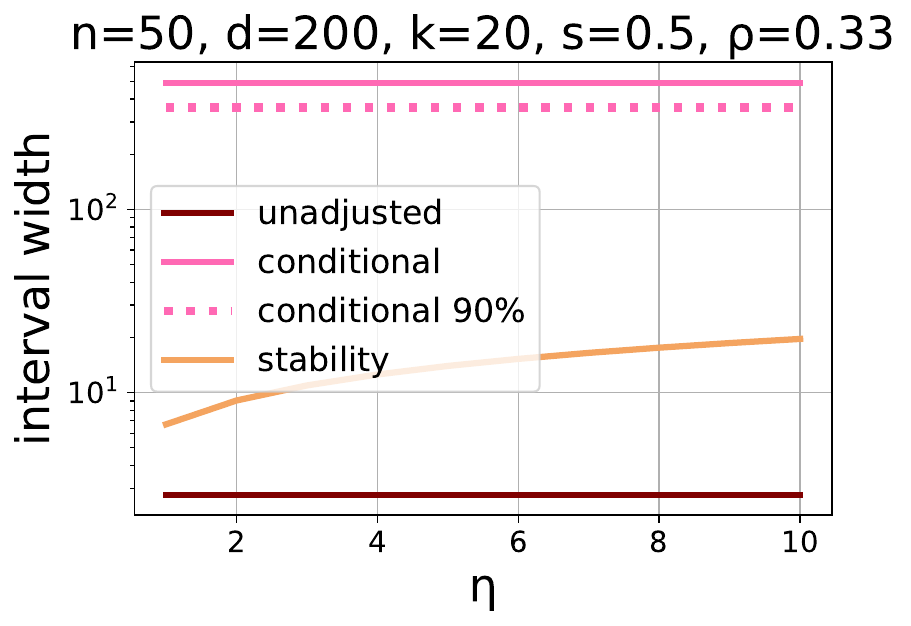}
\includegraphics[width=0.3\textwidth]{plots/MSwbern_d200_k20_s05_sig5.pdf}
\includegraphics[width=0.3\textwidth]{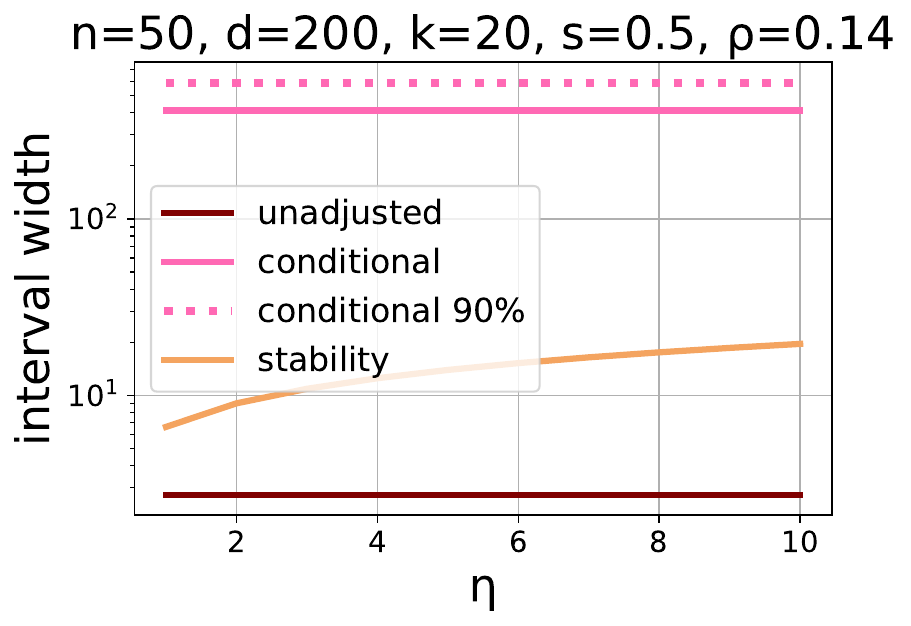}
}
\caption{Plots of average confidence interval widths of stable marginal screening and exact marginal screening with a conditional correction, with varying dimension and signal strength, in the Bernoulli design case. We also plot the $90\%$ quantile of the conditional width because it varies greatly across realizations. 
} 
\label{fig:screeningw_comparison_bern}
\end{figure}

\begin{figure}[b]
\centerline{\includegraphics[width=0.3\textwidth]{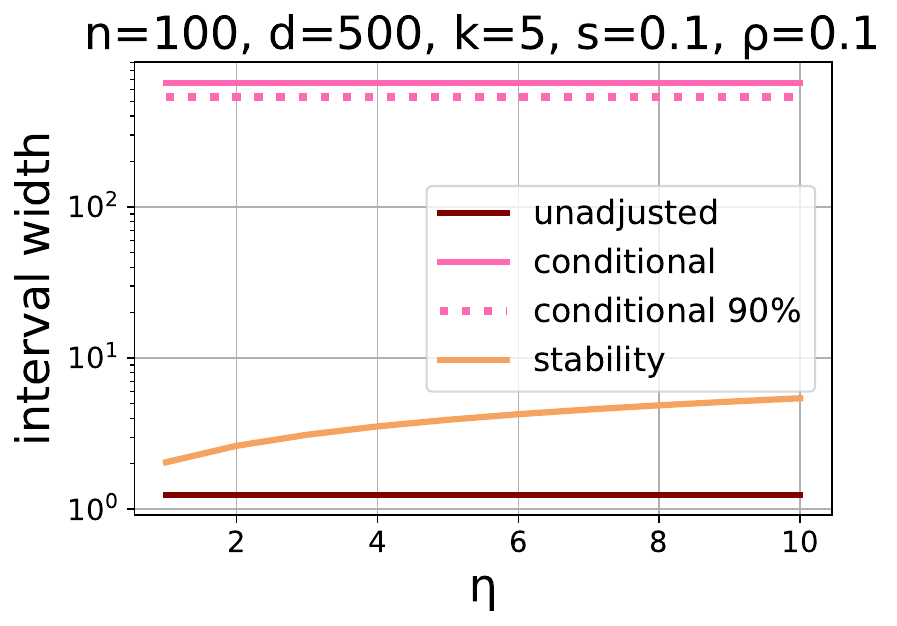}
\includegraphics[width=0.3\textwidth]{plots/MSwbern_n200_d500_k5_s01_sig10.pdf}
\includegraphics[width=0.3\textwidth]{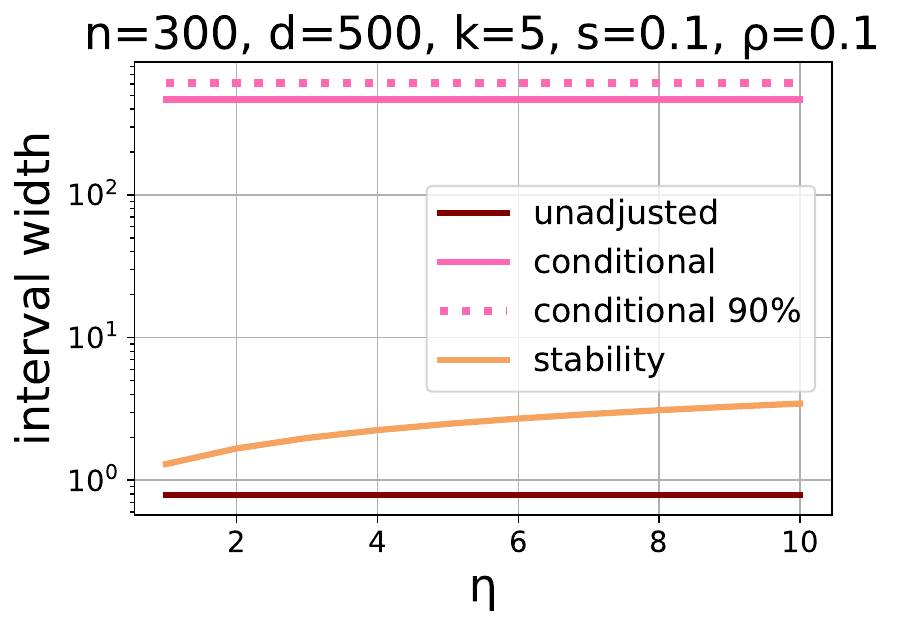}
}
\caption{Plots of average confidence interval widths of stable marginal screening and exact marginal screening with a conditional correction, with varying sample size, in the Bernoulli design case. We also plot the $90\%$ quantile of the conditional width because it varies greatly across realizations. 
} 
\label{fig:msw_highdim_bern}
\end{figure}

\paragraph{Experiments with Laplace errors.}
We include additional experiments when the errors have an exponential, rather than Gaussian, tail: the outcome is generated as $y = X\beta +~\epsilon$, where $\epsilon_i \stackrel{\mathrm{i.i.d.}}{\sim} \mathrm{Lap}\left(\frac{1}{\sqrt{2}}\right), i\in[n]$. The Laplace parameter is chosen so that the errors have variance equal to one. 

For the Gaussian design case, Figure \ref{fig:lasso_comparison_experr} shows plots analogous to those in Figure \ref{fig:lasso_comparison} and Figure \ref{fig:lasso_highdim} in this setting, and Figure \ref{fig:screening_comparison_experr} is the analogue of Figure \ref{fig:screening_comparison} and Figure \ref{fig:ms_highdim}. Similarly, for the Bernoulli design case, Figure \ref{fig:lasso_comparison_bern_experr} is analogous to Figure \ref{fig:lasso_comparison_bern} and Figure \ref{fig:lasso_highdim_bern}, and Figure \ref{fig:screening_comparison_bern_experr} is analogous to Figure~\ref{fig:screening_comparison_bern} and Figure~\ref{fig:ms_highdim_bern}.

\begin{figure}[h!]
\centerline{\includegraphics[width=0.25\textwidth]{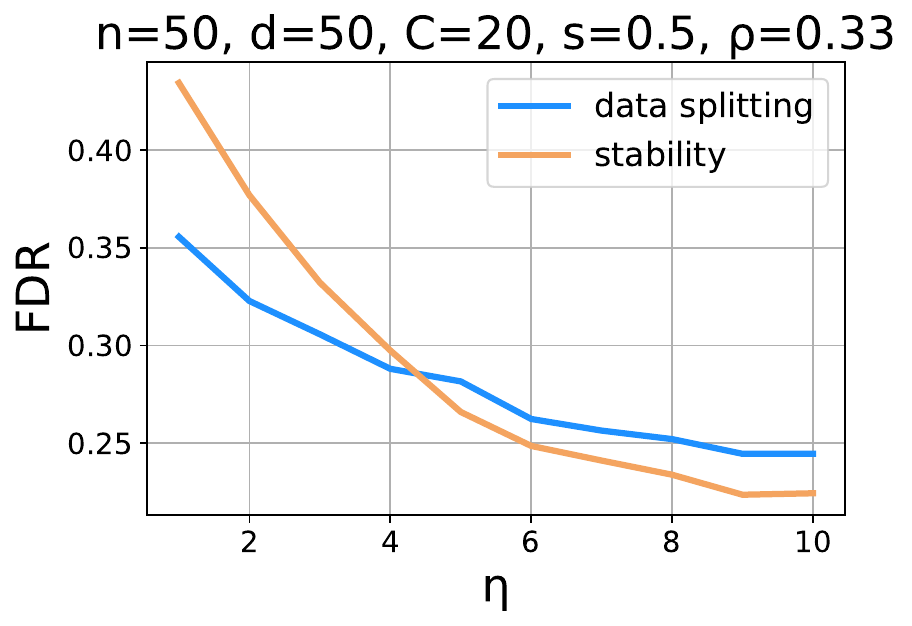}
\includegraphics[width=0.25\textwidth]{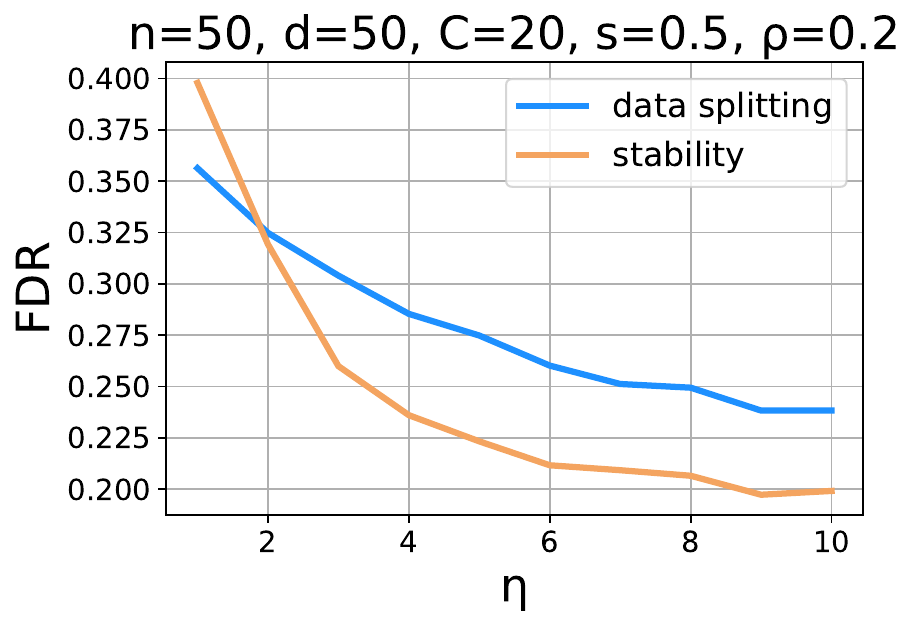}
\includegraphics[width=0.25\textwidth]{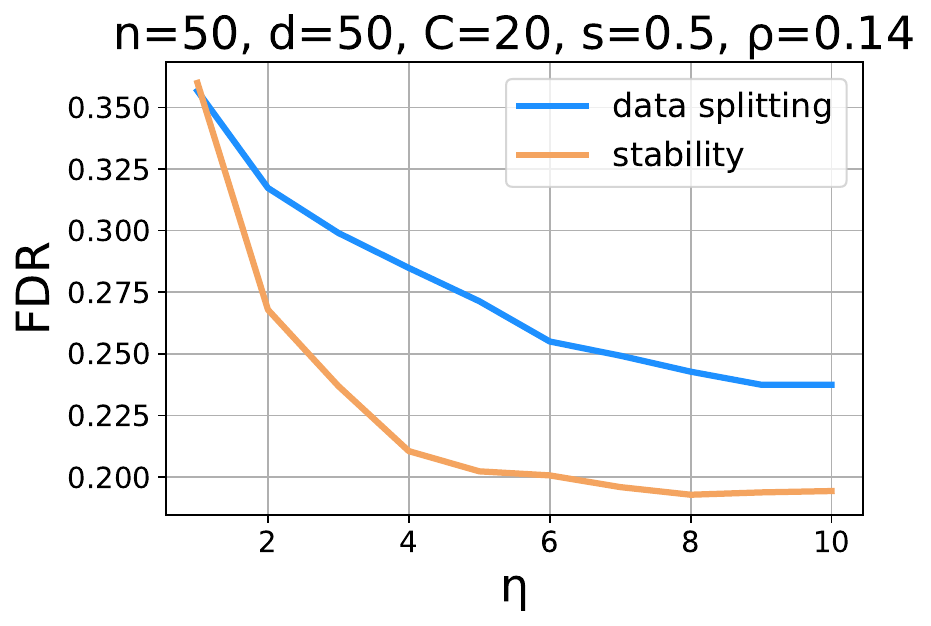}
\includegraphics[width=0.25\textwidth]{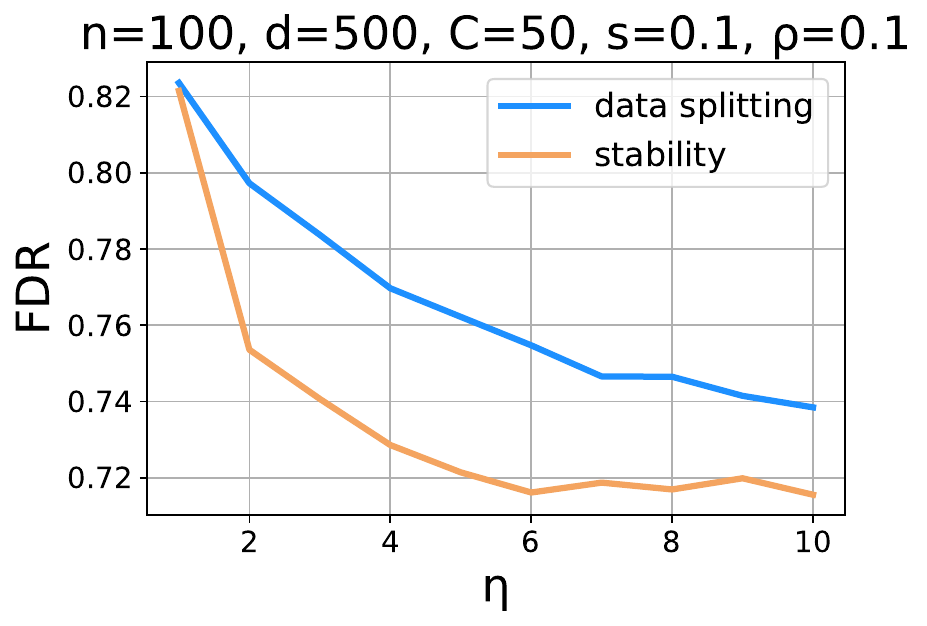}
}
\centerline{\includegraphics[width=0.25\textwidth]{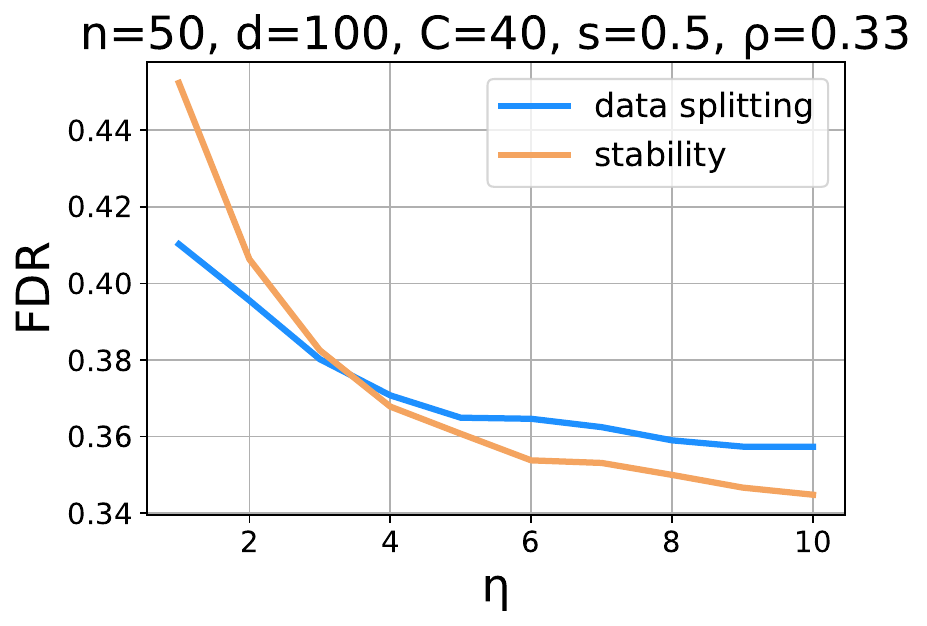}
\includegraphics[width=0.25\textwidth]{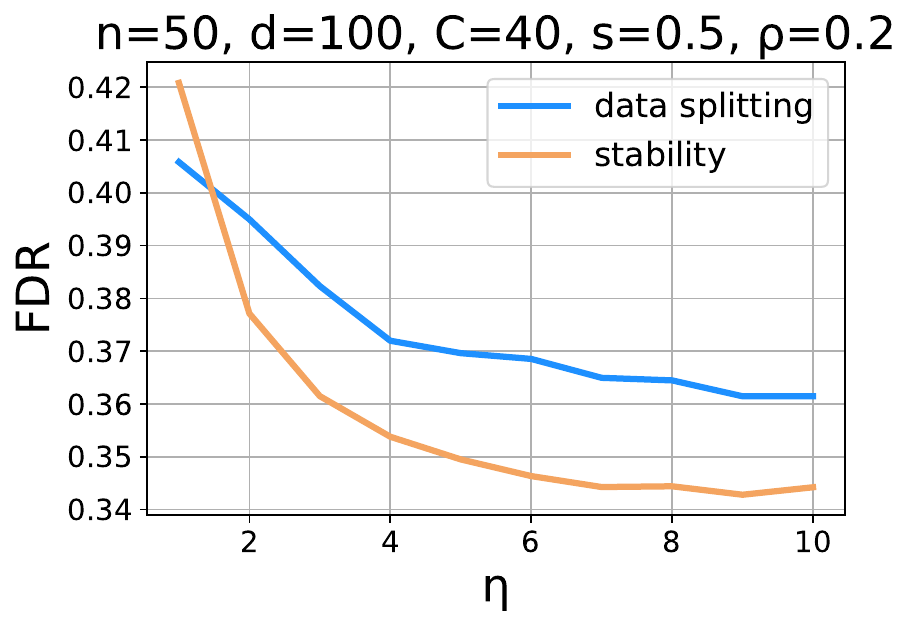}
\includegraphics[width=0.25\textwidth]{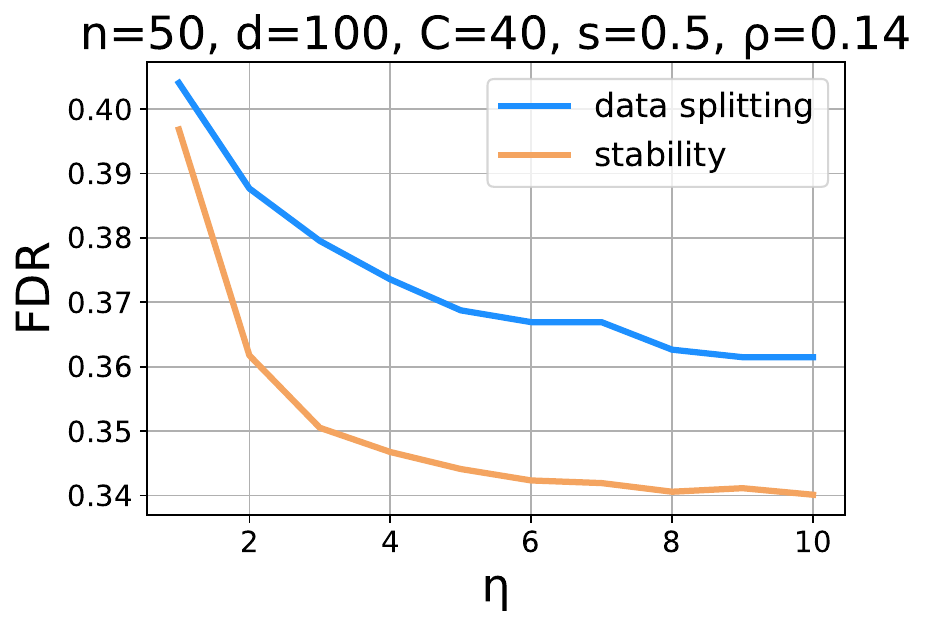}
\includegraphics[width=0.25\textwidth]{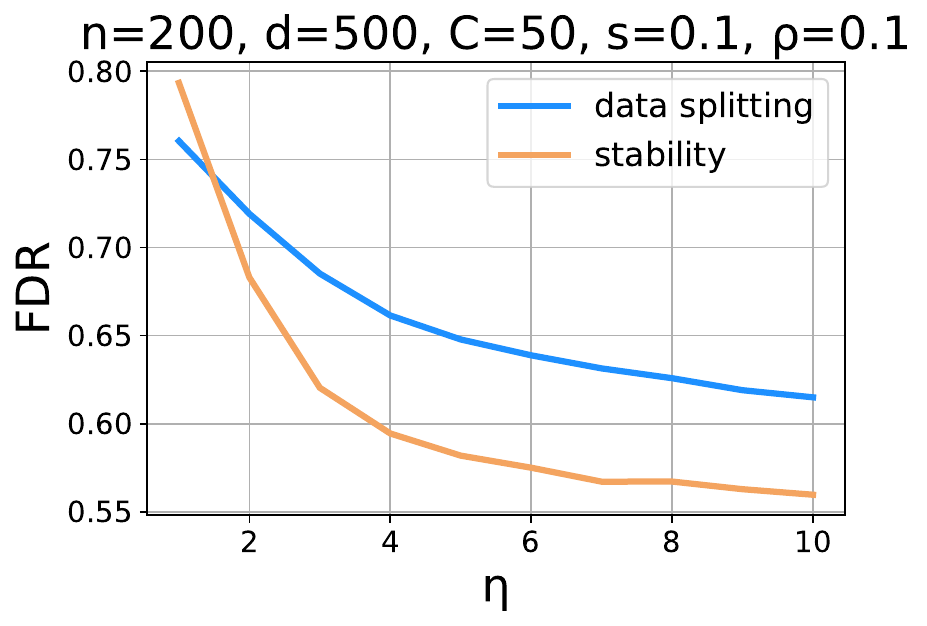}
}
\centerline{\includegraphics[width=0.25\textwidth]{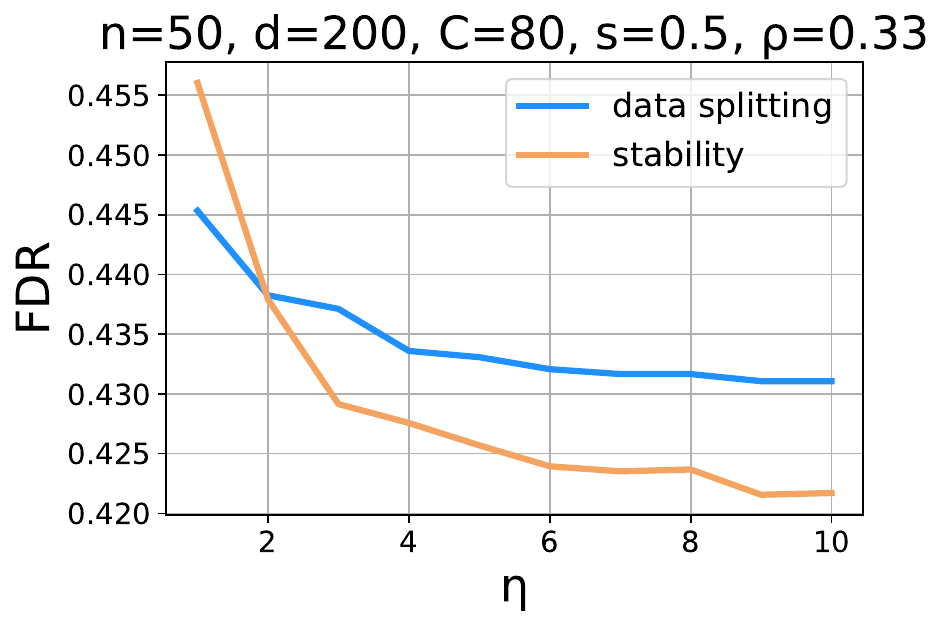}
\includegraphics[width=0.25\textwidth]{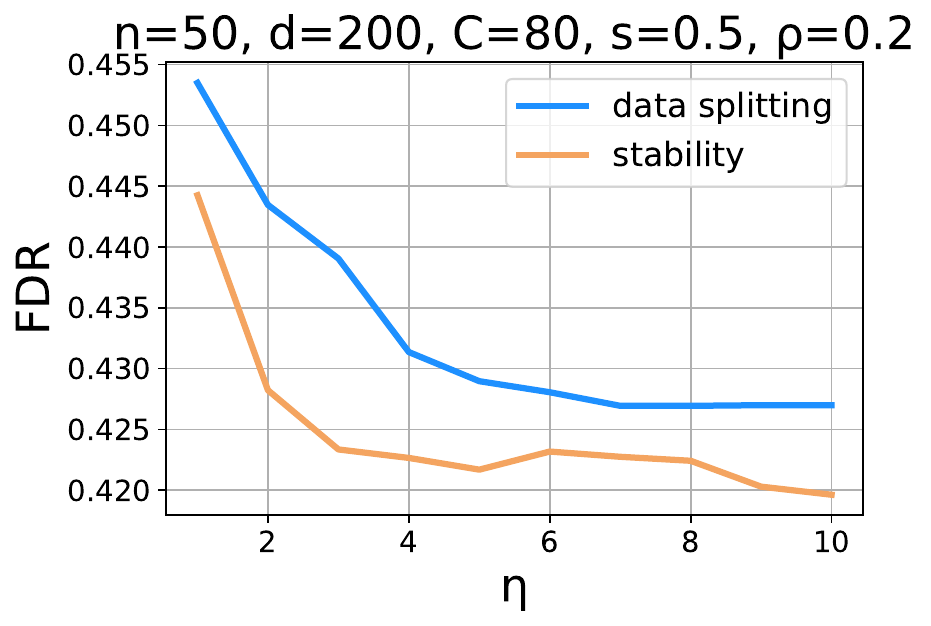}
\includegraphics[width=0.25\textwidth]{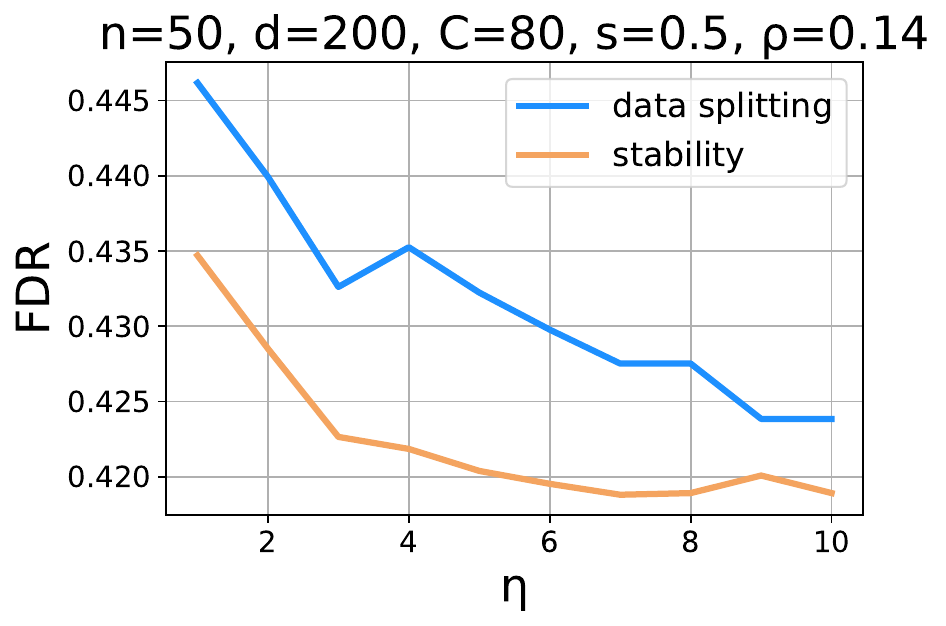}
\includegraphics[width=0.25\textwidth]{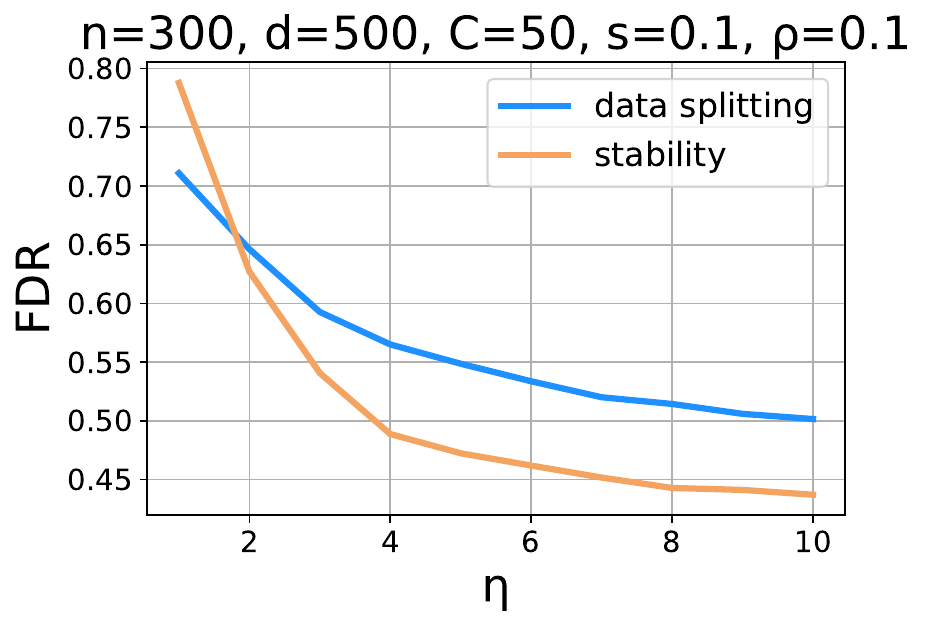}
}
\caption{Comparison of FDR after stable LASSO and LASSO with data splitting, with varying dimension and signal strength (first three columns) and sample size (last column), in the Gaussian design case. The errors are sampled from a Laplace distribution.} 
\label{fig:lasso_comparison_experr}
\end{figure}

\begin{figure}[h!]
\centerline{\includegraphics[width=0.25\textwidth]{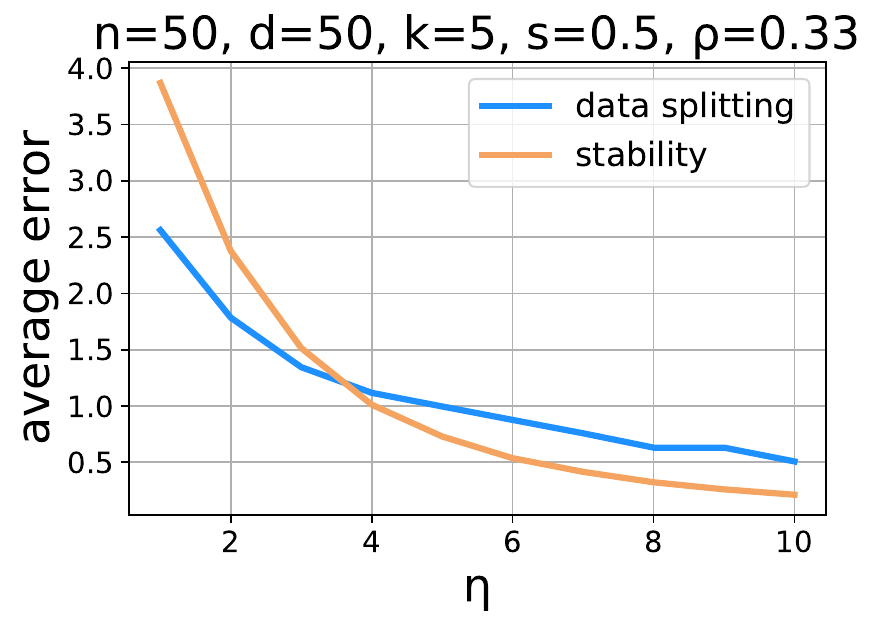}
\includegraphics[width=0.25\textwidth]{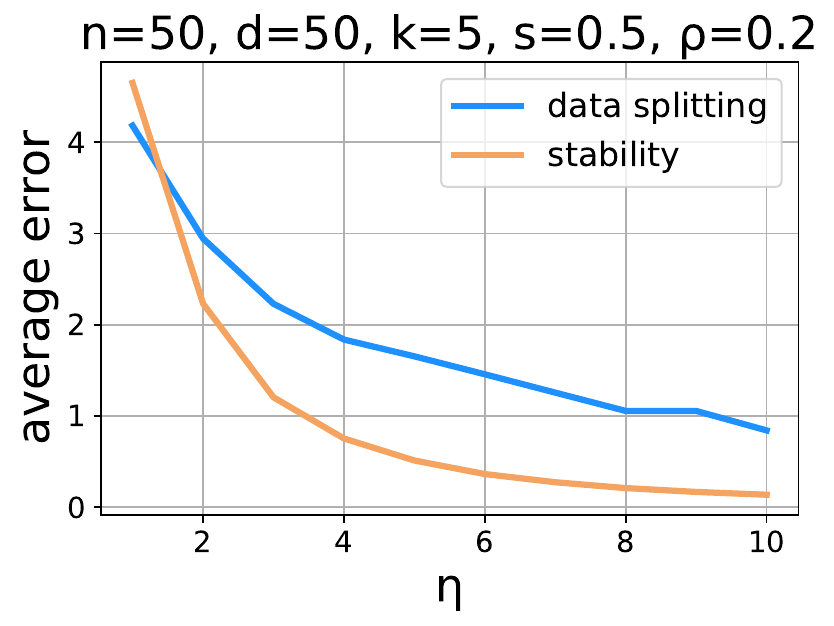}
\includegraphics[width=0.25\textwidth]{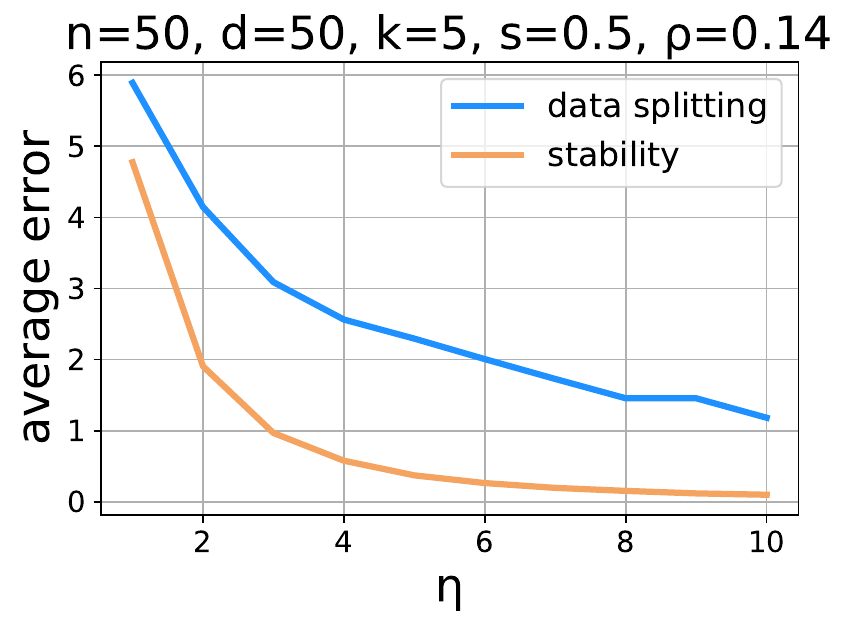}
\includegraphics[width=0.25\textwidth]{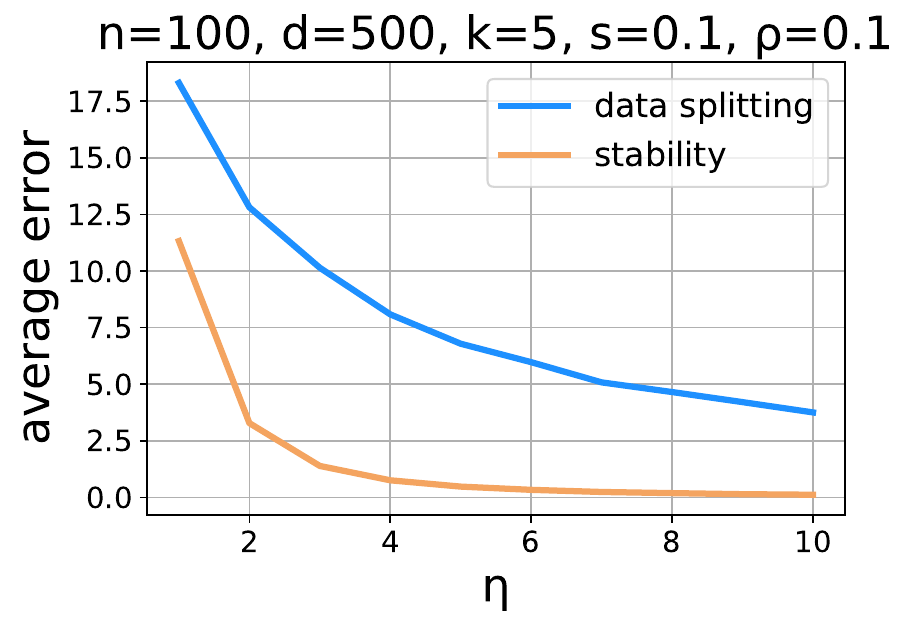}
}
\centerline{\includegraphics[width=0.25\textwidth]{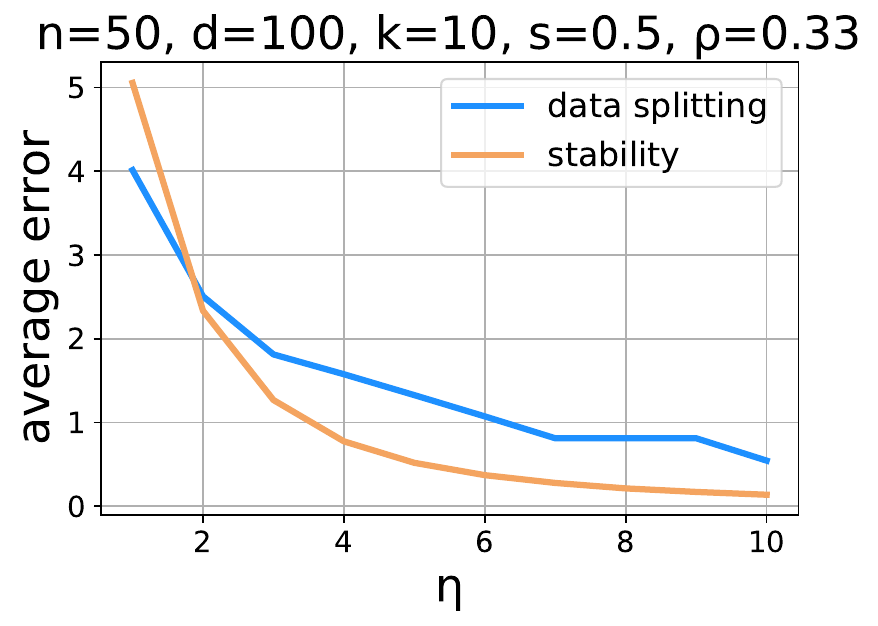}
\includegraphics[width=0.25\textwidth]{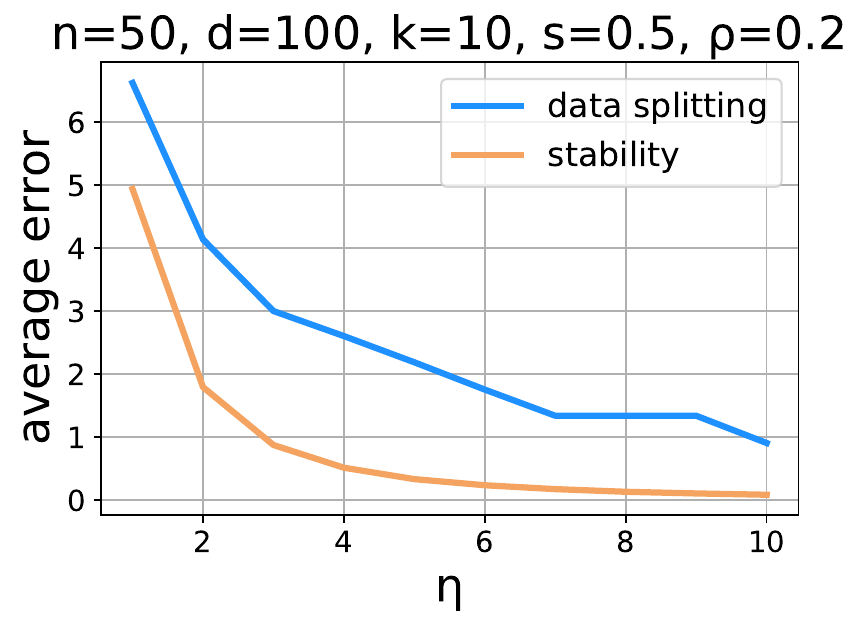}
\includegraphics[width=0.25\textwidth]{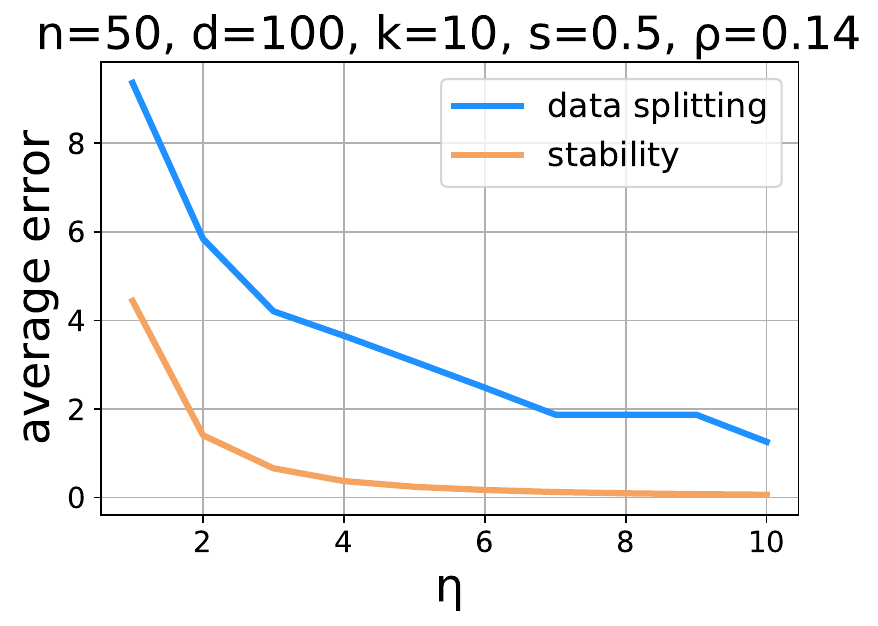}
\includegraphics[width=0.25\textwidth]{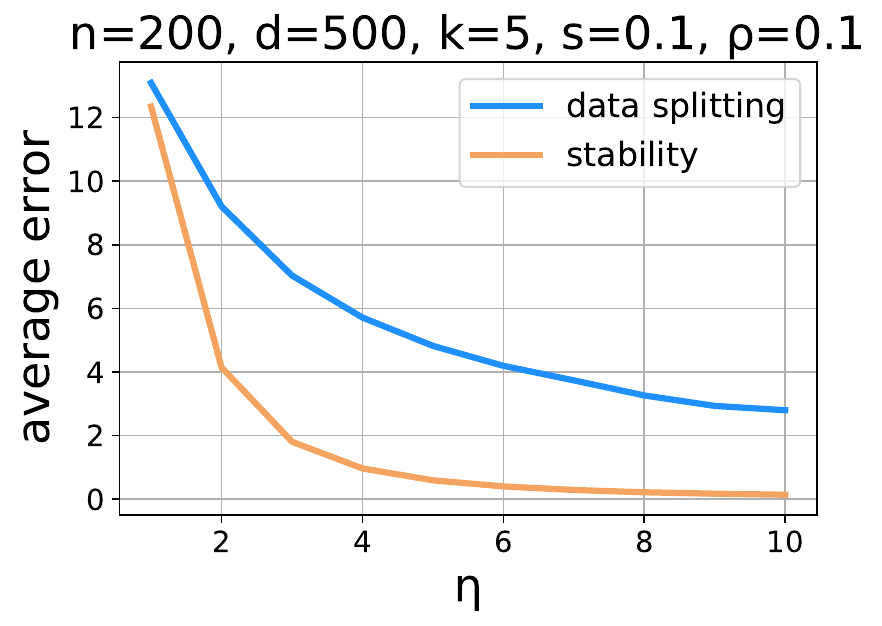}
}
\centerline{\includegraphics[width=0.25\textwidth]{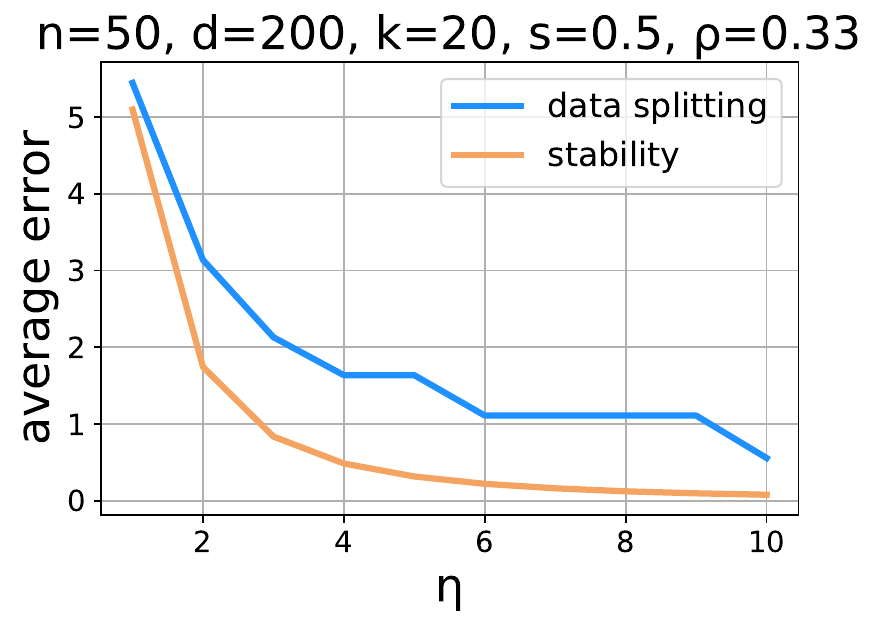}
\includegraphics[width=0.25\textwidth]{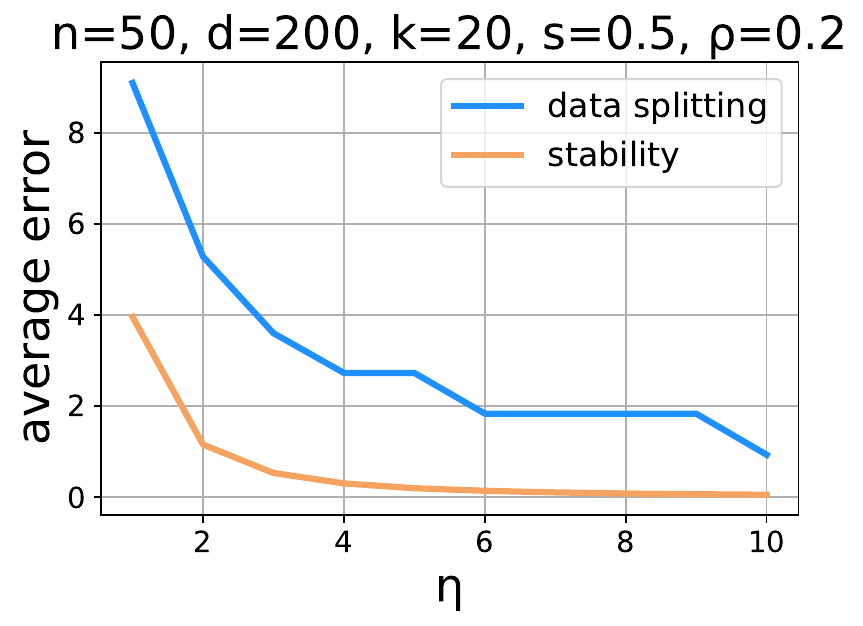}
\includegraphics[width=0.25\textwidth]{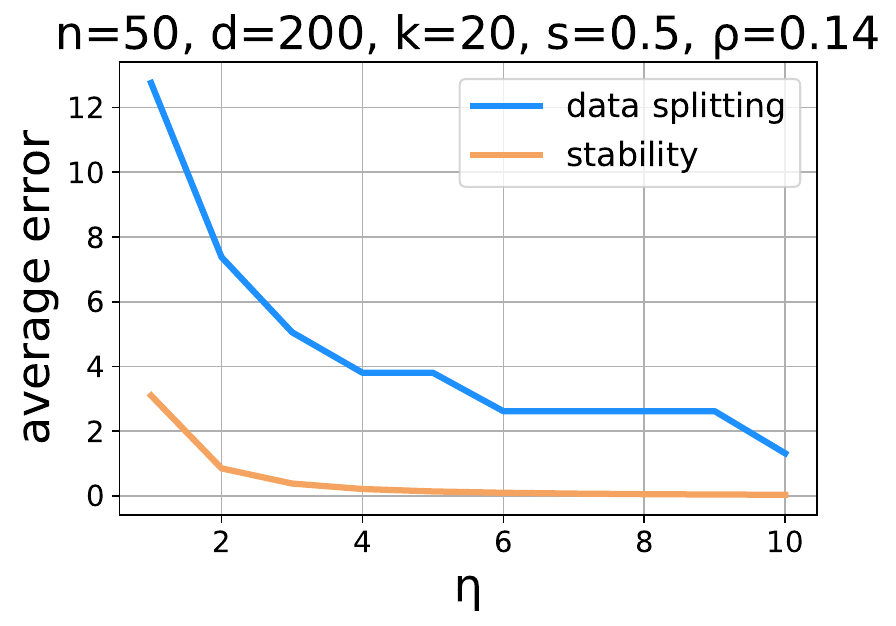}
\includegraphics[width=0.25\textwidth]{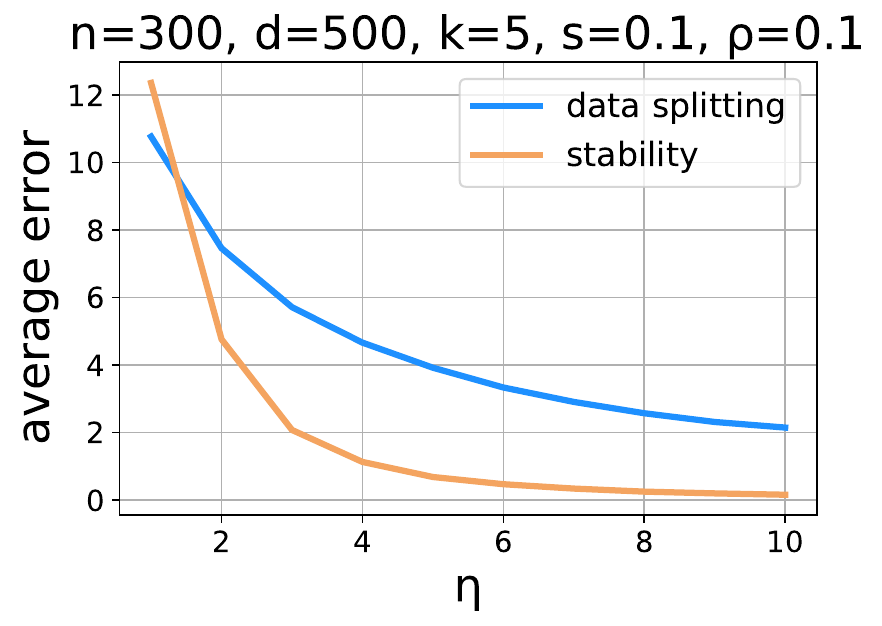}
}
\caption{Comparison of average error after stable marginal screening and marginal screening with data splitting, with varying dimension and signal strength (first three columns) and sample size (last column), in the Gaussian design case. The errors are sampled from a Laplace distribution.} 
\label{fig:screening_comparison_experr}
\end{figure}

\begin{figure}[h!]
\centerline{\includegraphics[width=0.25\textwidth]{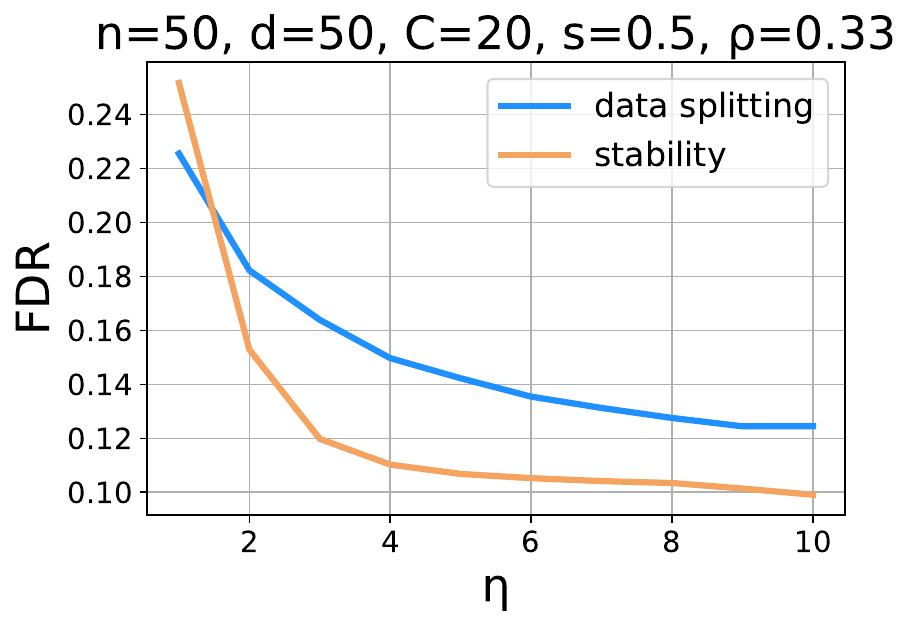}
\includegraphics[width=0.25\textwidth]{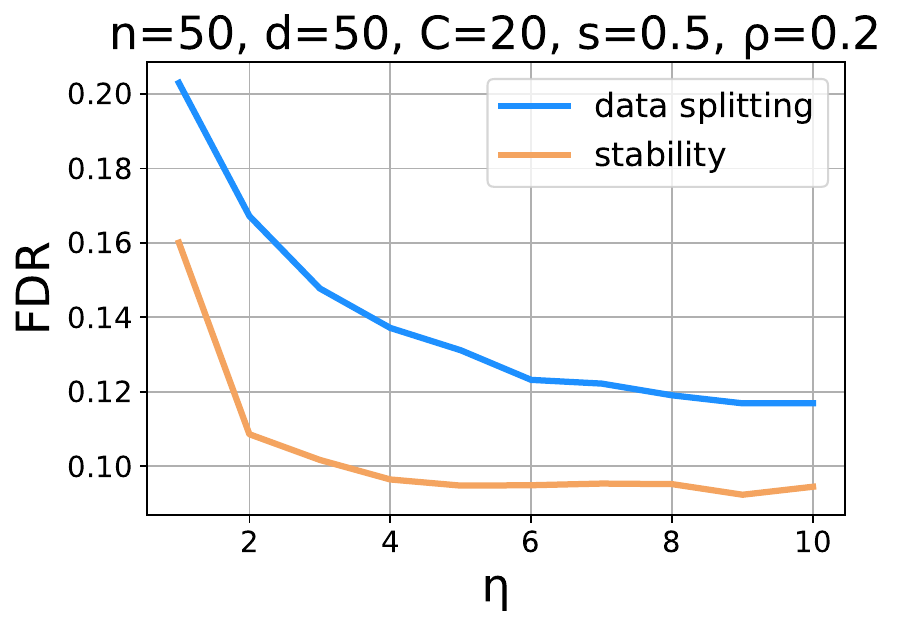}
\includegraphics[width=0.25\textwidth]{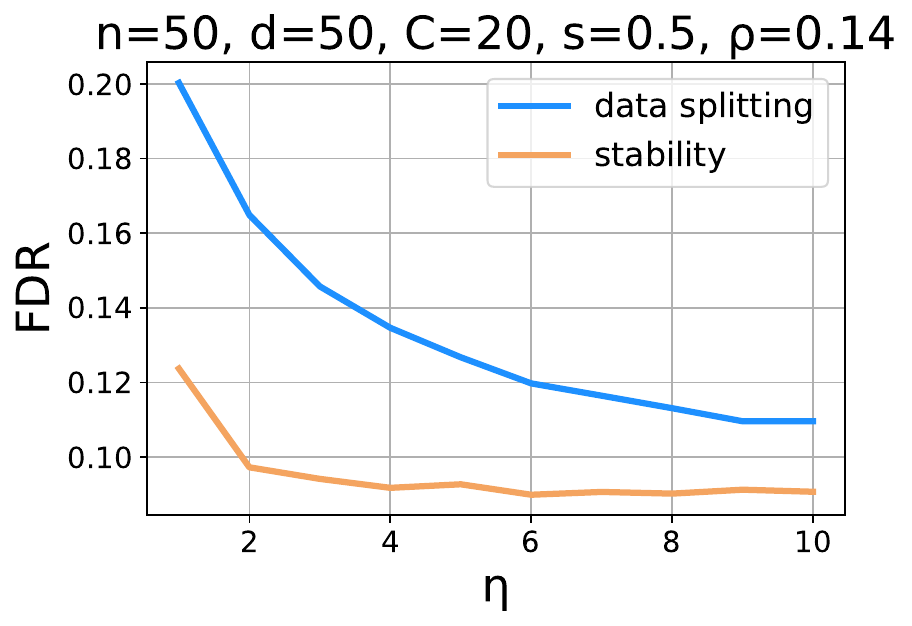}
\includegraphics[width=0.25\textwidth]{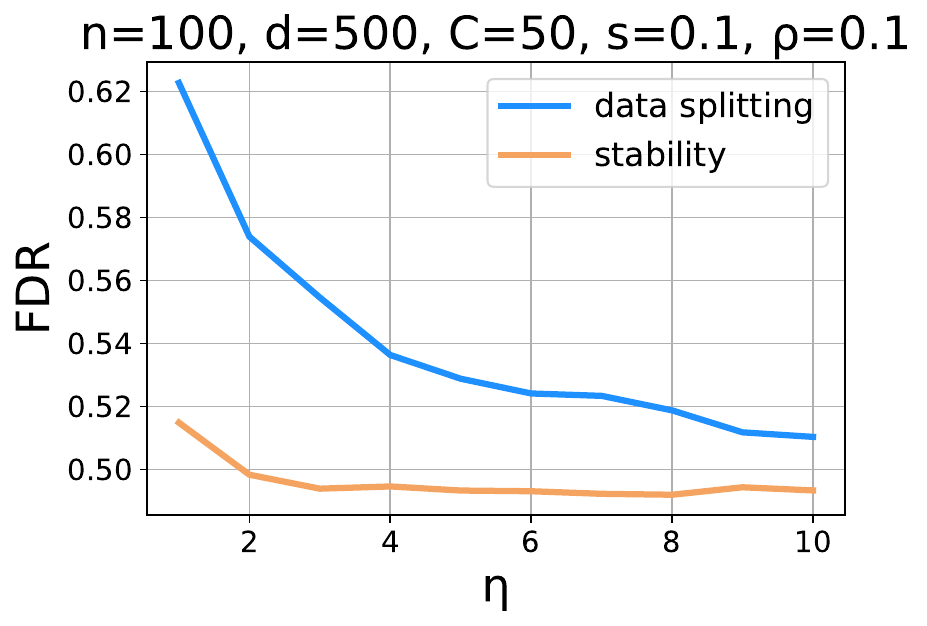}
}
\centerline{\includegraphics[width=0.25\textwidth]{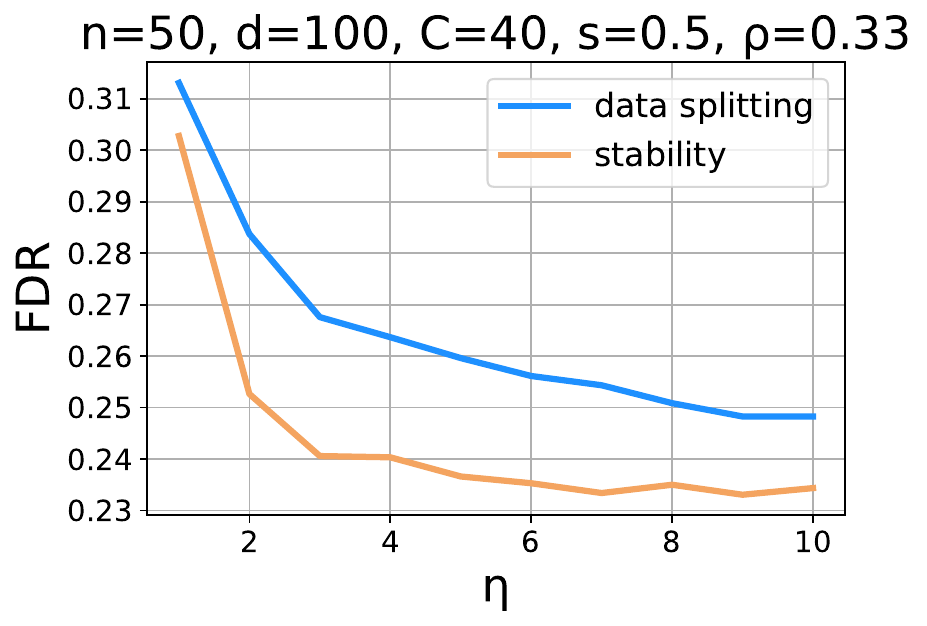}
\includegraphics[width=0.25\textwidth]{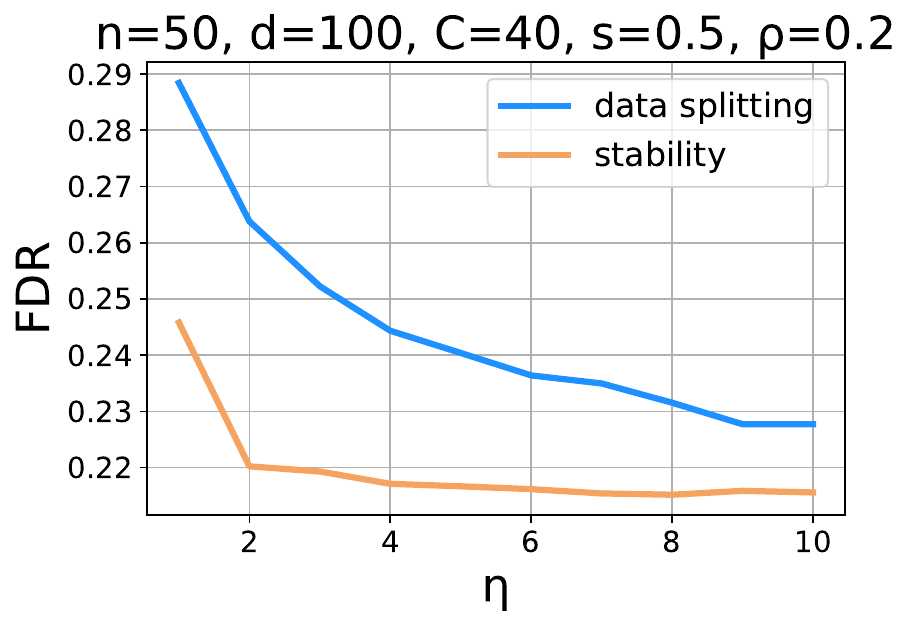}
\includegraphics[width=0.25\textwidth]{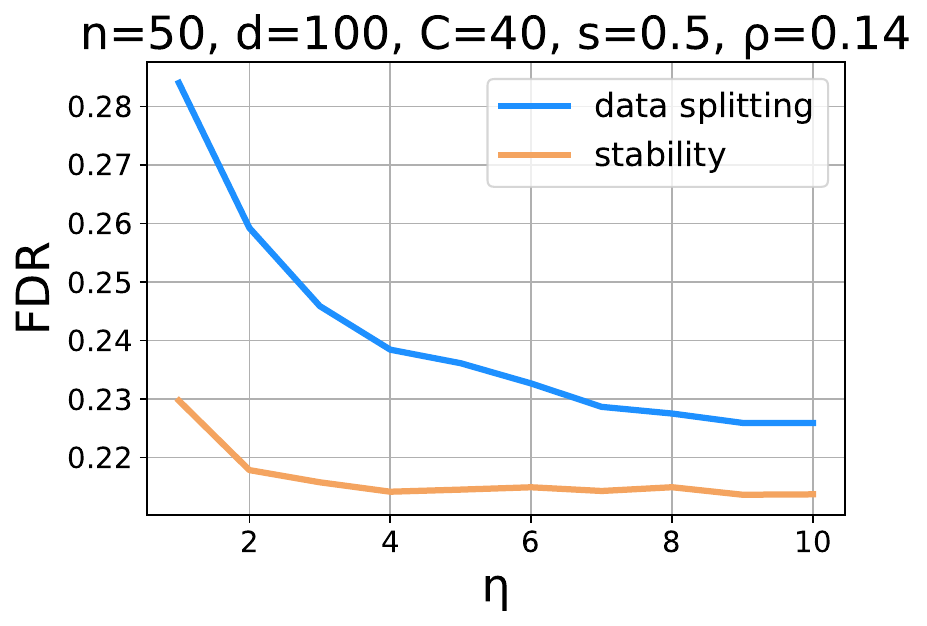}
\includegraphics[width=0.25\textwidth]{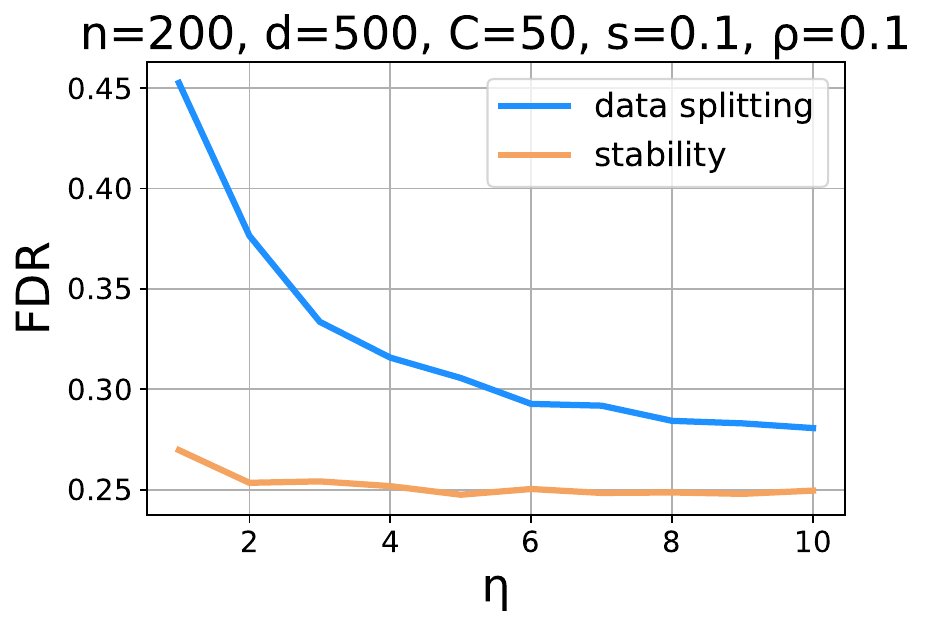}
}
\centerline{\includegraphics[width=0.25\textwidth]{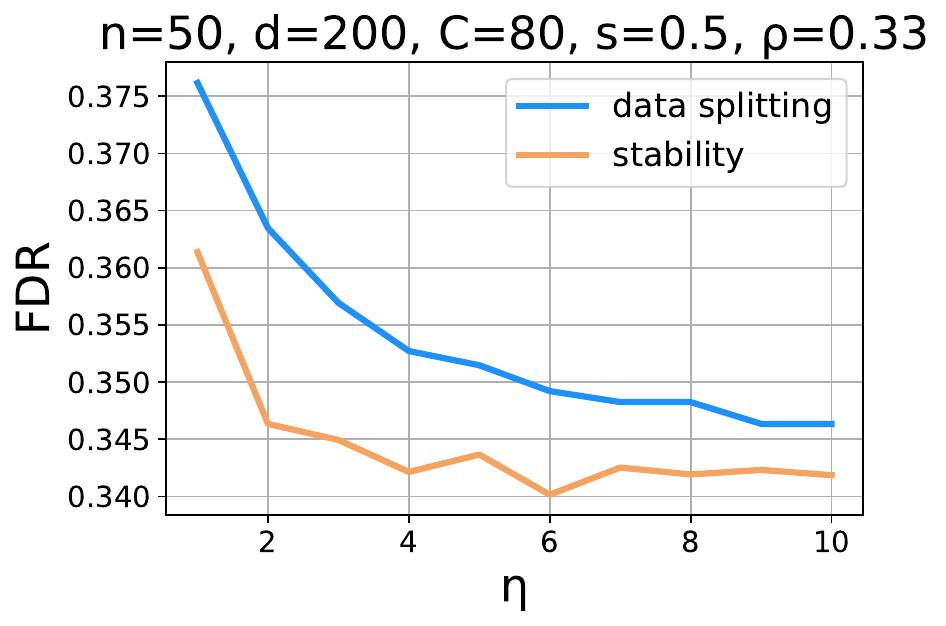}
\includegraphics[width=0.25\textwidth]{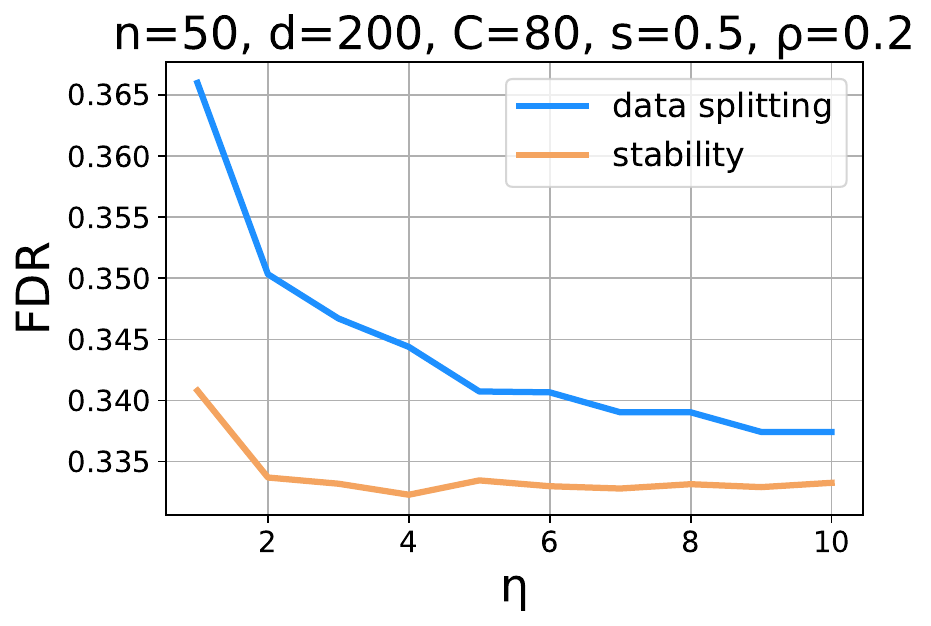}
\includegraphics[width=0.25\textwidth]{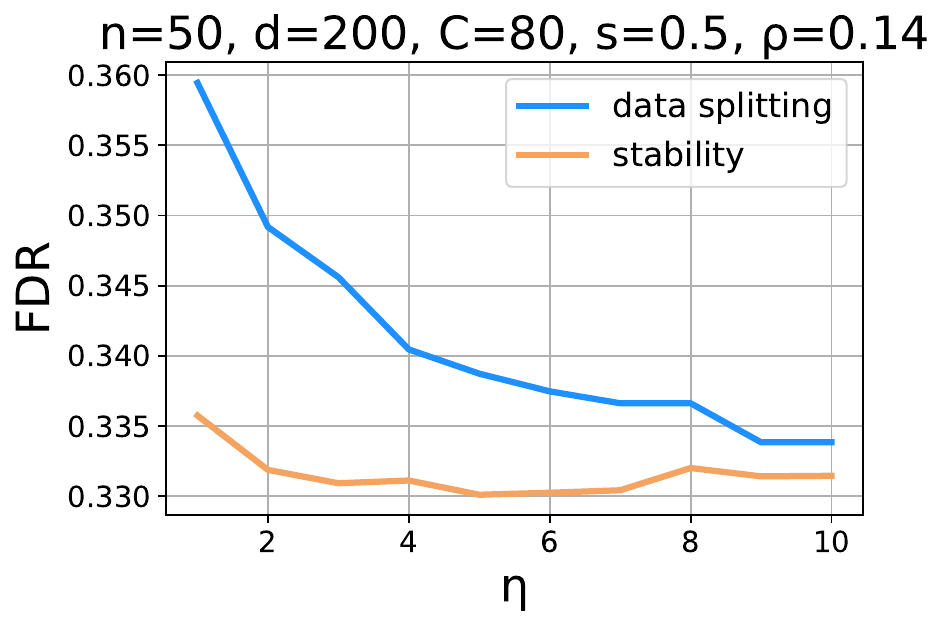}
\includegraphics[width=0.25\textwidth]{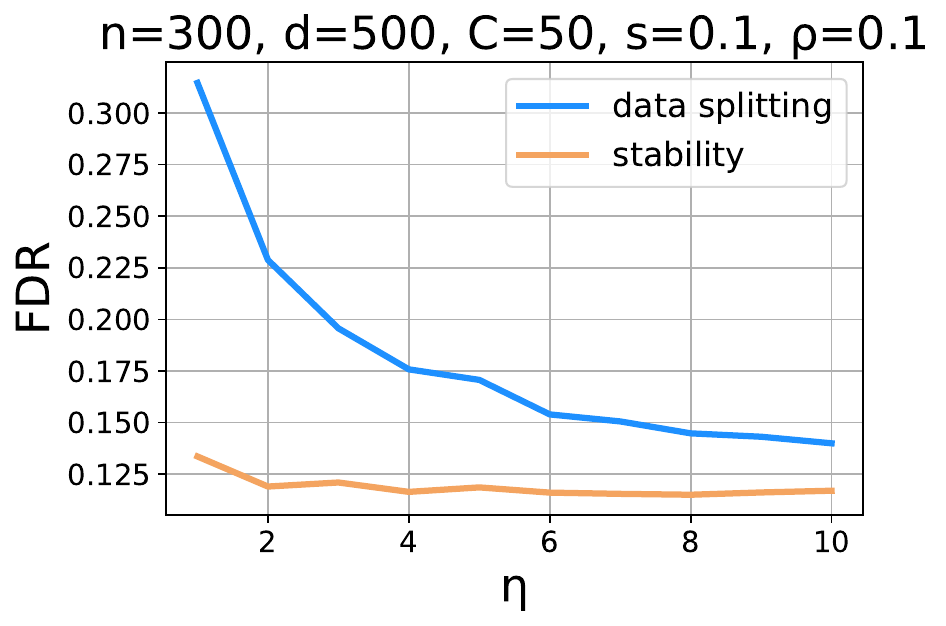}
}
\caption{Comparison of FDR after stable LASSO and LASSO with data splitting, with varying dimension and signal strength (first three columns) and sample size (last column), in the Bernoulli design case. The errors are sampled from a Laplace distribution.}
\label{fig:lasso_comparison_bern_experr}
\end{figure}

\begin{figure}[h!]
\centerline{\includegraphics[width=0.25\textwidth]{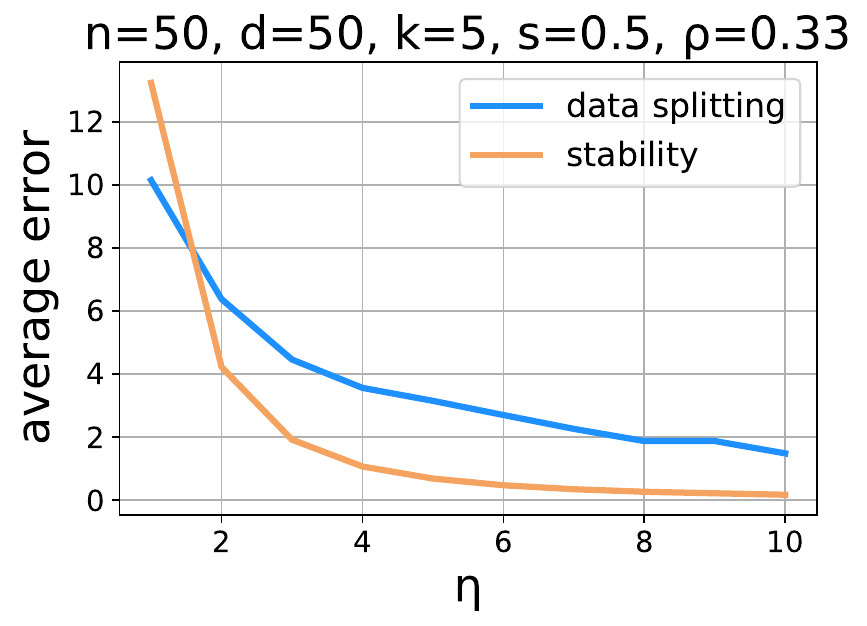}
\includegraphics[width=0.25\textwidth]{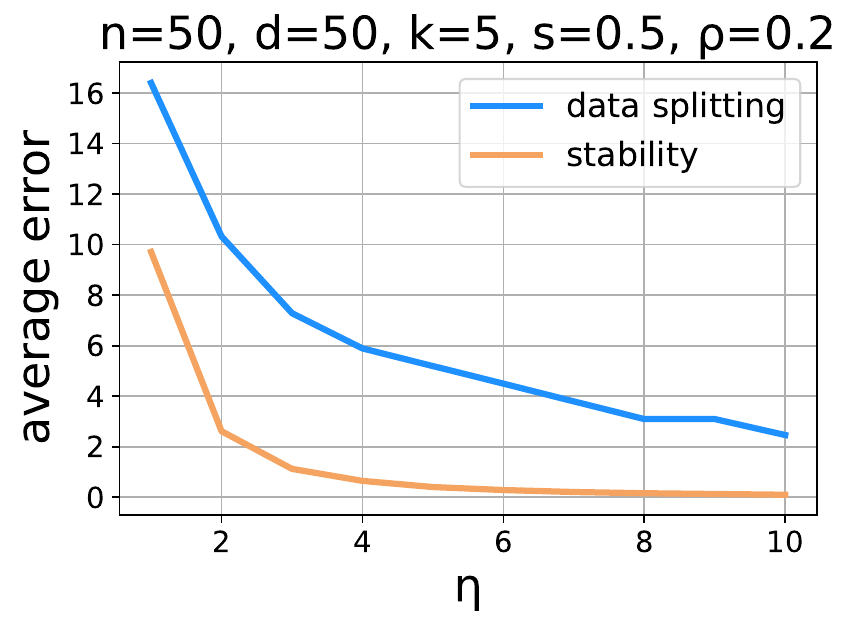}
\includegraphics[width=0.25\textwidth]{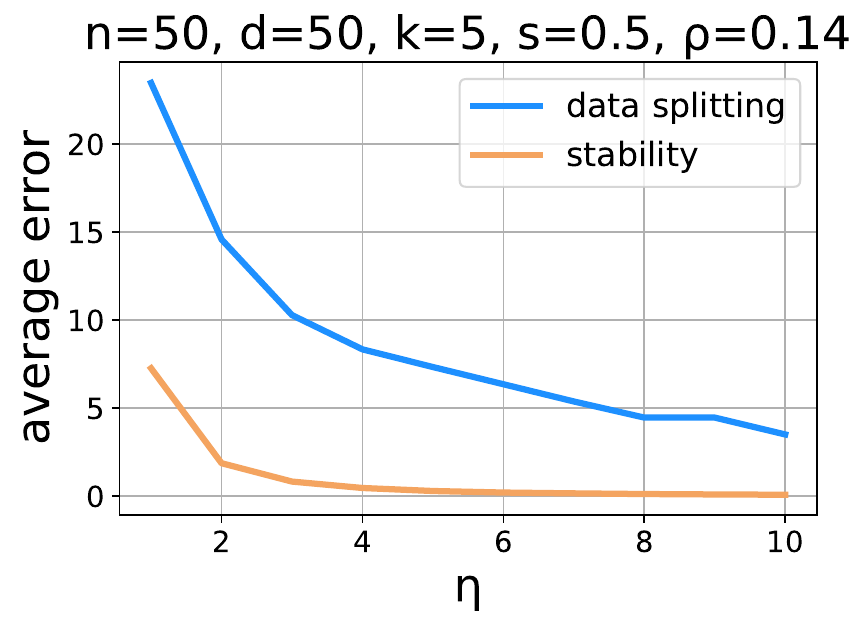}
\includegraphics[width=0.25\textwidth]{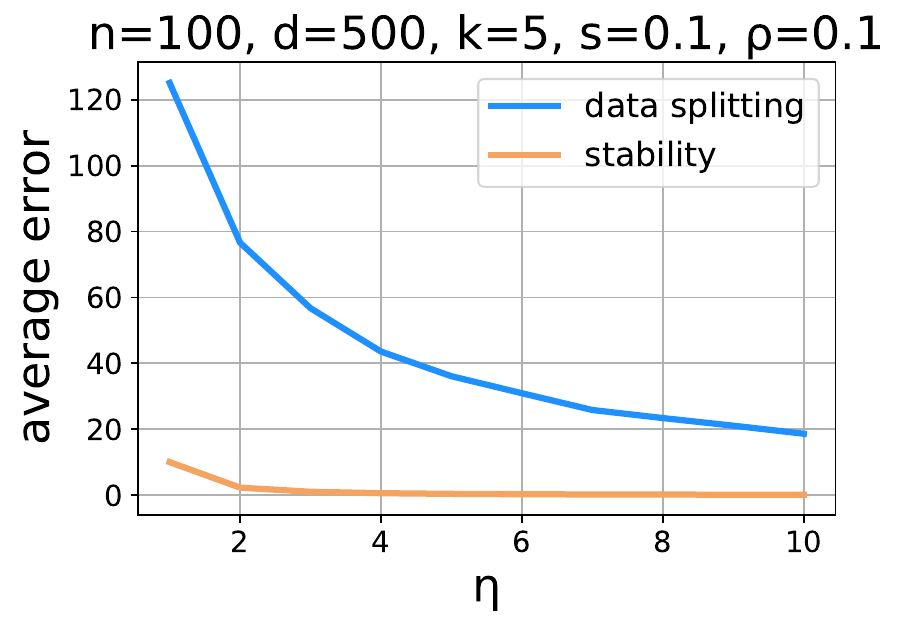}
}
\centerline{\includegraphics[width=0.25\textwidth]{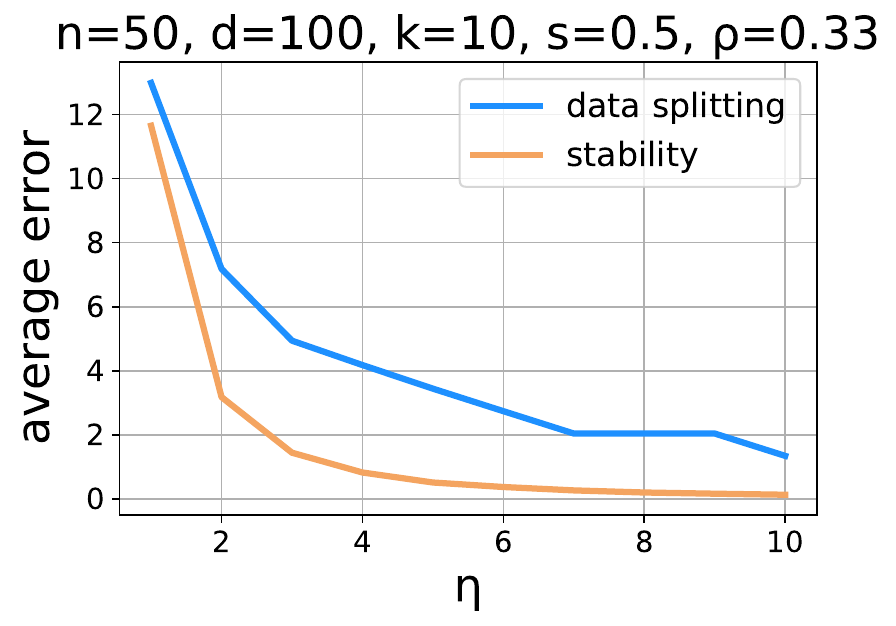}
\includegraphics[width=0.25\textwidth]{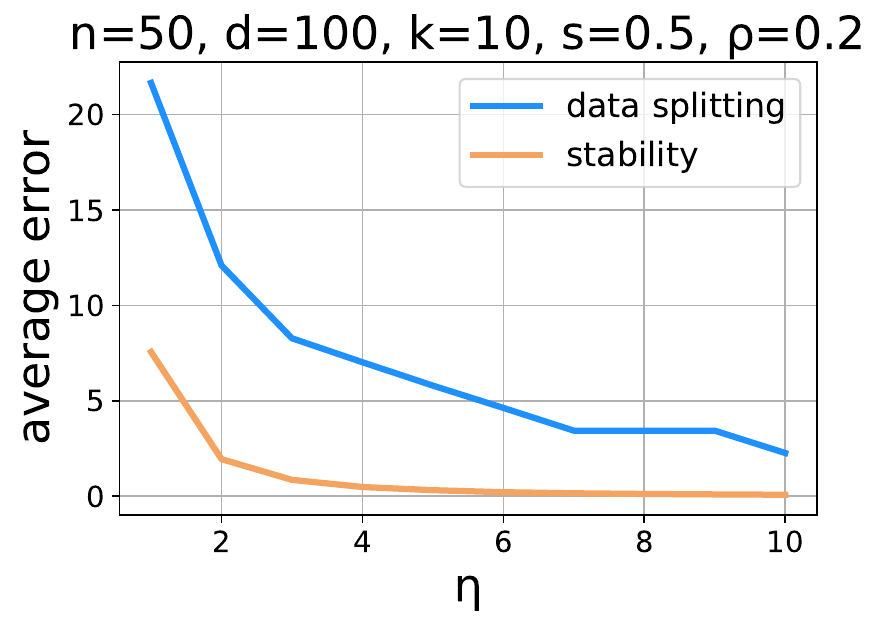}
\includegraphics[width=0.25\textwidth]{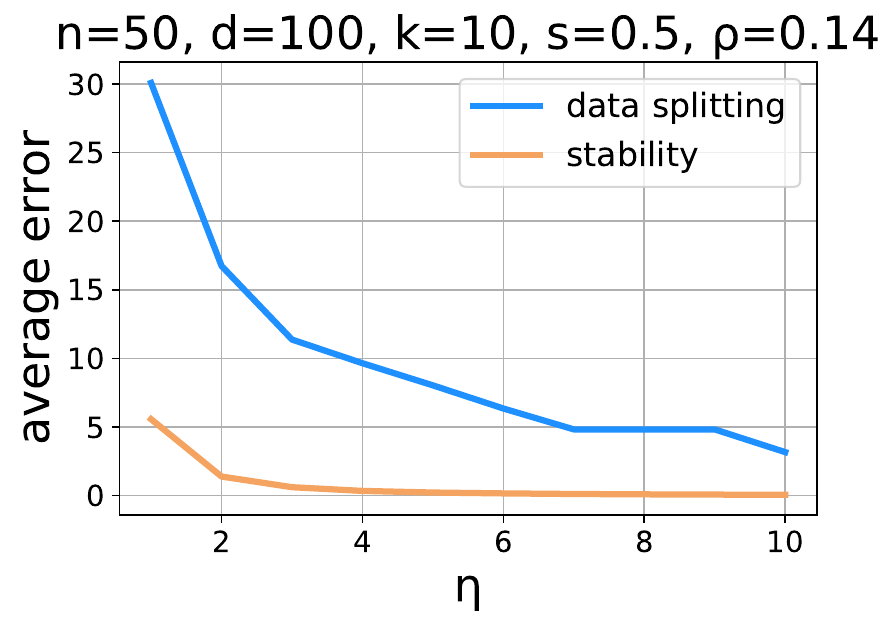}
\includegraphics[width=0.25\textwidth]{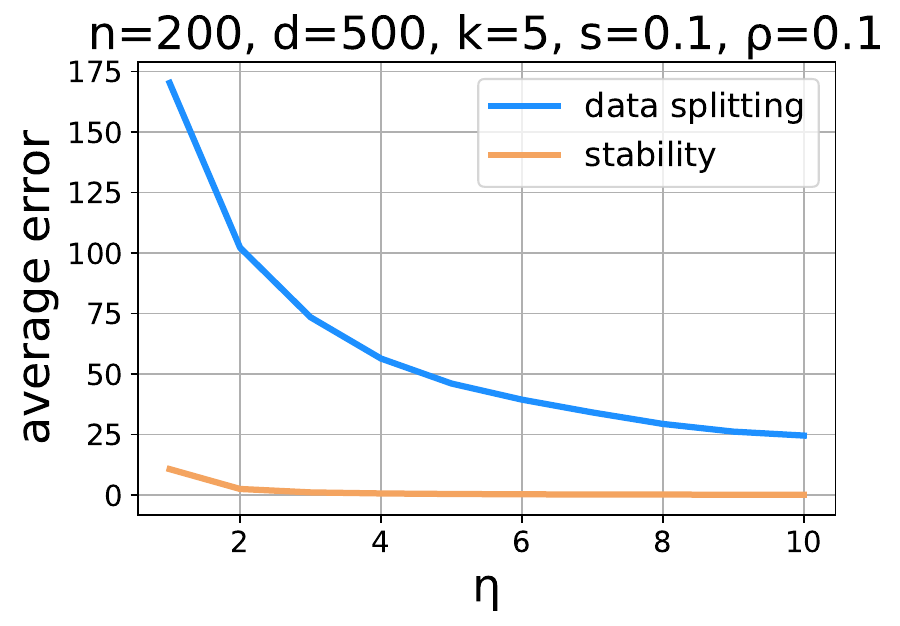}
}
\centerline{\includegraphics[width=0.25\textwidth]{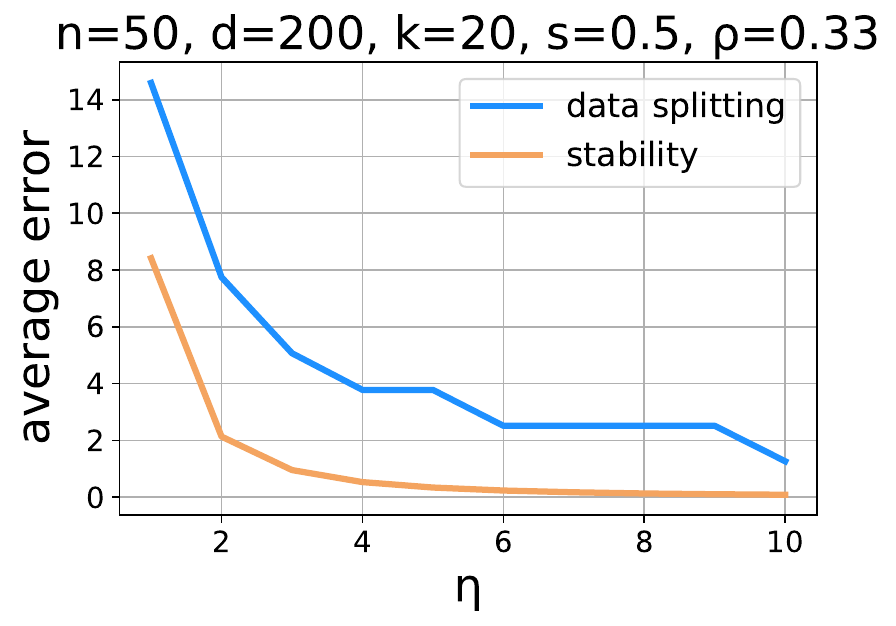}
\includegraphics[width=0.25\textwidth]{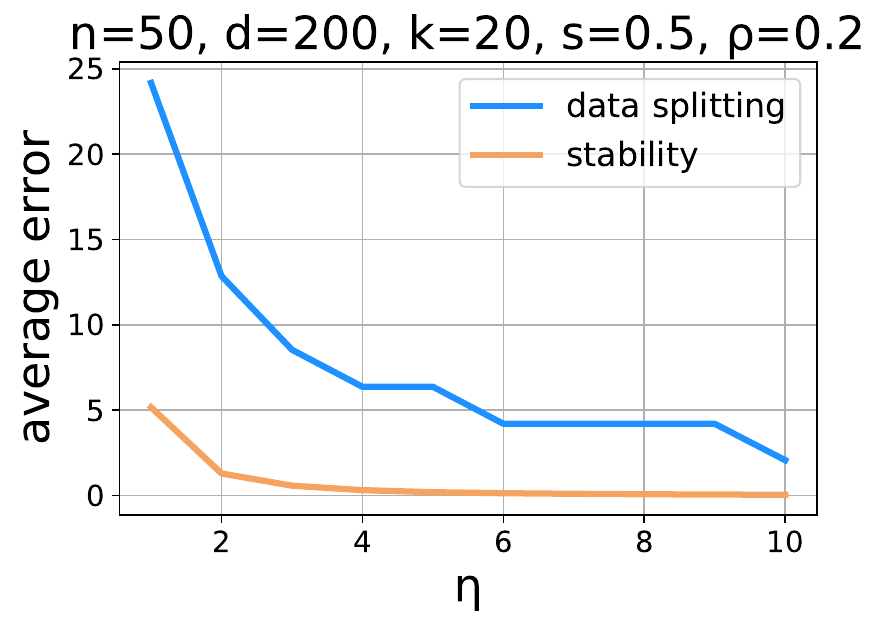}
\includegraphics[width=0.25\textwidth]{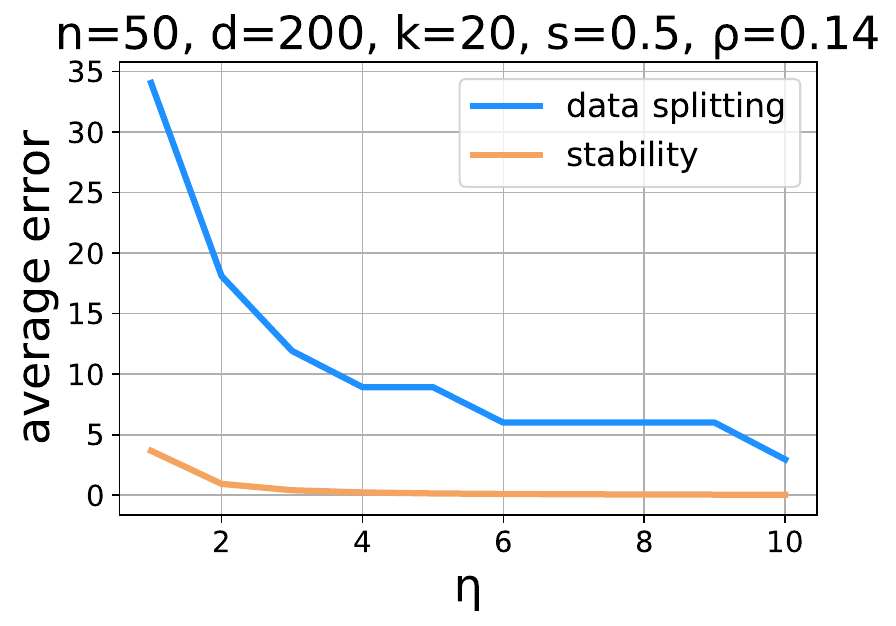}
\includegraphics[width=0.25\textwidth]{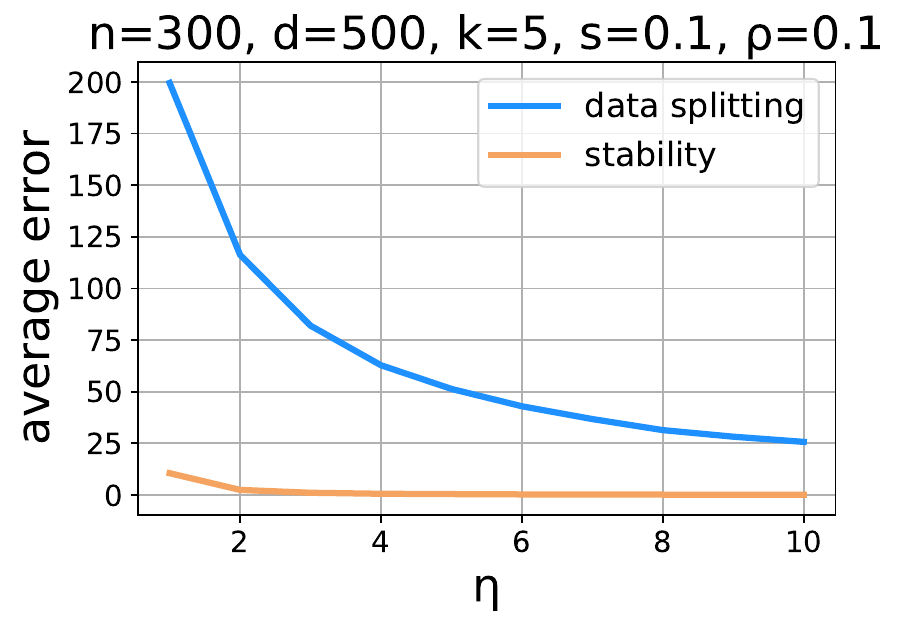}
}
\caption{Comparison of FDR after stable marginal screening and marginal screening with data splitting, with varying dimension and signal strength (first three columns) and sample size (last column), in the Bernoulli design case. The errors are sampled from a Laplace distribution.}
\label{fig:screening_comparison_bern_experr}
\end{figure}

\paragraph{Simulation details.}
All plots of FDR and average error are averaged over $10000$ trials. Width plots are averaged over $500$ trials because the stability correction implies interval widths of low variability. In each trial we generate a new triplet $(X,\beta,y)$, and run stable LASSO (respectively, marginal screening) on this data for all $\eta$, as well as LASSO (respectively, marginal screening) with data splitting. For both stability and data splitting we take a Bonferroni correction to provide simultaneous coverage over all selected variables. Moreover, we note that $\eta$ in the plots refers to the noise parameter in Algorithm~\ref{alg:lasso} and Algorithm~\ref{alg:marginal}; the actual stability parameter used to compute the corresponding splitting fraction for data splitting is multiplied by the number of steps of composition, i.e. we use $f(k\eta)$ as the splitting fraction. Finally, the $\delta$ parameter in Algorithm~\ref{alg:lasso} and Algorithm~\ref{alg:marginal} is set to $\delta=0.5$ throughout.

\newpage

\section{Generalizations of stable algorithms}
\label{sec:extensions}

We show how our stable algorithms can be generalized beyond the setting of Gaussianity. Our proofs exploited Gaussianity of the outcome vector only in terms of the decay of its tails. In general we only need to know how tightly $y$ concentrates around $\mu$ in order to reproduce the stable versions of the LASSO and marginal screening. To show this, we generalize our approach to all outcome vectors with bounded Orlicz norm. This includes other important cases such as general subgaussian and subexponential vectors~$y$.

\begin{definition}[Orlicz norm]
\label{def:orlicz1d}
A function $\psi:\R_\geq 0\rightarrow \R_\geq 0$ is an Orlicz function if $\psi$ is convex, non-decreasing, and satisfies $\psi(0)=0$, $\psi(x)\rightarrow \infty$ as $x\rightarrow \infty$. For an Orlicz function $\psi$, the Orlicz norm or $\psi$-norm of a random variable $W$ is defined as
$$\|W\|_\psi = \inf\left\{s>0: \E\left[\psi\left(\frac{|W|}{s}\right)\right]\leq 1\right\}.$$
\end{definition}
This definition immediately implies a tail bound by Markov's inequality:
\begin{equation}
\label{eqn:orlicz_bound}
	\PP{|W|\geq s\|W\|_\psi} \leq \PP{\psi\left(\frac{|W|}{\|W\|_\psi}\right)\geq \psi(s)} \leq \frac{\E\left[\psi\left(\frac{|W|}{\|W\|_\psi}\right)\right]}{\psi(s)} \leq \frac{1}{\psi(s)}.
\end{equation}

A natural extension of Definition \ref{def:orlicz1d} to random vectors is to consider all one-dimensional projections.

\begin{definition}[Orlicz norm in $\R^n$]
	For a random vector $W\in\R^n$ and Orlicz function $\psi$, we define the Orlicz norm of $W$ as
	$$\|W\|_\psi = \inf\left\{s>0: \sup_{v\in\R^n: \|v\|_2\leq 1}\|W^\top v\|_\psi \leq s\right\}.$$
\end{definition}

We state a corollary of Theorem \ref{thm:conf-ints-general}, where we construct confidence intervals for stable selection methods as long as the outcomes have bounded $\psi$-norm, for some Orlicz function~$\psi$.

\begin{corollary}
Suppose $\|y-\mu\|_\psi \leq G$, for some known $G>0$, and fix $\delta\in(0,1)$. Let $\hat M$ be an $(\eta,\tau,\nu)$-stable model selection algorithm. For all $j\in\hat M$, let
$$\ci_{j\cdot \hat M} = \left(\hat \beta_{j\cdot \hat M} \pm \psi^{-1}\left(|\hat M| e^\eta/ \delta\right)G \sqrt{((X_{\hat M}^\top X_{\hat M})^{-1})_{jj}}\right).$$
Then
$$\PP{\exists j\in \hat M~:~\beta_{j\cdot M} \not\in \ci_{j\cdot \hat M}} \leq \delta + \tau + \nu.$$	
\end{corollary}

\begin{proof}
Fix a model $M$. We only need to argue that
$$\PP{\max_{j\in M}\left|\frac{\hat \beta_{j\cdot M} - \beta_{j\cdot M}}{G\sqrt{((X_{\hat M}^\top X_{\hat M})^{-1})_{jj}}}\right| \geq \psi^{-1}\left(|M| e^\eta/\delta\right)} \leq \delta e^{-\eta}.$$
Invoking Theorem \ref{thm:conf-ints-general} then completes the proof.

Denote by $X_{j\cdot M}$ the residual vector when $X_j$ is regressed onto all other variables in $M$; that is, $X_{j\cdot M} = P^\perp_{X_{M\setminus j}}X_j$, where $P^\perp_{X_{M\setminus j}}$ denotes the projection matrix onto the orthocomplement of $X_{M\setminus j}$. With this notation, we can express the least-squares solution as
$$\hat \beta_{j\cdot M} = \frac{X_{j\cdot M}^\top y}{\|X_{j\cdot M}\|_2^2}, ~~ \beta_{j\cdot M} = \frac{X_{j\cdot M}^\top \mu}{\|X_{j\cdot M}\|_2^2}.$$
Moreover, $(X_M^\top X_M)^{-1}_{jj} = \frac{1}{\|X_{j\cdot M}\|_2^2}$. Using this fact, we have
\begin{align*}
&\lefteqn{\PP{\max_{j\in M}\left|\frac{\hat \beta_{j\cdot M} - \beta_{j\cdot M}}{G\sqrt{((X_{\hat M}^\top X_{\hat M})^{-1})_{jj}}}\right| \geq \psi^{-1}\left(|M| e^\eta/ \delta\right)}}\\
&= \PP{\max_{j\in M} |v_{j\cdot M}^\top (y-\mu)| \geq G \psi^{-1}\left(| M| e^\eta/\delta\right)},
\end{align*}
where we define $v_{j\cdot M}$ to be the random unit vector $\frac{X_{j\cdot M}}{\|X_{j\cdot M}\|_2}$. By applying the tail bound given by
Eq.~\eqref{eqn:orlicz_bound} together with a union bound, we get
\begin{align*}
	\PP{\max_{j\in M} |v_{j\cdot M}^\top (y-\mu)| \geq G \psi^{-1}\left(|M| e^\eta/\delta\right)} &\leq \sum_{j\in M}\PP{|v_{j\cdot M}^\top (y-\mu)| \geq \psi^{-1}\left(| M| e^\eta/ \delta\right)}\\
	&\leq |M| \left(\psi\left(\psi^{-1}\left(|M|e^\eta/\delta\right)\right)\right)^{-1}\\
	&= \delta e^{-\eta}.
\end{align*}
\end{proof}

Now we can generalize the stable versions of the LASSO and marginal screening. We state counterparts of Algorithm \ref{alg:lasso} and Algorithm \ref{alg:marginal} which ensure stability as long as we know a bound $G$ on $\|y-\mu\|_\psi$.

\begin{algorithm}[H]
\SetAlgoLined
\begin{flushleft}
\textbf{input: }design matrix $X\in\R^{n\times d}$, outcome vector $y\in\R^{n}$, $\ell_1$-constraint $C_1$, number of optimization steps $k$, typical stability parameters $\delta\in(0,1), \eta>0$\\
\textbf{output: } LASSO solution $\thetalasso \in\R^d$\newline
Initialize $\theta_1 = 0$\newline
\For{$t=1,2,\dots,k$}{
\ $\forall\phi \in C_1 \cdot \{\pm e_i\}_{i=1}^d$, sample $\xi_{t,\phi} \stackrel{\text{i.i.d.}}{\sim} \text{Lap}\left(\frac{4\psi^{-1}(1/\delta)C_1 \|X\|_{2,\infty}G}{n\eta}\right)$\newline
  $\forall\phi \in C_1 \cdot \{\pm e_i\}_{i=1}^d$, let $\alpha_\phi = \frac{2}{n}\phi^\top X^\top(y - X\theta_t) + \xi_{t,\phi}$\newline
Set $\phi_t = \argmin_{\phi\in C_1 \cdot\{\pm e_i\}_{i=1}^d} \alpha_\phi$\newline
Set $\theta_{t+1} = (1-\Delta_t)\theta_t + \Delta_t \phi_t$, where $\Delta_t = \frac{2}{t+2}$
}
 Return $\thetalasso = \theta_{k+1}$
 \end{flushleft}
\caption{Stable LASSO algorithm under general Orlicz norm $\psi$}
\label{alg:orlicz-lasso}
\end{algorithm}

\begin{algorithm}[H]
\SetAlgoLined
\begin{flushleft}
\textbf{input: } design matrix $X\in\R^{n\times d}$, outcome vector $y\in\R^{n}$, model size $k$\\
\textbf{output: } $\hat M = \{i_1,\dots,i_k\}$\newline
Compute $(c_1,\dots,c_d) = \frac{1}{n}X^\top y\in \R^d$\newline
$\text{res}_1 = [d]$\newline
\For{$t=1,2,\dots,k$}{
\ $\forall i \in \text{res}_i$, sample $\xi_{t,i} \stackrel{\text{i.i.d.}}{\sim} \text{Lap}\left(\frac{2\psi^{-1}(1/\delta)\|X\|_{2,\infty}G}{n\eta}\right)$\newline
$i_t = \argmax_{i\in\text{res}_t} |c_i + \xi_{t,i}|$\newline
$\text{res}_{t+1} = \text{res}_t \setminus i_t$
}
 Return $\hat M = \{i_1,\dots,i_k\}$
 \end{flushleft}
\caption{Stable marginal screening algorithm under general Orlicz norm $\psi$}
\label{alg:orlicz-marginal}
\end{algorithm}

\end{document}